\documentclass[conference]{asl}
\usepackage{rotate}
\usepackage{xcolor}

\usepackage[
backend=biber,
style=alphabetic,
]{biblatex}
\addbibresource{bib.tex} 
\usepackage{url}
\setcounter{biburllcpenalty}{7000}
\setcounter{biburlucpenalty}{8000}

\usepackage[shortlabels]{enumitem}

\newcommand{\bm}{\boldsymbol}
\newcommand{\R}{\mathbb{R}}
\newcommand{\T}{\mathcal{T}}
\newcommand{\Q}{\mathcal{Q}}
\newcommand{\G}{\mathcal{G}}
\newcommand{\I}{\mathcal{I}}
\newcommand{\M}{\mathcal{M}}
\newcommand{\U}{\mathcal{U}}
\newcommand{\N}{\mathcal{N}}
\newcommand{\C}{\mathcal{C}}

\newtheorem{theorem}{Theorem}[section]
\newtheorem{corollary}[theorem]{Corollary}
\newtheorem{lemma}[theorem]{Lemma}
\newtheorem{claim}[theorem]{Claim}
\newtheorem{subclaim}[theorem]{Subclaim}
\newtheorem{fact}[theorem]{Fact}
\newtheorem{definition}[theorem]{Definition}
\newtheorem{conjecture}[theorem]{Conjecture}
\newtheorem{remark}[theorem]{Remark}

\title{Unreachability of Inductive-Like Pointclasses in $L(\R)$}

\author{Derek Levinson}
\revauthor{Levinson, Derek}

\address{Department of Mathematics\\ University of California\\
Los Angeles, CA 90095-1555, USA}
\email{djlevins@math.ucla.edu}
\urladdr{https://www.math.ucla.edu/ \urltilde djlevins/}
\orcid{0009-0006-3647-1666}

\author{Itay Neeman}
\revauthor{Neeman, Itay}
\address{Department of Mathematics\\ University of California\\
Los Angeles, CA 90095-1555, USA}
\email{ineeman@math.ucla.edu}
\urladdr{https://www.math.ucla.edu/ \urltilde ineeman/}

\author{Grigor Sargsyan}
\revauthor{Sargsyan, Grigor}
\address{Institute of Mathematics\\ Polish Academy of Sciences \\ Gdansk, Poland}
\email{sargsyan@impan.pl}
\urladdr{https://grigorsarg.github.io/}
\orcid{0009-0009-7639-3940}


\begin{document}

\begin{abstract}
In \cite{twoapp}, Hjorth proved from $ZF + AD + DC$ that there is no sequence of distinct $\bm{\Sigma^1_2}$ sets of length $\bm{\delta^1_2}$. \cite{hra} extends Hjorth's technique to show there is no sequence of distinct $\bm{\Sigma^1_{2n}}$ sets of length $\bm{\delta^1_{2n}}$. Sargsyan conjectured an analogous property is true for any regular Suslin pointclass in $L(\R)$ --- i.e. if $\kappa$ is a regular Suslin cardinal in $L(\R)$, then there is no sequence of distinct $\kappa$-Suslin sets of length $\kappa^+$ in $L(\R)$. We prove this in the case that the pointclass $S(\kappa)$ is inductive-like.
\end{abstract}

\maketitle

\section{Introduction}
\label{intro chapter}

\begin{definition}
    For a boldface pointclass $\bm{\Gamma}$, we say $\lambda$ is $\bm{\Gamma}$-reachable if there is a sequence of distinct $\bm{\Gamma}$ sets of length $\lambda$ and $\lambda$ is $\bm{\Gamma}$-unreachable if $\lambda$ is not $\bm{\Gamma}$-reachable.
\end{definition}

The problem of unreachability is to determine the minimal $\lambda$ which is $\bm{\Gamma}$-unreachable for each pointclass $\bm{\Gamma}$. As this problem is trivial assuming the axiom of choice, unreachability is exclusively studied under determinacy assumptions. Under $AD$, unreachability yields an interesting measure of the complexity of a pointclass. An early result in this area is Harrington's theorem that there is no injection of $\omega_1$ into any pointclass strictly below the pointclass of Borel sets in the Wadge hierarchy (see \cite{Harrington}).

\begin{theorem}[Harrington]
    If $\beta < \omega_1$, then $\omega_1$ is $\bm{\Pi^0_\beta}$-unreachable.
\end{theorem}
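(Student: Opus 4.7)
The plan is to work under $AD + DC$ and derive a contradiction from a putative sequence $\langle A_\alpha : \alpha < \omega_1 \rangle$ of distinct $\bm{\Pi^0_\beta}$ sets by producing an injection of $\omega_1$ into $\R$ --- which is ruled out by the standard $AD$-consequence that $\omega_1$ does not inject into $\R$.

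First, I would fix a universal Borel coding at level $\beta$: a set $BC_\beta \subseteq \R$ of Borel codes for $\bm{\Pi^0_\beta}$ sets together with a decoding map $c \mapsto B(c)$ surjecting onto $\bm{\Pi^0_\beta}$, so that the membership relation $x \in B(c)$ is $\bm{\Pi^0_\beta}$-uniform in $(c,x)$. The equivalence relation $c \sim c' \iff B(c) = B(c')$ on $BC_\beta$ is then coanalytic. For each $\alpha < \omega_1$, the $\sim$-class $F_\alpha \subseteq BC_\beta$ consisting of codes for $A_\alpha$ is nonempty. If one can select a single $c_\alpha \in F_\alpha$ for every $\alpha$, then $\alpha \mapsto c_\alpha$ injects $\omega_1$ into $\R$, giving the contradiction.

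The main obstacle is carrying out this $\omega_1$-length selection without full choice. I would apply Moschovakis's coding lemma, valid under $AD$: encoding each $\alpha < \omega_1$ by a wellorder of $\omega$ of ordertype $\alpha$, the indexed family $\langle F_\alpha \rangle$ becomes a relation $R \subseteq \R \times \R$ whose domain is a prewellorder of length $\omega_1$; the coding lemma yields a projective subrelation $R^* \subseteq R$ with the same prewellordered domain, from which $DC$ extracts the sequence $\langle c_\alpha \rangle$. The delicate point is verifying that the prewellorder induced by $\alpha \mapsto A_\alpha$ sits in a pointclass to which the coding lemma applies; since we have no a priori bound on the complexity of the indexing, one must relativize to a parameter that codes enough of the sequence, or else argue by induction on $\beta$ using the fact that $\bm{\Pi^0_\beta}$ is generated as countable intersections of unions from lower levels.
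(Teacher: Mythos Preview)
The paper does not prove Harrington's theorem; it merely states the result and cites Harrington's 1978 notes. So there is no ``paper's proof'' to compare against, and your proposal must be assessed on its own merits.

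Your overall strategy --- under $AD$, turn an $\omega_1$-sequence of distinct $\bm{\Pi^0_\beta}$ sets into an injection of $\omega_1$ into $\R$, contradicting the perfect set property --- is the right shape. But two points in your sketch are off.

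First, the step ``from which $DC$ extracts the sequence $\langle c_\alpha\rangle$'' is not correct: $DC$ yields countable sequences, not $\omega_1$-sequences. After the Coding Lemma hands you $R^*\subseteq R$ in $\bm{\Sigma^1_2}$, you still face exactly the selection problem you started with, now for an $\omega_1$-indexed family of nonempty $\bm{\Sigma^1_2}$ sections. Uniformization (available for $\bm{\Sigma^1_2}$ under $AD$) gives a function $f:WO\to\R$, but $f(w)$ may depend on $w$ and not just on $|w|$, so you do not directly obtain a map on $\omega_1$. Some further invariance or boundedness argument is required here (for instance, one can arrange a $\bm{\Sigma^1_1}$ wellfounded relation of rank $\omega_1$ and invoke Kunen--Martin, or use a game in which a winning strategy for one side yields such a relation), and your proposal does not supply it.

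Second, the point you flag as ``delicate'' --- whether the indexing sits in a pointclass to which the Coding Lemma applies --- is not actually an obstacle. Under $AD$, the Moschovakis Coding Lemma applies to an \emph{arbitrary} relation $R$; only the prewellorder must lie in $\bm{\Delta_\Gamma}$ for a suitable $\bm{\Gamma}$, and the canonical $\bm{\Pi^1_1}$ prewellorder on $WO$ already does. So no relativization or induction on $\beta$ is needed to invoke the Coding Lemma; the genuine gap is the extraction step above, not the applicability of the lemma. (Incidentally, the equivalence ``codes the same $\bm{\Pi^0_\beta}$ set'' is Borel of bounded rank, not merely coanalytic, which is sometimes useful in the missing step.)
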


A recent application of Harrington's theorem was the resolution of the decomposability conjecture by Marks and Day (see \cite{decomposability}).

Prior work on unreachability has focused on levels of the projective hierarchy. Kechris gave a lower bound on the complexity of the pointclass needed to reach $\bm{\delta^1_{2n+2}}$ (see \cite{Kechris}).

\begin{theorem}[Kechris]
    Assume $ZF + AD + DC$. $\bm{\delta^1_{2n+2}}$ is $\bm{\Delta^1_{2n+1}}$-unreachable.
\end{theorem}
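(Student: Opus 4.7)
My plan is to combine Moschovakis's periodicity theorems with the Moschovakis Coding Lemma to obtain an upper bound, strictly below $\bm{\delta^1_{2n+2}}$, on any well-ordered sequence of distinct $\bm{\Delta^1_{2n+1}}$ subsets of $\R$.

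First I would invoke Moschovakis's periodicity theorem: under $AD$, $\bm{\Pi^1_{2n+1}}$ has the scale property. This yields a canonical $\bm{\Pi^1_{2n+1}}$-scale on a universal $\bm{\Pi^1_{2n+1}}$ set $U$, and hence a tree $T$ on $\omega \times \bm{\delta^1_{2n+1}}$ whose $x$-sections parametrize all $\bm{\Pi^1_{2n+1}}$ sets. Dually, $\bm{\Sigma^1_{2n+2}}$ is an adequate pointclass admitting $\bm{\Sigma^1_{2n+2}}$-prewellorderings of length $\bm{\delta^1_{2n+2}}$.

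Second, suppose toward contradiction that $\langle A_\alpha : \alpha < \bm{\delta^1_{2n+2}}\rangle$ is a sequence of distinct $\bm{\Delta^1_{2n+1}}$ sets. Fix a $\bm{\Sigma^1_{2n+2}}$-prewellordering $\preceq$ of $\R$ with norm $\phi$ onto $\bm{\delta^1_{2n+2}}$. Consider the (a priori undefinable) relation $R(x,y)$: ``$y$ encodes a $\bm{\Pi^1_{2n+1}}$-code for $A_{\phi(x)}$ together with one for $\R \setminus A_{\phi(x)}$.'' By the Moschovakis Coding Lemma applied to the adequate class $\bm{\Sigma^1_{2n+2}}$ and the prewellordering $\preceq$, there is a set $C \in \bm{\Sigma^1_{2n+2}}$ with $C \subseteq R$ whose section over every level $\alpha < \bm{\delta^1_{2n+2}}$ of $\preceq$ is non-empty. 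Thus $C$ is a single $\bm{\Sigma^1_{2n+2}}$ object from which the whole sequence $\langle A_\alpha\rangle$ can be recovered uniformly.

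Third, the desired contradiction is obtained by showing that any such $\bm{\Sigma^1_{2n+2}}$-coded family of $\bm{\Delta^1_{2n+1}}$ sets has at most $\bm{\delta^1_{2n+1}}$-many distinct members. Writing $C$ as the projection of a $\bm{\Pi^1_{2n+1}}$ set and pulling back through the scale tree $T$, distinct sets $A_\alpha$ correspond to distinct branches through a tree of height $\bm{\delta^1_{2n+1}}$. A Kunen--Martin-style boundedness argument, together with the regularity of $\bm{\delta^1_{2n+2}} = (\bm{\delta^1_{2n+1}})^+$ (Kechris--Martin), bounds the number of such branches by $\bm{\delta^1_{2n+1}} < \bm{\delta^1_{2n+2}}$, contradicting the assumed distinctness.

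The hardest step will be the third one: converting the $\bm{\Sigma^1_{2n+2}}$-coding of the sequence into a genuine injection of $\bm{\delta^1_{2n+2}}$ into a well-orderable set of size $\leq \bm{\delta^1_{2n+1}}$. This requires carefully exploiting the structural compatibility between the $\bm{\Pi^1_{2n+1}}$-scale (on $\omega \times \bm{\delta^1_{2n+1}}$) and the $\bm{\Sigma^1_{2n+2}}$-prewellordering, together with the successor-cardinal structure of $\bm{\delta^1_{2n+2}}$, so as to bound how many truly distinct $\bm{\Delta^1_{2n+1}}$ sets can be read off $\bm{\Sigma^1_{2n+2}}$-coded sections.
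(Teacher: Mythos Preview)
The paper does not prove this theorem; it is stated in the introduction purely as background and attributed to Kechris with a citation to \cite{Kechris}. So there is no ``paper's own proof'' to compare against.

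As for your outline: steps one and two are a reasonable setup. Periodicity gives the $\bm{\Pi^1_{2n+1}}$-scale, and the Coding Lemma applied to a $\bm{\Sigma^1_{2n+2}}$ prewellordering of length $\bm{\delta^1_{2n+2}}$ does let you package the putative sequence $\langle A_\alpha\rangle$ into a single $\bm{\Sigma^1_{2n+2}}$ set $C$. That much is standard.

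The gap is in step three. Your claim that ``distinct sets $A_\alpha$ correspond to distinct branches through a tree of height $\bm{\delta^1_{2n+1}}$'' and that a ``Kunen--Martin-style boundedness argument'' then caps the number of such branches by $\bm{\delta^1_{2n+1}}$ does not work as stated. The Kunen--Martin theorem bounds the \emph{rank} of a $\kappa$-Suslin wellfounded relation by $\kappa^+$; it says nothing about the cardinality of the set of branches through a tree on $\omega\times\kappa$, which is typically of size continuum and in any case not wellorderable without choice. Writing $C$ as the projection of a $\bm{\Pi^1_{2n+1}}$ set and passing to the scale tree gives you a tree whose branches parametrize \emph{reals} in $C$, not the sets $A_\alpha$ themselves, and there is no injection from $\{A_\alpha:\alpha<\bm{\delta^1_{2n+2}}\}$ into anything wellordered falling out of this picture. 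You have correctly identified that this is the hard step, but what you have written is not yet an argument: you need a concrete mechanism that assigns to each $A_\alpha$ an ordinal invariant below $\bm{\delta^1_{2n+1}}$ (or an element of some set that is provably wellorderable of that size) in an injective way, and nothing in your sketch does that.
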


In \cite{Kechris}, Kechris conjectured his own result could be strengthened to $\bm{\delta^1_{2n+2}}$ is $\bm{\Delta^1_{2n+2}}$-unreachable. He also made a second, stronger conjecture that $\bm{\delta^1_{2n+2}}$ is $\bm{\Sigma^1_{2n+2}}$-unreachable. Jackson proved the former in \cite{Jackson}.

\begin{theorem}[Jackson]
    Assume $ZF + AD + DC$. $\bm{\delta^1_{2n+2}}$ is $\bm{\Delta^1_{2n+2}}$-unreachable.
\end{theorem}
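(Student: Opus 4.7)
I would begin by assuming toward contradiction a sequence $\langle A_\alpha : \alpha < \bm{\delta^1_{2n+2}}\rangle$ of distinct $\bm{\Delta^1_{2n+2}}$ sets and converting it into a definable prewellordering. Using a universal $\bm{\Sigma^1_{2n+2}}$ set $U$, its $\bm{\Pi^1_{2n+2}}$ dual $V$, and the scale and uniformization properties at this level, one can extract a $\bm{\Delta^1_{2n+2}}$ set $C \subseteq \R$ of canonical codes together with a $\bm{\Delta^1_{2n+2}}$ prewellordering of $C$ of length exactly $\bm{\delta^1_{2n+2}}$. The problem then reduces to showing that the supremum defining $\bm{\delta^1_{2n+2}}$ is not attained by any $\bm{\Delta^1_{2n+2}}$ prewellordering.

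Next, I would invoke the canonical $\bm{\delta^1_{2n+1}}$-Suslin representation of $\bm{\Sigma^1_{2n+2}}$ coming from the Moschovakis scale on $\bm{\Pi^1_{2n+1}}$: fix a tree $T$ on $\omega \times \bm{\delta^1_{2n+1}}$ with $U = p[T]$. Under $AD$, Kunen-Martin yields $\bm{\delta^1_{2n+2}} = (\bm{\delta^1_{2n+1}})^+$, so every ordinal below $\bm{\delta^1_{2n+2}}$ is represented as the rank of a wellfounded subtree of $T$, and these ranks are controlled by ultrapowers of $\bm{\delta^1_{2n+1}}$ by the normal measures Jackson classifies. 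In this way each code $c_\alpha$ can be recoded as a representative function $f_\alpha : \bm{\delta^1_{2n+1}} \to \bm{\delta^1_{2n+1}}$ modulo an appropriate measure.

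The hard step, and the one requiring the full force of Jackson's inductive analysis, is to combine the strong partition relation $\bm{\delta^1_{2n+1}} \to (\bm{\delta^1_{2n+1}})^{\bm{\delta^1_{2n+1}}}$ with the description theory for measures on $\bm{\delta^1_{2n+1}}$ to show that, for an appropriate normal measure $\mu$, the assignment $\alpha \mapsto f_\alpha$ is forced by partition into ``continuous of uniform cofinality $\omega$'' form on a $\mu$-measure-one set of $\alpha$, and hence by countable completeness of $\mu$ is $\mu$-a.e. constant. That would make the corresponding $A_\alpha$ coincide on a $\mu$-measure-one set of indices, contradicting distinctness. Identifying $\mu$ and verifying the uniformity depends on the explicit recursive classification of measures on $\bm{\delta^1_{2n+1}}$, and this is exactly the obstruction the present paper seeks to circumvent by appealing to inner-model-theoretic methods applicable to Suslin pointclasses beyond the projective hierarchy.
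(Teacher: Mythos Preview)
The paper does not prove this theorem; it is merely cited in the introduction as Jackson's result from \cite{Jackson}, so there is no proof in the paper to compare against. I will therefore assess your sketch on its own terms.

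Your opening reduction is the fatal step. You claim that from a sequence $\langle A_\alpha : \alpha < \bm{\delta^1_{2n+2}}\rangle$ of distinct $\bm{\Delta^1_{2n+2}}$ sets one can extract a $\bm{\Delta^1_{2n+2}}$ prewellordering of length $\bm{\delta^1_{2n+2}}$, after which the problem becomes ``show the sup is not attained.'' But that last statement is essentially immediate under $AD$ (regularity of $\bm{\delta^1_{2n+2}}$), so if your reduction were available the theorem would be trivial and would not have required Jackson's analysis. The reduction is not available: uniformization lets you choose a code $c_\alpha$ for each $A_\alpha$, and distinctness lets you recover $A_\alpha$ from $c_\alpha$, but it does not let you recover the \emph{ordinal} $\alpha$ from $c_\alpha$ in a $\bm{\Delta^1_{2n+2}}$ way. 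The indexing sequence $\alpha \mapsto A_\alpha$ is an arbitrary object of $L(\R)$ with no assumed definability, so there is no reason the induced order on codes should lie in $\bm{\Delta^1_{2n+2}}$. This gap is exactly why ``no long sequence of distinct sets'' is genuinely harder than ``no long prewellordering,'' and why the paper distinguishes Jackson's $\bm{\Delta^1_{2n+2}}$ result from the easier ``no strictly increasing sequence'' result (Theorem~\ref{no strictly inc}).

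Your third paragraph also has a domain mismatch: the measure $\mu$ you invoke lives on $\bm{\delta^1_{2n+1}}$, while the index $\alpha$ ranges over $\bm{\delta^1_{2n+2}} = (\bm{\delta^1_{2n+1}})^+$, so ``$\mu$-a.e.\ constant in $\alpha$'' does not parse. And ``continuous of uniform cofinality $\omega$'' is a regularity condition on functions into $\bm{\delta^1_{2n+1}}$, not a constancy condition; it does not collapse distinct $A_\alpha$'s. Jackson's actual argument works rather differently: it exploits the description theory to bound the number of $\bm{\Delta^1_{2n+2}}$ sets directly via the structure of the canonical measures, not by first manufacturing a prewellordering.
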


\cite{Jackson} also made progress on Kechris's second conjecture by showing there is no strictly increasing sequence of $\bm{\Sigma^1_{2n+2}}$ sets of length $\bm{\delta^1_{2n+2}}$. In fact, Jackson and Martin proved the following more general theorem.

\begin{theorem}[Jackson]
\label{no strictly inc}
    Assume $ZF +AD +DC$. Suppose $\kappa$ is a Suslin cardinal, and $\kappa$ is either a successor cardinal or a regular limit cardinal. Then there is no strictly increasing (or strictly decreasing) sequence $\langle A_\alpha : \alpha < \kappa^+\rangle$ contained in $S(\kappa)$.
\end{theorem}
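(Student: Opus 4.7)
The plan is to derive a contradiction via the Kunen--Martin theorem, which bounds the rank of any $\kappa$-Suslin well-founded relation on $\R$ by $\kappa^+$. I would work toward producing such a relation of rank at least $\kappa^+$ from the hypothesized sequence.

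First I would suppose $\langle A_\alpha : \alpha < \kappa^+ \rangle \subseteq S(\kappa)$ is strictly increasing and set $D_\alpha := A_{\alpha+1} \setminus A_\alpha$, noting that the $D_\alpha$ are pairwise disjoint and each nonempty. The assignment $x \in D_\alpha \mapsto \alpha$ gives a surjection $\pi$ from a set of reals onto $\kappa^+$, and the strict relation
\[
x \prec y \iff \pi(x) < \pi(y) \iff \exists \alpha\,(x \in A_\alpha \wedge y \notin A_\alpha)
\]
is a well-founded order of rank $\kappa^+$ on a subset of $\R$. At this point no contradiction arises on its own: the external quantifier over $\alpha < \kappa^+$ is not absorbed into the $\kappa$-Suslin structure, and choice is too weak under $AD$ to extract an injection of $\omega_1$ into $\R$ directly from this surjection.

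The heart of the argument is to upgrade $\prec$ to a genuinely $\kappa$-Suslin relation. Here the hypothesis on $\kappa$ is essential. In the successor Suslin cardinal case, one of $S(\kappa)$ or its dual $\check{S}(\kappa)$ has the scale property by Moschovakis's second periodicity (or its extensions due to Steel and Jackson in the Suslin hierarchy of $L(\R)$); the resulting $\kappa$-scale uniformizes the tree representations $A_\alpha = p[T_\alpha]$ and, together with the coding lemma, replaces the external parameter $\alpha$ by a quantifier over $\kappa$-many objects coded by a single tree, making $\prec$ itself $\kappa$-Suslin. In the regular limit case, $S(\kappa) = \bigcup_{\lambda < \kappa} S(\lambda)$, and a pigeonhole/cofinality argument concentrates a cofinal subsequence inside some bounded $S(\lambda)$, reducing to the successor-level analysis below. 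In either case the Kunen--Martin bound on the upgraded $\prec$ contradicts its having rank $\kappa^+$.

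The strictly decreasing case follows by taking complements, which sends the sequence into the dual pointclass $\check{S}(\kappa)$ as a strictly increasing sequence; the same analysis applies using whichever of $S(\kappa), \check{S}(\kappa)$ carries the requisite scale. The principal obstacle throughout is the uniform coding step — translating an abstractly given $\kappa^+$-sequence of Suslin sets into a single $\kappa$-Suslin relation — and it is precisely here that the hypothesis on $\kappa$ (successor or regular limit) guarantees that the requisite scale and pointclass closure properties are available; without this hypothesis the sequence may fail to be reflectable inside the Suslin structure and Kunen--Martin cannot be invoked.
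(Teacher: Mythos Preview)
The paper does not give its own proof of this theorem; it is stated in the introduction as a background result of Jackson and Martin, with a citation to \cite{Jackson}, and is never revisited. So there is no in-paper argument to compare against. That said, your overall strategy --- produce a well-founded relation of rank $\kappa^+$ from the sequence and invoke Kunen--Martin --- is indeed the standard route, and the increasing/decreasing duality via complements is fine.

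The genuine gap is in your handling of the regular limit case. The assertion $S(\kappa) = \bigcup_{\lambda<\kappa} S(\lambda)$ is false precisely in the cases of greatest interest here: when $S(\kappa)$ is inductive-like (type~IV), $\kappa = \bm{\delta_\Gamma}$ is a regular limit Suslin cardinal, yet $\bigcup_{\lambda<\kappa} S(\lambda)$ is contained in $\bm{\Delta_\Gamma}$, strictly below $S(\kappa)=\bm{\Gamma}$. A universal $\bm{\Gamma}$ set is $\kappa$-Suslin but not $\lambda$-Suslin for any $\lambda<\kappa$. So the pigeonhole reduction never gets started, and even if it did, the target $S(\lambda)$ need not fall under a ``successor'' case, so the reduction would not terminate.

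What actually makes the coding step go through in both the successor and regular-limit cases is not a reduction to smaller Suslin cardinals but the structural fact that, under the stated hypothesis on $\kappa$, $S(\kappa)$ is a nonselfdual pointclass closed under $\exists^{\R}$, conjunction, and disjunction, and carries (or its dual carries) a $\kappa$-scale on a universal set. One then applies the Coding Lemma relative to a $\bm{\Delta}$-prewellordering of length $\kappa$ to replace the external $\kappa^+$-parameter by a real quantifier, yielding a single $\kappa$-Suslin well-founded relation of rank $\kappa^+$ and the desired contradiction with Kunen--Martin. Your successor-case sketch gestures at this, but the regular-limit case needs the same direct closure/Coding-Lemma argument, not a pigeonhole descent.
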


But the resolution of Kechris's second conjecture eluded the traditional techniques of descriptive set theory. Hjorth pioneered the use of inner model theory in this area to resolve one case of Kechris's second conjecture (see \cite{twoapp}).

\begin{theorem}[Hjorth]
\label{hjorth result}
    Assume $ZF + AD + DC$. $\bm{\delta^1_{2}}$ is $\bm{\Sigma^1_{2}}$-unreachable.
\end{theorem}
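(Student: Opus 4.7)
The plan is to suppose toward contradiction that $\langle A_\alpha : \alpha < \omega_2\rangle$ is a sequence of pairwise distinct $\bm{\Sigma^1_2}$ sets — recall $\bm{\delta^1_2}=\omega_2$ under $AD$ — and to derive a contradiction from the inner-model-theoretic representation of $\bm{\Sigma^1_2}$ via sharps. The first task is to replace this externally-given sequence with a uniformly $\bm{\Sigma^1_2}$-definable object. I would fix the canonical $\bm{\Delta^1_2}$-prewellordering $\preceq$ of $\mathbb{R}$ of length $\bm{\delta^1_2}$ coming from the Moschovakis periodicity theorems, and use it to represent each ordinal $\alpha<\omega_2$ by reals of $\preceq$-rank $\alpha$. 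An appropriate form of the Moschovakis coding lemma (applied below $\bm{\delta^1_2}$ and then pushed to the boundary using $DC$, which is where $DC$ enters the argument) should produce a single $\bm{\Sigma^1_2}$ relation $R(y,x)$ whose cross-sections $R_y$ depend only on the $\preceq$-rank of $y$ and realize the $A_\alpha$'s.

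Next I would exploit the fact that every $\bm{\Sigma^1_2}(z)$ set admits a representation of the form $\{x : \langle z,x\rangle^{\#} \vDash \varphi\}$ for a fixed $\Sigma_1$ formula $\varphi$ in the language of set theory augmented with a predicate for Silver indiscernibles. Applying this to $R$, we obtain a real parameter $z_0$ and a formula $\psi$ with
\[
R(y,x)\iff \langle z_0,y,x\rangle^{\#}\vDash \psi.
\]
The hypothesis that the $A_\alpha$'s are distinct then translates into the assertion that the map $y\mapsto\{x:\langle z_0,y,x\rangle^{\#}\vDash\psi\}$ attains $\omega_2$ distinct values as $y$ ranges over representatives of $\preceq$-ranks, reducing the descriptive-set-theoretic statement to a purely inner-model-theoretic question about how sharps vary with $y$.

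The main obstacle, and where Hjorth's novel input is required, is to bound the number of distinct cross-sections by $\aleph_1$. My approach would be to observe that $\langle z_0, y, x\rangle^{\#}$ depends on $y$ only through the theory of $L[z_0,y]$ with its Silver indiscernibles, and then to combine this with the Martin club measure on $\omega_1$ (which is an ultrafilter under $AD$) together with a pigeonhole on the countably many $\Sigma_1$-theories to force the cross-section map to factor through an equivalence relation of index at most $\aleph_1$, contradicting distinctness. The delicate point will be reconciling this measure-theoretic pigeonhole with the fact that $\preceq$ has length $\omega_2$, as generic indiscernibility arguments are \emph{a priori} too weak at the level of $\omega_2$; the inner model theory must be exploited to bridge the mismatch between the $\omega_1$-scale on which sharps are well-behaved and the $\omega_2$-scale on which the hypothesis distinguishes reals.
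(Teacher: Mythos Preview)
The paper does not give its own proof of Hjorth's theorem; it is cited from \cite{twoapp}, where Hjorth's argument went through the Kechris--Martin theorem. The paper does, however, outline in Section~\ref{projective cases section} how its main technique specializes to recover this case, so I will compare against that.

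Your first two paragraphs are fine: the coding-lemma reduction and the sharp representation of $\bm{\Sigma^1_2}$ sets are standard and correct. The gap is your last paragraph, which you yourself flag as ``the delicate point.'' It is not merely delicate; as stated it does not work. The assertion that $\langle z_0,y,x\rangle^\#$ depends on $y$ only through the theory of $L[z_0,y]$ with its indiscernibles buys nothing: that theory \emph{is} $\langle z_0,y\rangle^\#$, and there are continuum many such sharps, so no countable pigeonhole is available. The Martin measure lives on Turing degrees, not on $\omega_2$, and there is no mechanism in your sketch for pushing a cone argument through the $\preceq$-rank map. You have correctly located where the difficulty is and then stopped.

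What is missing is the passage from $0^\#$-level mice to $M_1^\#$. The approach the paper sketches (following Hjorth and Sargsyan) analyzes the directed system $\mathcal{I}'$ of countable complete iterates of $M_1^\#(z_0)$. The direct limit has a Woodin cardinal $\delta_{M_\infty} > (\bm{\delta^1_2})^+$ and a least $<\!\delta$-strong cardinal $\kappa_{M_\infty} < \bm{\delta^1_2}$. The Woodin cardinal supplies Woodin's extender algebra, so every real---in particular each rank-representative $y$ and each $\bm{\Sigma^1_2}$ parameter---becomes \emph{generic} over some iterate. This is the step that your sharp-based approach cannot replicate: it converts the continuum of parameters $y$ into sets of \emph{conditions} $S_\alpha$ living below $\kappa_{M_\infty}$ in the direct limit. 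One then gets $\bm{\delta^1_2}$ many distinct subsets of an ordinal below $\bm{\delta^1_2}$ inside $M'_\infty$, contradicting that $M'_\infty$ computes successor cardinals below $V$. Without a Woodin cardinal and genericity iterations there is no way to make an arbitrary real small-generic, and hence no way to bound the family of cross-sections.
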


Kechris also pointed out the following corollary of Hjorth's result.

\begin{corollary}
    Assume $ZF + AD + DC$. A $\bm{\Pi^1_2}$ equivalence relation has either $2^{\aleph_0}$ or $\leq \aleph_1$ equivalence classes.
\end{corollary}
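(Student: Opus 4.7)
The plan is to derive the corollary by combining Theorem~\ref{hjorth result} with a standard Burgess-style dichotomy for $\bm{\Pi^1_2}$ equivalence relations under $AD + DC$. Let $E$ be a $\bm{\Pi^1_2}$ equivalence relation on $\R$. The starting observation is that for each $x \in \R$, the equivalence class $[x]_E = \{y : x \mathrel{E} y\}$ is a $\bm{\Pi^1_2}$ set in parameter $x$, so its complement $\R \setminus [x]_E$ is $\bm{\Sigma^1_2}$ in parameter $x$, and two $E$-inequivalent reals $x, x'$ yield two distinct such $\bm{\Sigma^1_2}$ complements.

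Next I would split into cases. If there is a perfect set of pairwise $E$-inequivalent reals, then immediately $\R$ injects into the quotient, yielding $2^{\aleph_0}$ equivalence classes and closing this branch. Otherwise, I would invoke the $\bm{\Pi^1_2}$-analog of Burgess's theorem, obtainable under $AD + DC$ via the Moschovakis coding lemma applied to the $\bm{\Sigma^1_2}$-scale on $\neg E$, to produce a well-ordered enumeration $\langle [x_\alpha]_E : \alpha < \lambda \rangle$ of the quotient $\R/E$ with $\lambda \leq \bm{\delta^1_2}$.

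Finally, recall that under $AD + DC$ one has $\bm{\delta^1_2} = \aleph_2$. If $\lambda \geq \bm{\delta^1_2}$, restricting the enumeration produces a sequence of $\bm{\delta^1_2}$ distinct equivalence classes, whose complements form a sequence of $\bm{\delta^1_2}$ distinct $\bm{\Sigma^1_2}$ sets, contradicting the $\bm{\Sigma^1_2}$-unreachability of $\bm{\delta^1_2}$ given by Theorem~\ref{hjorth result}. Hence $\lambda < \bm{\delta^1_2} = \aleph_2$, i.e., there are at most $\aleph_1$ equivalence classes, which together with the perfect-set case gives the dichotomy. The main obstacle is formalizing the Burgess-style dichotomy at the $\bm{\Pi^1_2}$ level, rather than the final cardinal count; once that dichotomy is in hand, Hjorth's theorem is what sharpens the Burgess-type bound from $\aleph_2$ down to the optimal $\aleph_1$.
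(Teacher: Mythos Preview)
The paper does not actually supply a proof of this corollary; it simply records that Kechris observed it as a consequence of Theorem~\ref{hjorth result} and moves on. Your outline is the standard derivation and is essentially correct: a Burgess--Harrington--Sami style dichotomy for $\bm{\Pi^1_2}$ equivalence relations under $AD+DC$ (perfect set of inequivalent reals versus a wellordered enumeration of the quotient of length at most $\bm{\delta^1_2}$), followed by the observation that an enumeration of length $\bm{\delta^1_2}$ would yield $\bm{\delta^1_2}$ distinct $\bm{\Sigma^1_2}$ sets (the complements of the classes), contradicting Theorem~\ref{hjorth result}. Since $\bm{\delta^1_2}=\aleph_2$ under $AD$, the non-perfect branch collapses to at most $\aleph_1$ classes. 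The one place to be careful is exactly the step you flag: making precise the $\bm{\Pi^1_2}$ dichotomy via the $\bm{\Sigma^1_2}$ scale on $\neg E$. Once that is in hand, the deduction from Hjorth's theorem is immediate, and there is nothing further in the paper to compare against.
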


Hjorth's proof of Theorem \ref{hjorth result} involved an application of the Kechris-Martin Theorem, which precluded an easy generalization of his technique to other projective pointclasses. The rest of Kechris's second conjecture survived another two decades, until Sargsyan found a modification of Hjorth's proof which generalized to the rest of the projective hierarchy (see \cite{hra}).

\begin{theorem}[Sargsyan]
\label{projective thm}
    Assume $ZF + AD + DC$. $\bm{\delta^1_{2n+2}}$ is $\bm{\Sigma^1_{2n+2}}$-unreachable.
\end{theorem}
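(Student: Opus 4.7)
The plan is to follow Hjorth's blueprint at level $n=0$ and identify the analog of the Kechris--Martin input that survives into the rest of the projective hierarchy. Assume for contradiction that $\langle A_\alpha : \alpha < \bm{\delta^1_{2n+2}}\rangle$ enumerates distinct $\bm{\Sigma^1_{2n+2}}$ sets. The first step is to make the sequence uniformly definable: using the Moschovakis coding lemma together with a $\bm{\Pi^1_{2n+1}}$-norm of length $\bm{\delta^1_{2n+2}}$, I would produce a single $\bm{\Sigma^1_{2n+2}}$ relation $R \subseteq \R^2$ whose sections $R_x$, as $x$ ranges over $\R$, realize a cofinal subsequence of the $A_\alpha$. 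The map sending $x$ to the unique $\alpha$ with $A_\alpha = R_x$ then behaves as a partial $\bm{\Sigma^1_{2n+2}}$-norm whose range is cofinal in $\bm{\delta^1_{2n+2}}$.

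Next I would bring in inner model theory. Under $AD$, truth for a $\bm{\Sigma^1_{2n+2}}$ set in a real $y$ is captured by a $\Sigma_1$ statement over $M_{2n}^{\#}(y)$, the minimal $\omega_1$-iterable $y$-mouse with $2n$ Woodin cardinals. To each $\alpha$ I would attach a witness $y_\alpha$ with $A_\alpha = R_{y_\alpha}$ together with the $\Sigma_1$ type describing membership in $A_\alpha$ over $M_{2n}^{\#}(y_\alpha)$. Mirroring Hjorth, I would then compare these mice, map the $y_\alpha$ into the direct limit of iterates of $M_{2n}^{\#}$ under a suitable countably complete measure on $\bm{\delta^1_{2n+2}}$, and use the fact that this direct limit has cardinality strictly below $\bm{\delta^1_{2n+2}}$ to bound the number of possible $\Sigma_1$ types. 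Pigeonhole then produces $\alpha \neq \beta$ with $A_\alpha = A_\beta$, contradicting distinctness.

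The main obstacle is producing the correct substitute for Kechris--Martin at level $n$: one needs both that $M_{2n}^{\#}$ computes $\bm{\Sigma^1_{2n+2}}$-truth correctly under a canonical evaluation, and that this evaluation is uniform enough to be invariant under the comparison embeddings and the direct-limit map. Equivalently, the $\bm{\Sigma^1_{2n+2}}$ type of a $\bm{\Pi^1_{2n+1}}$-singleton should be preserved under iteration of $M_{2n}^{\#}$ by its canonical strategy. Once that ingredient is in hand, the cardinality bound on the direct limit becomes routine, and Jackson's Theorem \ref{no strictly inc} eliminates any reduction to a monotone subsequence.
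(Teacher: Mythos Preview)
Your proposal has a genuine gap: you correctly identify that the crux is a substitute for Kechris--Martin at level $n$, but you do not supply one. As the introduction notes, this is exactly the obstruction that blocked a generalization of Hjorth's argument for two decades; naming the obstacle is not overcoming it. Both Sargsyan's proof in \cite{hra} and the alternative proof outlined in Section~\ref{projective cases section} sidestep Kechris--Martin entirely rather than prove a higher-level analogue. Your pigeonhole-on-types outline also rests on a false cardinality claim: the direct limit of iterates of the relevant mouse does \emph{not} have size below $\bm{\delta^1_{2n+2}}$; its least Woodin sits above $(\bm{\delta^1_{2n+2}})^+$. Only the part below the least strong cardinal is small, and that is what must be exploited.

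The actual mechanism is quite different from yours. Both proofs work with $M_{2n+1}^\#$, not $M_{2n}^\#$: the extra Woodin is essential because it enables genericity iterations, making arbitrary reals generic for the extender algebra at the bottom Woodin of an iterate. In the paper's version, each $A_\alpha$ is coded not by a $\Sigma_1$ type but by a set $S_\alpha$ of extender-algebra conditions, and a reflection argument places $S_\alpha$ in $\mathcal{P}(\kappa_{M'_\infty})^{M'_\infty}$, where $\kappa_{M'_\infty}\le\bm{\delta^1_{2n+2}}$. The contradiction is not pigeonhole but the production of $(\bm{\delta^1_{2n+2}})^+$ distinct subsets of $\bm{\delta^1_{2n+2}}$ inside $M'_\infty$, against $((\bm{\delta^1_{2n+2}})^+)^{M'_\infty}<(\bm{\delta^1_{2n+2}})^+$. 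Sargsyan's original argument in \cite{hra} differs again, using a $\Pi^1_{2n+3}\setminus\Sigma^1_{2n+3}$ set to obtain a uniform bound on the coding sets below the least strong; neither route resembles a direct lift of Hjorth's type-counting.
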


The following result of Kechris shows Sargsyan's theorem is optimal.

\begin{theorem}[Kechris]
\label{strictly inc sequence}
    Assume $ZF + AD + DC$. Suppose $\kappa$ is a Suslin cardinal. Then there is a strictly increasing sequence $\langle A_\alpha : \alpha < \kappa \rangle$ contained in $S(\kappa)$.
\end{theorem}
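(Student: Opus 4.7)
The plan is to exhibit the sequence $\langle A_\alpha : \alpha<\kappa\rangle$ as initial segments of a prewellordering of length $\kappa$ lying in $\bm\Delta(S(\kappa))$. The heart of the proof is the following standard fact about Suslin pointclasses under $AD$: for any Suslin cardinal $\kappa$ one has $\delta(S(\kappa)) \geq \kappa^+$, and in fact $\bm\Delta(S(\kappa))$ contains prewellorderings of every length strictly less than $\kappa^+$, in particular of length $\kappa$.

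Granting this, fix a $\bm\Delta(S(\kappa))$-prewellordering $\preceq$ on some $B \subseteq \R$ whose rank function $\varphi : B \to \kappa$ is surjective. For each $\alpha < \kappa$, pick $y_\alpha \in B$ with $\varphi(y_\alpha) = \alpha$ and set
\[
A_\alpha = \{x \in B : x \preceq y_\alpha\}.
\]
By the standard uniformity property of $\bm\Gamma$-norms (applied with $\bm\Gamma = S(\kappa)$ and dually), for each fixed $y \in B$ both $\{x : x \leq^*_\varphi y\}$ and $\{x : y <^*_\varphi x\}$ are in $S(\kappa)$, so $A_\alpha \in \bm\Delta(S(\kappa)) \subseteq S(\kappa)$. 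The sequence is strictly increasing because $\alpha < \beta$ forces $\varphi(y_\beta) = \beta > \alpha$, hence $y_\beta \in A_\beta \setminus A_\alpha$.

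The main obstacle is the verification of $\delta(S(\kappa)) \geq \kappa^+$ for an arbitrary Suslin cardinal $\kappa$. Under $AD$, the scale property holds on one side of $S(\kappa)$ by the Moschovakis--Steel periodicity theorems; combining this with the Kunen--Martin analysis of prewellorderings of $\kappa$-Suslin sets pins $\delta(S(\kappa))$ at exactly $\kappa^+$. At the projective levels this amounts to the classical identities $\bm{\delta^1_{n+1}} = (\bm{\delta^1_n})^+$ in the relevant parities; in general it follows from the structure of the Suslin--cardinal hierarchy and the propagation of scales through the Wadge hierarchy. Once this fact is in hand the rest of the argument is a routine manipulation of $\bm\Gamma$-norms, and in particular the construction is completely uniform across regular, singular, successor, and limit Suslin cardinals.
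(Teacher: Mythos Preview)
The paper does not prove this theorem; it is cited as a result of Kechris without proof. So I evaluate your argument on its own merits.

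Your overall strategy---take a norm of length $\kappa$ and use its initial segments---is the right idea, but the justification you give contains a genuine error. You assert that $\delta(S(\kappa)) \geq \kappa^+$ for every Suslin cardinal $\kappa$, and hence that $\bm{\Delta}(S(\kappa))$ contains a prewellordering of length $\kappa$. This is false precisely in the case the present paper is about: when $S(\kappa)$ is inductive-like. For such $\bm{\Gamma} = S(\kappa)$ one has $\bm{\delta_\Gamma} = \kappa$, not $\kappa^+$; this is established in the second Claim in the proof of Lemma~\ref{pwo is short}. Moreover, the supremum $\bm{\delta_\Gamma}$ is never attained by a $\bm{\Delta_\Gamma}$-prewellordering: if it were, adjoining a single new top point would produce a $\bm{\Delta_\Gamma}$-prewellordering of length $\bm{\delta_\Gamma}+1 > \bm{\delta_\Gamma}$. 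So in the inductive-like case there is no $\bm{\Delta}(S(\kappa))$-prewellordering of length $\kappa$, and your construction as written cannot get started.

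The repair is small but not optional. You do not actually need a $\bm{\Delta_\Gamma}$-prewellordering; a $\bm{\Gamma}$-norm of length $\kappa$ on a $\bm{\Gamma}$-complete set suffices, since the initial segments $\{x : x \leq^*_\varphi y\}$ of a $\bm{\Gamma}$-norm already lie in $\bm{\Delta_\Gamma} \subseteq S(\kappa)$. In the inductive-like case $\bm{\Gamma}$ has the prewellordering property, and a $\bm{\Gamma}$-norm on a $\bm{\Gamma}$-complete set has length exactly $\bm{\delta_\Gamma} = \kappa$, so such a norm is available. In the remaining (projective-like) cases your identity $\delta(S(\kappa)) = \kappa^+$ is correct and your argument goes through as written.
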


Sargsyan's theorem resolves the problem of unreachability for every level of the projective hierarchy. He conjectured an analogous result holds for every regular Suslin pointclass.

\begin{conjecture}[Sargsyan]
\label{Grigor conjecture}
    Assume $AD^+$. Suppose $\kappa$ is a regular Suslin cardinal. Then $\kappa^+$ is $S(\kappa)$-unreachable.
\end{conjecture}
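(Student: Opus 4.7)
The plan is to follow the Hjorth--Sargsyan blueprint from Theorems \ref{hjorth result} and \ref{projective thm}, lifted to the setting of an inductive-like pointclass $S(\kappa)$. I would assume for contradiction that $\langle A_\alpha : \alpha < \kappa^+ \rangle$ is a sequence of distinct sets in $S(\kappa)$, fix a universal $S(\kappa)$-set $U \subseteq \R \times \R$, and choose real parameters $x_\alpha$ with $A_\alpha = U_{x_\alpha}$. By uniformization inside $S(\kappa)$ one can arrange that $\alpha \mapsto x_\alpha$ is ordinal definable, so the putative sequence produces an OD injection of $\kappa^+$ into the reals.

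The next step is to construct a hod mouse $\M$ that is iterable in $L(\R)$ and whose derived model computes $S(\kappa)$ correctly; this is where the inductive-like hypothesis is used in an essential way. The mouse $\M$ plays the role that $M_n^{\#}$ played in Sargsyan's projective argument: its internal wellorder of the reals gives a canonical enumeration, of order type at most $\kappa$, of the candidates for the parameters $x_\alpha$. Using iterability I would then run a Hjorth-style reflection to map the whole $\kappa^+$-sequence into an iterate $\M^*$ of $\M$, preserving the identities of the $x_\alpha$.

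At this point the contradiction should be a pigeonhole: inside $\M^*$ there are at most $\kappa$ candidates for the parameters $x_\alpha$, but $\alpha \mapsto x_\alpha$ is injective on $\kappa^+$, so two of the $A_\alpha$'s must coincide. This is the analogue of Hjorth's final cardinality argument, where Kechris--Martin supplied the reflection into $L[z]$ for a real $z$ and the contradiction came from $\omega_2^{L[z]}$ being too small.

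The main obstacle I expect is the construction and analysis of the hod mouse $\M$ itself. In the projective case one has the well-developed theory of $M_n^{\#}$ and its derived models to rely on, but for a general inductive-like $S(\kappa)$ one must identify the correct hybrid operator, prove that an $S(\kappa)$-full hod mouse exists in $L(\R)$, and establish the form of generic absoluteness needed to pull the $\kappa^+$-sequence into $\M^*$. A secondary difficulty is replacing Kechris--Martin: at the projective level it provided the key $\bm{\Pi}^1_3$-reflection, and here one needs an analogue at the level of $S(\kappa)$, which should follow from the iteration strategy of $\M$ rather than be quoted from classical descriptive set theory.
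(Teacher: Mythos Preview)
First, note that the statement you are attacking is a \emph{conjecture}: the paper does not prove it. The paper proves only the special case $V=L(\R)$ with $S(\kappa)$ inductive-like (Theorem~\ref{main thm v2}), and explicitly leaves the remaining cases open (see Section~\ref{mouse sets section}). So at best your proposal should be compared with the paper's argument for that special case.

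Even restricted to that case, your outline has a genuine gap in its first move. You write that by uniformization one can choose reals $x_\alpha$ with $A_\alpha = U_{x_\alpha}$ and obtain an OD injection of $\kappa^+$ into the reals. Under $AD$ there is no injection of $\omega_1$ into $\R$, so if such a choice of $x_\alpha$'s were available the contradiction would be immediate and no mice would be needed; the point is precisely that no such choice function exists. Uniformization selects a real from each nonempty section of a \emph{real}-indexed set, not from an ordinal-indexed family, so it does not produce the $x_\alpha$'s. The paper handles this by a completely different device: it applies the Coding Lemma to the prewellordering $\leq'_*$ coming from the directed system of iterates of a $\Gamma$-suitable mouse to obtain a single set $D\in J_{\beta'}(\R)$ that simultaneously codes witnesses for all $\alpha$, and then captures $D$ by a term $\tau^M$ inside a $\Gamma$-ss mouse $M$. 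The objects that actually get injected into a model are not reals but \emph{sets of extender-algebra conditions} $S_\alpha\subseteq\kappa_{M'_\infty}$, and the contradiction is that this forces $(\bm{\delta_\Gamma}^+)^{M'_\infty}=(\bm{\delta_\Gamma}^+)^{L(\R)}$, which is ruled out by a cofinality computation (Lemma~\ref{measurable or cof omega}). None of this is a pigeonhole on reals. Finally, the mice used are not hod mice but the $\Gamma$-suitable and $\Gamma$-ss mice of Theorem~\ref{nice strategy exists}; the replacement for Kechris--Martin is the term-condensation machinery of Lemmas~\ref{term relation condensation} and~\ref{branch agreement}, together with the reflection Lemma~\ref{reflecting below kappa}, not an abstract generic-absoluteness principle.
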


Below, we prove part of Conjecture \ref{Grigor conjecture}.

\begin{theorem}
\label{main thm v2}
    Assume $ZF + AD + DC + V=L(\mathbb{R})$. Suppose $\kappa$ is a regular Suslin cardinal and $S(\kappa)$ is inductive-like. Then $\kappa^+$ is $S(\kappa)$-unreachable.
\end{theorem}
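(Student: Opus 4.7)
The plan is to lift the Hjorth--Sargsyan argument for Theorem~\ref{projective thm} from the projective hierarchy to the inductive-like setting in $L(\R)$, replacing the canonical mice $M_{2n+1}^\#$ that code $\bm{\Sigma}^1_{2n+2}$-sets with the hybrid mice from Sargsyan's HOD analysis of $L(\R)$. Assume toward contradiction that $\vec{A} = \langle A_\alpha : \alpha < \kappa^+\rangle$ is a sequence of distinct sets in $S(\kappa)$. Since $S(\kappa)$ is inductive-like, $S(\kappa)$ has the scale property, so each $A_\alpha$ admits a canonical $\kappa$-Suslin representation by a tree $T_\alpha$ on $\omega \times \kappa$ arising from its scale.

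First, I would carry out a mouse-theoretic coding step. Using the $V = L(\R)$ hypothesis together with mouse capturing at the inductive-like level, assign to each $A_\alpha$ a hybrid premouse $\mathcal{P}_\alpha$ (built over a canonical initial segment of the HOD analysis of $L(\R)$) and an ordinal parameter $p_\alpha$ such that $A_\alpha$ is uniformly definable from $\mathcal{P}_\alpha$, $p_\alpha$, and the iteration strategy of $\mathcal{P}_\alpha$. This is the inductive-like analogue of the projective coding of $\bm{\Sigma}^1_{2n+2}$-sets by $M_{2n+1}^\#$ that drives Sargsyan's proof.

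Second, I would run a simultaneous comparison: coiterate the family $\{\mathcal{P}_\alpha : \alpha < \kappa^+\}$ via their canonical strategies to a common iterate $\mathcal{P}^\ast$. Using regularity of $\kappa$ together with standard properties of the direct limit in Sargsyan's HOD analysis, only $\leq \kappa$ many parameters in $\mathcal{P}^\ast$ can index distinct $S(\kappa)$-sets via the canonical definition scheme. A pigeonhole on $\kappa^+$ many distinct $A_\alpha$'s against $\leq \kappa$ many admissible parameters then yields $\alpha \neq \beta$ with $A_\alpha = A_\beta$, contradicting distinctness of $\vec{A}$.

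The main obstacle is the coding step. In the projective setting one has the explicit canonical mouse $M_{2n+1}^\#$; at an arbitrary inductive-like $S(\kappa)$ one must instead invoke the fine structure of $\mathrm{HOD}^{L(\R)}$ below $\Theta$ and the hybrid mouse / HOD pair machinery in order to produce the correct $\mathcal{P}_\alpha$ and verify both its iterability and the precise correspondence between $S(\kappa)$-sets and parameters in the direct limit. A secondary difficulty is arranging the simultaneous comparison of $\kappa^+$ many hybrid mice while keeping the ``readable'' part of the final direct limit of size $\kappa$; this is the point at which DC, the regularity of $\kappa$, and the inductive-likeness of $S(\kappa)$ enter most essentially, since together they guarantee the requisite closure and continuity of the direct limit construction.
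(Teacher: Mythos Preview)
Your proposal has the right high-level shape --- use mice adapted to the inductive-like pointclass, code the $A_\alpha$'s by mouse-theoretic data, and obtain a contradiction by showing the codes live in a set of size $\kappa$ --- but several of the concrete steps you describe do not work and are not what the paper does.

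First, the paper does \emph{not} assign a separate mouse $\mathcal{P}_\alpha$ to each $A_\alpha$ and then coiterate $\kappa^+$ many such mice. There is no meaningful ``simultaneous comparison of $\kappa^+$ many hybrid mice'' in this setting: under $ZF+DC$ one cannot even arrange such a comparison, and in any case the Comparison Lemma gives no control over a family of that size. Instead, the paper fixes a \emph{single} $\Gamma$-suitable mouse $W$ (and a $\Gamma$-ss mouse $M_{z_0}$ over a real coding $W$ and the Coding Lemma data), studies the directed system $\mathcal{I}$ of its countable complete iterates, and works with the direct limit $M_\infty$. The key facts are that $\kappa_{M_\infty}\le\bm{\delta_\Gamma}$ while $\delta_{M_\infty}>(\bm{\delta_\Gamma})^+$, so ordinals below $\delta_{M_\infty}$ are long enough to index the sequence.

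Second, the coding is not ``$A_\alpha$ is definable from $\mathcal{P}_\alpha$, an ordinal $p_\alpha$, and its strategy.'' The actual code for $A_\alpha$ is a \emph{set} $S^M_\alpha$ of extender-algebra conditions below $\kappa_M$ in an $\alpha$-stable iterate $M$: one uses the Coding Lemma to tie $A_\alpha$ to a parameter in the $\leq_*$-prewellorder, then uses term relations for a self-justifying system and a reflection lemma to push witnesses for $U(x,y^2)$ down into an initial segment $N\triangleleft M|\kappa_M$. The paper emphasizes that this coding by \emph{sets} of conditions (rather than single conditions as in \cite{hra}) is exactly what makes the argument go through for inductive-like $\bm{\Gamma}$, since the stronger reflection needed for single-condition codes fails when $\bm{\Gamma}$ is closed under coprojection.

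Third, your pigeonhole is circular as stated: the claim that ``only $\leq\kappa$ many parameters in $\mathcal{P}^\ast$ can index distinct $S(\kappa)$-sets'' is essentially the theorem. In the paper the contradiction is obtained differently: one shows $\alpha\mapsto S_\alpha=\pi_{M,\infty}(S^M_\alpha)$ is injective into $\mathcal{P}(\kappa_{M'_\infty})^{M'_\infty}$, hence $(\bm{\delta_\Gamma}^+)^{M'_\infty}=(\bm{\delta_\Gamma}^+)^{L(\R)}$; but a direct argument using Lemma~\ref{measurable or cof omega} (successor cardinals of $M'_\infty$ have cofinality $\omega$ in $L(\R)$) shows $(\bm{\delta_\Gamma}^+)^{M'_\infty}<(\bm{\delta_\Gamma}^+)^{L(\R)}$.

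In short, the missing ideas are: (i) working with one directed system rather than $\kappa^+$ mice; (ii) internalizing that system inside a $\Gamma$-ss mouse via terms for a self-justifying system; (iii) the set-of-conditions coding together with the reflection below $\kappa_M$; and (iv) the successor-cardinal contradiction in the direct limit.
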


$ZF + AD + DC + V=L(\mathbb{R})$ implies $AD^+$, so Theorem \ref{main thm v2} is a special case of Conjecture \ref{Grigor conjecture}. Theorem \ref{strictly inc sequence} demonstrates this is the optimal result for inductive-like pointclasses.

Let $\bm{\Gamma} = S(\kappa)$ for $\kappa$ as in Theorem \ref{main thm v2}. Then $\kappa = \bm{\delta_\Gamma}$. $ZF + AD + DC + V=L(\mathbb{R})$ also implies any inductive-like pointclass $\bm{\Gamma}$ is of the form $S(\kappa)$ for some regular Suslin cardinal $\kappa$. So an equivalent formulation of Theorem \ref{main thm v2} is the following:

\begin{theorem}
\label{main thm}
    Assume $ZF + AD + DC + V=L(\mathbb{R})$. Suppose $\bm{\Gamma}$ is an inductive-like pointclass. Then $\bm{\delta_{\Gamma}^+}$ is $\bm{\Gamma}$-unreachable.
\end{theorem}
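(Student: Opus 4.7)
My plan is to follow the broad strategy of Hjorth (Theorem \ref{hjorth result}) and Sargsyan (Theorem \ref{projective thm}), lifting their arguments from the projective levels to a general inductive-like pointclass via the core model induction machinery available under $V=L(\R)$. Suppose for contradiction that $\langle A_\alpha : \alpha < \bm{\delta_\Gamma^+} \rangle$ is a sequence of distinct $\bm{\Gamma}$-sets. Since $\bm{\Gamma}$ is inductive-like and hence has the scale property, I would fix a universal $\bm{\Gamma}$-set $U \subseteq \R \times \R$ with a $\bm{\Gamma}$-scale and, via the coding lemma, extract a definable assignment $\alpha \mapsto x_\alpha$ with $A_\alpha = U_{x_\alpha}$. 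The scale produces trees $T_\alpha$ on $\omega \times \bm{\delta_\Gamma}$ projecting onto each $A_\alpha$; these trees, together with their norms, are the objects I plan to manipulate.

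The central inner-model-theoretic ingredient will be a hybrid mouse operator $x \mapsto N(x)$ capturing $\bm{\Gamma}$-truth about $x$. For inductive-like $\bm{\Gamma}$ in $L(\R)$, such an operator is provided by the Steel--Woodin analysis of $\mathrm{HOD}^{L(\R)}$ at the $\Sigma_1$-reflection level naturally associated with $\bm{\Gamma}$, that is, an ordinal $\alpha$ with $L_\alpha(\R) \prec_{\Sigma_1} L(\R)$ so that $\bm{\Gamma}$ corresponds to $\Sigma_1(L_\alpha(\R))$ in the relevant $\Sigma_1$-gap of $L(\R)$. Using the iteration strategy for $N$, definable at a bounded level of $L(\R)$, I would attach to each $\alpha$ a canonical mouse-theoretic object $M_\alpha$ which determines $A_\alpha$ and lies in $\mathrm{HOD}^{L(\R)}$.

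The contradiction should then emerge by bounding the number of such objects. The set of possible values of $M_\alpha$ is, in $\mathrm{HOD}^{L(\R)}$, indexed by an ordinal strictly below $\bm{\delta_\Gamma^+}$, by appeal to the cardinal structure of $\mathrm{HOD}^{L(\R)}$ at $\bm{\delta_\Gamma}$ (via the Steel--Woodin analysis) together with the $\Sigma_1$-reflection characterization of $\bm{\delta_\Gamma}$. Pigeonholing on the $\bm{\delta_\Gamma^+}$-sequence then produces $\alpha \neq \beta$ with $M_\alpha = M_\beta$, and the correctness of the mouse operator forces $A_\alpha = A_\beta$, contradicting the distinctness of the sequence.

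The hard part, I anticipate, will be the second step: identifying and verifying the correct hybrid mouse operator for a general inductive-like $\bm{\Gamma}$, not merely the projective cases. Sargsyan's projective proof uses $x \mapsto M_{2n+1}^{\#}(x)$, whose iteration strategy admits strong absoluteness via Woodin's genericity iterations. For a general inductive-like $\bm{\Gamma}$, one instead needs a mouse operator whose iteration strategy is only $\Sigma_1$-definable over the relevant reflection level $L_{\bm{\delta_\Gamma}}(\R)$. Showing that this weaker absoluteness still supports the comparison and correctness required for the final pigeonhole step --- specifically, forcing $A_\alpha = A_\beta$ from $M_\alpha = M_\beta$ --- is where I expect the principal technical work to lie.
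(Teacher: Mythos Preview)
Your broad architecture---assume a bad sequence, fix a universal set and apply the Coding Lemma, attach to each $\alpha$ a canonical object in a direct-limit model, and derive a contradiction from cardinal arithmetic in that model---matches the paper. But two of your steps are genuinely underspecified, and you have misidentified where the difficulty lies.

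First, the mouse operator you worry about is not the hard part. Under $V=L(\R)$, Woodin's work (recorded in the core model induction; see Theorem~\ref{nice strategy exists}) already provides, for any inductive-like $\bm{\Gamma}$, an $\omega$-sound $\Gamma$-suitable mouse $W_x$ over each real $x$ with a fullness-preserving iteration strategy guided by a self-justifying system. This operator is off the shelf; you do not need to build it. Your remark that the strategy is ``$\Sigma_1$-definable over the relevant reflection level'' is also off: the strategy for a $\Gamma$-suitable mouse is \emph{not} in $\bm{\Gamma}$ (Lemma~\ref{guided strat not in gamma}); it is guided by a sjs living at the end of the gap.

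The real gap in your proposal is the object $M_\alpha$ and the bound on its possible values. You say only that $M_\alpha$ is a ``canonical mouse-theoretic object\ldots in $\mathrm{HOD}^{L(\R)}$'' and that the number of such objects is bounded below $\bm{\delta_\Gamma^+}$ ``by appeal to the cardinal structure of $\mathrm{HOD}^{L(\R)}$.'' This is where the paper does all of its work. The code for $A_\alpha$ is not a mouse or a single extender-algebra condition (as in Hjorth and Sargsyan) but a \emph{set} $S_\alpha$ of extender-algebra conditions, and the point is to arrange $S_\alpha \in \mathcal{P}(\kappa_{M'_\infty})^{M'_\infty}$ with $\kappa_{M'_\infty}\le\bm{\delta_\Gamma}$. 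The contradiction is then that $(\bm{\delta_\Gamma}^+)^{M'_\infty}<(\bm{\delta_\Gamma}^+)^{L(\R)}$, which follows from Lemma~\ref{measurable or cof omega}. Getting each $S_\alpha$ to live below $\kappa$ requires a reflection argument (Lemma~\ref{reflecting below kappa}), and here is the subtlety your proposal misses entirely: the reflection used by Hjorth and Sargsyan in the projective case exploits that $\bm{\Sigma^1_{2n+2}}$ is \emph{not} closed under coprojection, whereas an inductive-like $\bm{\Gamma}$ \emph{is}. The paper's substitute is to pass to a larger $\Gamma$-super-suitable mouse $M_x$ (with an inaccessible above its Woodin), perform the StrLe construction inside it, and reflect in $M_x$ rather than in the suitable mouse itself; this is why Sections~\ref{strle lemmas section} and~\ref{reflection section} exist. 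Without specifying this mechanism, your pigeonhole step has no force: you have not shown the codes live in a set of size~$<\bm{\delta_\Gamma^+}$.
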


Our proof of Theorem \ref{main thm} extends the inner model theory approach pioneered in \cite{twoapp}. Our technique also gives an alternative proof of Theorem \ref{projective thm}.

\section{Background}
We will assume the reader is familiar with the basics of descriptive set theory espoused in \cite{mosdst} and the theory of iteration strategies for premice covered in \cite{ooimt}. The rest of the necessary background is covered below. In Section \ref{dst in L(R)}, we summarize Steel's classification of the scaled pointclasses and Suslin pointclasses in $L(\R)$. Section \ref{woodin and iteration section} reviews the relationship between Woodin cardinals and iteration trees. Two inner model constructions are covered in Sections \ref{m-s construction section} and \ref{s-const section}. In Section \ref{suitable mice section}, we review results from the core model induction demonstrating the existence of mice corresponding to inductive-like pointclasses in $L(\R)$.\\

\subsection{The Pointclasses of $L(\R)$}
\label{dst in L(R)}

We will assume for this section $ZF + DC + AD + V = L(\R)$. All of the results in this section are due to Steel and are proven outright or else implicit in \cite{scales}.

The boldface pointclasses we are interested in all appear in a hierarchy we will now define. If $\bm{\Gamma}$ and $\bm{\Lambda}$ are non-selfdual pointclasses, say $\{\bm{\Gamma},\bm{\Gamma^c}\} <_w \{\bm{\Lambda}, \bm{\Lambda^c}\}$ if $\bm{\Gamma} \subset \bm{\Lambda} \cap \bm{\Lambda^c}$. This is a wellordering by Wadge's Lemma. For $\alpha < \Theta$, consider the $\alpha$th pair $\{\bm{\Gamma},\bm{\Gamma^c}\}$ in this wellordering such that $\bm{\Gamma}$ or $\bm{\Gamma^c}$ is closed under projection. Let $\bm{\Sigma^1_\alpha}$ denote whichever of the two is closed under projection --- if both are, $\bm{\Sigma^1_\alpha}$ denotes whichever has the separation property. Let $\bm{\Pi^1_\alpha} = (\bm{\Sigma^1_\alpha})^c$.

For any pointclass $\bm{\Gamma}$, we define 
\begin{align*}
    &\bm{\Delta_\Gamma} = \bm{\Gamma} \cap \bm{\Gamma^c} \text{ and} \\
    &\bm{\delta_\Gamma} = sup \{|\leq^*| :\, \leq^* \text{ is a prewellordering in } \bm{\Delta_\Gamma}\}.
\end{align*}.

Let $\bm{\delta^1_\alpha} = \bm{\delta_{\Sigma^1_\alpha}}$. The pointclasses $\{\bm{\Sigma^1_n}: n\in\omega \}$ and $\{\bm{\Pi^1_n}: n\in\omega \}$ are the usual levels of the projective hierarchy. We will refer to the collection of pointclasses $\{\bm{\Sigma^1_\alpha}: \alpha\in ON \} \cup \{\bm{\Pi^1_\alpha}: \alpha \in ON\}$ as the extended projective hierarchy.

We now define a hierarchy slightly coarser than the one above. If $n\in \omega$ and $\alpha\in ON$, we say a pointset $A$ is in the pointclass $\bm{\Sigma_n}(J_\alpha(\R))$ if there is a $\Sigma_n$ formula $\phi$ with real parameters such that $A = \{x : J_\alpha(\R) \models \phi[x]\}$. $\bm{\Pi_n}(J_\alpha(\R))$ is defined analogously with $\Pi_n$-formulas.\footnote{See \cite{scales} for the definition of $J_\alpha(\R)$. Alternatively, the reader will not lose too much of importance by pretending $J_\alpha(\R) = L_\alpha(\R)$.} The Levy hierarchy consists of all pointclasses of the form $\bm{\Sigma_n}(J_\alpha(\R))$ or $\bm{\Pi_n}(J_\alpha(\R))$ for some $n$ and $\alpha$. It is clear any pointclass in the Levy hierarchy equals $\bm{\Sigma^1_\alpha}$ or $\bm{\Pi^1_\alpha}$ for some $\alpha$, but the converse is false.

In this section, we will classify the scaled pointclasses within the Levy hierarchy, relate the Levy hierarchy to the extended projective hierarchy, and classify the regular Suslin pointclasses.\\

\subsubsection{Classification of Scaled Pointclasses}

A $\Sigma_1$-gap is a maximal interval $[\alpha,\beta]$ such that for any real $x$, the $\Sigma_1$-theory of $x$ is the same in $J_\alpha(\R)$ and $J_\beta(\R)$.

We say the gap $[\alpha,\beta]$ is admissible if $J_\alpha(\R)\models KP$, equivalently, if the pointclass $\bm{\Sigma_1}(J_\alpha(\R))$ is closed under coprojection. Suppose $[\alpha,\beta]$ is an admissible gap. Let $n_\beta\in\mathbb{N}$ be least such that the pointclass $\bm{\Sigma_{n_\beta}}(J_\beta(\R))$ is not contained in $J_\beta(\R)$. We say $[\alpha,\beta]$ is a strong gap if for any $b\in J_\beta(\R)$, there is $\beta' < \beta$ and $b' \in J_{\beta'}(\R)$ such that the $\Sigma_{n_\beta}$ and $\Pi_{n_\beta}$ theories of $b'$ in $J_{\beta'}(\R)$ are the same as the $\Sigma_{n_\beta}$ and $\Pi_{n_\beta}$ theories of $b$ in $J_{\beta}(\R)$. Otherwise, we say $[\alpha,\beta]$ is weak.

\begin{theorem}
\label{scales classification}
    Suppose $\bm{\Gamma}$ is a pointclass in the Levy hierarchy. If $\bm{\Gamma}$ is scaled, then one of the following holds.
    \begin{enumerate}
        \item $\bm{\Gamma} = \bm{\Sigma}_{2k+1}(J_\alpha(\R))$ for some $k\in \omega$ and some $\alpha$ beginning an inadmissible gap.
        \item $\bm{\Gamma} = \bm{\Pi}_{2k+2}(J_\alpha(\R))$ for some $k\in \omega$ and some $\alpha$ beginning an inadmissible gap.
        \item $\bm{\Gamma} = \bm{\Sigma}_1(J_\alpha(\R))$ for some $\alpha$ beginning an admissible gap.
        \item $\bm{\Gamma} = \bm{\Sigma}_{n_{\beta}+2k}(J_\beta(\R))$ for some $k\in \omega$ and some $\beta$ ending a weak gap.
        \item $\bm{\Gamma} = \bm{\Pi}_{n_{\beta}+2k + 1}(J_\beta(\R))$ for some $k\in \omega$ and some $\beta$ ending a weak gap.
    \end{enumerate}
\end{theorem}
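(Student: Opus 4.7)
The plan is to combine Moschovakis' first and second periodicity theorems with Steel's fine-structural analysis of $\Sigma_1$-gaps in $L(\R)$. The argument proceeds by (i) pinpointing a few base pointclasses where a new scale can originate, (ii) propagating these scales upward within each gap via periodicity, and (iii) excluding every remaining Levy pointclass by a coding argument that uses the defining property of a $\Sigma_1$-gap to contradict a putative scale.

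For the base cases I would split by gap type. At the start $\alpha$ of an inadmissible gap, the failure of $J_\alpha(\R) \models KP$ yields a $\Sigma_1(J_\alpha(\R))$ cofinal map, which lets one mimic the classical scale construction on $\bm{\Sigma^1_1}$ to scale $\bm{\Sigma_1}(J_\alpha(\R))$; first and second periodicity then propagate this upward to $\bm{\Sigma_{2k+1}}(J_\alpha(\R))$ and $\bm{\Pi_{2k+2}}(J_\alpha(\R))$, yielding cases (1) and (2). At the start of an admissible gap, admissibility makes $\bm{\Sigma_1}(J_\alpha(\R))$ closed under coprojection; this both equips it with a scale via a direct construction inside $J_\alpha(\R)$ and blocks the usual parity propagation, explaining why only case (3) appears from this source. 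At the end $\beta$ of a weak gap, the definition of weakness produces a $b \in J_\beta(\R)$ whose $\Sigma_{n_\beta}, \Pi_{n_\beta}$ theory is not realized earlier in the gap; using $b$ as a parameter one runs the scale construction at the critical level $\bm{\Sigma_{n_\beta}}(J_\beta(\R))$, and periodicity produces the remaining classes in cases (4) and (5).

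The exclusion step runs by contradiction. Suppose $\bm{\Gamma}$ is a Levy pointclass not on the list and carries a scale. Constancy of the $\Sigma_1$-theory across $[\alpha,\beta]$ lets one code any $\bm{\Gamma}$-scale into a $\Sigma_1$-statement over $J_\alpha(\R)$, thereby reducing it to the base pointclass of the gap; combining this reduction with the parity restrictions imposed by periodicity forces $\bm{\Gamma}$ into one of the five listed families. Carrying the coding out requires distinguishing by where $\bm{\Gamma}$ sits: inside an admissible gap one uses closure under coprojection to collapse $\bm{\Sigma_n}$ down to $\bm{\Sigma_1}$ for $n \geq 2$, while inside an inadmissible gap one uses the $\Sigma_1$ cofinal map to absorb extra quantifiers.

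The main obstacle I expect is the end of a strong gap. Here strength gives the required reflection of $\Sigma_{n_\beta}, \Pi_{n_\beta}$ theories, which directly defeats the weak-gap scale construction at level $n_\beta$; but one must further rule out scales at all higher levels $\bm{\Sigma_n}(J_\beta(\R))$ and $\bm{\Pi_n}(J_\beta(\R))$ for $n > n_\beta$. This demands a delicate induction interleaving the coding argument with periodicity, because a candidate scale at level $n_\beta + j$ could in principle be manufactured by iterated applications of $\exists^\R$ and $\forall^\R$ before reflection can be applied; controlling this requires tracking precisely which norms descend to the reflected theory, and is the technical core that pushes the proof through the full range of Levy classes.
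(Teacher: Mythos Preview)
The paper does not prove this theorem. Theorem~\ref{scales classification} is stated as background in Section~\ref{dst in L(R)}, where the authors write that ``all of the results in this section are due to Steel and are proven outright or else implicit in \cite{scales}.'' There is nothing in the present paper to compare your proposal against; the result is quoted from Steel's analysis of scales in $L(\R)$.

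That said, your outline is broadly in the spirit of Steel's actual argument in \cite{scales}: one constructs a $\Sigma_1(J_\alpha(\R))$ scale at the start of each gap, propagates by periodicity, and shows no new scales arise in the interior of a gap or at the end of a strong gap. A few of your details are off, however. The $\Sigma_1(J_\alpha(\R))$ scale is built uniformly by Steel's closed-game method regardless of admissibility; admissibility is what prevents periodicity from climbing further, not what enables the base scale. Your exclusion step (``code any $\bm{\Gamma}$-scale into a $\Sigma_1$-statement over $J_\alpha(\R)$'') is the right intuition but is not how Steel argues: the point is rather that a scale on a universal $\Pi_1(J_\alpha(\R))$ set would yield a new $\Sigma_1$ fact (namely, the existence of such a scale), which by reflection must already hold at $\alpha$, contradicting the gap. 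Finally, the strong-gap case is indeed the delicate point, but it is handled not by ``interleaving coding with periodicity'' as you suggest; Martin's argument shows directly that the reflection property defining a strong gap pushes any putative scale at level $n_\beta$ down below $\beta$, and the higher levels then fall to periodicity in the \emph{next} gap rather than requiring a separate induction here.
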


\begin{definition}
    A self-justifying system (sjs) is a countable set $\mathcal{B} \subseteq \mathcal{P}(\mathbb{R})$ which is closed under complements and has the property that every $B\in\mathcal{B}$ admits a scale $\vec{\psi}$ such that $\leq_{\psi_n} \in \mathcal{B}$ for all $n$.
\end{definition}

\begin{definition}
    Let $z\in\R$ and $\gamma\in ON$. $OD^{<\gamma}(z)$ is the set of pointsets which are ordinal definable from the parameter $z$ in $J_\xi(\R)$ for some $\xi<\gamma$. $OD^{<\gamma}$ denotes $OD^{<\gamma}(0)$.
\end{definition}

The proof of Theorem \ref{scales classification} also gives:

\begin{theorem}
\label{where sjs appears}
    Suppose $[\alpha,\beta]$ is an admissible gap. Let $\beta'$ be the least ordinal such that there is a scale for a universal $\bm{\Pi_1}(J_\alpha(\R))$-set definable over $J_{\beta'}(\R)$. Then there is $z\in\R$ and a sjs $\mathcal{B} \subset OD^{<\beta'}(z)$ such that a universal $\bm{\Pi_1}(J_\alpha(\R))$-set is in $\mathcal{B}$ and either
    \begin{enumerate}
        \item $[\alpha,\beta]$ is weak and $\beta' = \beta$ or
        \item $[\alpha,\beta]$ is strong and $\beta' = \beta+1$.
    \end{enumerate}
\end{theorem}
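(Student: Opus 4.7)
The plan is to construct the sjs $\mathcal{B}$ in tandem with Steel's scale analysis for a universal $\bm{\Pi_1}(J_\alpha(\R))$-set $U$. By the admissible-gap cases of Theorem \ref{scales classification}, $U$ admits a scale, and $\beta'$ as defined is the least level at which one such scale is obtainable. I would begin by isolating why $\beta' = \beta$ in the weak case and $\beta' = \beta+1$ in the strong case. In a weak gap, some parameter $b \in J_\beta(\R)$ has $\Sigma_{n_\beta}/\Pi_{n_\beta}$-theory not realized earlier; this parameter drives Steel's construction, producing norms on $U$ whose individual slices $\leq_{\psi_n}$ are bounded at some $\xi_n < \beta$ even though the whole sequence is first definable over $J_\beta(\R)$ itself. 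In a strong gap every parameter's theory reflects, so no $b$ inside $J_\beta(\R)$ provides the necessary non-reflection, and one must ascend one step to $J_{\beta+1}(\R)$, where the full reflecting sequence of approximations can be assembled to define the scale.

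Fix a real $z$ coding the relevant parameter in the weak case, or the scale-producing data at level $\beta+1$ in the strong case. Steel's construction then yields a scale $\vec{\psi}$ on $U$ with each $\leq_{\psi_n} \in OD^{<\beta'}(z)$. I would build the sjs by the usual closure: set $\mathcal{B}_0 = \{U, U^c\} \cup \{\leq_{\psi_n}, (\leq_{\psi_n})^c : n < \omega\}$, and at each successor stage, for every $B$ in the current collection, choose a scale on $B$ whose prewellorderings lie in $OD^{<\beta'}(z)$ and add those prewellorderings together with their complements. Let $\mathcal{B} = \bigcup_n \mathcal{B}_n$. This is countable, closed under complements, contained in $OD^{<\beta'}(z)$, contains $U$, and every $B \in \mathcal{B}$ admits a scale whose norms all lie in $\mathcal{B}$ --- exactly what is required for a sjs.

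The main obstacle is guaranteeing the closure step: for every $B \in OD^{<\beta'}(z)$ that gets added, we need a scale on $B$ whose norms are still in $OD^{<\beta'}(z)$. This requires extending Steel's scale construction uniformly to all sets at that definability level, and it is where admissibility of $J_\alpha(\R)$ does the crucial work, ensuring that the projective-like hierarchy below $\bm{\Sigma_1}(J_\alpha(\R))$ is closed under the operations driving Moschovakis-style second-periodicity propagation, so the scale construction can be iterated without escaping $OD^{<\beta'}(z)$. A secondary issue is verifying that the strong case genuinely demands $\beta+1$ rather than $\beta$; this is direct from the definition of a strong gap, which says precisely that every $\Sigma_{n_\beta}/\Pi_{n_\beta}$-theory of a parameter in $J_\beta(\R)$ is realized in some $J_{\beta''}(\R)$ with $\beta'' < \beta$, so the $J_\beta(\R)$-internal definitions used in the weak case are unavailable and one genuinely needs the extra level.
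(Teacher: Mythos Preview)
The paper does not give an independent proof of this theorem; it simply states that the result falls out of Steel's proof of Theorem~\ref{scales classification} in \cite{scales}. Your sketch is a faithful expansion of what that citation is pointing to: Steel's analysis of scales at the end of an admissible gap produces a scale on a universal $\bm{\Pi_1}(J_\alpha(\R))$-set whose individual norms are definable strictly below $\beta'$, and the self-justifying system is then obtained by the closure-under-scales procedure you describe. So your approach and the paper's are the same in spirit; you are just writing out what the paper leaves implicit.

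One point worth tightening: in your closure step you need that every $B$ added (a norm or a complement of a norm) again has a scale with norms in $OD^{<\beta'}(z)$. You attribute this to admissibility enabling second-periodicity propagation ``below $\bm{\Sigma_1}(J_\alpha(\R))$,'' but that undersells what is needed. The norms of Steel's scale on $U$ need not lie in $\bm{\Delta_\Gamma}$; they can sit at the top of the gap, at levels $\bm{\Sigma_{n_\beta}}(J_\beta(\R))$ in the weak case. What makes the closure go through is the uniformity of Steel's entire gap analysis: every set definable at a level $\gamma<\beta'$ is handled by one of the cases of Theorem~\ref{scales classification}, and the resulting scale has norms again definable at levels $<\beta'$. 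So the recursion stays inside $OD^{<\beta'}(z)$ because Steel's classification is exhaustive over that range, not merely because of closure properties below $\bm{\Sigma_1}(J_\alpha(\R))$.
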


\begin{remark}
\label{form of ind-like}
    Suppose $\bm{\Gamma}$ is a boldface inductive-like pointclass in $L(\R)$. Then
    \begin{enumerate}
        \item $\bm{\Gamma} = \bm{\Sigma_1}(J_\alpha(\R))$ for some $\alpha$ beginning an admissible gap,
        \item there is $x\in \R$ such that letting $\Gamma$ be the class of pointsets which are $\Sigma_1$-definable over $J_\alpha(\R)$ from the parameter $x$, $\bm{\Gamma}$ is the closure of $\Gamma$ under preimages by continuous functions, and
        \item \label{Gamma is (Sigma^2_1)^Delta} $\bm{\Gamma} = (\bm{\Sigma^2_1})^{\bm{\Delta_\Gamma}}$.\footnote{We say $A\subseteq \R$ is in $(\bm{\Sigma^2_1})^{\bm{\Delta_\Gamma}}$ if there is $z\in \R$ and a formula $\phi$ such that for all $x\in \R$, $x\in A \iff (\exists B\in \bm{\Delta_\Gamma}) (\R,B) \models \phi(x,z)$.}
    \end{enumerate}
\end{remark}
\vspace{2mm}
\subsubsection{Relationship between the Levy Hierarchy and the Extended Projective Hierarchy}

\begin{definition}
    Suppose $\lambda < \Theta$ is a limit ordinal. We say
    \begin{itemize}
        \item $\lambda$ is type I if $\bm{\Sigma^1_\lambda}$ is closed under finite intersection but not countable intersection,
        \item $\lambda$ is type II if $\bm{\Sigma^1_\lambda}$ is not closed under finite intersection,
        \item $\lambda$ is type III if $\bm{\Sigma^1_\lambda}$ is closed under countable intersection but not coprojection, and
        \item $\lambda$ is type IV if $\bm{\Sigma^1_\lambda}$ is closed under coprojection.
    \end{itemize}
    
\end{definition}

Let $\langle \delta_\alpha : \alpha < \Theta \rangle$ enumerate the ordinals $\delta$ such that there exist sets of reals in $J_{\delta+1}(\R) \backslash J_\delta(\R)$. Let $n_\alpha$ be minimal such that $\bm{\Sigma_{n_\alpha}}(J_{\delta_\alpha}(\R)) \not\subset J_{\delta_\alpha}(\R)$.

\begin{theorem}
\label{extended projective hierarchy to levy hierarchy}
    Suppose $\alpha < \Theta$.
    \begin{enumerate}
        \item If $\omega\alpha$ is type I, then $\bm{\Sigma^1_{\omega\alpha + k}} = \bm{\Sigma_{n_\alpha+k}}(J_{\delta_\alpha}(\R))$ for all $k\in \omega$.
        \item If $\omega\alpha$ is type II or III, then $\bm{\Sigma^1_{\omega\alpha + k + 1}} = \bm{\Sigma_{n_\alpha+k}}(J_{\delta_\alpha}(\R))$ for all $k\in \omega$.
        \item If $\omega\alpha$ is type IV, then $\bm{\Pi^1_{\omega\alpha}} = \bm{\Sigma_{n_\alpha}}(J_{\delta_\alpha}(\R))$ 
        and \\$\bm{\Sigma^1_{\omega\alpha + k+1}} = \bm{\Sigma_{n_\alpha+k}}(J_{\delta_\alpha}(\R))$ for all $k\in \omega\backslash\{0\}$.
    \end{enumerate}        
\end{theorem}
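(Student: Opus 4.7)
The plan is to proceed by induction on $\alpha < \Theta$ and, within each projective block indexed by $\alpha$, by a secondary induction on $k$. The workhorse of the within-block step is the identity $\exists^\R \bm{\Pi_m}(J_{\delta_\alpha}(\R)) = \bm{\Sigma_{m+1}}(J_{\delta_\alpha}(\R))$ for every $m \geq n_\alpha$: since $J_{\delta_\alpha}(\R)$ is transitive with $\R$ as a member, real quantifiers amount to bounded set quantifiers relativized to $\R$, and at or above the critical Levy level they genuinely raise the $\Sigma$-complexity by one. Combined with the recursive definition $\bm{\Sigma^1_{\omega\alpha+k+1}} = \exists^\R \bm{\Pi^1_{\omega\alpha+k}}$, this forces all the higher-$k$ identifications once the base case at $k=0$ is settled.

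The bulk of the proof is therefore identifying the base of each block, and this is where the type of $\omega\alpha$ intervenes. By the definition of $\delta_\alpha$ and the minimality of $n_\alpha$, the class $\bm{\Sigma_{n_\alpha}}(J_{\delta_\alpha}(\R))$ is the first Levy level above all preceding blocks that produces a new set of reals, while $\bm{\Sigma^1_{\omega\alpha}}$ (or $\bm{\Pi^1_{\omega\alpha}}$) is the first new projection-closed pair appearing in the extended projective hierarchy. These two ``first new'' classes must be Wadge-comparable, and I would argue by their closure properties that they either coincide or differ by a small offset. In Type I, $\bm{\Sigma_{n_\alpha}}(J_{\delta_\alpha}(\R))$ is automatically closed under finite but not countable intersection, matching the defining property of Type I, so $\bm{\Sigma^1_{\omega\alpha}} = \bm{\Sigma_{n_\alpha}}(J_{\delta_\alpha}(\R))$ directly. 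In Types II and III, $\bm{\Sigma^1_{\omega\alpha}}$ fails some closure, hence sits strictly below $\bm{\Sigma_{n_\alpha}}(J_{\delta_\alpha}(\R))$ as a supremum-type class; one projection step then saturates at the $\omega\alpha+1$ slot. In Type IV, the coprojection closure of $\bm{\Sigma^1_{\omega\alpha}}$ combined with the automatic projection closure of its complement makes both members of the pair closed under both real quantifiers; the convention labels the separation side as $\bm{\Sigma^1_{\omega\alpha}}$, so via the scale analysis of Theorem~\ref{scales classification} we obtain $\bm{\Pi^1_{\omega\alpha}} = \bm{\Sigma_{n_\alpha}}(J_{\delta_\alpha}(\R))$.

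The main obstacle will be the Type IV case, where the stated identifications skip the slot $\bm{\Sigma^1_{\omega\alpha+1}}$ entirely. This reflects the fact that $\bm{\Sigma^1_{\omega\alpha+1}}$ is a non-Levy projection-closed pair sitting strictly between $\{\bm{\Sigma_{n_\alpha}}, \bm{\Pi_{n_\alpha}}\}$ and $\{\bm{\Sigma_{n_\alpha+1}}, \bm{\Pi_{n_\alpha+1}}\}$ over $J_{\delta_\alpha}(\R)$; it arises from a Suslin-like iteration on the $\bm{\Sigma_{n_\alpha}}$-level that acquires projection closure without reaching the next $\Sigma$-level. To show the Levy-index alignment resumes at $\bm{\Sigma^1_{\omega\alpha+2}} = \bm{\Sigma_{n_\alpha+1}}(J_{\delta_\alpha}(\R))$ I would appeal to the detailed analysis of Wadge pairs inside admissible gaps from \cite{scales}. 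Finally, one must verify exhaustiveness of the four type cases and consistency at block boundaries: every set of reals in $J_{\delta_{\alpha+1}}(\R)$ appears at some finite Levy level over $J_{\delta_\alpha}(\R)$, so no projective pairs are skipped between consecutive blocks.
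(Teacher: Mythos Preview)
The paper does not prove this theorem. At the start of Section~\ref{dst in L(R)} it says ``All of the results in this section are due to Steel and are proven outright or else implicit in \cite{scales},'' and Theorem~\ref{extended projective hierarchy to levy hierarchy} is simply stated as background with no argument. So there is no paper proof to compare your proposal against; any assessment has to be of your sketch on its own merits.

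On that score, there are real gaps. First, you invoke a ``recursive definition $\bm{\Sigma^1_{\omega\alpha+k+1}} = \exists^\R \bm{\Pi^1_{\omega\alpha+k}}$,'' but that is not how the paper defines the extended projective hierarchy: $\bm{\Sigma^1_\beta}$ is the $\beta$th non-selfdual Wadge pair with a projection-closed side. That $\exists^\R \bm{\Pi^1_{\omega\alpha+k}}$ is exactly the \emph{next} such pair---i.e., that no intermediate projection-closed pair slips in---is precisely part of what the theorem asserts, so you cannot assume it. Second, your justification of $\exists^\R \bm{\Pi_m}(J_{\delta_\alpha}(\R)) = \bm{\Sigma_{m+1}}(J_{\delta_\alpha}(\R))$ (``real quantifiers amount to bounded set quantifiers relativized to $\R$'') only yields the inclusion $\subseteq$. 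The reverse inclusion needs a uniformly definable surjection of $\R$ onto $J_{\delta_\alpha}(\R)$, which is available because $\delta_\alpha$ was chosen so that no new reals-sets appear at level $\delta_\alpha+1$, but you do not say this. Third, your base-case paragraph is essentially a restatement of the desired conclusions dressed up as closure-property matching; in Types II/III, for instance, you assert that ``one projection step then saturates at the $\omega\alpha+1$ slot'' without any argument for why exactly one step suffices, and in Type IV you defer the actual content to ``the detailed analysis \ldots\ from \cite{scales}.'' Since the whole theorem is already attributed to \cite{scales}, this is not a proof so much as a plausibility outline with the hard parts outsourced back to the reference.
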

\vspace{2mm}
\subsubsection{Classification of Suslin Pointclasses}

There is a related classification of the Suslin pointclasses. For $\alpha < \Theta$, let $\kappa_\alpha$ be the $\alpha$th Suslin cardinal. Let $\nu_\alpha$ be the $\alpha$th ordinal $\nu$ such that $\bm{\Sigma^1_\nu}$ or $\bm{\Pi^1_\nu}$ is scaled.

\begin{theorem}
\label{classification of suslin pointclasses}
    Let $\lambda < \bm{\delta^2_1}$ be a limit cardinal and $\nu = sup\{\nu_\alpha : \alpha < \lambda\}$.
    \begin{enumerate}
        \item If $\nu$ is type I, then for all $k\in \omega$
        \begin{itemize}
            \item $\bm{\Sigma^1_{\nu+2k}}$ and $\bm{\Pi^1_{\nu+2k+1}}$ are scaled,
            \item $S(\kappa_{\lambda + k}) = \bm{\Sigma^1_{\nu+k+1}}$,
            \item $\kappa_{\lambda+2k+1} = \bm{\delta^1_{\nu+2k+1}} = (\kappa_{\lambda+2k})^+$, and
            \item $cof(\kappa_{\lambda+2k}) = \omega$.
        \end{itemize}
        \item If $\nu$ is type II or III, then for all $k\in \omega$
        \begin{itemize}
            \item $\bm{\Sigma^1_{\nu+2k+1}}$ and $\bm{\Pi^1_{\nu+2k}}$ are scaled,
            \item $S(\kappa_{\lambda + k}) = \bm{\Sigma^1_{\nu+k+1}}$,
            \item $\kappa_{\lambda+2k+2} = \bm{\delta^1_{\nu+2k+2}} = (\kappa_{\lambda+2k+1})^+$, and
            \item $cof(\kappa_{\lambda+2k+1}) = \omega$.
        \end{itemize}
        \item \label{ind-like case of suslin classification} If $\nu$ is type IV, then $\bm{\Pi^1_\nu}$ is scaled, $S(\kappa_\lambda) = \bm{\Pi^1_\nu}$, and for all $k\in \omega$, letting $\mu = \nu_{\lambda+1}$,
        \begin{itemize}
            \item $\bm{\Sigma^1_{\mu+2k}}$ and $\bm{\Pi^1_{\mu+2k+1}}$ are scaled,
            \item $S(\kappa_{\lambda + k + 1}) = \bm{\Sigma^1_{\mu+k+1}}$,
            \item $\kappa_{\lambda+2k+2} = \bm{\delta^1_{\mu+2k+1}} = (\kappa_{\lambda+2k+1})^+$, and
            \item $cof(\kappa_{\lambda+2k+1}) = \omega$.
        \end{itemize}
    \end{enumerate}
\end{theorem}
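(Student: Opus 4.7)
The plan is to leverage Theorem \ref{scales classification}, Steel's classification of the scaled Levy pointclasses, and translate its information into the language of Suslin cardinals by means of two auxiliary tools: Moschovakis's Second and Third Periodicity Theorems, which govern how scales propagate between complementary pointclasses, and the Coding Lemma, which identifies $S(\kappa)$ for $\kappa$ a Suslin cardinal. The standard facts I will use repeatedly are: if $\bm{\Gamma}$ is a boldface scaled pointclass closed under $\forall^\R$, then every $\bm{\Gamma}$-set is $\bm{\delta_\Gamma}$-Suslin (via the scale-to-tree construction), and conversely $S(\bm{\delta_\Gamma}) = \exists^\R\bm{\Gamma}$ by the Coding Lemma. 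Given this dictionary, once we pin down exactly which Levy pointclasses immediately above $\nu$ are scaled, and what their prewellordering ordinals are, the enumeration of the next block of Suslin cardinals and the identifications of $S(\kappa_{\lambda+k})$ fall out.

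First I would locate $\nu = \nu_\lambda$ in the Levy hierarchy using Theorem \ref{extended projective hierarchy to levy hierarchy}: depending on whether $\nu$ is type I, III, or IV, $\nu$ corresponds to the beginning of one of the blocks singled out by Theorem \ref{scales classification} --- either a $\bm{\Sigma}_{2k+1}(J_\alpha(\R))$ or $\bm{\Pi}_{2k+2}(J_\alpha(\R))$ at an inadmissible gap, or a $\bm{\Sigma}_1(J_\alpha(\R))$ at an admissible gap (type IV). In each case I would read off from Theorem \ref{scales classification} which pointclasses in the block are scaled, translate back into the extended projective notation via Theorem \ref{extended projective hierarchy to levy hierarchy}, and obtain the first bullet of each clause.

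Next I would establish the identifications of $S(\kappa_{\lambda+k})$ and the successor-cardinal relations. By Moschovakis periodicity, the scaled pointclasses in each block alternate parity in a controlled way; applying $\exists^\R$ to a $\bm{\Pi}$ scaled pointclass at some level produces the $\bm{\Sigma}$ scaled pointclass at the next level, whose prewellordering ordinal is the next Suslin cardinal. Iterating yields both the enumeration $\langle \kappa_{\lambda+k} : k \in \omega\rangle$ and the formula $S(\kappa_{\lambda+k}) = \bm{\Sigma^1_{\nu+k+1}}$ (with the appropriate shift in the other cases). The successor relation $\kappa_{\lambda+2k+1} = (\kappa_{\lambda+2k})^+$ is an instance of the Kunen--Martin theorem: $\kappa_{\lambda+2k+1} = \bm{\delta^1_{\nu+2k+1}}$, and Kunen--Martin bounds the length of wellfounded relations in this pointclass by $(\kappa_{\lambda+2k})^+$, while a standard scale-length computation supplies the matching lower bound. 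The cofinality claim $cof(\kappa_{\lambda+2k}) = \omega$ is then forced: $\kappa_{\lambda+2k}$ is the supremum of the norm-lengths of the $\omega$-many norms of a scale on a universal set in the $\bm{\Pi}$ pointclass just below it.

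The main obstacle will be clause \ref{ind-like case of suslin classification}, the type IV (inductive-like) case. Here $\nu$ begins an admissible gap, so $\bm{\Pi^1_\nu} = (\bm{\Sigma}_1(J_\alpha(\R)))^c$ is scaled and $S(\kappa_\lambda) = \bm{\Pi^1_\nu}$ sits at the bottom of the block; but the structure of the next scaled pointclass above $\bm{\Pi^1_\nu}$ does not come from ordinary Moschovakis periodicity, because one must exhibit a scale on a universal $\bm{\Sigma}_1(J_\alpha(\R))$-set, not merely on its complement. This requires the self-justifying system construction of Theorem \ref{where sjs appears}: I would run through the gap to locate the least $\beta'$ at which such a scale appears, extract a sjs inside $OD^{<\beta'}(z)$, and use it to seed the next block of scaled pointclasses starting at $\mu = \nu_{\lambda+1}$. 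Verifying that this sjs supports the full projective-like hierarchy $\bm{\Sigma^1_{\mu+2k}}, \bm{\Pi^1_{\mu+2k+1}}$ over $\omega$-many steps, and that the resulting Suslin cardinals line up with the claimed successor and cofinality pattern, will be the most technical part of the argument.
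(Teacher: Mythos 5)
The paper offers no proof of this theorem: it sits in the background section, which opens with the remark that everything there ``is due to Steel and is proven outright or else implicit in \cite{scales}.'' So there is no in-paper argument to measure you against; what you have written is an outline of the standard published derivation, and the ingredients you name --- Theorem \ref{scales classification}, the translation via Theorem \ref{extended projective hierarchy to levy hierarchy}, periodicity, Kunen--Martin for the successor identities, norm-length suprema for the cofinality claims, and Theorem \ref{where sjs appears} for the admissible-gap case --- are indeed the right ones.

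That said, two things keep this from being a proof. First, in the type IV case your labels are reversed: by the paper's conventions (Theorem \ref{extended projective hierarchy to levy hierarchy}, case 3) the scaled inductive-like class is $\bm{\Pi^1_\nu} = \bm{\Sigma_1}(J_\alpha(\R))$ itself, not its complement, and the class that lacks a scale until the end of the gap --- the one Theorem \ref{where sjs appears} and the self-justifying system are needed for --- is the dual $\bm{\Sigma^1_\nu} = \bm{\Pi_1}(J_\alpha(\R))$, i.e.\ a universal $\bm{\Pi_1}(J_\alpha(\R))$-set rather than a universal $\bm{\Sigma_1}(J_\alpha(\R))$-set. Second, and more substantively, nothing in your outline addresses why the enumeration is exhaustive. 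The periodicity propagation manufactures a sequence of Suslin cardinals attached to the scaled pointclasses, but the theorem asserts that these are exactly $\kappa_{\lambda+k}$, the $(\lambda+k)$-th Suslin cardinals in the global enumeration. For that you must show that no Suslin cardinals are skipped --- every Suslin cardinal in the relevant interval is the ordinal attached to one of the listed scaled classes --- and that the set of Suslin cardinals below $\bm{\delta^2_1}$ is closed, so that $\kappa_\lambda$ exists and equals $\sup\{\kappa_\alpha : \alpha<\lambda\}$, which is what makes the type I/III versus type IV trichotomy on $\nu$ meaningful in the first place. This ``no extra Suslin cardinals'' step is where much of the real work in the Kechris--Steel analysis lies, and your sketch takes it for granted.
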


\begin{corollary}
\label{classification of regular Suslins}
    Suppose $\bm{\Gamma} = S(\kappa)$ for a regular Suslin cardinal $\kappa \leq \bm{\delta^2_1}$. Then one of the following holds.
    \begin{enumerate}
        \item $\bm{\Gamma} = \bm{\Sigma}_{2k+1}(J_\alpha(\R))$ for some $k\in \omega$ and some $\alpha$ beginning an inadmissible gap.
        \item $\bm{\Gamma} = \bm{\Sigma}_1(J_\alpha(\R))$ for some $\alpha$ beginning an admissible gap.
        \item $\bm{\Gamma} = \bm{\Sigma}_{n_{\beta}+2k}(J_\beta(\R))$ for some $k\in \omega$ and some $\beta$ ending a weak gap.
    \end{enumerate}
\end{corollary}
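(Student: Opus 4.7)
The plan is to combine Theorems \ref{scales classification}, \ref{extended projective hierarchy to levy hierarchy}, and \ref{classification of suslin pointclasses}. Since $S(\kappa)$ is scaled (being the Suslin pointclass of a Suslin cardinal), Theorem \ref{scales classification} gives five possible forms for $\bm{\Gamma}$. The corollary amounts to ruling out items (2) and (5) of that theorem --- the two $\bm{\Pi}$-forms --- when $\kappa$ is regular.

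First I would identify the regular Suslin cardinals below $\bm{\delta^2_1}$ via Theorem \ref{classification of suslin pointclasses}. In cases (1) and (2) of that theorem, the cardinals $\kappa_{\lambda+2k}$ (type I) or $\kappa_{\lambda+2k+1}$ (type II/III) are explicitly singular of cofinality $\omega$, so the only regular Suslin cardinals arising are the subsequent successor cardinals, whose Suslin pointclasses are of $\bm{\Sigma^1}$-form. Case (3) is the type IV case, where in addition $\kappa_\lambda$ itself is regular with $S(\kappa_\lambda) = \bm{\Pi^1_\nu}$; the remaining cardinals $\kappa_{\lambda+k+1}$ behave as in cases (1)/(2) with $\mu = \nu_{\lambda+1}$ replacing $\nu$, so their regular instances again give $\bm{\Sigma^1}$-form Suslin pointclasses.

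Next I would translate each resulting Suslin pointclass into the Levy hierarchy using Theorem \ref{extended projective hierarchy to levy hierarchy}, using the correspondence between types and gap structure: type I corresponds to the beginning of an inadmissible gap, types II and III to the end of a weak gap, and type IV to the beginning of an admissible gap. A $\bm{\Sigma^1_{\nu+j}}$ pointclass translates to $\bm{\Sigma_{n+j}}(J_\delta(\R))$ for the appropriate $n$ and $\delta$. Tracking parities, the regular-$\kappa$ cases yield exactly the forms listed in items (1), (2), and (3) of the corollary: inadmissible gaps give $\bm{\Sigma_{2k+1}}(J_\alpha(\R))$, the single admissible-gap case gives $\bm{\Pi^1_\nu} = \bm{\Sigma_1}(J_\alpha(\R))$ for $\kappa_\lambda$, and weak-gap endings give $\bm{\Sigma_{n_\beta+2k}}(J_\beta(\R))$.

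The only real work is careful parity bookkeeping --- verifying that the $\bm{\Sigma^1}$-form Suslin pointclasses coming out of Theorem \ref{classification of suslin pointclasses} always translate to the $\bm{\Sigma}$-form (rather than $\bm{\Pi}$-form) Levy pointclasses of the corollary. There is no substantial conceptual obstacle beyond this cross-referencing of the three classification theorems.
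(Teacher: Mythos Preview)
Your approach is correct and is essentially what the paper intends: the corollary is stated without proof, immediately following Theorem \ref{classification of suslin pointclasses}, so it is meant to be read off from Theorems \ref{scales classification}, \ref{extended projective hierarchy to levy hierarchy}, and \ref{classification of suslin pointclasses} by exactly the kind of parity and type bookkeeping you describe. There is nothing to compare against beyond this cross-referencing.
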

\vspace{2mm}
\subsection{Woodin Cardinals and Iterations}
\label{woodin and iteration section}

We borrow most of the notation of premice and iteration trees from \cite{ooimt}. In addition to the lightface premice defined in \cite{ooimt}, we will also consider premice built over some $a\in HC$. We write an $a$-premouse as $M = (J_\alpha^{\vec{E}},\in,\vec{E}\upharpoonright\alpha,E_\alpha,a)$, for a fine extender sequence $\vec{E} = \langle E_\eta: \eta \leq \alpha\rangle$. If $\beta\leq \alpha$, $M|\beta$ represents the premouse $(J_\beta^{\vec{E}},\in,\vec{E}\upharpoonright\beta,E_\beta,a)$. Unless otherwise specified, an iteration strategy will refer to an $(\omega_1,\omega_1)$-iteration strategy and a mouse will refer to a premouse with such a strategy. Under $ZF + AD$, $\omega_1$ is measurable, so an $(\omega_1,\omega_1)$-iteration strategy induces an $(\omega_1,\omega_1+1)$-iteration strategy. In particular, the theorems in this section requiring an $\omega_1+1$-iteration strategy will all apply to the mice we use in Section \ref{ind-like chapter}.

Additionally, if $\T$ is an iteration tree of limit length and $b$ is a cofinal, non-dropping branch through $\T$, we let $M_b^\T$ be the direct limit of the models on $b$ and let $i_b^\T: M_0^\T \to M_b^\T$ be the associated direct limit embedding.

For a model $M$, let $\delta_M$ denote the least Woodin cardinal of $M$ (if one exists) and $Ea_M$ denote Woodin's extender algebra in $M$ at $\delta_M$. Let $\kappa_M$ be the least cardinal of $M$ which is $<\delta_M$-strong in $M$. $ea$ will refer to the generic over $Ea_M$. When considering the product extender algebra $Ea_M \times Ea_M$, we will write $ea_l \times ea_r$ for the generic. $ea_r$ will typically code a pair which we shall write $(ea^1_r,ea^2_r)$. For posets of the form $Col(\omega,X)$, $\dot{g}$ denotes a name for the generic.

Suppose $M$ is a premouse with iteration strategy $\Sigma$. We say $N$ is a complete iterate of $M$ if $N$ is the last model of an iteration tree $\T$ on $M$ such that $\T$ is according to $\Sigma$ and the branch through $\T$ from $M$ to $N$ is non-dropping.\footnote{This is a slight abuse of notation, since being ``a complete iterate of $M$'' is dependent on $\Sigma$ as well as $M$. This will not cause any ambiguity, since the mice we are interested in have unique iteration strategies.}

\begin{theorem}
\label{genericity iteration for ext alg}
    Let $M$ be a countable premouse with an $\omega_1+1$-iteration strategy such that $M \models$ ``There is a Woodin cardinal.'' Then $Ea_M$ is a $\delta_M$-c.c. Boolean algebra and for any $x\in \R$, there is a countable, complete iterate $N$ of $M$ such that $x$ is $Ea_N$-generic over $N$.
\end{theorem}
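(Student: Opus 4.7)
The plan is to prove the chain condition and the genericity iteration separately.

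For the $\delta_M$-c.c., I would unfold the definition of $Ea_M$ as the Lindenbaum algebra of an infinitary propositional language with atoms $\{\dot x_n : n<\omega\}$ and disjunctions of length $<\delta_M$, modulo axioms of the form $\bigvee_{\alpha<\lambda}\varphi_\alpha \leftrightarrow j_E(\bigvee_{\alpha<\lambda}\varphi_\alpha)$, where $E$ ranges over extenders on the $M$-sequence with critical point below $\delta_M$ and $\lambda<\mathrm{crit}(E)$. A hypothetical antichain $A\in M$ of size $\delta_M$ lies in $M|\delta_M$; by Woodinness of $\delta_M$ in $M$ one can choose an extender $E$ on the $M$-sequence reflecting $A$ in such a way that two of its members must be identified via the axiom scheme associated with $E$, contradicting that $A$ is an antichain. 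A byproduct is that $|Ea_M| \leq \delta_M$.

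For the genericity iteration I would build a countable iteration tree $\T$ on $M$ by recursion, maintaining a last model $N_\alpha$. At a successor stage, if $x$ is already $Ea_{N_\alpha}$-generic over $N_\alpha$, set $N = N_\alpha$ and stop; otherwise let $\varphi_\alpha$ be the least axiom of $Ea_{N_\alpha}$ failed by $x$ in a canonical $N_\alpha$-definable enumeration, use Woodinness of $\delta_{N_\alpha}$ in $N_\alpha$ to find an extender $E_\alpha$ on the $N_\alpha$-sequence so that $j_{E_\alpha}$ sends $\varphi_\alpha$ to an axiom satisfied by $x$, and apply $E_\alpha$ to the appropriate earlier model on the tree (dictated by $\mathrm{crit}(E_\alpha)$) to form $N_{\alpha+1}$. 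At limit stages I would invoke the given $\omega_1+1$-iteration strategy on $M$ to extend the tree by a cofinal wellfounded branch.

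The main obstacle is termination at a countable stage, and I would attack it using the chain condition proved above. The indices of the corrected axioms $\varphi_\alpha$ strictly increase along the main branch of $\T$; if the construction ran for $\omega_1$ stages, a direct-limit reflection argument would convert these failures into a long antichain in the extender algebra of the limit model $N_{\omega_1}$, contradicting its $\delta$-c.c. Hence $\T$ has countable length, its last model $N$ is countable, and $x$ is $Ea_N$-generic over $N$ by construction.
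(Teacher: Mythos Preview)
The paper does not give its own proof of this theorem; it simply refers the reader to Section~7.2 of \cite{ooimt}. Your treatment of the $\delta_M$-c.c.\ and your description of how the genericity iteration tree is built are the standard ones and match that reference.

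Your termination argument, however, has a genuine gap. You propose that an $\omega_1$-length iteration would yield an antichain of size $\omega_1$ in $Ea_{N_{\omega_1}}$, contradicting the $\delta_{N_{\omega_1}}$-c.c. Two things go wrong. First, $\delta_{N_{\omega_1}}=i^{\T}_{b}(\delta_M)\geq\omega_1$, so an antichain of size $\omega_1$ is not by itself a violation of the chain condition. Second, and more basically, there is no evident antichain to extract: the natural candidate conditions attached to the violated axioms $\varphi_\alpha$ (for instance the disjuncts of their hypotheses) are all satisfied by the single real $x$, hence pairwise compatible rather than incompatible. The chain condition is needed to know $Ea_M$ is a reasonable forcing, but it is not what drives termination.

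The ``direct-limit reflection'' you allude to is indeed the right mechanism, but the contradiction it yields is different from the one you state. Let $b$ be the cofinal branch chosen by the strategy. For limit $\lambda\in b$ the model $N_\lambda$ is the direct limit of the earlier models on $b$, so the least violated axiom $\varphi_\lambda\in N_\lambda$ has the form $i^{\T}_{g(\lambda),\lambda}(\psi_\lambda)$ for some $g(\lambda)<\lambda$ on $b$ and some $\psi_\lambda\in N_{g(\lambda)}$. Fodor's lemma together with the countability of each $N_\eta$ then produces $\lambda<\lambda'$ on $b$ with $\varphi_{\lambda'}=i^{\T}_{\lambda,\lambda'}(\varphi_\lambda)$. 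The point of choosing $E_\lambda$ at stage $\lambda$ is exactly that $x$ satisfies the image of $\varphi_\lambda$ after that ultrapower, and one checks this persists along $b$; hence $x$ already satisfies $\varphi_{\lambda'}$ in $N_{\lambda'}$, contradicting its selection as a violated axiom there. That, rather than any appeal to the chain condition, is the termination argument in \cite{ooimt}.
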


\begin{corollary}
    \label{genericity iteration for Col}
    Let $M$ be a countable premouse with an $\omega_1+1$-iteration strategy such that $M \models$ ``There is a Woodin cardinal.''  Then for any $x\in\R$, there is a countable, complete iterate $N$ of $M$ and $g$ which is $Col(\omega,\delta_N)$-generic over $N$ such that $x\in N[g]$.
\end{corollary}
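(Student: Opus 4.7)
The plan is to deduce this directly from Theorem \ref{genericity iteration for ext alg} together with a standard forcing absorption. First, apply Theorem \ref{genericity iteration for ext alg} to $M$ and $x$ to obtain a countable, complete iterate $N$ of $M$ such that $x$ is $Ea_N$-generic over $N$. Since $N$ is countable in $V$, $DC$ lets us enumerate the countably many dense subsets of $Col(\omega,\delta_N)$ lying in $N[x]$ and meet them one at a time, producing a filter $h$ which is $Col(\omega,\delta_N)$-generic over $N[x]$.

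It remains to merge $x$ and $h$ into a single $Col(\omega,\delta_N)$-generic over $N$. The pair $(x,h)$ is $Ea_N \times Col(\omega,\delta_N)$-generic over $N$. Inside $N$ this product forcing has cardinality $\delta_N$ (since $|Ea_N|^N = \delta_N$) and collapses $\delta_N$ to $\omega$ via its $Col$-factor. Because $\delta_N$ is uncountable in $N$, the standard absorption theorem for Levy collapses says that the Boolean completions of $Ea_N \times Col(\omega,\delta_N)$ and of $Col(\omega,\delta_N)$, computed in $N$, are isomorphic. Translating $(x,h)$ through this isomorphism yields a filter $g$ which is $Col(\omega,\delta_N)$-generic over $N$ with $N[x][h] = N[g]$, and in particular $x \in N[g]$.

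There is essentially no obstacle in this argument: the iteration side is provided by Theorem \ref{genericity iteration for ext alg}, and the forcing side is the well-known fact that $Col(\omega,\delta)$ is universal among forcings of cardinality at most $\delta$ that collapse $\delta$ to $\omega$. The only mild point to verify is that $Ea_N$ does have cardinality $\delta_N$ in $N$, which is immediate from its definition via formulas associated to extenders with index below $\delta_N$. The purpose of this corollary, then, is simply to repackage Theorem \ref{genericity iteration for ext alg} in a form more convenient for later arguments that use $Col(\omega,\delta_N)$-names rather than extender algebra names.
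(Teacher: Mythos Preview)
Your argument is correct and is the standard deduction of the corollary from Theorem \ref{genericity iteration for ext alg}: make $x$ extender-algebra generic over an iterate $N$, then absorb $Ea_N$ into $Col(\omega,\delta_N)$ using the universality of the Levy collapse. The paper does not give its own proof but simply refers to Section 7.2 of \cite{ooimt}, where exactly this reasoning is carried out, so your approach matches the intended one.
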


See Section 7.2 of \cite{ooimt} for a proof of Theorem \ref{genericity iteration for ext alg} and its corollary.

\begin{definition}
    For $\kappa <\delta$ and $A\subseteq \delta$, we say $\kappa$ is $A$-reflecting in $\delta$ if for every $\nu < \delta$, there is an extender $E$ with critical point $\kappa$ such that $i_E(\kappa)>\nu$ and $i_E(A) \cap \nu = A \cap \nu$.
\end{definition}

\begin{theorem}
\label{branch uniqueness}
    Suppose $b$ and $c$ are distinct wellfounded branches of a normal iteration tree $\T$ and $A\subseteq \delta(\T)$ is in $M^\T_b \cap M^\T_c$. Then there is $\kappa < \delta(\T)$ such that $M^\T_b \models$ ``$\kappa$ is $A$-reflecting in $\delta(\T)$,'' and this is witnessed by a sequence of extenders on the extender sequence of $\mathcal{M}(\T)$.
\end{theorem}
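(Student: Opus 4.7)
My plan is to run the Martin--Steel Zipper Argument. Writing $\delta := \delta(\T)$, suppose for contradiction that no $\kappa < \delta$ witnesses the conclusion; that is, for every $\kappa < \delta$ there is some $\xi_\kappa < \delta$ past which no $\M(\T)$-extender $E$ with $\mathrm{crit}(E) = \kappa$ reflects $A$, meaning $i_E(A \cap \kappa) \cap lh(E) \neq A \cap lh(E)$. Since both $i^\T_b$ and $i^\T_c$ are continuous at $\delta$, we have $A = i^\T_b(A) \cap \delta = i^\T_c(A) \cap \delta$, so this reformulation is legitimate from each branch's point of view.

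Next, I would build the zipper. Let $\gamma = \sup(b \cap c)$; since $b$ and $c$ are distinct cofinal branches, each uses extenders from $\vec E^\T$ cofinally in $\delta$ past $\gamma$. Using that $lh(E^\T_\alpha)$ is strictly increasing in $\alpha$ and that every $M^\T_\alpha$ agrees with $\M(\T)$ below $lh(E^\T_\alpha)$, I would inductively select $\alpha_0 < \alpha_1 < \cdots$ in $b$ past $\gamma$ and $\beta_0 < \beta_1 < \cdots$ in $c$ past $\gamma$ with critical points and lengths interleaved as
\[
  \kappa(E^\T_{\beta_i}) \;<\; lh(E^\T_{\alpha_i}) \;<\; \kappa(E^\T_{\beta_{i+1}}) \;<\; lh(E^\T_{\alpha_{i+1}}).
\]
At each stage the non-reflection hypothesis, applied to the critical point of the just-chosen extender, furnishes a discrepancy ordinal $\rho_i < \delta$ witnessing failure of $A$-coherence, and I would pick the next extender on the opposite branch with length past $\rho_i$.

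The main obstacle is converting the interleaved zipper into an outright contradiction. Here I would use the commutativity of the tree embeddings: for $\alpha \in b$, the map $i^\T_{\alpha,b}$ agrees with $i^\T_b$ on $M^\T_\alpha | lh(E^\T_\alpha)$, and analogously on $c$, so each pair $(E^\T_{\alpha_i}, E^\T_{\beta_i})$ yields a commuting square of partial ultrapower embeddings on initial segments of $\M(\T)$. Tracking the image of the discrepancy ordinals $\rho_i$ through these squares produces an infinite descending sequence of ordinals in either $M^\T_b$ or $M^\T_c$, contradicting their wellfoundedness. Ensuring that the zipper yields strict descent, rather than a chain that accidentally stabilizes, is the delicate combinatorial bookkeeping step, and it repeatedly invokes the non-reflection hypothesis to refresh the discrepancy at each rung. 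The $\kappa$ and witnessing extenders extracted by reversing this argument automatically lie on the $\M(\T)$-sequence, giving the stated strengthening.
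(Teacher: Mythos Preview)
The paper does not give its own proof of this theorem; it is background, cited from Steel's \emph{Outline of Inner Model Theory} (6.9--6.10). Your overall strategy, the Martin--Steel zipper argument, is exactly the approach in that reference, so at the level of ``which proof is this'' you are aligned with the paper.

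That said, your proposed mechanism for extracting the contradiction has a real gap. The extenders $E^\T_{\alpha_i}$ and $E^\T_{\beta_i}$ that you select along $b$ and $c$ actually \emph{do} reflect $A$ up to their supports: since $A\in M^\T_b$ and the branch embedding past $\alpha_i+1$ on $b$ has critical point at least $\nu(E^\T_{\alpha_i})$, one gets $i_{E^\T_{\alpha_i}}(A\cap\kappa_i)\cap\nu(E^\T_{\alpha_i}) = A\cap\nu(E^\T_{\alpha_i})$, and symmetrically on $c$. So there are no ``discrepancy ordinals'' for the zipper extenders themselves. If instead you mean discrepancies coming from \emph{other} $\M(\T)$-extenders with the same critical points but longer length, you have not explained why tracking those ordinals through the commuting squares yields a \emph{descending} chain rather than an ascending one; the $\rho_i$ as you have chosen them increase by construction, and nothing in the setup forces their images to drop.

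The standard completion is different in character. One uses the zipper to show that $\delta(\T)$ is $A$-Woodin in $M^\T_b$ with witnesses on the $\M(\T)$-sequence: the extenders used along $c$ lie on the $M^\T_b$-sequence below $\delta(\T)$ and, by the observation above, each reflects $A$ up to its support; interleaving against a given $f:\delta(\T)\to\delta(\T)$ in $M^\T_b$ produces the required witnesses. A single $A$-reflecting $\kappa$ is then extracted by applying this to the function sending each $\kappa$ to its least point of failure. The contradiction, if one insists on phrasing it that way, is with the totality of that failure function, not with wellfoundedness via a descending chain of discrepancy ordinals.
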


See 6.9 and 6.10 of \cite{ooimt} for  definitions of $\delta(\T)$ and $\M(\T)$ and a proof of Theorem \ref{branch uniqueness}. The theorem justifies the following definitions.

\begin{definition}
    Suppose $b$ is a wellfounded branch through a normal iteration tree $\T$. Let $\Q(b,\T)$ be the least initial segment of $M^\T_b$ extending $\M(\T)$ such that there is $A\subset\delta(\T)$ which is definable over $\Q(b,\T)$ and realizes $\delta(\T)$ is not Woodin via extenders in $\M(\T)$, if such an initial segment exists.
\end{definition}

\begin{definition}
    Suppose $M$ is a premouse and $\eta\in M$. We say $\eta$ is a cutpoint of $M$ if there is no extender on the fine extender sequence of $M$ with critical point less than $\eta$ and length greater than $\eta$. $\eta$ is a strong cutpoint if there is no extender on the fine extender sequence of $M$ with critical point less than or equal to $\eta$ and length greater than $\eta$.
\end{definition}

\begin{definition}
    Suppose $\T$ is a normal iteration tree. Let $\Q(\T)$ be the least $\delta(\T)$-sound, $\omega_1+1$-iterable premouse extending $\M(\T)$ and projecting to $\delta(\T)$ such that $\delta(\T)$ is a strong cutpoint of $\Q(\T)$ and there is $A\subset \delta(\T)$ which is definable over $\Q(\T)$ and realizes $\delta(\T)$ is not Woodin via extenders in $\M(\T)$, if one exists.
\end{definition}

It follows from Theorem \ref{branch uniqueness} that there is at most one wellfounded branch $b$ through $\T$ such that $\Q(\T)\trianglelefteq M^\T_b$. In many cases, we will be able to locate the branch a strategy $\Sigma$ chooses as the unique branch which absorbs $\Q(\T)$ in this sense.

Note an $\omega_1$-iteration strategy on a countable premouse can be coded by a set of reals. For $a\in HC$ and a pointclass $\Gamma$, this allows us to define
\begin{align*}
    Lp^\Gamma(a) = \, \bigcup\{N :\, &N \text{ is an } \omega\text{-sound } a\text{-premouse projecting to } a \\
    & \text{ with an $\omega_1$-iteration strategy in } \bm{\Delta_{\Gamma}}\}.
\end{align*}
$Lp^\Gamma(a)$ can be reorganized as an $a$-premouse, which is what we will typically use $Lp^\Gamma(a)$ to refer to.

Closely related to $Lp^\Gamma$ is the operator $C_\Gamma$. For $x\in \R$, 
\begin{align*}
    C_\Gamma(x) = \{z\in \R: z \text{ is } \Delta_\Gamma(x) \text{ in some countable ordinal}\}.
\end{align*}
And for $a\in HC$,
\begin{align*}
    C_\Gamma(a) = \{b\subseteq a: \text{for all reals } x \text{ coding } a,\, b_x\in C_\Gamma(x)\}.
\end{align*}

Here $b_x$ codes $b$ relative to $x$. See \cite{steel2016} for more details.

\begin{theorem}
\label{lp = c}
    Assume $AD^{L(\R)}$. Suppose $\Gamma$ is a (lightface) inductive-like pointclass in $L(\R)$ and $a\in HC$. Then $C_\Gamma(a) = Lp^\Gamma(a) \cap P(a)$.
\end{theorem}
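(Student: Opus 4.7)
My plan is to prove the two containments separately, exploiting Remark \ref{form of ind-like}: $\bm{\Gamma} = \bm{\Sigma_1}(J_\alpha(\R))$ with $\alpha$ beginning an admissible gap $[\alpha,\beta]$. The easy containment $Lp^\Gamma(a) \cap P(a) \subseteq C_\Gamma(a)$ goes as follows: if $x$ lies in some $N \trianglelefteq Lp^\Gamma(a)$ with $N$ $\omega$-sound projecting to $a$, then $x$ is definable over $N$ from $a$ and finitely many ordinals, and $Lp^\Gamma(a)$ itself is $\bm{\Gamma}$-definable from $a$, since membership in it is expressible as ``lies in a countable $\omega$-sound $a$-premouse projecting to $a$ with a $\bm{\Delta_\Gamma}$ iteration strategy,'' and the latter clause is captured by a universal $\bm{\Gamma}$-set for $\omega_1$-iteration strategies. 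Composing definitions places $x$ in $C_\Gamma(a)$.

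For the reverse containment, fix $x \in C_\Gamma(a) \cap P(a)$; by Remark \ref{form of ind-like}(2), $x$ is $\bm{\Sigma_1}(J_\alpha(\R))$-definable from $a$ and some real $z$. I would produce an $\omega$-sound $a$-premouse $N$ with $x \in N$, projecting to $a$, carrying an $\omega_1$-iteration strategy $\Sigma \in \bm{\Delta_\Gamma}$. Apply Theorem \ref{where sjs appears} to get a self-justifying system $\mathcal{B} \subseteq OD^{<\beta'}(z)$ containing a universal $\bm{\Pi_1}(J_\alpha(\R))$-set (where $\beta'$ equals $\beta$ or $\beta+1$ depending on gap strength). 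Build $N$ via a fully-backgrounded $L[E]$-style construction over $a$ inside an appropriate hull of $J_{\beta'}(\R)$, using $\mathcal{B}$ to supply backgrounds for the extenders of $N$ and scales witnessing iterability of its levels; arrange by a genericity iteration (Theorem \ref{genericity iteration for ext alg} and Corollary \ref{genericity iteration for Col}) that the ordinals needed to define $x$ over $J_\alpha(\R)$ land in $N$, so that $x \in N$.

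The hard part will be showing $\Sigma \in \bm{\Delta_\Gamma}$. My plan is to define $\Sigma(\T)$, for each normal tree $\T$ of limit length on $N$ or its iterates, as the unique cofinal branch $b$ with $\Q(b,\T) = \Q(\T)$; uniqueness follows from Theorem \ref{branch uniqueness}, since two such branches would force a reflecting extender with $A$-contradictory behavior. Existence and low-complexity location of $\Q(\T)$ strictly below $Lp^\Gamma(\M(\T))$ at every stage rely on the admissibility of the gap at $\alpha$: $\bm{\Sigma_1}(J_\alpha(\R))$ is closed under coprojection, which prevents the $\Q$-structures from projecting past $\delta(\T)$ up into a level that would code the whole pointclass $\bm{\Gamma}$, and the sjs $\mathcal{B}$ supplies the scales needed to express branch-selection uniformly in $\bm{\Sigma_1}(J_\alpha(\R))$ from $(a,z)$. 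This yields $\Sigma \in \bm{\Delta_\Gamma}$, so $N \trianglelefteq Lp^\Gamma(a)$ and $x \in Lp^\Gamma(a)$. The delicate bookkeeping will be threading the parameter $a$ through the sjs-backgrounded construction uniformly in $(a,z)$ and verifying that $\Q$-guided branch identification remains available at every stage of every putative iteration — this is precisely the place where the \emph{admissibility} (rather than merely the inadmissibility) of the gap is essential, and it is the step I expect to require the most care.
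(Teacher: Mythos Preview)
The paper does not actually prove Theorem \ref{lp = c}; it is stated as a background result from the core model induction literature, so there is no ``paper's own proof'' to compare against. That said, your proposal for the hard containment $C_\Gamma(a) \subseteq Lp^\Gamma(a)$ has a genuine gap. The plan to ``build $N$ via a fully-backgrounded $L[E]$-style construction over $a$ inside an appropriate hull of $J_{\beta'}(\R)$, using $\mathcal{B}$ to supply backgrounds for the extenders of $N$'' is not coherent: a self-justifying system is a countable family of sets of reals with scales, not a source of background extenders, and hulls of $J_{\beta'}(\R)$ are not premice carrying extender sequences against which one could run a Mitchell--Steel construction. Likewise, the appeal to genericity iterations to ``make the ordinals needed to define $x$ land in $N$'' misreads what those iterations do --- they make reals generic over a mouse, they do not insert prescribed ordinals.

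The standard route, and the one the paper's surrounding machinery points to, is via the fine-structural witness condition. If $x \in C_\Gamma(a)$ then (coding $a$ by a real) the statement defining $x$ is $\Sigma_1$ over $J_{\alpha_0}(\R)$ in that real parameter, hence already true in some $J_\gamma(\R)$ with $\gamma < \alpha_0$. Lemma \ref{truth gives witness} then produces an $\omega$-sound $\langle\phi,z\rangle$-witness mouse projecting to $\omega$ whose iteration strategy lies in $J_\gamma(\R) \subseteq \bm{\Delta_\Gamma}$; this mouse computes $x$, and after translating from a mouse over a real code for $a$ to an $a$-mouse one lands in $Lp^\Gamma(a)$. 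Your discussion of $\Q$-structure guided branches is relevant to why such strategies live in $\bm{\Delta_\Gamma}$, but it cannot substitute for the existence argument, which is what your construction of $N$ was supposed to provide and does not. Your easy direction is essentially correct in outline, though you should be careful that the definition of $Lp^\Gamma(a)$ you use is lightface $\Gamma$ in a code for $a$, not merely $\bm{\Gamma}$.
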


\begin{remark}
\label{lp(a) contained in lp(b)}
    Suppose $a$ and $b$ are countable, transitive sets and $a\in b$. It is easy to see from the definition of $C_\Gamma$ that $C_\Gamma(a)\subseteq C_\Gamma(b)$. This, and the theorem above, implies $Lp^\Gamma(a)\subseteq Lp^\Gamma(b)$.
\end{remark}
\vspace{2mm}
\subsection{The Mitchell-Steel Construction}
\label{m-s construction section}

We shall require a method of building an $a$-premouse inside a premouse $M$ which contains $a$. Our main tool for this purpose is the fully backgrounded Mitchell-Steel construction developed in \cite{msbook}. This section reviews the construction and its properties.

We say a premouse $M$ is reliable if $\C_\omega(M)$ exists and is universal and solid. As we shall see in a moment, we will end the Mitchell-Steel construction if we reach a premouse which is not reliable. \cite{msbook} defines reliable to include the stronger property that $\C_\omega(M)$ is iterable. But the weaker properties of universality and solidity are enough to propagate the construction, and our weaker requirement ensures the construction does not end prematurely when performed inside a mouse. The definitions of universality and solidity can be found in \cite{ooimt}. In all of the cases relevant to us, universality and solidity are guaranteed and the reader will lose little by taking on faith that the construction does not end.

For the moment we will work in $V$ and assume $ZFC$. Fix $z\in\R$. Define a sequence of $z$-premice $\langle \M_\xi : \xi \in On\rangle$ inductively as follows.
\begin{enumerate}
    \item $\M_0 = (V_\omega,\in,\emptyset,\emptyset,z)$
    \item \label{active case of m-s construction} Suppose we have constructed $\M_\xi = (J_\alpha^{\vec{E}},\in,\vec{E},\emptyset,z)$. Note $\M_\xi$ is a passive premouse. Suppose also there is an extender $F^*$ over $V$, an extender $F$ over $\M_\xi$, and $\nu<\alpha$ such that 
    \begin{enumerate}
        \item $V_{\nu+\omega} \subset Ult(V,F^*)$,
        \item $\nu$ is the support of $F$,
        \item $F\upharpoonright \nu = F^*\cap ([\nu]^{<\omega} \times \M_\xi)$, and
        \item $\N_{\xi+1} = (J_\alpha^{\vec{E}},\in,\vec{E},F,z)$ is a premouse.
    \end{enumerate}
    If $\N_{\xi+1}$ is reliable, let $\M_{\xi+1} = \C_\omega(\N_{\xi+1})$. Otherwise, the construction ends. If there are multiple such $F^*$, we pick one which minimizes the support of $F$. We say $F^*$ is the extender used as a background at step $\xi+1$.
    
    \item Suppose we have constructed $\M_\xi = (J_\alpha^{\vec{E}},\in,\vec{E},E_\alpha,z)$ and either $\M_\xi$ is active or $\M_\xi$ is passive and there is no extender $F^*$ as above. Let $\N_{\xi+1} = (J_{\alpha+1}^{\vec{E}^\frown E_\alpha}, \in,\vec{E}^\frown E_\alpha,\emptyset,z)$. If $\N_{\xi+1}$ is reliable, let $\M_{\xi+1} = \C_\omega(\N_{\xi+1})$. Otherwise, the construction ends.
    
    \item Suppose we have constructed $\langle \M_\xi : \xi < \lambda\rangle$ for $\lambda$ a limit ordinal. Let \\$\eta = lim\,inf_{\xi < \lambda} (\rho_\omega(\M_\xi)^+)^{\M_\xi}$. Let $\N_\lambda$ be the passive premouse of height $\eta$ such that $\N_\lambda|\beta = lim_{\xi<\lambda}\, \M_\xi|\beta$ for all $\beta < \eta$. If $\N_\lambda$ is reliable, let $\M_\lambda = \C_\omega(\N_\lambda)$. Otherwise, the construction ends.
\end{enumerate}

Suppose the construction never breaks down. That is, $\M_\xi$ is defined for all $\xi\in On$.

\begin{theorem}
\label{levels of m-s construction projectum fact}
    Suppose $\zeta_0$ and $\xi$ are ordinals such that $\zeta_0 < \xi$ and $\kappa = \rho_\omega(\M_\xi) \leq \rho_\omega(\M_\zeta)$ for all $\zeta \geq \zeta_0$. Then $\M_\xi \trianglelefteq \M_\eta$ for all $\eta \geq \xi$. Moreover, $\M_{\xi+1} \models$ ``every set has cardinality at most $\kappa$.''
\end{theorem}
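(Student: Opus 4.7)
The plan is to prove $\M_\xi \trianglelefteq \M_\eta$ by induction on $\eta \geq \xi$, with the moreover falling out from soundness once the induction is done. The base case $\eta = \xi$ is trivial.

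For the successor step, assume $\M_\xi \trianglelefteq \M_\eta$. The premouse $\N_{\eta+1}$ is built from $\M_\eta$ by either adding a top extender (clause \ref{active case of m-s construction}) or taking the next $J$-level, so $\M_\xi \trianglelefteq \N_{\eta+1}$ in either case. I then invoke the standard fine-structural fact that if $\M \trianglelefteq \N$ is an $\omega$-sound initial segment with $\rho_\omega(\M) \leq \rho_\omega(\N)$, then $\M \trianglelefteq \C_\omega(\N)$. Applied to $\M = \M_\xi$ and $\N = \N_{\eta+1}$, the hypothesis $\rho_\omega(\N_{\eta+1}) = \rho_\omega(\M_{\eta+1}) \geq \kappa = \rho_\omega(\M_\xi)$ is exactly what is needed, yielding $\M_\xi \trianglelefteq \M_{\eta+1}$.

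For the limit step $\eta = \lambda$, the inductive hypothesis gives $\M_\xi \trianglelefteq \M_\zeta$ for every $\zeta$ with $\xi \leq \zeta < \lambda$. For such $\zeta \geq \zeta_0$ we have $\rho_\omega(\M_\zeta) \geq \kappa$, and $\M_\xi$ is an initial segment of $\M_\zeta$ of height strictly below $(\kappa^+)^{\M_\zeta} \leq (\rho_\omega(\M_\zeta)^+)^{\M_\zeta}$ (the first inequality because $\omega$-soundness of $\M_\xi$ forces $\height(\M_\xi) < (\kappa^+)^{\M_\xi}$). Consequently $\liminf_{\zeta < \lambda}(\rho_\omega(\M_\zeta)^+)^{\M_\zeta} > \height(\M_\xi)$, so $\M_\xi \trianglelefteq \N_\lambda$, and a second application of the fine-structural fact produces $\M_\xi \trianglelefteq \C_\omega(\N_\lambda) = \M_\lambda$.

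For the moreover, I appeal to another standard fact: a proper $\omega$-sound initial segment of $\M_{\xi+1}$ with projectum $\kappa$ forces $\rho_\omega(\M_{\xi+1}) \leq \kappa$ (the universal $\Sigma_\omega$-theory of the initial segment, which codes a new subset of $\kappa$, is $\Sigma_\omega$-definable over $\M_{\xi+1}$ and still cannot lie in $\M_{\xi+1}$). Combined with $\rho_\omega(\M_{\xi+1}) \geq \kappa$ from the hypothesis, this pins down $\rho_\omega(\M_{\xi+1}) = \kappa$. Now $\omega$-soundness of $\M_{\xi+1}$ gives $\M_{\xi+1} = \text{Hull}^{\M_{\xi+1}}_\omega(\kappa \cup \{p_\omega(\M_{\xi+1})\})$, producing a $\Sigma_\omega$-definable surjection of $\kappa$ onto $\M_{\xi+1}$; relativizing to any set, $\M_{\xi+1}$ sees that every set has cardinality at most $\kappa$.

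The main obstacle is the first fine-structural fact --- that coring preserves $\omega$-sound initial segments of large enough projectum. Its verification depends on solidity of standard parameters and on the fact that $\height(\M_\xi)$ is definable over $\N_{\eta+1}$ from $\rho_\omega(\N_{\eta+1})$, and this is exactly why the construction is arranged to halt at any level that fails to be reliable.
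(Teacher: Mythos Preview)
The paper does not actually prove this theorem; it is stated as background from the Mitchell--Steel construction, with an implicit reference to \cite{msbook}. Your proposal is the standard argument for this fact and is essentially correct.

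Two minor points of imprecision are worth flagging. In the limit step you write $\height(\M_\xi) < (\kappa^+)^{\M_\xi}$, but $(\kappa^+)^{\M_\xi}$ need not be defined (or could equal $\height(\M_\xi)$); what you want is $\height(\M_\xi) < (\kappa^+)^{\M_\zeta}$, which follows because the $\Sigma_\omega$-theory of $\M_\xi$ gives a surjection of $\kappa$ onto $\M_\xi$ that lies in $\M_\zeta$. In the successor step, the ``fine-structural fact'' you invoke is really the statement that $\C_\omega(\N)$ and $\N$ agree up to $(\rho_\omega(\N)^+)^{\C_\omega(\N)}$, and this is precisely where universality (part of reliability) is used: universality guarantees $\mathcal{P}(\rho_\omega)\cap \N = \mathcal{P}(\rho_\omega)\cap \C_\omega(\N)$, so the subset of $\kappa$ coding $\M_\xi$ survives into the core, and acceptability then recovers $\M_\xi$ as an initial segment. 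Your closing remark that reliability is ``exactly why'' the fact holds is right, but the dependence is on universality specifically rather than on solidity or on definability of $\height(\M_\xi)$.
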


Let $\M$ be the class-sized model such that whenever $\xi\in On$ satisfies $\M_\xi \trianglelefteq \M_\eta$ for all $\eta\geq \xi$, $\M_\xi$ is an initial segment of $\M$. We call $\M$ the output of the Mitchell-Steel construction over $z$. For $\delta\in On$, we call $\M_\delta$ the output of the Mitchell-Steel construction of length $\delta$ over $z$.

\begin{theorem}
\label{woodin in m-s}
    Assume $ZFC$. Suppose $\delta$ is the least ordinal such that $\delta$ is Woodin in $L(V_\delta)$. Suppose the Mitchell-Steel construction in $V_\delta$ does not break down, and let $\M$ be the output of the construction. Then $\delta$ in Woodin in $L(\M)$.
\end{theorem}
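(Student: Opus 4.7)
The plan is the standard argument propagating Woodinness through a fully backgrounded Mitchell-Steel construction. Fix $A\subseteq\delta$ with $A\in L(\M)$ and $\lambda<\delta$; I need to produce $\kappa<\lambda$ and an extender on the $\M$-sequence with critical point $\kappa$ and support at least $\lambda$ witnessing $i(A)\cap\lambda=A\cap\lambda$, where $i$ is the associated ultrapower map.

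First I would pass to the ambient Woodinness hypothesis. Because the construction is performed inside $V_\delta$, the relevant initial segments of $\M$ lie in $V_\delta$, so $L(\M)\subseteq L(V_\delta)$ and in particular $A\in L(V_\delta)$. Applying Woodinness of $\delta$ in $L(V_\delta)$ to the set $A$ yields some $\kappa<\lambda$ and a $V$-extender $F^*$ with critical point $\kappa$, satisfying $V_{\nu+\omega}\subseteq Ult(V,F^*)$ for some $\nu\geq\lambda$, and with the property $i_{F^*}(A)\cap\lambda=A\cap\lambda$. Here one uses that the family of extenders witnessing Woodinness in $L(V_\delta)$ is sufficient to furnish background extenders in $V$ of the kind demanded by clause~(\ref{active case of m-s construction}) of the construction.

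Next I would locate a stage $\xi$ of the construction at which an extender $F$ with background $F^*$, or with a background compatible with $F^*$ below $\lambda$, is placed on the sequence of $\N_{\xi+1}$. The key property guaranteed by the backgrounding is that $F$ is (up to resurrection and cores) the restriction of $F^*$ to $\M_\xi$, so $i_F:\M\to Ult(\M,F)$ factors into $i_{F^*}\upharpoonright\M$ via the canonical factor embedding, and $i_F(\M)|V_\lambda=i_{F^*}(\M)|V_\lambda$. To verify $i_F(A)\cap\lambda=A\cap\lambda$, observe that since $V_\lambda\subseteq Ult(V,F^*)$, the Mitchell-Steel construction carried out in $Ult(V,F^*)$ agrees with the one in $V$ below $V_\lambda$, giving $i_{F^*}(\M)|V_\lambda=\M|V_\lambda$. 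Consequently $L(i_{F^*}(\M))$ and $L(\M)$ compute the same subsets of $\lambda$. Applying $i_{F^*}$ to the defining formula of $A$ over $L(\M)$ identifies $i_{F^*}(A)\cap\lambda$ with $A\cap\lambda$, and composing with the factor map $Ult(\M,F)\to Ult(V,F^*)$ transfers this to $i_F(A)\cap\lambda=A\cap\lambda$.

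The principal obstacle is the middle step: one must verify that the extenders witnessing Woodinness in $L(V_\delta)$ either appear as backgrounds in the Mitchell-Steel construction or have the property that the backgrounds actually used at the relevant stage still agree with them below $\lambda$ on the parts of $A$ that matter. This is where one uses both the minimality of $\delta$, to avoid interference from smaller Woodin-like structure, and the rule encoded in clause~(\ref{active case of m-s construction}) that backgrounds are chosen to minimize the support of the resulting extender on the sequence, so that agreement with $F^*$ below $\lambda$ on sets in $\M_\xi$ is preserved when the construction substitutes a smaller background.
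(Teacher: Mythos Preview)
The paper does not actually prove this theorem; it simply refers to Theorem~11.3 of \cite{msbook}. Your sketch is aimed at that argument and is broadly on the right track, but the resolution you offer for what you call the ``principal obstacle'' is not correct, and there is a quantifier slip at the outset.

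On the quantifiers: Woodinness of $\delta$ in $L(\M)$ asks, for each $A\subseteq\delta$ in $L(\M)$, for a \emph{single} $\kappa<\delta$ that is $A$-strong up to every $\lambda<\delta$. You instead fix $\lambda$ first and seek $\kappa<\lambda$, and you never explain why this weaker-looking statement suffices. In the Mitchell--Steel argument one applies Woodinness in $L(V_\delta)$ once, to a set $T$ coding $A$ together with the extender sequence of $\M$, and the resulting $\kappa$ then works uniformly for all $\lambda$.

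On the obstacle: neither the minimality of $\delta$ nor the support-minimizing tie-breaker in clause~(\ref{active case of m-s construction}) is what puts the needed extender on the $\M$-sequence. The actual mechanism is coherence. Having arranged that $i_{F^*}$ moves the extender sequence of $\M$ correctly below the relevant level (this is what reflecting $T$ rather than merely $A$ buys, though as you observe it also follows from $V_\lambda\subseteq Ult(V,F^*)$ since the construction is local to $V_\delta$), one checks that the restriction of $F^*$ to the appropriate $\M_\xi$ determines an extender whose trivial completion lies on the sequence of $i_{F^*}(\M)$: the point is that $F^*$ is a legitimate background in $Ult(V,F^*)$ as well, so the construction there adds it. Agreement of $\M$ and $i_{F^*}(\M)$ below that level then places this extender on the $\M$-sequence itself. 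The support-minimization rule is only a tie-breaker making the construction deterministic; it plays no role in matching backgrounds, and the minimality of $\delta$ is not used in this step at all.
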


See the proof of Theorem 11.3 of \cite{msbook}.

\begin{theorem}[Universality]
\label{universality of m-s}
    Assume $ZFC$. Let $\delta$ be Woodin and $z\in \R$. Assume the Mitchell-Steel construction of length $\delta$ over $z$ does not break down. Let $N$ be the output of the construction. Suppose no initial segment of $N$ satisfies ``there is a
    superstrong cardinal.'' Let $W$ be a premouse over x of height $\leq \delta$, and suppose $P$ 
    and $Q$ are the final models above $W$ and $N$, respectively, in a successful coiteration. Then $P\trianglelefteq Q$.
\end{theorem}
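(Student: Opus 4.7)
The plan is to argue by contradiction: suppose $P \ntrianglelefteq Q$. Since the coiteration is successful and the final models must be comparable, this forces $Q \triangleleft P$ to be a proper initial segment. I will derive a contradiction by lifting the $N$-side coiteration tree to an iteration of $V$ via the background extenders, then exploiting the Woodinness of $\delta$ to locate a sound mouse whose presence contradicts the termination of the comparison at $Q$.

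Let $\T$ on $W$ and $\U$ on $N$ be the iteration trees of the coiteration, with last models $P$ and $Q$. First, I reduce to the case where the main branch $b$ of $\U$ is nondropping, which is the combinatorially essential case (the dropping case is handled via $\omega$-soundness of the relevant initial segments, leading to a similar contradiction). Since no initial segment of $N$ is superstrong, every extender used along $\U$ has length below $\delta$ and, via resurrection through iterates, traces back to an extender on the sequence of some $\M_\xi$, which by step (\ref{active case of m-s construction}) has a background extender $F^*$ over $V$ satisfying the coherence conditions needed for copying.

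Second, I copy $\U$ to an iteration tree $\U^*$ on $V$. Inductively, each extender $E_\alpha^\U$ is lifted through its resurrection to an extender on the sequence of the relevant $\M_\xi$, whose background $F^*$ is then applied on $V$'s side. Standard resurrection machinery yields copy maps $\sigma_\alpha : \M_\alpha^\U \to \M_\alpha^{*\U^*}$ commuting with tree embeddings. Because $\delta$ is Woodin in $V$ and the tree $\U^*$ lives strictly below $\delta$ (all critical points and lengths used are below $\delta$), Woodinness provides a cofinal wellfounded branch $b^*$ of $\U^*$ corresponding to the main branch $b$ of $\U$, and the direct limit of the copy maps gives an embedding $\sigma_\infty : Q \to Q^*$ into a wellfounded iterate of $V$.

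Finally, I extract the contradiction. Since $Q \triangleleft P$ strictly, there is a least $R$ with $Q \triangleleft R \trianglelefteq P$ and $\rho_\omega(R) \leq \delta(\U)$; such $R$ is $\omega$-sound above $\delta(\U)$. The copy machinery extends $\sigma_\infty$ to a map sending $R$ to a sound mouse $R^*$ sitting above $Q^*$ inside the $V$-iterate. By the universality properties of the background construction (Theorem \ref{levels of m-s construction projectum fact}), every such sound mouse above a stage of the construction must appear as, or sit below, some later stage $\M_\zeta$; pulling back along the copy, the corresponding sound mouse already appears on the sequence of the iterate of $N$ producing $Q$. But then $\U$'s termination at $Q$ could not have left $R$ outside, contradicting $Q \triangleleft R$. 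The main obstacle is executing the lifting rigorously: one must carefully track resurrection maps, verify that the agreement-of-extenders condition defining iteration trees is preserved under copying, and use the absence of superstrong initial segments of $N$ to prevent length-mismatches between background and lifted extenders that would otherwise break the construction of $\U^*$. This is precisely where the superstrong hypothesis is essential, since a superstrong on $N$'s sequence would produce a background extender whose support exceeds its length in a way that obstructs the copy.
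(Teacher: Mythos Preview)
The paper does not give a proof of this theorem; it simply cites Theorem 11.1 of \cite{dmatm}. Your outline follows the standard strategy---lift the $N$-side tree $\U$ to a tree $\U^*$ on $V$ via the background certificates, maintain realization maps into the construction as computed in the $\U^*$-models, and extract a contradiction from $Q \triangleleft P$---and this is indeed the core of the argument in the cited reference. You also correctly identify the role of the no-superstrong hypothesis in keeping the background extenders (and hence the lifted tree) below $\delta$.

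There is, however, a genuine gap. The assertion that ``Woodinness provides a cofinal wellfounded branch $b^*$ of $\U^*$ corresponding to the main branch $b$ of $\U$'' misidentifies both what is needed and what Woodinness supplies. The tree order of $\U^*$ is by construction identical to that of $\U$, so the branch is already $b$; the issue is whether the direct limit $M^{\U^*}_b$ is wellfounded. Woodinness of $\delta$ in $V$ does not imply that iteration trees on $V$ using extenders from $V_\delta$ have wellfounded direct limits---that would amount to an iterability hypothesis on $V$, which is not assumed here. In the actual argument one never appeals to iterability of $V$: the realization maps are maintained into the background construction as re-computed inside each successive internal ultrapower, and the contradiction is extracted without needing the final $\U^*$-model to be globally wellfounded.

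Your final step is also too loose. Theorem \ref{levels of m-s construction projectum fact} governs how stages of the construction fit together once their projecta stabilize; it says nothing about an externally given sound mouse $R$ being absorbed into the construction. Invoking it to conclude that $R^*$ ``must appear as, or sit below, some later stage $\M_\zeta$'' essentially assumes the universality you are trying to prove. The precise contradiction in the standard proof comes instead from analyzing the least disagreement in the comparison and showing, via the coherence of the background certificates, that the $N$-side could not have been the one to contribute an extender there.
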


See Theorem 11.1 of \cite{dmatm}.

\begin{theorem}
\label{iterability of m-s}
    Suppose $M$ is an $\omega_1+1$-iterable mouse with Woodin cardinal $\delta$ satisfying enough of $ZFC$ and $z\in M\cap \R$. Then the Mitchell-Steel construction of length $\delta$ over $z$ done inside $M$ does not break down. Let $N$ be the output of the construction. Then $N$ is a $z$-mouse of height $\delta$.
\end{theorem}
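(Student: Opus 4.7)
The plan is to derive iterability of the models appearing in the Mitchell--Steel construction inside $M$ from the iterability of $M$ in $V$ via the standard copying/lifting technique, and then to use iterability plus comparison to propagate reliability through the construction. Once this is done, the same lifting argument applied to the final output $N$ shows that $N$ is itself iterable in $V$.

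Let $\Sigma_M$ denote the (unique) $\omega_1 + 1$-iteration strategy for $M$ in $V$ witnessing that $M$ is a mouse. I would proceed by induction on the stages $\xi < \delta$ of the construction done in $V_\delta^M$, maintaining the hypothesis that the construction has not yet broken down and that each $\M_\xi^M$ produced so far is iterable in $V$. For the inductive step, given an iteration tree $\T$ on $\M_\xi^M$, I would resurrect each extender $E_\gamma^\T$ via the background extender recorded on $M$'s sequence at the construction stage where (the appropriate preimage of) $E_\gamma^\T$ was placed on the $\M^M$-sequence, following the resurrection/lifting procedure of Chapter 12 of \cite{msbook}. This yields a lifted tree $\T^*$ on $M$. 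The branch chosen by $\Sigma_M$ through $\T^*$ copies down to a cofinal wellfounded branch through $\T$ via the copy maps, producing an $\omega_1 + 1$-iteration strategy $\Sigma_\xi$ for $\M_\xi^M$ in $V$.

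With $\Sigma_\xi$ available, reliability of $\N_{\xi+1}^M$ follows from the standard comparison argument: any failure of universality or solidity of $\C_\omega(\N_{\xi+1}^M)$ would yield a counterexample which, when coiterated against $\N_{\xi+1}^M$ using $\Sigma_{\xi+1}$, contradicts the properties of successful coiterations established in Section 8 of \cite{ooimt}. Hence the construction continues, and by induction it reaches the full length $\delta$ without breakdown, producing the output $N$ of height $\delta$ inside $M$. Applying the same lifting argument to iteration trees on $N$, using the backgrounds $F^*_\eta$ for $\eta < \delta$ recorded throughout the construction, we obtain an $\omega_1$-iteration strategy for $N$ in $V$. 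Thus $N$ is a $z$-mouse of height $\delta$.

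The main obstacle is the bookkeeping in the resurrection step: across the induction, the core-taking maps $\N_{\eta+1}^M \to \C_\omega(\N_{\eta+1}^M) = \M_{\eta+1}^M$ and the tree $\T$'s own ultrapower embeddings both act on extenders, so when an extender from the $\M_\xi^M$-sequence is used in $\T$ one must correctly trace it back through these maps to identify the background extender on $M$'s sequence that is to be applied in $\T^*$. This is precisely the content of Chapter 12 of \cite{msbook}, and it transfers to our setting essentially verbatim, with the only change being that appeals to $V$-iterability of the background universe are replaced by appeals to $\Sigma_M$, which is guaranteed by the hypothesis that $M$ is a mouse.
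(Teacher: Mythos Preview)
The paper does not actually prove this theorem: it is stated as background in Section~\ref{m-s construction section} without proof or explicit reference, alongside other results attributed to \cite{msbook} and \cite{dmatm}. Your proposal is the standard argument from the literature (essentially Chapter~12 of \cite{msbook} for the lifting/resurrection, combined with the solidity and universality proofs in \cite{ooimt}), and this is precisely what the paper is implicitly relying on by citing the theorem as known background. So your approach is correct and matches the intended one.
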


The proof of Theorem \ref{iterability of m-s} is well known. To show the construction does not break down, by \cite{msbook} it suffices to show universality and solidity of the models $\langle \M_\xi: \xi < \delta \rangle$ built during the construction. \cite{msbook} further reduces this to showing iterability for each $\M_\xi$. An iteration strategy for $M_\xi$ can be defined by lifting iteration trees on $\M_\xi$ to trees on (an initial segment of) $M$ and selecting the branch picked by the strategy for $M$. $\omega_1+1$-iterability of $M$ suffices to obtain the required iterability for each $\M_\xi$. Similarly, $N$ being a $z$-mouse follows from iterability of $M$.

For a premouse $M$ satisfying enough of $ZFC$ and $z\in M\cap \R$, we write $Le[M,z]$ for the output of the Mitchell-Steel construction in $M$ over $z$ (assuming the construction does not break down). $Le[M]$ will refer to $Le[M,\emptyset]$. $Le[M,z]$ is a $z$-premouse. If $M$ is iterable, so is $Le[M,z]$.

We are most interested in cases in which $M$ is a mouse with a Woodin cardinal $\delta$, no largest cardinal, and no total extenders above $\delta$. Then $Le[M|\delta,z]$ is equal to the Mitchell-Steel construction of length $\delta$ over $z$, done inside $M$, and $Le[M,z]$ is an initial segment of $L(Le[M|\delta.z])$.

\begin{remark}
\label{m-s up to inaccessible}
    Suppose $M$ is an $\omega_1+1$-iterable premouse, $z\in M\cap \R$, and $\kappa$ is inaccessible in $M$. Let $\langle \M_\xi : \xi < \kappa\rangle$ be the models of the Mitchell-Steel construction in $M$ of length $\kappa$ over $z$. Suppose an extender is added at step $\xi+1$ in the construction. Let $F^*$, $F$, and $\nu$ be as in Case \ref{active case of m-s construction} of the construction. Then there is $F'\in M|\kappa$ such that $M\models V_{\nu+\omega}\subset Ult(M,F')$ and $F'\cap ([\nu]^{<\omega} \times \M_\xi) = F\upharpoonright\nu$. So we may assume if $F^*$ is used as a background in the construction of length $\kappa$, then $F^*\in M|\kappa$.

    In particular, if $M$ is an $\omega_1+1$-iterable premouse, $z\in M\cap \R$, and $\kappa$ is inaccessible in $M$, then $Le[M|\kappa,z]$ equals the Mitchell-Steel construction of length $\kappa$ over $z$, done in $M$.
\end{remark}
\vspace{2mm}
\subsection{S-constructions}
\label{s-const section}

Below we outline the $S$-construction (this was introduced as the $P$-construction in \cite{self-iter}).

Suppose $M = (J_\gamma^{\vec{E}},\in,\vec{E}\upharpoonright\gamma,E_\gamma,a)$ is a countable $a$-premouse and $\delta\in M$ is a cardinal and cutpoint of $M$. Suppose $ON \cap \bar{S} = \delta + \omega$, $\delta$ is a Woodin cardinal of $\bar{S}$, $\bar{S}$ is definable over $M$, and there is a generic $G$ (for the version of Woodin's extender algebra with $\delta$ propositional letters) such that $\bar{S}[G] = M|\delta+1$. Inductively define a sequence $\langle S_\alpha : \delta + 1 \leq \alpha \leq \gamma \rangle$ as follows. $S_{\delta+1}$ is set to be $\bar{S}$. At a limit $\lambda$, $S_\lambda = \bigcup_{\alpha < \lambda} S_\alpha$. If $M|\lambda$ is active, add a predicate for $E_{\lambda} \cap S_\lambda$ to $S_\lambda$. For the successor step, we define $S_{\alpha + 1}$ by constructing one more level over $S_\alpha$. The construction proceeds until we construct $S_\gamma$, or we reach some $S_\alpha$ such that $\delta$ is not Woodin in $S_\alpha$. We refer to $S_\gamma$ as the maximal $S$-construction in $M$ over $\bar{S}$ if the construction reaches $\gamma$. We are primarily interested in cases where $\delta$ is Woodin in $M$, in which case the construction is guaranteed to reach $\gamma$.

\begin{lemma}
\label{s-const lemma}
    Suppose $M,\bar{S},\delta,\gamma,$ and $G$ are as above. Assume also $M$ is $(\omega_1,\omega_1+1)$-iterable, $\omega$-sound, and $\rho_\omega(M)\geq \delta$. If the construction reaches $\gamma$, then for each $\alpha$ such that $\delta+1\leq \alpha \leq \gamma$, $S_\alpha$ is an $\bar{S}$-mouse and $S_\alpha[G] = M|\alpha$. If also $\alpha < \gamma$, or $\alpha = \gamma$ and $\delta$ is definably Woodin over $S_\alpha$, then $\rho_n(S_\alpha) = \rho_n(M|\alpha)$ for all $n$ and $S_\alpha$ is $\omega$-sound.
\end{lemma}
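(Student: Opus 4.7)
The plan is to prove the four claims --- that $S_\alpha$ is an $\bar{S}$-premouse, that $S_\alpha[G] = M|\alpha$, that $S_\alpha$ inherits iterability from $M$, and (under the auxiliary hypothesis) that $\rho_n(S_\alpha) = \rho_n(M|\alpha)$ and $S_\alpha$ is $\omega$-sound --- by simultaneous induction on $\alpha \in [\delta+1,\gamma]$. The central identity is $S_\alpha[G] = M|\alpha$; once this is carried along the induction and iterability is transferred, the fine-structural claims become translations across the $Ea$-forcing.

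The base case $\alpha = \delta+1$ is given by the setup. At a successor step, a single rudimentary stage is added on each side, and using that the forcing relation for the extender algebra at $\delta$ is definable in $S_\alpha$, definable subsets of $M|\alpha$ translate to definable subsets of $S_\alpha$ with $G$ as an additional parameter and conversely, so the identity propagates. At an active stage $\lambda$, the cutpoint hypothesis on $\delta$ forces $\mathrm{crit}(E_\lambda) > \delta$, so $E_\lambda$ acts trivially on $\bar{S}[G] = M|\delta+1$; this makes $E_\lambda \cap S_\lambda$ a legitimate extender on $S_\lambda$, whose coherence and initial-segment condition are inherited from those of $E_\lambda$ via the identity. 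Limits are taken as unions, with predicates added at active limits.

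For iterability, every extender used on a normal tree $\T$ on $S_\alpha$ has critical point above $\delta$, and under the identity such an extender can be regarded as an extender of $M|\alpha$ with the same critical point and coherent action above $\delta$. This lets me lift $\T$ to a tree $\T^*$ on $M|\alpha$ whose models $M^*_\xi$ satisfy $S^*_\xi[G^*_\xi] = M^*_\xi$, where $S^*_\xi$ is the corresponding iterate of $S_\alpha$ and $G^*_\xi$ is the image of $G$. Iterability of $M$ then supplies a wellfounded branch through $\T^*$, and the corresponding branch on the $S_\alpha$-side is wellfounded with direct limit related to the $M$-side one by an $S$-construction. Theorem~\ref{branch uniqueness} matches the $\Q$-structures: an extender on $\M(\T)$ witnessing failure of Woodinness of $\delta(\T)$ lifts to one on $\M(\T^*)$ witnessing the same on the $M$-side. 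This yields an iteration strategy for $S_\alpha$, so $S_\alpha$ is an $\bar{S}$-mouse. Projectum preservation follows from the same definability translation: $\Sigma_n$-definable bounded subsets of $\kappa \geq \delta$ over $S_\alpha$ correspond to those over $M|\alpha$ via the forcing relation. The conditions ``$\alpha < \gamma$'' and ``$\alpha = \gamma$ and $\delta$ is definably Woodin over $S_\gamma$'' are exactly what prevents a new definable subset of $\delta$ from witnessing failure of Woodinness (which would otherwise drop the projectum to at most $\delta$). Solidity and preservation of standard parameters follow by the same translation, yielding $\omega$-soundness.

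The hard part will be the iterability transfer, specifically the analysis at limit stages of trees. The subtle point is ensuring that the $\Q$-structure the $M$-strategy selects on $\T^*$ corresponds, under an $S$-construction over the image of $\bar{S}$, to a $\Q$-structure on the $S_\alpha$-side that picks out the correct branch through $\T$; one must verify that the cutpoint condition persists in iterates, that the images $G^*_\xi$ of $G$ remain extender-algebra generic over the corresponding $\bar{S}$-iterates, and that the identity $S^*[G^*] = M^*$ is preserved at direct limits. The cutpoint hypothesis together with Theorem~\ref{branch uniqueness} is exactly what makes this work, but these verifications constitute the technical heart of the argument.
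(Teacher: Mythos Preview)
Your proposal is correct and follows the standard approach; the paper does not actually prove this lemma but defers to Lemma~1.5 of \cite{self-iter} for everything except iterability, and handles iterability in one sentence by noting that trees on $S_\gamma$ lift to trees on $M$ above $\delta$ --- precisely the lifting argument you sketch. Your outline is thus a faithful expansion of what the cited reference does together with the paper's remark on iterability transfer.
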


Lemma 1.5 of \cite{self-iter} gives everything in Lemma \ref{s-const lemma} except the iterability of $S_\gamma$. The iteration strategy for $S_\gamma$ in Lemma \ref{s-const lemma} comes from lifting an iteration tree on $S_\gamma$ to iteration trees on $M$ above $\delta$. In particular, we have:

\begin{fact}
\label{definiability of s-const strat}
    Suppose $M,\bar{S},\delta,\gamma,$ and $G$ are as in Lemma \ref{s-const lemma}. Then the iteration strategy for $S_\gamma$ (as an $\bar{S}$-premouse) is projective in the iteration strategy for $M$ restricted to iteration trees above $\delta$.
\end{fact}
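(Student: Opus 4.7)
The plan is to realize the iteration strategy $\Sigma_{S_\gamma}$ as a recursive reduction from $\Sigma_M^{>\delta}$, the restriction of the $M$-strategy to trees based above $\delta$. Given a countable iteration tree $\T$ of limit length on $S_\gamma$, I will lift it in an effective manner to an iteration tree $\T^*$ on $M$ above $\delta$ with the same tree order, then invoke $\Sigma_M^{>\delta}$ on $\T^*$ and read off a cofinal wellfounded branch through $\T$. Since each step of the lift is computable from $\T$ and only one oracle call to $\Sigma_M^{>\delta}$ is needed to select a branch, this yields $\Sigma_{S_\gamma}\leq_T \Sigma_M^{>\delta}$, which is more than enough for projective definability.

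The lift is built by induction on length, maintaining the invariant that $\M^{\T}_\eta$ is the S-construction inside $\M^{\T^*}_\eta$ over the image of $\bar S$, witnessed by the image $G_\eta$ of $G$. Lemma \ref{s-const lemma} supplies the base correspondence $S_\gamma \leftrightarrow M$ via $G$, and in particular puts each extender $E$ on the $S_\gamma$-sequence above $\delta$ into canonical correspondence with an extender $E^*$ on the $M$-sequence above $\delta$ with the same critical point and support. If $\T$ applies $E$ to $\M^\T_\xi$ at stage $\xi+1$, then $\T^*$ applies $E^*$ to $\M^{\T^*}_\xi$. Because $\delta$ is a strong cutpoint of $M$ and every applied extender has critical point strictly above the image of $\delta$, $\T^*$ is a legitimate tree above $\delta$, and each ultrapower on the $M$-side fixes the S-construction data of $M|\delta+1$ and its generic $G$, so the invariant propagates through the successor step.

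At each limit stage $\lambda \leq \text{lh}(\T)$, I first recursively produce $\T^* \upharpoonright \lambda$, then apply $\Sigma_M^{>\delta}$ to obtain a cofinal wellfounded branch $b_\lambda$ through it. Taking the same tree-order branch in $\T\upharpoonright \lambda$, the direct limit $\M^{\T}_{b_\lambda}$ is the S-construction inside $\M^{\T^*}_{b_\lambda}$ over the image of $\bar S$, and is therefore wellfounded. Setting $\Sigma_{S_\gamma}(\T)$ to be the branch thus recovered from $\Sigma_M^{>\delta}(\T^*)$ defines the strategy. The principal obstacle is verifying preservation of the S-construction invariant under the lifted ultrapowers and at limits --- in particular, that the image of the extender-algebra generic $G$ continues to witness the correspondence and that the image of $\delta$ remains a cutpoint of every iterate. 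Both reduce to the fact that $\delta$ is a strong cutpoint of $M$ and that the applied extenders have critical point above $\delta$; once established, the effectiveness of the lift makes $\Sigma_{S_\gamma}$ manifestly recursive, a fortiori projective, in $\Sigma_M^{>\delta}$.
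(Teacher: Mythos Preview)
Your approach is essentially the one the paper has in mind: the paper does not give a proof of this Fact, but the sentence immediately preceding it says the strategy for $S_\gamma$ ``comes from lifting an iteration tree on $S_\gamma$ to iteration trees on $M$ above $\delta$,'' which is exactly the copying construction you spell out. One small correction: the hypothesis on $\delta$ is only that it is a cutpoint of $M$, not a strong cutpoint, so extenders with critical point equal to $\delta$ are in principle allowed; however, since you are iterating $S_\gamma$ as an $\bar S$-premouse and $\bar S$ has ordinal height $\delta+\omega$, all extenders applied have critical point above $\delta$ anyway, and the argument goes through unchanged.
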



The $S$-construction serves two purposes in what follows. It allows us to ``undo'' generic extensions from Woodin's extender algebra. And combined with the fully-backgrounded Mitchell-Steel construction, it provides an inner model of a premouse with convenient properties.

\begin{definition}
    Let $M$ be an $\omega_1+1$-iterable premouse with a Woodin cardinal and $z\in M \cap \R$. Let $\bar{S}$ be the result of constructing one level of the $\mathcal{J}$-hierarchy over $Le[M|\delta_M,z]$. Let $StrLe[M,z]$ denote the maximal $S$-construction in $M$ over $\bar{S}$.
\end{definition}
\vspace{2mm}
\subsection{Suitable Mice}
\label{suitable mice section}

We now review some results from the core model induction. Most of the concepts below are from \cite{cmi}, with some minor additions. We need to work with mice with an inaccessible cardinal above a Woodin, so in Definition \ref{ss definition} we introduce a modification of the standard notion of a suitable premouse. \cite{cmi} proves the existence of terms in suitable mice capturing certain sets of reals. We will need analogous lemmas for our modified definition. In fact we require more than is stated in \cite{cmi} --- it is essential for our purposes that there is a canonical term capturing each set. Fortunately, this stronger claim is already implicit in the proofs of \cite{cmi}.

For the remainder of this section, we will asssume $ZF + AD + DC + V=L(\R)$ and fix a boldface inductive-like pointclass $\bm{\Gamma}$ such that $\bm{\Gamma}\neq \bm{\Sigma^2_1}$. We then have $\bm{\Gamma} = \bm{\Sigma_1}(J_{\alpha_0}(\R))$ for some $\alpha_0$ beginning an admissible $\Sigma_1$-gap $[\alpha_0,\beta_0]$. Fix a lightface pointclass $\Gamma$ as in Remark \ref{form of ind-like} such that $\bm{\Gamma}$ is the closure of $\Gamma$ under preimages by continuous functions.

\begin{definition}
    Suppose $x\in HC$. Say an $x$-premouse $N$ is $\Gamma$-suitable if $N$ is countable and
    \begin{enumerate}
        \item $N \models$ there is exactly one Woodin cardinal $\delta_N$.
        \item Letting $N_0 = Lp^\Gamma(N|\delta_N)$ and $N_{i+1} = Lp^\Gamma(N_i)$, we have that $N = \bigcup_{i<\omega} N_i$.
        \item If $\xi < \delta_N$, then $Lp^\Gamma(N|\xi) \models \xi$ is not Woodin.
    \end{enumerate}
\end{definition}

\begin{definition}
\label{ss definition}
    Suppose $x\in HC$. Say an $x$-premouse $N$ is $\Gamma$-super-suitable ($\Gamma$-ss) if $N$ is countable and
    \begin{enumerate}
        \item $N \models$ There is exactly one Woodin cardinal $\delta_N$.
        \item $N \models$ There is exactly one inaccessible cardinal above $\delta_N$. We denote this inaccessible by $\nu_N$.
        \item Letting $N_0 = Lp^\Gamma(N|\nu_N)$ and $N_{i+1} = Lp^\Gamma(N_i)$, we have that $N = \bigcup_{i<\omega} N_i$.
        \item For each $\xi \geq \delta_N$, $N|(\xi^+)^N = Lp^\Gamma(N|\xi)$. 
        \item If $\xi < \delta_N$, then $Lp^\Gamma(N|\xi) \models \xi$ is not Woodin.
    \end{enumerate}
\end{definition}

\begin{definition}
    Let $N$ be a mouse and $\delta\in N$. We say $\delta$ is a $\Gamma$-Woodin of $N$ if $\delta$ is Woodin in $Lp^\Gamma(N|\delta)$. 
\end{definition}

A $\Gamma$-suitable premouse is a minimal premouse with a $\Gamma$-Woodin cardinal which is closed under $Lp^{\Gamma}$, in that none of its initial segments have this property. Similarly, a $\Gamma$-ss premouse can be considered a minimal premouse with a $\Gamma$-Woodin which is closed under $Lp^\Gamma$ and has an inaccessible cardinal above its $\Gamma$-Woodin. The existence of a $\Gamma$-suitable (or $\Gamma$-ss) premouse is not any stronger than the existence of a premouse with a $\Gamma$-Woodin cardinal. For suppose $N = Lp^{\Gamma}(N|\delta)$, $\delta$ is Woodin in $N$, and no $\xi < \delta$ is a $\Gamma$-Woodin of $N$. If $Q \triangleright N$, $\rho(Q) \leq \delta$, $Q$ is $\delta$-sound, and a set definable over $Q$ witnesses that $\delta$ is not Woodin, then $Q$ must have a $\Gamma$-Woodin cardinal above $\delta$. Otherwise, $Q$ would be iterable by $Q$-structures in $\bm{\Delta_\Gamma}$ and hence in $Lp^\Gamma(N|\delta)$. Then we may build a $\Gamma$-suitable ($\Gamma$-ss) premouse by closing under $Lp^\Gamma$ as many times as is necessary, since this will never construct a $\Gamma$-Woodin.

\begin{definition}
    Let $A \subseteq \R$, $N$ a countable premouse, $\eta$ an uncountable cardinal of $N$, and $\tau\in N^{Col(\omega,\eta)}$. We say that $\tau$ weakly captures $A$ over $N$ if whenever $g$ is $Col(\omega,\eta)$-generic over $N$, $\tau[g] = A \cap N[g]$.
\end{definition}

\begin{lemma}
\label{terms condense}
    Suppose $\mathcal{B}$ is a self-justifying system and $N$ and $M$ are transitive models of enough of ZFC such that $N\in M$. Let $\mathcal{C}$ be a comeager set of $Col(\omega,N)$ generics over $M$ and suppose for each $B\in \mathcal{B}$ there is a term $\tau_B\in M$ such that if $g\in\mathcal{C}$, then $\tau_B[g] = B \cap M[g]$. Let $\pi: \bar{M} \to M$ be elementary with $\{N\} \cup \{\tau_B: B \in \mathcal{B}\}\subset ran(\pi)$. Let $(N,\tau_B) = \pi(\bar{N},\bar{\tau}_B)$. Then whenever $g$ is $Col(\omega,\bar{N})$-generic over $\bar{M}$, $\bar{\tau}_B[g] = B \cap \bar{M}[g]$.
\end{lemma}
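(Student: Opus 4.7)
The plan is to reduce the claim for $\bar{\tau}_B$ to the analogous claim for prewellorderings in a scale on $B$ (whose terms lie in $\mathcal{B}$ by the self-justifying hypothesis), and then resolve the resulting system by a scale-absoluteness argument across $\mathcal{B}$.

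First I would fix, for each $B \in \mathcal{B}$, a scale $\vec{\psi}^B$ on $B$ with $\leq_{\psi^B_n} \in \mathcal{B}$ for all $n$. Since $B$ equals the field of $\leq_{\psi^B_0}$, the weak capturing hypothesis gives $\tau_B[h] = \mathrm{field}(\tau_{\leq_{\psi^B_0}}[h])$ for every $h \in \mathcal{C}$. Comeagerness of $\mathcal{C}$ promotes this to the outright forced equation $\Vdash^{M}_{Col(\omega,N)} \tau_B = \mathrm{field}(\tau_{\leq_{\psi^B_0}})$, because any condition forcing the negation would admit an extending generic in $\mathcal{C}$, contradicting the equality. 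Elementarity of $\pi$ then transfers the forced equation to $\bar{M}$: $\Vdash^{\bar{M}}_{Col(\omega,\bar{N})} \bar{\tau}_B = \mathrm{field}(\bar{\tau}_{\leq_{\psi^B_0}})$, so for any generic $g$ over $\bar{M}$, $\bar{\tau}_B[g] = \mathrm{field}(\bar{\tau}_{\leq_{\psi^B_0}}[g])$. By the same mechanism, other internal Boolean and scale-structural coherence relations transfer; in particular $\bar{\tau}_B[g]$ and $\bar{\tau}_{B^c}[g]$ partition $\R \cap \bar{M}[g]$.

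Next I would argue the scale absoluteness. The reduction in the previous step shows that $\bar{\tau}_B[g] = B \cap \bar{M}[g]$ reduces to $\bar{\tau}_{\leq_{\psi^B_0}}[g] = \leq_{\psi^B_0} \cap \bar{M}[g]^2$, which has the same form applied to $\leq_{\psi^B_0} \in \mathcal{B}$. Since iterating this reduction does not terminate, I would resolve all cases simultaneously: for $x \in \bar{\tau}_B[g]$, the internal norms $\bar{\psi}^B_n(x)$ computed from $\bar{\tau}_{\leq_{\psi^B_n}}[g]$ are ordinals in $\bar{M}[g]$ (hence in $V$), and they form an internal branch through a Moschovakis-style tree assembled from the sjs scale terms. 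The self-justifying closure of $\mathcal{B}$ forces this internal branch to coincide with a branch through the external tree $T_B$ satisfying $B = p[T_B]$, giving $x \in B$. Applying the symmetric argument to $B^c \in \mathcal{B}$ and combining with the partition property then yields $\bar{\tau}_B[g] = B \cap \bar{M}[g]$.

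The hardest step will be the scale absoluteness: showing that internally computed ranks in $\bar{M}[g]$ genuinely give branches through the external tree $T_B$. A naive approach of lifting $g$ through $\pi$ to an $M$-generic $h \in \mathcal{C}$ and extending $\pi$ to $\pi^+ : \bar{M}[g] \to M[h]$ (which would otherwise finish the proof immediately since $\pi^+$ would restrict to the identity on reals) is obstructed because if $\pi \upharpoonright \bar{N}$ fails to be surjective onto $N$ then the image filter $\pi[g]$ cannot be extended to a $Col(\omega,N)$-generic over $M$: its associated function already fills $\omega$ into the proper subset $\pi[\bar{N}] \subseteq N$, blocking genericity. The self-justifying structure is the essential ingredient replacing this lift: each scale prewellordering in $\mathcal{B}$ has its own scale in $\mathcal{B}$ whose terms are also in the system, so the internal scale data forms a closed self-verifying loop whose forced consistency (transferred from $M$ via $\pi$) pins down the external scale values. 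Executing this bootstrapping simultaneously across all of $\mathcal{B}$ is the main technical challenge.
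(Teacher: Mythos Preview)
The paper does not prove this lemma; it cites Lemma 3.7.2 of \cite{cmi}. Your outline follows the architecture of that argument: transfer forced relations among the $\tau_B$ through $\pi$ (the partition $\tau_B \sqcup \tau_{B^c} = \R$, ``$\tau_{\leq_{\psi_n}}$ is a prewellordering with field $\tau_B$'', etc.), reduce $\bar\tau_B[g] = B \cap \bar M[g]$ to the one-sided inclusion $\bar\tau_B[g] \subseteq B$ for every $B \in \mathcal{B}$ via the complement-and-partition trick, and use the scales for that inclusion. Your diagnosis of why one cannot simply lift $g$ through $\pi$ is also correct.

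Where the sketch has a genuine gap is the ``scale absoluteness'' step. The internal ranks $\bar\psi^B_n(x)$ are in general strictly below the true scale values, since $\bar M[g]$ omits reals lying below $x$ in each $\leq_{\psi_n}$; so $(x, \langle \bar\psi^B_n(x)\rangle_n)$ is \emph{not} a branch through the external tree $T_B$, and the ``closed self-verifying loop'' you describe does not by itself manufacture one. The argument in the cited reference instead goes back through $\pi$ to $M$: from a standard term $\sigma$ with $\sigma[g]=x$ and a decreasing sequence of conditions $p_k \in g$ deciding $\sigma\upharpoonright k$ and forcing suitable $\equiv_{\psi_n}$-equivalences among terms for $n<k$, one pushes $\pi(p_k)$ into $Col(\omega,N)$, picks $h_k \in \mathcal{C}$ below it, and sets $y_k = \pi(\sigma)[h_k] \in B \cap M[h_k]$, so that $y_k \to x$. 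The point is that $\equiv_{\psi_n}$ is an \emph{order} statement, hence means the same internally and externally on $\mathcal{C}$-generics; this is what lets one arrange each external $\psi_n(y_k)$ to be eventually constant, after which lower semicontinuity of the scale gives $x \in B$. The missing ingredient in your proposal is this concrete convergent-sequence construction on the $M$-side, replacing the abstract bootstrap inside $\bar M[g]$.
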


See Lemma 3.7.2 of \cite{cmi}.

Let $\beta'$ be the least ordinal greater than $\alpha_0$ such that there is a scale for a universal $\bm{\Pi_1}(J_{\alpha_0}(\R))$ set definable over $J_{\beta'}(\R)$. By Theorem \ref{where sjs appears}, $\beta'= \beta_0$ or $\beta'=\beta_0+1$ and there is a self-justifying system $\G = \{ G_n : n\in\omega\}$ such that 
\begin{align*}
    G_0 = \{(x,y):\,  & x \text{ codes some transitive set } a \text{ and } y \text{ codes an } \omega\text{-sound } \\
    & a\text{-premouse } R \text{ such that } R \text{ projects to } a \text{ and } R \text{ has an } \\
    & \omega_1\text{-iteration strategy in } \bm{\Delta}\},
\end{align*}
and $\G$ is contained in $OD^{<\beta'}(z)$ for some $z\in\mathbb{R}$. Note $G_0\in\bm{\Gamma}$, by part \ref{Gamma is (Sigma^2_1)^Delta} of Remark \ref{form of ind-like}. In fact, $G_0$ is a universal $\bm{\Gamma}$-set (we will not need this property specifically for $G_0$, but we will use that $\G$ contains some universal $\bm{\Gamma}$-set). For ease of notation, assume $\G\subset OD^{<\beta'}$.


\begin{definition}
\label{definition of standard terms}
    Suppose $B \subset \mathbb{R}$, $N$ is a premouse, and $\eta$ is a cardinal of $N$. Let $\tau^N_{B,\eta}$ be the set of pairs $(\sigma,p)\in N$ such that
    \begin{enumerate}
        \item $\sigma$ is a  $Col(\omega,\eta)$-standard term for a real,
        \item $p\in Col(\omega,\eta)$, and
        \item for comeager many $g\subset Col(\omega,\eta)$ which are $Col(\omega,\eta)$-generic over $N$ such that $p\in g$, $\sigma[g]\in B$.
    \end{enumerate}
    For $n\in\omega$, let $\tau^N_{n,\eta} = \tau^N_{G_n,\eta}$ and if $N$ has a Woodin cardinal let $\tau^N_n = \tau^N_{n,\delta_N}$.
\end{definition}

\begin{lemma}
\label{terms exist}
    Suppose $N$ is a $\Gamma$-suitable or $\Gamma$-ss premouse, $z\in N$, $B\in OD^{<\beta'}(z)$, and $\eta$ is a cardinal of $N$. Then $\tau^N_{B,\eta}$ is in $N$.
\end{lemma}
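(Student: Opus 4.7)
The plan is to reduce to showing that the sjs-terms $\tau^N_{G_n,\eta}$ lie in $N$ for every $n\in\omega$. Once this is in hand, the general case follows because any $B\in OD^{<\beta'}(z)$ is first-order definable over some $J_\xi(\R)$ ($\xi<\beta'$) using $z$ and ordinals, and such a definition can be rewritten via the sjs $\G$ --- which contains a universal $\bm{\Sigma_1}(J_{\alpha_0}(\R))$-set and is closed under complements --- as a first-order combination of membership predicates for the $G_n$'s. Applying this rewriting to standard terms, $\tau^N_{B,\eta}$ becomes a first-order-in-$N$ combination of the $\tau^N_{G_n,\eta}$ and the real parameter $z$, and hence is an element of $N$.

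To place the sjs terms in $N$, I would use that each $G_n$ admits a scale $\vec{\psi}^n$ whose prewellorderings $\leq_{\psi^n_k}$ are members of $\G$ and hence lie in $\bm{\Delta_\Gamma}$. By Theorem \ref{lp = c}, for any countable transitive $a$ coding a pair of reals, the question of whether that pair belongs to $\leq_{\psi^n_k}$ is decided inside $Lp^\Gamma(a)$. Since $N$ is $\Gamma$-suitable or $\Gamma$-ss, $N$ is built as an increasing union of $Lp^\Gamma$-closures above $N|\delta_N$ (resp.\ $N|\nu_N$); combined with Remark \ref{lp(a) contained in lp(b)}, this ensures that whenever $g$ is $Col(\omega,\eta)$-generic over $N$ and $\sigma\in N$ is a standard term for a real, the restrictions of the $\leq_{\psi^n_k}$ to any finite collection of reals in $N[g]$ are computable in $N[g]$ from $Lp^\Gamma$-mice living already in $N$. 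The scale property then characterizes $\sigma[g]\in G_n$ by a $\Pi$-statement about these ranks. Hence the set of pairs $(\sigma,p)$ such that $\sigma[g]\in G_n$ for comeager $g\ni p$ is uniformly definable inside $N$, so $\tau^N_{G_n,\eta}\in N$.

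The main obstacle is the internal availability of the requisite $Lp^\Gamma$-mice for reals in the generic extension $N[g]$, and this requires a case split on $\eta$. For $\eta<\delta_N$, I would apply Theorem \ref{genericity iteration for ext alg} inside $N$ to replace the generic extension by an iterate, at which point Lemma \ref{terms condense} propagates any term existing above the image of $\eta$ back down to $\tau^N_{G_n,\eta}$. For $\eta\geq\delta_N$ (which only arises in the $\Gamma$-ss case), the $Lp^\Gamma$-layering of $N$ above its Woodin supplies the needed closure directly, with the inaccessibility of $\nu_N$ handling the largest values of $\eta$. In all cases the comeagerness condition is phrased using the forcing relation of $Col(\omega,\eta)$, which is internal to $N$, ensuring the final definition of $\tau^N_{B,\eta}$ is carried out entirely inside $N$.
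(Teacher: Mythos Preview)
The paper gives no argument of its own here; it just refers to the proof of Lemma~3.7.5 in \cite{cmi}, so the comparison is with that proof.

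Your proposal has a genuine gap in the reduction step. You claim that an arbitrary $B\in OD^{<\beta'}(z)$ can be ``rewritten via the sjs $\G$ \dots\ as a first-order combination of membership predicates for the $G_n$'s,'' so that $\tau^N_{B,\eta}$ becomes a first-order-in-$N$ combination of the $\tau^N_{G_n,\eta}$'s and $z$. But $B$ is defined over some $J_\xi(\R)$ from $z$ together with \emph{ordinal} parameters $\vec{\gamma}$, and nothing in your sketch explains how those ordinals are absorbed. Different choices of $\vec{\gamma}$ yield different sets $B$ and hence different terms $\tau^N_{B,\eta}$, while the finitely many $\tau^N_{G_n,\eta}$'s and $z$ are fixed; no single first-order-in-$N$ definition from that data can recover all of them. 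Moreover, for $\xi$ near the end of the gap (e.g.\ $\xi=\beta_0$ when $\beta'=\beta_0+1$), higher-order truth over $J_\xi(\R)$ is genuinely beyond $\bm{\Gamma}\cup\bm{\Gamma^c}$, so the universal $\bm{\Gamma}$-set $G_1$ and its complement do not suffice to express it even with real parameters available.

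The \cite{cmi} argument avoids this reduction entirely: it works directly with $\tau^N_{B,\eta}$, viewed as a subset of $N|(\eta^+)^N$, and uses the $Lp^\Gamma$-fullness of $N$ together with Theorem~\ref{lp = c} to place it in $N$. The ordinal parameters in the definition of $B$ are not eliminated in favor of the $G_n$'s; they are handled by the mouse-set analysis identifying $C_\Gamma$ with $Lp^\Gamma\cap P$. Your scale-based idea for the individual $\tau^N_{G_n,\eta}$'s is closer in spirit to this part of the CMI proof; it is the reduction you prepend that breaks.
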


See the proof of Lemma 3.7.5 of \cite{cmi}. In Lemma 5.4.3 of \cite{cmi}, Lemma \ref{terms condense} is used to show:

\begin{lemma}[Woodin]
\label{term relation condensation}
    Suppose $z\in\R$, $N$ is a $\Gamma$-suitable (or $\Gamma$-ss) $z$-premouse, and $\mathcal{B}$ is a sjs containing a universal $\bm{\Sigma_1}(J_{\alpha_0}(\R))$-set such that each $B\in\mathcal{B}$ is $OD^{<\beta'}(z)$. Suppose $\pi:M\to N$ is $\Sigma_1$-elementary and for every $B\in\mathcal{B}$ and $\eta \geq \delta_N$, $\tau^N_{B,\eta}\in range(\pi)$. Then
    \begin{enumerate}
        \item $M$ is $\Gamma$-suitable ($\Gamma$-ss) and
        \item $\pi(\tau^M_{B,\bar{\eta}}) = \tau^N_{B,\eta}$, where $\bar{\eta}$ is such that $\pi(\bar{\eta}) = \eta$.
    \end{enumerate}
\end{lemma}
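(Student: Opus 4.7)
The plan is to establish part (2) first --- that $\pi$ sends canonical terms to canonical terms --- and then use it to verify the structural conditions in part (1). Write $\bar\tau_B := \pi^{-1}(\tau^N_{B,\eta})$ and $\bar\eta := \pi^{-1}(\eta)$.

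First I would show $\bar\tau_B$ weakly captures $B$ over $M$. Since $M$ is countable, using $DC$ I pick $g\subseteq Col(\omega,\bar\eta)^M$ generic over $M$, and then use countability of $N$ to extend $\pi[g]$ to $h\subseteq Col(\omega,\eta)^N$ generic over $N$. The map $\pi$ lifts to a $\Sigma_1$-elementary $\pi^*\colon M[g]\to N[h]$ via $\pi^*(\sigma[g])=\pi(\sigma)[h]$. Because $\pi^*$ fixes every natural number, it fixes every real in $M[g]$ pointwise, and so by $\Delta_0$-elementarity, for any $X\subseteq\R$ in $M[g]$ we have $\pi^*(X)\cap M[g] = X$. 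Specializing to $X=\bar\tau_B[g]$ and using $\pi^*(\bar\tau_B[g]) = \tau^N_{B,\eta}[h] = B\cap N[h]$ gives $\bar\tau_B[g] = B\cap N[h]\cap M[g] = B\cap M[g]$, so $\bar\tau_B$ weakly captures $B$ over $M$.

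To conclude $\bar\tau_B = \tau^M_{B,\bar\eta}$, I would use that the canonical term is characterized externally as the set of $(\sigma,p)$ such that $p$ forces $\sigma\in B$ on a comeager set of generics. The lifting in the previous step sets up a correspondence between $Col(\omega,\bar\eta)^M$-generics over $M$ extending a given condition $p$ and $Col(\omega,\eta)^N$-generics over $N$ extending $\pi(p)$, under which comeager sets correspond. Along this correspondence the defining membership condition for $\tau^N_{B,\eta}$ transfers to that for $\tau^M_{B,\bar\eta}$, giving $(\sigma,p)\in\tau^M_{B,\bar\eta}$ iff $(\pi(\sigma),\pi(p))\in\tau^N_{B,\eta}$, i.e., iff $(\sigma,p)\in\bar\tau_B$. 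This is the point where I would appeal to Lemma \ref{terms condense} inside a sufficiently large ambient model to make the correspondence rigorous.

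For part (1), the structural conditions of being $\Gamma$-suitable (or $\Gamma$-ss) transfer: uniqueness of a Woodin $\delta_M := \pi^{-1}(\delta_N)$, and (in the super-suitable case) a unique inaccessible above it, follow from $\Sigma_1$-elementarity. Closure of $M$ under $Lp^\Gamma$ and the non-Woodinness of each $Lp^\Gamma(M|\xi)$ for $\xi<\delta_M$ are the substantive points: by Remark \ref{form of ind-like} the set $G_0$ (codes of $\bm{\Delta_\Gamma}$-iterable $\omega$-sound premice) lies in the sjs, so the term $\tau^M_{0,\eta}$ identifies the relevant mice over $M|\xi$. Since $N$ has the corresponding closure property and the term-relations transfer by part (2), one pulls this back to obtain the closure and non-Woodinness conditions for $M$. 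The main obstacle is the second step above: showing literal equality $\bar\tau_B = \tau^M_{B,\bar\eta}$, rather than just that both terms weakly capture $B$. Matching the ``comeager in generics over $M$'' condition with ``comeager in generics over $N$'' precisely via the lifting is the technical heart of the argument, and choosing the right ambient model in which to apply Lemma \ref{terms condense} --- one containing both $M$ and $N$ with enough choice to line up the comeager sets --- is the delicate point.
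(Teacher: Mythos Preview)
The paper gives no proof of its own here; it simply cites Lemma~5.4.3 of \cite{cmi} and remarks that Lemma~\ref{terms condense} is the tool used. So the comparison is with the standard core-model-induction argument.

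Your outline is in the right spirit, and step~1 is essentially correct: it is a direct proof of (a sharpening of) Lemma~\ref{terms condense}, using as input that $\tau^N_{B,\eta}$ already weakly captures $B$ over $N$ for all generics (the generalization of Corollary~\ref{standard terms capture} to the sjs $\mathcal{B}$).

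Step~2 as you describe it has a real gap. The lifting $g\mapsto h$ is neither canonical nor bijective, and there is no reason the induced ``correspondence'' carries comeager sets of $M$-generics to comeager sets of $N$-generics: an $N$-generic $h$ need not arise as an extension of some $\pi[g]$, and $\pi^{-1}[h]$ need not be $M$-generic when $\mathrm{ran}(\pi)$ is not cofinal in $Col(\omega,\eta)$. Appealing to Lemma~\ref{terms condense} in an ambient model does not help either, since that lemma only yields weak capturing, which you already have from step~1.

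The clean fix is to work internally rather than through a correspondence of generics. From step~1, $\bar\tau_B[g]=B\cap M[g]$ for \emph{every} $M$-generic $g$, so ``comeager-many $g\ni p$ have $\sigma[g]\in B$'' is equivalent to ``$p\Vdash^M \sigma\in\bar\tau_B$'' (the set of $g$ with $\sigma[g]\in\bar\tau_B[g]$ is decided by forcing, hence relatively open in the generics, so comeager iff all). The same reasoning over $N$ shows $\tau^N_{B,\eta}$ is \emph{maximal}: $(\sigma',p')\in\tau^N_{B,\eta}$ iff $p'\Vdash^N\sigma'\in\tau^N_{B,\eta}$. Now the forcing relation for the $\Sigma_0$ statement ``$\sigma\in\tau$'' is $\Delta_1$ and hence preserved by the $\Sigma_1$-elementary $\pi$, giving
\[
(\sigma,p)\in\bar\tau_B \iff \pi(p)\Vdash^N\pi(\sigma)\in\tau^N_{B,\eta} \iff p\Vdash^M\sigma\in\bar\tau_B \iff (\sigma,p)\in\tau^M_{B,\bar\eta}.
\]
No ambient model or comeager matching is needed.

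For part~(1), your plan is reasonable but understated. That initial segments of $M$ have $\bm{\Delta}$-strategies follows by copying trees along $\pi$. The harder direction --- that $M$ is $Lp^\Gamma$-\emph{full}, i.e., already contains every relevant $\bm{\Delta}$-iterable mouse over its levels --- is where the universal $\bm{\Gamma}$ set in $\mathcal{B}$ (and hence the term capturing $G_0$) does genuine work, and it takes more than the one-line remark you give it.
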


As a result of Lemmas \ref{terms condense} and \ref{terms exist} we have:

\begin{corollary}
\label{standard terms capture}
    If $N$ is $\Gamma$-suitable or $\Gamma$-ss and $\eta$ is an uncountable cardinal of $N$, then $\tau^N_{n,\eta}$ weakly captures $G_n$.
\end{corollary}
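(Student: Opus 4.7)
The plan is to combine Lemma \ref{terms exist} (which puts $\tau^N_{n,\eta}$ inside $N$) with Lemma \ref{terms condense} (which upgrades capturing from comeager to all generics) via a standard L\"owenheim--Skolem argument.

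Since $G_n \in \G \subset OD^{<\beta'}$, Lemma \ref{terms exist} gives $\tau^N_{n,\eta} \in N$. The task is to show $\tau^N_{n,\eta}[g] = G_n \cap N[g]$ for every $Col(\omega,\eta)$-generic $g$ over $N$. First I would verify this equality on a comeager set of generics. The $\subseteq$ inclusion is immediate from the definition: if $(\sigma,p) \in \tau^N_{n,\eta}$ and $p \in g$, then comeager many $g \ni p$ have $\sigma[g] \in G_n$. For $\supseteq$, I would invoke the property of Baire for $G_n$ (valid under $AD$): for any standard term $\sigma \in N$ and any condition $p$, after refining $p$ if necessary, either comeager many $g \ni p$ satisfy $\sigma[g] \in G_n$ or comeager many satisfy $\sigma[g] \notin G_n$. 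Intersecting over the countably many standard terms in $N$ yields a comeager set $\C_0$ of $Col(\omega,\eta)$-generics on which $\tau^N_{n,\eta}[g] = G_n \cap N[g]$.

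To promote this equality from $\C_0$ to all generics, I would apply Lemma \ref{terms condense}. Let $M$ be a transitive set large enough to contain $N$, each $\tau^N_{B,\eta}$ for $B \in \G$, and to witness the comeager capturing established above. Use L\"owenheim--Skolem to obtain a countable transitive $\bar{M}$ and an elementary $\pi : \bar{M} \to M$ with $\{N\} \cup \{\tau^N_{B,\eta} : B \in \G\} \subseteq \text{range}(\pi)$. Because $N$ is already countable and transitive, $\pi$ fixes $N$ and each $\tau^N_{B,\eta}$. Lemma \ref{terms condense} then yields $\tau^N_{B,\eta}[g] = B \cap \bar{M}[g]$ for \emph{every} $g$ generic over $\bar{M}$, and for any $g$ which is $Col(\omega,\eta)$-generic over $N$, genericity over $\bar{M}$ can be arranged (by enlarging $\bar{M}$ within the L\"owenheim--Skolem step if needed). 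Restricting back down to $N[g] \subseteq \bar{M}[g]$ produces the required equality.

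The main obstacle is matching the two forcings in play: Lemma \ref{terms condense} is phrased for $Col(\omega,N)$-generics over an ambient $M$, whereas $\tau^N_{n,\eta}$ is a $Col(\omega,\eta)$-name internal to $N$. Since $\eta$ is uncountable in $N$ but $N$ itself is countable in $V$, these forcings are externally equivalent, so the lemma can be invoked for the right version; still, one has to carefully translate generics and comeager sets across this identification and check that the resulting capturing restricts correctly to $N[g]$ rather than $\bar{M}[g]$. The self-justifying structure of $\G$ and the fact that $\G \subset OD^{<\beta'}$ ensure all required term relations exist simultaneously in $\bar{M}$ and are preserved by $\pi$, which is what makes the argument go through uniformly in $n$.
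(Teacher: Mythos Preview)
Your overall strategy matches the paper's: the corollary is stated there simply as a consequence of Lemmas \ref{terms condense} and \ref{terms exist}, and you correctly identify that one first gets comeager capturing from the definition of $\tau^N_{n,\eta}$ together with the property of Baire, and then uses the self-justifying system to upgrade to all generics via Lemma \ref{terms condense}.

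There is, however, a genuine gap in your L\"owenheim--Skolem step. You take a large ambient $M$, a countable hull $\bar{M}$, and then claim that for an arbitrary $Col(\omega,\eta)$-generic $g$ over $N$, ``genericity over $\bar{M}$ can be arranged (by enlarging $\bar{M}$ \ldots).'' This is not correct: once $g$ is fixed, enlarging $\bar{M}$ can only make it \emph{harder} for $g$ to be $\bar{M}$-generic, and there is no reason an arbitrary $N$-generic should meet all dense sets of any countable elementary $\bar{M} \supsetneq N$. So as written, your argument only yields capturing for $\bar{M}$-generics, not for all $N$-generics.

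The fix is to drop the L\"owenheim--Skolem detour entirely. The premouse $N$ is already countable and transitive, so apply Lemma \ref{terms condense} with the ``$M$'' of that lemma equal to $N$ itself, the ``$N$'' of that lemma equal to $\eta$ (or $N|\eta$), and $\pi = \mathrm{id}$. Your first paragraph establishes the required comeager hypothesis for $N$-generics, and the conclusion of the lemma then reads: for every $Col(\omega,\eta)$-generic $g$ over $N$, $\tau^N_{B,\eta}[g] = B \cap N[g]$, which is exactly weak capturing. No passage through a larger model is needed, and the forcing-mismatch worry you flag disappears as well.
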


\begin{definition}
    Let $\T$ be a normal iteration tree on a $\Gamma$-suitable (or $\Gamma$-ss) premouse $N$. Suppose also $\T$ is below $\delta_N$. Say $\T$ is $\Gamma$-short if for all limit $\xi \leq lh(\T)$, $Lp^\Gamma(\M(\T\upharpoonright\xi))\models \delta(\T\upharpoonright\xi)$ is not Woodin. Otherwise, say $\T$ is $\Gamma$-maximal.
\end{definition}

\begin{definition}
    Let $N$ be a $\Gamma$-suitable ($\Gamma$-ss) premouse with an $(\omega_1,\omega_1)$-iteration strategy $\Sigma$. Say $\Sigma$ is fullness-preserving if whenever $P$ is an iterate of $N$ by $\Sigma$ via an iteration below $\delta_N$, then
    \begin{enumerate}
        \item if the branch to $P$ does not drop, then $P$ is $\Gamma$-suitable ($\Gamma$-ss), and
        \item if the branch to $P$ does drop, then $P$ has an $\omega_1$-iteration strategy in $J_{\alpha_0}(\mathbb{R})$.
    \end{enumerate}
\end{definition}

\begin{remark}
\label{no gamma woodin gives delta strat}
    Let $N$ be a $\Gamma$-suitable (or $\Gamma$-ss) mouse with a fullness-\\preserving iteration strategy $\Sigma$. Suppose $P\triangleleft N|\delta_N$, and $\Sigma'$ is the iteration strategy for $P$ given by restricting the domain of $\Sigma$ to trees on $P$. Suppose $\T$ is an iteration tree on $P$ according to $\Sigma'$. Then the branch $b$ through $\T$ chosen by $\Sigma'$ can be determined from $\Q(\T)$. And $\Q(\T)$ is the unique $\M(\T)$-mouse projecting to $\omega$ with an iteration strategy in $\bm{\Delta}$. It follows from Remark \ref{form of ind-like} and the uniqueness of $\Q(\T)$ that $\Sigma'$ is coded by a set in $\bm{\Delta}$.
\end{remark}

\begin{definition}
    Let $\T$ be a $\Gamma$-maximal iteration tree on a $\Gamma$-suitable (or $\Gamma$-ss) premouse $N$ and let $b$ be a cofinal branch through $\T$. Say $b$ respects $\vec{G}_n$ if $i^\T_b(\tau^N_{k,\eta}) = \tau^{M^\T_b}_{k,i_b(\eta)}$ for all $k < n$ and every cardinal $\eta$ of $N$ above $\delta_N$.
\end{definition}

\begin{definition}
     Let $N$ be a $\Gamma$-suitable (or $\Gamma$-ss) mouse with a fullness-\\preserving iteration strategy $\Sigma$. Say $\Sigma$ is guided by $\G$ if whenever $\T$ is an iteration tree according to $\Sigma$ of limit length and $b = \Sigma(\T)$, then
    \begin{enumerate}
        \item if $\T$ is $\Gamma$-short, then $\Q(b,\T)$ exists and $\Q(b,\T) \in Lp^\Gamma(\M(\T))$, and
        \item if $T$ is $\Gamma$-maximal, then $\Sigma(b)$ respects $\vec{G}_n$ for all $n\in\omega$.
    \end{enumerate}
\end{definition}

\begin{lemma}
\label{guided strat not in gamma}
    If $N$ is $\Gamma$-suitable (or $\Gamma$-ss) and $\Sigma$ is an iteration strategy for $N$ which is guided by $\mathcal{G}$, then $\Sigma$ is not in $\bm{\Gamma}$.
\end{lemma}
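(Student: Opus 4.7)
The plan is to derive a contradiction from $\Sigma \in \bm{\Gamma}$ by showing that every $G_n$ admits a $\bm{\Gamma}$-definition computed from $\Sigma$. Since $\G$ is a self-justifying system closed under complements, there is $m \in \omega$ with $G_m = \R \setminus G_1$, so this forces $G_1 \in \bm{\Gamma} \cap \bm{\Gamma^c} = \bm{\Delta_\Gamma}$, contradicting that $G_1$ is universal $\bm{\Gamma}$ (which would make $\bm{\Gamma}$ self-dual, contrary to Remark \ref{form of ind-like}, as $\bm{\Gamma} = \bm{\Sigma_1}(J_{\alpha_0}(\R))$ at a gap-beginning ordinal is non-self-dual).

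Fix $x \in \R$. By the genericity iteration (Theorem \ref{genericity iteration for Col}) applied along $\Sigma$, there exist a $\Sigma$-iteration tree $\T$ on $N$ below $\delta_N$ with non-dropping main branch $b$, last model $P := M_b^\T$, and a $Col(\omega, \delta_P)$-generic $g$ over $P$ with $x \in P[g]$. By fullness-preservation, $P$ is $\Gamma$-suitable (or $\Gamma$-ss), and the iteration map $\pi := i_b^\T : N \to P$ is defined. The central technical claim is that $\pi(\tau^N_n) = \tau^P_n$ for every $n$: at $\Gamma$-maximal limit stages this is clause (2) in the definition of ``guided by $\G$,'' while at $\Gamma$-short limit stages and successor steps it follows from fullness-preservation together with the uniqueness of the canonical term in $P$ satisfying the capture condition, invoking Lemma \ref{term relation condensation} applied to the hull in $P$ generated by the images of the standard terms. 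By Corollary \ref{standard terms capture}, $\tau^P_n$ weakly captures $G_n$ over $P$, so for any $Col(\omega, \delta_P)$-standard name $\sigma \in P$ with $\sigma[g] = x$, one has $x \in G_n$ iff $\exists p \in g\ (\sigma, p) \in \tau^P_n$.

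From this one reads off, for each $n \in \omega$,
\[
x \in G_n \iff \exists (\T, b, g, \sigma, p)\ \bigl[b = \Sigma(\T) \wedge p \in g \wedge \sigma[g] = x \wedge (\sigma, p) \in i_b^\T(\tau^N_n)\bigr],
\]
where the matrix is arithmetic in its parameters except for ``$b = \Sigma(\T)$,'' which is $\bm{\Gamma}$ by assumption. Closure of $\bm{\Gamma}$ under real existential quantification (from inductive-likeness) places every $G_n$ in $\bm{\Gamma}$, yielding the contradiction above. The principal obstacle is verifying the commutation $\pi(\tau^N_n) = \tau^P_n$ at $\Gamma$-short stages, since the definition of ``guided by $\G$'' only explicitly guarantees respecting at $\Gamma$-maximal stages; once handled by the condensation argument sketched above, the remainder is a routine complexity calculation.
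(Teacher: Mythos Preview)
Your approach is essentially the paper's: both argue that a universal $\bm{\Gamma^c}$ set in $\G$ (the paper picks one directly; you take the complement of $G_1$, which is in $\G$ since a sjs is closed under complements) becomes $\bm{\Gamma}$-definable from $\Sigma$ via a genericity iteration together with term capture in the iterate, contradicting non-self-duality of $\bm{\Gamma}$. The use of $Col(\omega,\delta_P)$ in place of $Ea_{N^*}$ is cosmetic.

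The subtlety you flag about term-preservation at $\Gamma$-short stages is a real one, and the paper's one-line proof glosses over the same point: it writes $\tau^{N^*}_n$ without saying how this element of $N^*$ is identified from the iteration data alone, which is what is needed for the displayed equivalence to define a $\bm{\Gamma}$ set. However, your proposed resolution via Lemma~\ref{term relation condensation} does not work as written. That lemma is a \emph{pull-back} statement: its hypothesis is that the capturing terms of the \emph{target} model already lie in the range of the embedding, and its conclusion is that their preimages capture in the source. Applied to the iteration map $\pi:N\to P$, you would need $\tau^P_{n,\eta}\in\mathrm{range}(\pi)$ as input --- but that is precisely the equality $\pi(\tau^N_{n,\bar\eta})=\tau^P_{n,\eta}$ you are trying to establish. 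Passing to ``the hull in $P$ generated by the images of the standard terms'' does not help either: that hull contains $\pi(\tau^N_{n,\bar\eta})$ by construction, not $\tau^P_{n,\eta}$, and these coincide only if the conclusion already holds. So the argument is circular at exactly the step you identify as the principal obstacle.

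In short: the overall strategy and the complexity calculation are correct and match the paper, but the justification you offer for $\pi(\tau^N_n)=\tau^P_n$ at short stages is not valid. One either needs an independent push-forward argument for term-preservation along short branches, or one should note (as the paper implicitly does) that in every application the strategy in question is obtained from Theorem~\ref{nice strategy exists} or Lemma~\ref{strle strat f-p and guided}, where full term-preservation along all iterates is established directly.
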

\begin{proof}
    There is $n\in\omega$ such that $G_n$ is a universal $\bm{\Gamma^c}$-set. Then $y\in G_n$ if and only if there exists a countable, complete iterate $N^*$ of $N$ according to $\Sigma$ and $g\in \R$ which is $Col(\omega,\R)$-generic over $N^*$ such that $y\in\tau^{N^*}_n[g]$. Since $\bm{\Gamma}$ is closed under projection, if $\Sigma$ were in $\bm{\Gamma}$, $G_n$ would also be in $\bm{\Gamma}$.
\end{proof}

\begin{theorem}[Woodin]
\label{nice strategy exists}
    For any $x\in HC$, there is a (unique) $\omega$-sound, $\Gamma$-suitable $x$-mouse $W_x$ projecting to $x$ with a (unique) iteration strategy that is fullness-preserving, condenses well,\footnote{In the sense of Definition 5.3.7 of \cite{cmi}.} and is guided by $\G$. Similarly, there is  a (unique) $\omega$-sound, $\Gamma$-ss $x$-mouse $M_x$ projecting to $x$ with a (unique) iteration strategy that is fullness-preserving, condenses well, and is guided by $\G$.
\end{theorem}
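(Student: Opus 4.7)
The approach is to realize $W_x$ and $M_x$ as transitive collapses of countable hulls of an HOD-like inner model associated to $\bm{\Gamma}$, and to define the desired iteration strategies by combining the $\mathcal{Q}$-structure method for $\Gamma$-short trees with the term-condensation method for $\Gamma$-maximal trees. The key background is already in place: $\mathcal{G}$ is a sjs contained in $OD^{<\beta'}$ containing a universal $\bm{\Sigma_1}(J_{\alpha_0}(\mathbb{R}))$-set, canonical term relations $\tau^N_{G_n,\eta}$ exist inside any $\Gamma$-suitable or $\Gamma$-ss $N$ by Lemma \ref{terms exist}, and they condense well by Lemma \ref{term relation condensation}.

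\textbf{Existence.} Working inside $J_{\beta'+1}(\R)$, perform the standard HOD-analysis (carried out in \cite{cmi} for inductive-like pointclasses) to obtain a fine-structural inner model $\mathcal{H}$ whose extender sequence codes $\mathcal{G}$ through the relations $\tau^{\mathcal{H}}_{G_n,\eta}$, in which there is a Woodin cardinal $\delta$ closed under $Lp^\Gamma$, and an inaccessible $\nu > \delta$ such that $\mathcal{H}\models (\xi^+) = Lp^\Gamma(\mathcal{H}|\xi)$ for all $\xi \geq \delta$. Given $x \in HC$, take a countable $\Sigma_1$-elementary hull $\pi: N \to \mathcal{H}|\theta$ with $\theta$ a large enough cardinal of $\mathcal{H}$, ensuring $x$ and every $\tau^{\mathcal{H}}_{G_n,\eta}$ with $\eta\geq \delta$ a cardinal of $\mathcal{H}|\theta$ lies in $\operatorname{ran}(\pi)$. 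Let $W_x$ be the transitive collapse cut at (the preimage of) $\delta$, then closed up into a full $\omega$-sound $\Gamma$-suitable structure projecting to $x$ by taking finitely many $Lp^\Gamma$ closures as dictated by the definition. Lemma \ref{term relation condensation} and the condensation properties of $\mathcal{H}$ give that $W_x$ is $\Gamma$-suitable. For $M_x$, include $\nu$ in the hull and cut off at (the preimage of) $\nu^+$ instead; condition (4) of Definition \ref{ss definition} is inherited from $\mathcal{H}$.

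\textbf{The strategy.} Define $\Sigma$ on iteration trees $\mathcal{T}$ on $W_x$ (respectively $M_x$) as follows. For trees above $\delta_{W_x}$, iterability is inherited from the $\bm{\Delta}$-strategies on the initial segments above $\delta_{W_x}$ granted by $\Gamma$-suitability. For trees below $\delta_{W_x}$ split into cases: if $\mathcal{T}$ is $\Gamma$-short, then $\Gamma$-fullness of $W_x$ implies $Lp^\Gamma(\mathcal{M}(\mathcal{T}))$ contains a $\mathcal{Q}$-structure $\mathcal{Q}(\mathcal{T})$ identifying by Theorem \ref{branch uniqueness} a unique cofinal well-founded branch $b$, and we set $\Sigma(\mathcal{T}) = b$. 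If $\mathcal{T}$ is $\Gamma$-maximal, let $\Sigma(\mathcal{T})$ be the unique cofinal well-founded branch $b$ such that $i^{\mathcal{T}}_b(\tau^{W_x}_{G_n,\eta}) = \tau^{M^{\mathcal{T}}_b}_{G_n, i_b(\eta)}$ for every $n\in\omega$ and every cardinal $\eta \geq \delta_{W_x}$ of $W_x$. Existence of this branch is proved by lifting $\mathcal{T}$ through $\pi$ to a tree on $\mathcal{H}$, forming its direct limit inside $V$, and pulling back the correct branch via the hull, then applying Lemma \ref{terms condense} to see that term relations are preserved. Uniqueness follows from Theorem \ref{branch uniqueness} applied to the sets $G_n$: two distinct branches respecting $\vec{G}_n$ would disagree on a set reflecting that $\delta(\mathcal{T})$ is not Woodin, contradicting $\Gamma$-maximality. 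The properties of fullness-preservation, condensing well, and guidance by $\mathcal{G}$ are then direct consequences of Lemmas \ref{terms condense} and \ref{term relation condensation}.

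\textbf{Uniqueness and main obstacle.} Suppose $W, W'$ are two candidate $\omega$-sound $\Gamma$-suitable $x$-mice projecting to $x$, with $\mathcal{G}$-guided fullness-preserving strategies $\Sigma, \Sigma'$. Simultaneously coiterate $W$ and $W'$ by $\Sigma, \Sigma'$. Because both strategies respect term relations for every $G_n$, and because a universal $\bm{\Gamma^c}$-set lies in $\mathcal{G}$, the common $\bm{\Gamma^c}$-facts on both sides force the coiteration to agree at every stage; the comparison must therefore terminate with a common iterate, and by $\omega$-soundness plus $\rho_\omega = x$ on both sides the coiteration is trivial, so $W=W'$ and $\Sigma = \Sigma'$. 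The same argument handles $M_x$. I expect the main obstacle to be verifying, for $\Gamma$-maximal trees, that a branch chosen by a genuine iteration strategy obtained by lifting through $\pi$ actually respects $\vec{G}_n$; this is the heart of Woodin's construction and rests on the combination of $\mathcal{G}\subset OD^{<\beta'}$ (so that the terms are definable inside $\mathcal{H}$ and hence in $N$) with Lemma \ref{terms condense} (so that term evaluation survives the collapse).
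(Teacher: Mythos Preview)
The paper does not give its own proof of this theorem; it simply cites Chapter 5 of \cite{cmi} for the $\Gamma$-suitable case and asserts the $\Gamma$-ss case follows. So there is no in-paper argument to compare against, and your proposal is really a sketch of what you hope the cited argument looks like.

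Your sketch has a genuine circularity problem. You begin by positing a fine-structural inner model $\mathcal{H}$ with a $\Gamma$-Woodin $\delta$, $Lp^\Gamma$-closure at every level, and enough iterability that you can ``lift $\mathcal{T}$ through $\pi$ to a tree on $\mathcal{H}$, form its direct limit inside $V$, and pull back the correct branch.'' But the existence of such an $\mathcal{H}$, together with an iteration strategy for it that moves the term relations correctly, is essentially the content of the theorem. In the actual development (as in \cite{cmi}), the order is reversed: one first produces $\Gamma$-suitable premice with (at first only) \emph{quasi}-iteration strategies via the core model induction, uses branch-existence arguments and boundedness to upgrade to genuine $\mathcal{G}$-guided strategies, and only then does one know that the direct limit of such mice is the HOD-like model you want to start from. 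Taking a hull of $\mathcal{H}$ to manufacture $W_x$ presupposes exactly the conclusion.

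A second, more local gap is in your treatment of $\Gamma$-maximal trees. You say existence of the term-respecting branch follows by lifting through $\pi$, but $\pi$ is a map into a fixed uncountable $\mathcal{H}|\theta$, not an iteration strategy; lifting a tree on the collapse does not automatically yield a tree on $\mathcal{H}$ with a canonical branch unless $\mathcal{H}$ itself is already equipped with a suitable strategy. The genuine argument here (due to Woodin) uses a delicate mixture of generic comparison, the boundedness of $\Gamma$-short trees, and the self-justifying system to show that for a cofinal set of $n$ there is a branch respecting $\vec{G}_n$, and then merges these into a single branch. Your uniqueness argument for $W_x$ via coiteration is fine in spirit, and your identification of the ``main obstacle'' is accurate, but the resolution you offer does not discharge it.
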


Chapter 5 of \cite{cmi} demonstrates the existence of such a $\Gamma$-suitable mouse. It is not difficult to see this gives the existence of the required $\Gamma$-ss mouse as well.

For any $\Gamma$-suitable (or $\Gamma$-ss) premouse $N$ and any $n\in\omega$, let
\begin{align*}
    \gamma^N_n = Hull^N(\{\tau^N_i: i < n\}) \cap \delta_N.
\end{align*}

The regularity of $\delta_N$ in $N$ implies each $\gamma^N_n$ is an ordinal. Lemma \ref{term relation condensation} can be used to show:

\begin{fact}
    $\langle \gamma^N_n : n \in \omega \rangle$ is cofinal in $\delta_N$.
\end{fact}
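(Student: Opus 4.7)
The plan is to argue by contradiction via Lemma \ref{term relation condensation}. Suppose $\gamma := \sup_n \gamma^N_n < \delta_N$; I will derive a contradiction with clause~(3) of $\Gamma$-suitable (clause~(5) of $\Gamma$-ss) applied to $N$.

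First I form the countable Skolem hull $H = \text{Hull}^N(\{\tau^N_i : i \in \omega\})$, noting $H \cap \delta_N = \gamma$. Before applying Lemma \ref{term relation condensation}, I will need $H$ to contain $\tau^N_{B,\eta}$ for every $B \in \G$ and every cardinal $\eta \geq \delta_N$ of $N$. Here a short check is required: each such $\tau^N_{B,\eta}$ is $N$-definable from $\tau^N_{B,\delta_N}$ together with $\eta$ via the internal forcing relation of $N$, so throwing these terms into $H$ along with the relevant $\eta$'s, which all lie above $\delta_N$, keeps $H$ countable and does not enlarge $H \cap \delta_N$ past $\gamma$. Taking $\sigma : \bar N \to N$ to be the uncollapse of the transitive collapse of this enlarged hull, $\sigma$ is elementary with $\text{crit}(\sigma) = \gamma$ and $\sigma(\gamma) = \delta_N$, so that $\delta_{\bar N} = \gamma$.

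Next I apply Lemma \ref{term relation condensation} to obtain that $\bar N$ is $\Gamma$-suitable (or $\Gamma$-ss). Since $\sigma$ has critical point $\gamma = \delta_{\bar N}$, $\sigma$ is the identity on every element of $\bar N$-rank below $\gamma$, so $\bar N|\gamma = N|\gamma$ and hence $Lp^\Gamma(\bar N|\gamma) = Lp^\Gamma(N|\gamma)$. But $\bar N$'s $\Gamma$-suitability forces $\gamma = \delta_{\bar N}$ to be Woodin in $\bar N$, as witnessed by extenders on the extender sequence of $\bar N|\gamma$, so $\gamma$ is already Woodin in $Lp^\Gamma(\bar N|\gamma) = Lp^\Gamma(N|\gamma)$. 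As $\gamma < \delta_N$, this directly contradicts the $\Gamma$-suitability of $N$.

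The main obstacle will be the definability claim used to enlarge $H$ without growing $H \cap \delta_N$: I must confirm that every $\tau^N_{B,\eta}$ for $\eta > \delta_N$ a cardinal of $N$ is obtainable in $N$ from $\tau^N_{B,\delta_N}$ and $\eta$, so that its presence in the Skolem hull is forced by closure under $N$-definable functions without producing any new ordinals below $\delta_N$. A secondary subtlety is justifying $\bar N|\gamma = N|\gamma$, which I will get from the standard mouse-theoretic fact that a $\Sigma_1$-elementary embedding between premice is the identity on all sets of rank below its critical point.
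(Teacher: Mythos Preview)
Your proposal is correct and follows exactly the route the paper indicates: the paper's entire proof is the sentence preceding the Fact, that Lemma \ref{term relation condensation} can be used to show it. You have also correctly isolated the one point requiring care --- getting all $\tau^N_{B,\eta}$ for cardinals $\eta \geq \delta_N$ into the range of the collapse map without enlarging the hull's trace on $\delta_N$ --- which the paper leaves implicit.
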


\begin{lemma}
\label{branch agreement}
    Let $\T$ be a normal iteration tree on a $\Gamma$-suitable (or $\Gamma$-ss) premouse $N$ and let $b$ and $c$ be branches through $\T$ which respect $\vec{G}_n$. Then $i^\T_b\upharpoonright\gamma^N_n = i^\T_c\upharpoonright\gamma^N_n$. Moreover, if $b$ and $c$ both respect $\vec{G}_n$ for all $n$, then $b=c$.
\end{lemma}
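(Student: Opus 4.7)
The plan is to trace each ordinal in $\gamma^N_n$ back through its Skolem-term definition and use the respecting condition to force both embeddings to agree on such ordinals. Fix $\alpha\in \gamma^N_n$, so $\alpha = t^N[\tau^N_0,\ldots,\tau^N_{n-1}]$ for some definable Skolem term $t$. Although the respecting hypothesis is stated only for cardinals $\eta$ of $N$ strictly above $\delta_N$, this suffices to pin down $i_b^\T(\tau^N_k)$: the terms $\tau^N_{k,\delta_N}$ and $\tau^N_{k,\eta}$ (for $\eta>\delta_N$ a cardinal of $N$) are $OD$-interdefinable over $N$, so the hull computing $\gamma^N_n$ is unchanged if each $\tau^N_i$ is replaced by $\tau^N_{i,\eta}$. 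Hence $i_b^\T(\tau^N_k) = \tau^{M_b^\T}_k$ and $i_c^\T(\tau^N_k) = \tau^{M_c^\T}_k$ for $k<n$, so by elementarity
\begin{align*}
    i_b^\T(\alpha) &= t^{M_b^\T}[\tau^{M_b^\T}_0,\ldots,\tau^{M_b^\T}_{n-1}],\\
    i_c^\T(\alpha) &= t^{M_c^\T}[\tau^{M_c^\T}_0,\ldots,\tau^{M_c^\T}_{n-1}].
\end{align*}

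Next I argue these are equal. Respecting $\vec{G}_n$ --- in the intended use, where $b,c$ are non-dropping wellfounded branches arising from a fullness-preserving, $\G$-guided strategy --- forces $M_b^\T$ and $M_c^\T$ to be $\Gamma$-suitable (or $\Gamma$-ss), so by clause (4) of Definition \ref{ss definition} we have
\[
    M_b^\T|(\delta(\T)^+)^{M_b^\T} = Lp^\Gamma(\M(\T)) = M_c^\T|(\delta(\T)^+)^{M_c^\T};
\]
call this common initial segment $S$. Every $(\sigma,p)\in \tau^{M_b^\T}_k$ has $\sigma$ a $Col(\omega,\delta(\T))$-name of hereditary size at most $\delta(\T)$, hence $\sigma\in S$; and genericity for $Col(\omega,\delta(\T))$ over $M_b^\T$ is determined by its dense subsets, all of which lie in $S$. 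So $\tau^{M_b^\T}_k = \tau^S_{k,\delta(\T)} = \tau^{M_c^\T}_k$. Since $\alpha<\delta_N$, both Skolem-term values lie below $\delta(\T)$, and a reflection argument localizes the Skolem-term evaluation to $S$, yielding $i_b^\T(\alpha)=i_c^\T(\alpha)$.

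For the moreover, assume $b,c$ respect every $\vec{G}_n$. Then $i_b^\T\upharpoonright\gamma^N_n=i_c^\T\upharpoonright\gamma^N_n$ for all $n$, so by cofinality of the $\gamma^N_n$ in $\delta_N$ (the preceding Fact) one has $i_b^\T\upharpoonright\delta_N=i_c^\T\upharpoonright\delta_N$, which extends to $i_b^\T\upharpoonright(N|\delta_N)=i_c^\T\upharpoonright(N|\delta_N)$. Theorem \ref{branch uniqueness} then gives $b=c$: if $b\neq c$ there would be $\kappa<\delta(\T)$ that is $A$-reflecting in $\delta(\T)$ inside $M_b^\T$ via $\M(\T)$-extenders for some $A\in M_b^\T\cap M_c^\T$, which together with the identification of $\Gamma$-closures above contradicts the uniqueness of the Woodin cardinal in the $\Gamma$-suitable (or ss) iterate. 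I expect the main obstacle to be the Skolem-term reflection step: confirming that $t$ evaluated at $\tau^{M_b^\T}_k$ inside $M_b^\T$ yields the same ordinal as at $\tau^{M_c^\T}_k$ inside $M_c^\T$, which hinges on the shared structure $S=Lp^\Gamma(\M(\T))$ being a uniform witness for both computations.
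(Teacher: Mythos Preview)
The paper does not give a proof here; it simply cites Lemma~6.25 of \cite{hacm}. Your attempt has the right architecture but contains one real gap and misses a simplification that would dissolve the very step you flag as the main obstacle.

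Granting $\Gamma$-suitability of $M_b^\T$ and $M_c^\T$, you argue they agree up to $S = Lp^\Gamma(\M(\T))$ and then invoke an unspecified ``reflection argument'' to push the Skolem-term computation down into $S$. But under your suitability assumption the models agree \emph{everywhere}, not just up to $S$: a $\Gamma$-suitable premouse extending $\M(\T)$ with Woodin $\delta(\T)$ is uniquely $\bigcup_{i<\omega} N_i$ with $N_0 = Lp^\Gamma(\M(\T))$ and $N_{i+1} = Lp^\Gamma(N_i)$, so $M_b^\T = M_c^\T$ outright. Then $\vec\tau^{M_b^\T} = \vec\tau^{M_c^\T}$ and $t^{M_b^\T}[\vec\tau] = t^{M_c^\T}[\vec\tau]$ trivially; no reflection is needed.

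The actual burden, which you set aside with ``in the intended use,'' is establishing that $M_b^\T$ and $M_c^\T$ are $\Gamma$-suitable in the first place. This does not follow from the bare definition of ``respects $\vec G_n$''; your appeal to clause~(4) of Definition~\ref{ss definition} (or its suitable analogue) already presupposes it. This is where the term-condensation machinery (Lemma~\ref{term relation condensation}) and the comparison arguments of \cite{hacm} do their work, and it is the substantive content of the cited lemma. So the gap is not the reflection step but the suitability step you assumed away.

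For the ``moreover,'' your use of Theorem~\ref{branch uniqueness} does not produce a contradiction as written: the zipper argument would only show that $\delta(\T)$ is Woodin with respect to sets in $M_b^\T\cap M_c^\T$, but for a $\Gamma$-maximal tree $\delta(\T)$ \emph{is} the Woodin of the suitable model over $\M(\T)$, so nothing is contradicted. The correct route is that once $i_b^\T \upharpoonright (N|\delta_N) = i_c^\T \upharpoonright (N|\delta_N)$ and $M_b^\T = M_c^\T$, one recovers the branch of a normal tree from its direct-limit embedding, yielding $b = c$.
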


See Lemma 6.25 of \cite{hacm}. 

Lemma \ref{branch agreement} implies if $b$ is the branch through $\T$ chosen by the nice iteration strategy for a $\Gamma$-suitable premouse given by Theorem \ref{nice strategy exists} and $c$ is any branch respecting $\vec{G}_n$, then $i^\T_b$ and $i^\T_c$ agree up to $\gamma^N_n$. In particular, to track the iteration of a $\Gamma$-suitable mouse up to some point below its least Woodin, it is sufficient to know finitely many of the sets in $\G$.

Suppose $M$ is a countable premouse with an $(\omega_1,\omega_1+1)$-iteration strategy.\footnote{The suitable mice from Theorem \ref{nice strategy exists} satisfy this. We only explicitly required these to have $(\omega_1,\omega_1)$-iteration strategies, but since $ZF + AD$ implies $\omega_1$ is measurable, an $(\omega_1,\omega_1)$-iteration strategy induces an $(\omega_1,\omega_1+1)$-iteration strategy.} Together, the Comparison Lemma and the Dodd-Jensen Lemma imply the collection of countable, complete iterates of $M$, together with the iteration maps between them, forms a directed system.

\cite{hodbelowtheta} presents work of Steel and Woodin analyzing the direct limit of all countable, complete iterates of $M^\#_\omega$. This direct limit cut to its least Woodin is $(HOD||\Theta)^{L(\R)}$. \cite{hacm} goes further in showing that the entire class $HOD^{L(\R)}$ is a strategy mouse. The iteration maps through trees on $M^\#_\omega$ are approximated using indiscernibles, analogously to the use of terms in Lemma \ref{branch agreement}. These approximations are merged to give an ordinal definable definition of the direct limit in $L(\R)$. In particular, initial segments of the direct limit maps are definable from finitely many indiscernibles.

In place of $M^\#_\omega$, we shall analyze the direct limit of a $\Gamma$-suitable mouse and prove that portions of the direct limit maps are definable within a $\Gamma$-ss mouse.

Our task is simpler in that we only need to reach up to $\bm{\delta_\Gamma^+}$, which we show in Section \ref{length of direct limit section} is below the least Woodin of our direct limit. So a single approximation using only finitely many sets from $\G$ will suffice. Another advantage we have is that there is no harm in working over a real parameter, so we can work in a $\Gamma$-ss mouse over a real which codes $W_0$. On the other hand, we will have some extra work to do in Section \ref{definability section} ensuring enough information about $\Gamma$ and $\G$ is definable in a $\Gamma$-ss mouse before we internalize the directed system in Section \ref{internalization section}.

\cite{hacm} also makes use of the fact that the derived model of $M^\#_\omega$ is essentially $L(\R)$. So for $x\in M^\#_\omega \cap \R$, a $\Sigma^2_1$ statement about $x$ is true if and only if it holds in the derived model of $M^\#_\omega$. In particular, there is a natural way to ask about $\Sigma^2_1$ truth inside of $M^\#_\omega$. A second, though minor, inconvenience of having to use a $\Gamma$-suitable mouse is we cannot talk about its derived model, since it only has one Woodin. Instead we will use the fine-structural witness condition of \cite{cmi}.

\begin{remark}
    We can associate to any $\Sigma_1$-formula $\phi$ a sequence of formulas $\langle \phi^k:k<\omega\rangle$ such that for any ordinal $\gamma$ and any real $z$, $J_{\gamma+1}(\R)\models \phi[z] \iff (\exists k) J_\gamma(\R) \models \phi^k[z]$. Moreover, the map $\phi \to \langle \phi^k:k<\omega\rangle$ is recursive.
\end{remark}

\begin{definition}
    Suppose $\phi(v)$ is a $\Sigma_1$-formula and $z\in\R$. A $\langle \phi,z\rangle$-witness is an $\omega$-sound $z$-mouse $N$ in which there are $\delta_0 < ... < \delta_9$, $\mathcal{S}$, and $\T$ such that N satisfies the formulae expressing
    \begin{enumerate}
        \item ZFC,
        \item $\delta_0 < ... < \delta_9$ are Woodin,
        \item $\mathcal{S}$ and $\T$ are trees on some $\omega \times \eta$ which are absolutely complementing in $V^{Col(\omega,\delta_9)}$, and
        \item For some $k < \omega$, $\rho[T]$ is the $\Sigma_{k+3}$-theory (in the language with names for each real) of $J_\gamma(\R)$, where $\gamma$ is least such that $J_\gamma(\R) \models \phi^k[z]$.
    \end{enumerate}
\end{definition}

Other than iterability, the rest of the properties of being a $\langle \phi,z\rangle$-witness are first order. The following two lemmas illustrate the usefulness of this definition.

\begin{lemma}
\label{witness gives truth}
    If there is a $\langle \phi,z\rangle$-witness, then $L(\R)\models \phi[z]$.
\end{lemma}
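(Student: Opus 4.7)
The plan is to use the nine Woodin cardinals $\delta_0<\cdots<\delta_8$ below $\delta_9$ inside the witness $N$ to transfer truth from the interior of $N$ up to $V$. By clause~(4) of the definition, inside $N$ the projection $\rho[\mathcal{T}]$ is the $\Sigma_{k+3}$-theory of the least $J_\gamma(\R)^N$ satisfying $\phi^k[z]$. If one can show that $\rho[\mathcal{T}]$, computed in $V$, still codes a genuine $\Sigma_{k+3}$-theory of some $J_\gamma(\R)^V$ satisfying $\phi^k[z]$, then the recursive decomposition $\phi\mapsto\langle\phi^k:k<\omega\rangle$ from the preceding remark immediately yields $J_{\gamma+1}(\R)^V\models\phi[z]$, hence $L(\R)\models\phi[z]$. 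So the entire proof reduces to an absoluteness claim about $\rho[\mathcal{T}]$.

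To establish that absoluteness I would first fix an arbitrary real $x\in V$ and, using Theorem~\ref{genericity iteration for ext alg} and Corollary~\ref{genericity iteration for Col}, iterate $N$ above $\delta_8$ to a countable iterate $N^{*}$ in which $x$ lies in a $\mathrm{Col}(\omega,i(\delta_9))$-generic extension $N^{*}[g]$; iterability of $N$ is precisely what makes this possible. The images $\mathcal{S}^{*},\mathcal{T}^{*}$ remain absolutely complementing in $N^{*}[g]$. The nine Woodin cardinals $\delta_0,\dots,\delta_8$ below $\delta_9$ (and their images) supply the standard Woodin/Martin--Steel tree-production argument, which shows that the membership of any given real in $\rho[\mathcal{T}^{*}]$ is independent of the choice of $N^{*}$ and $g$, and agrees with $\rho[\mathcal{T}]$ evaluated in any sufficiently large common generic extension of $V$. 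This defines a set $A\subseteq\R^V$ equal to $\rho[\mathcal{T}]^V$ such that, for every real $x$, $x\in A$ iff $x\in\rho[\mathcal{T}^{*}]^{N^{*}[g]}$ for some (equivalently, every) such $N^{*},g$.

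Finally, within each $N^{*}[g]$ the set $A\cap N^{*}[g]=\rho[\mathcal{T}^{*}]$ is, by the witness condition pulled up along the iteration map, the $\Sigma_{k+3}$-theory of the least $J_{\gamma^{*}}(\R^{N^{*}[g]})\models\phi^k[z]$. Since these local theories are coherent across iterates (they are all restrictions of the single set $A$), they fit together to produce an ordinal $\gamma\in V$ such that $A$ is the $\Sigma_{k+3}$-theory of $J_\gamma(\R)^V$ and $J_\gamma(\R)^V\models\phi^k[z]$. The $\phi\leftrightarrow\langle\phi^k\rangle$ correspondence then delivers $L(\R)\models\phi[z]$. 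The main obstacle is the tree-production step in the second paragraph: making precise that the nine Woodin cardinals, together with iterability, truly force $\rho[\mathcal{T}]^V$ to be absolutely complemented and to coincide locally with the $N^{*}[g]$-projections. This is where the generous supply of Woodins in the witness condition is genuinely consumed, and it is a standard but substantial application of Woodin's generic-absoluteness machinery.
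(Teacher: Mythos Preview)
The paper does not supply a proof of this lemma; it is stated as background from the core model induction (implicitly \cite{cmi}), so there is no argument of the paper's to compare against.

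Your outline is broadly correct, but you have misplaced where the Woodin cardinals are consumed. You write that the nine lower Woodins ``supply the standard Woodin/Martin--Steel tree-production argument, which shows that the membership of any given real in $\rho[\mathcal{T}^{*}]$ is independent of the choice of $N^{*}$ and $g$.'' But the absolutely complementing trees $\mathcal{S},\mathcal{T}$ are already handed to you by clause~(3) of the witness definition; no tree production is needed in this direction. The ten Woodins are spent in the \emph{construction} of such a witness (Lemma~\ref{truth gives witness}), where one must build scales internally to get the complementing trees. For the present lemma you only need: iterability of $N$ to run a genericity iteration making an arbitrary real of $V$ appear in some $N^{*}[g]$ with $g$ generic for $\mathrm{Col}(\omega,i(\delta_9))$; elementarity of the iteration map to push clauses~(3) and~(4) to $N^{*}$; and absolute complementation of $i(\mathcal{S}),i(\mathcal{T})$ in $N^{*}[g]$, which already forces $\rho[i(\mathcal{T})]\cap N^{*}[g]$ to be independent of $g$ and to agree across iterates on common reals. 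No further appeal to Woodin cardinals is required for that step.

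The genuinely delicate point is your last paragraph: passing from ``$A\cap N^{*}[g]$ is the $\Sigma_{k+3}$-theory of the least $J_{\gamma^{*}}(\R^{N^{*}[g]})\models\phi^k[z]$'' for each iterate to ``$A$ is the $\Sigma_{k+3}$-theory of some $J_\gamma(\R^V)\models\phi^k[z]$.'' Your sketch (``the local theories cohere and fit together'') is the right idea, and this is standard, but it is the step that actually carries content; the coherence comes from the fact that all the local theories are restrictions of the single set $A$, together with the closure of the language under names for reals. Once that is made precise, the recursive correspondence $\phi\leftrightarrow\langle\phi^k\rangle$ finishes the argument as you say.
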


\begin{lemma}
\label{truth gives witness}
    Suppose $\phi$ is a $\Sigma_1$-formula, $z\in\R$, $\gamma$ is a limit ordinal, and $J_\gamma(\R)\models \phi[z]$. Then there is a $\langle\phi,z\rangle$-witness $N$ such that the iteration strategy for $N$ restricted to countable trees is in $J_\gamma(\R)$. By taking a Skolem hull, we can also ensure $\rho_\omega(N) = \omega$.
\end{lemma}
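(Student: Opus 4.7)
The plan is to convert the given truth $J_\gamma(\R) \models \phi[z]$ into a mouse exhibiting the witness clauses, following the core model induction strategy from \cite{cmi}.

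First, invoke the remark preceding the definition to pick $k < \omega$ and a minimal limit $\gamma' \le \gamma$ with $J_{\gamma'}(\R) \models \phi^k[z]$. Let $T$ be the $\Sigma_{k+3}$-theory of $J_{\gamma'}(\R)$ in the language with a name for each real; then $T \subseteq \R$ is definable over $J_{\gamma'+1}(\R)$ and hence lies in $J_\gamma(\R)$. The target is to build $N$ with ten Woodins $\delta_0 < \dots < \delta_9$ such that inside $N$, after collapsing $\delta_9$, there are absolutely complementing trees $\mathcal{S}, \mathcal{T}$ on some $\omega \times \eta$ with $\rho[\mathcal{T}] = T$.

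Second, obtain the trees externally. Under $ZF + AD + DC + V = L(\R)$, Steel's classification (Theorem \ref{scales classification}) supplies scales at a scaled pointclass refining $\Sigma_{k+3}(J_{\gamma'}(\R))$, and in particular yields trees $\mathcal{S}^*, \mathcal{T}^*$ on some $\omega \times \eta$, $\eta < \Theta$, projecting to $T$ and $\R \setminus T$ and remaining complementary in $V^{Col(\omega, \eta)}$. Both trees, the cardinal $\eta$, and the verification of complementarity are definable in $J_{\gamma + \beta}(\R)$ for a small $\beta$.

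Third, run the core model induction to produce $N$. At levels below inductive-like, the existence of countable, $\omega$-sound $z$-mice of the form $M_{10}^\#(z, A)$ for bounded-Wadge-rank sets $A$ is standard; in the inductive-like case one uses the suitable and super-suitable mouse constructions from Section \ref{suitable mice section}. Applying this with $A = T$, one obtains a countable, iterable, $\omega$-sound $z$-premouse $N$ projecting to $\omega$ with ten Woodins (and further extenders above them) whose iteration strategy is computed from $T$ via the ordinal-definability clauses of the induction. Inside $N$, the Martin--Steel tree-production lemma applied at $\delta_9$ with background extenders reflects the complementarity of $(\mathcal{S}^*, \mathcal{T}^*)$ down to internal trees $(\mathcal{S}, \mathcal{T}) \in N$, delivering clauses (3) and (4) of the witness definition. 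Iterability is witnessed in $J_\gamma(\R)$ because the only non-absolute input needed to compute the strategy is $T$, which itself lies in $J_\gamma(\R)$.

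Finally, take a countable transitive collapse of a $\Sigma_\omega$-Skolem hull of $N$ in the parameters $\{z, \delta_0, \dots, \delta_9, \mathcal{S}, \mathcal{T}\}$. Since all witness clauses except iterability are first-order, and iterability pulls back under such hulls, the collapse remains a $\langle \phi, z\rangle$-witness; standard fine-structure then arranges $\rho_\omega(N) = \omega$. The hard part is the third step: producing $N$ and, crucially, certifying that its countable strategy lies already in $J_\gamma(\R)$, which is exactly where the full strength of the $L(\R)$ core model induction is used.
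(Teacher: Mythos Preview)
The paper does not prove this lemma; it is stated without proof as background drawn from the core model induction in \cite{cmi}. So there is no ``paper's own proof'' to compare against, and your sketch is really a sketch of the \cite{cmi} argument.

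Your outline is the right shape: reflect to a least $\gamma'$ where some $\phi^k$ holds, isolate the $\Sigma_{k+3}$-theory $T$ of $J_{\gamma'}(\R)$, and produce a $z$-mouse with ten Woodin cardinals internally carrying absolutely complementing trees for $T$. That is indeed how the witness is obtained in the core model induction. The Skolem-hull step for $\rho_\omega = \omega$ is fine.

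Where the sketch is thin is exactly where the content lies. Your step~3 invokes ``$M_{10}^\#(z,A)$ for bounded-Wadge-rank sets $A$'' and the ``Martin--Steel tree-production lemma'' as if these were off-the-shelf, but building a $z$-mouse with ten Woodins that \emph{internally} captures $T$ via complementing trees, and whose iteration strategy lives in $J_\gamma(\R)$, is precisely the inductive output of \cite{cmi} and is not a one-line citation. More specifically, your justification that ``the only non-absolute input needed to compute the strategy is $T$'' is not correct: the iteration strategy for such a mouse is guided by $Q$-structures coming from the $Lp$-type operators available at that stage of the induction, not from $T$ itself. The reason the strategy lands in $J_\gamma(\R)$ is that those operators (and hence the relevant $Q$-structures) are already in $J_\gamma(\R)$ by the inductive hypothesis of the core model induction, not because $T\in J_\gamma(\R)$. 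As written, your argument for the key clause --- that the strategy restricted to countable trees is in $J_\gamma(\R)$ --- does not go through; it needs to be replaced by an appeal to the correct inductive hypothesis on mouse operators.
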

\vspace{2mm}
\section{The Inductive-Like Case}
\label{ind-like chapter}

In this section we will prove Theorem \ref{main thm}. We now assume $ZF + AD + DC + V=L(\R)$ and fix a boldface inductive-like pointclass $\bm{\Gamma}$. By a reflection argument, we may assume $\bm{\Gamma} \neq \bm{\Sigma^2_1}$.\footnote{Suppose the theorem fails for $\bm{\Gamma} = \bm{\Sigma^2_1}$. Then an initial segment of $L(\R)$ satisfying a large fragment of $ZF + AD + DC$ satisfies this. Reflecting this gives an inital segment $N$ of $L(\R)$ below $\bm{\delta^2_1}$ such that $\bm{\Gamma}' = (\bm{\Sigma^2_1})^N$ is an inductive-like pointclass in $L(\R)$ and $N$ satisfies that there exists a sequence of distinct $\bm{\Gamma}'$ sets of length $\bm{\delta_\Gamma}^+$. Since $N$ satisfies enough of $ZF + AD + DC + V=L(\R)$, the proof that follows will give a contradiction in $N$. In this case, the iteration strategy for the $\mathbf{\Gamma}'$-suitable mouse used in the proof will not be in $N$. But this does not effect the argument --- it is enough that the strategy is in $L(\R)$.}

Let $\bm{\Delta} = \bm{\Delta_{\Gamma}}$ and let $[\alpha_0,\beta_0],\beta'$, $\Gamma$, and $\G$ be as in Section \ref{suitable mice section}. We will also refer to the mouse operators $x\to W_x$ and $x\to M_x$ from Theorem \ref{nice strategy exists} and use the notation for standard terms from Definition \ref{definition of standard terms}.

In Sections \ref{length of direct limit section} through \ref{internalization section} we analyze the directed system of iterates of a suitable mouse and show the directed system can be approximated inside a larger suitable mouse. Section \ref{strle lemmas section} covers some lemmas about the StrLe construction inside a suitable mouse. Section \ref{reflection section} contains a lemma we will use to obtain witnesses for $\Sigma_1$ statements inside an initial segment of a suitable mouse. Finally, Theorem \ref{main thm} is proven in Section \ref{main thm section}.

One of the key ideas to our proof of Theorem $\ref{main thm}$ is a different coding than the one used in \cite{twoapp} and \cite{hra}. In \cite{hra}, $\bm{\Sigma^1_{2n+2}}$ sets are coded by conditions in the extender algebra at the least Woodin of some complete iterate $N$ of $M^\#_{2n+1}$. The reflection argument from \cite{twoapp} ensures a code for each $\bm{\Sigma^1_{2n+2}}$ set appears below the least $<\delta_N$-strong cardinal $\kappa_N$ of some iterate $N$ (in fact it gives a uniform bound below $\kappa_N$). But this reflection argument depends upon the pointclass $\bm{\Sigma^1_{2n+2}}$ not being closed under coprojection.

Our proof of Theorem \ref{main thm} instead codes $\bm{\Gamma}$-sets by sets of conditions in the extender algebra of some $\Gamma$-suitable mouse $N$. A weaker reflection argument than the one in \cite{twoapp} is used to contain each code in $N|\kappa_N$. This weaker reflection is sufficient for the proof.\\

\subsection{The Direct Limit}
\label{length of direct limit section}

Let $W = W_0$ and let $\mathcal{I}$ be the directed system of countable, complete iterates of $W$ according to its $(\omega_1,\omega_1)$-iteration strategy. Let $M_\infty$ be the direct limit of $\mathcal{I}$. For $M,N\in \mathcal{I}$ and $N$ an iterate of $M$, let $\pi_{M,N}: M \to N$ be the iteration map and $\pi_{M,\infty}: M \to M_\infty$ the direct limit map. Here we demonstrate a few properties of $M_\infty$. The proofs of this section are generalizations of arguments in \cite{ooimt} and \cite{hacm} giving analogous properties of the direct limit of all countable, complete iterates of $M^\#_\omega$.

\begin{lemma}
\label{pwo is long}
    $\kappa_{M_\infty} \leq \bm{\delta_\Gamma}$ \footnote{In fact $\kappa_{M_\infty} = \bm{\delta_\Gamma}$, but we don't need this.}
\end{lemma}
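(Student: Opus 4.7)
The plan is to define a prewellorder $\leq^*$ of length $\kappa_{M_\infty}$ that lies in $\bm{\Delta}$; since $\bm{\delta_\Gamma}$ is (by definition) the supremum of the lengths of $\bm{\Delta}$-prewellorderings, this yields $\kappa_{M_\infty} \leq \bm{\delta_\Gamma}$ immediately. The prewellorder will live on (codes for) pairs $(N,\eta)$ where $N$ is a countable complete iterate of $W$ via a tree below $\delta_W$ according to the strategy $\Sigma$ from Theorem \ref{nice strategy exists}, and $\eta < \kappa_N$. Given two such pairs $(N_1,\eta_1),(N_2,\eta_2)$, I would coiterate $N_1$ and $N_2$ to a common iterate $P$ using $\Sigma$, and set $(N_1,\eta_1) \leq^* (N_2,\eta_2)$ iff $\pi_{N_1,P}(\eta_1) \leq \pi_{N_2,P}(\eta_2)$, a condition independent of the choice of $P$. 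By fullness-preservation of $\Sigma$ and comparison, both sides of the coiteration are non-dropping and terminate at a common $\Gamma$-suitable iterate, and the coiteration stays $\Gamma$-short throughout, since any $\Gamma$-Woodin appearing in a common-part model would descend to a $\Gamma$-Woodin below the Woodin of one of the terminal $\Gamma$-suitable iterates, contradicting $\Gamma$-suitability.

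To see the length is $\kappa_{M_\infty}$, I would use that $(N,\eta)\mapsto \pi_{N,\infty}(\eta)$ sends equivalence classes bijectively to ordinals below $\kappa_{M_\infty}$. Every $\alpha < \kappa_{M_\infty}$ has the form $\pi_{N,\infty}(\eta)$ for some $N \in \I$ because $M_\infty = \bigcup_{N \in \I}\pi_{N,\infty}[N]$, and because $\pi_{N,\infty}$ is order-preserving with $\pi_{N,\infty}(\kappa_N) = \kappa_{M_\infty}$, the preimage $\eta$ lies below $\kappa_N$, putting the pair in the domain.

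The main obstacle will be verifying $\leq^* \in \bm{\Delta}$. The key input is that $\Sigma$ restricted to trees below $\delta_W$ is coded by a set in $\bm{\Delta}$: $\Gamma$-suitability of $W$ implies no proper initial segment of $W|\delta_W$ has a $\Gamma$-Woodin, so every such tree $\T$ is $\Gamma$-short, and the branch chosen by $\Sigma$ is the unique one whose $\Q$-structure is an initial segment of $Lp^\Gamma(\M(\T))$. By Theorem \ref{lp = c}, $Lp^\Gamma = C_\Gamma$ is coded by a set in $\bm{\Delta}$, so the reasoning of Remark \ref{no gamma woodin gives delta strat} gives that this restricted strategy is itself $\bm{\Delta}$. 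Coiterations of countable iterates of $W$ below the Woodins are therefore countable processes coded by reals, and checking that a real codes a valid coiteration according to $\Sigma$ is a $\bm{\Delta}$-condition. Hence ``$(N_1,\eta_1) \leq^* (N_2,\eta_2)$'' is $\exists^\R$ over a $\bm{\Delta}$ matrix, placing it in $\bm{\Gamma}$ (using closure of $\bm{\Gamma}$ under $\exists^\R$); applying the same reasoning to $>^*$ and invoking trichotomy of ordinal comparison yields $\leq^* \in \bm{\Delta}$. This furnishes a $\bm{\Delta}$-prewellorder of length $\kappa_{M_\infty}$ and completes the proof. The delicate point is really the $\bm{\Delta}$-ness (rather than mere $\bm{\Gamma}$-ness) of the restricted strategy, which rests on the uniqueness of the correct $\Q$-structure inside $Lp^\Gamma$.
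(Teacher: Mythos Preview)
Your overall strategy---constructing a $\bm{\Delta}$-prewellordering of length $\kappa_{M_\infty}$---is the same as the paper's, but there is a genuine gap in your argument that the prewellorder lies in $\bm{\Delta}$.

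The problem is your claim that the coiteration of two $\Gamma$-suitable iterates $N_1,N_2$ stays $\Gamma$-short. This is false: such coiterations typically terminate at a $\Gamma$-maximal stage. If $\T$ is $\Gamma$-maximal at some limit stage, then $\delta(\T)$ \emph{is} the Woodin of the resulting $\Gamma$-suitable iterate; it does not ``descend to a $\Gamma$-Woodin below the Woodin'' of anything. (Compare Lemma~\ref{coiterate in M|nu}, where the $\Gamma$-maximal case is handled explicitly and the common iterate is recovered as $\bigcup_i Lp^\Gamma$-iterates of $\M(\T)$, not via a $\Q$-structure.) Consequently, the full strategy $\Sigma$ for $W$ on trees below $\delta_W$ is \emph{not} in $\bm{\Delta}$; indeed by Lemma~\ref{guided strat not in gamma} it is not even in $\bm{\Gamma}$. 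Remark~\ref{no gamma woodin gives delta strat} applies only to proper initial segments $P\triangleleft W|\delta_W$, not to $W$ itself.

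The paper circumvents this by showing, for each $\xi<\kappa_{M_\infty}$ separately, that $\xi<\bm{\delta_\Gamma}$. Pick $M\in\I$ and $\bar\xi$ with $\pi_{M,\infty}(\bar\xi)=\xi$, and choose a proper initial segment $P\triangleleft M|\delta_M$ containing $\bar\xi$ whose largest cardinal is a cutpoint of $M$. Then $P$ genuinely has a $\bm{\Delta}$ iteration strategy (now Remark~\ref{no gamma woodin gives delta strat} applies), and the cutpoint condition ensures the direct limit of the system of iterates of $P$ sends $\bar\xi$ to $\xi$. This yields a prewellordering of height $\xi$ projective in a $\bm{\Delta}$ set. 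Your argument can be repaired along these lines: restrict to pairs $(P,\eta)$ where $P$ is a proper initial segment (with cutpoint top) of some iterate of $W$, rather than the full iterate.
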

\begin{proof}    
    Suppose $\xi < \kappa_{M_\infty}$. Let $M\in \mathcal{I}$ and $\bar{\xi}\in M$ be such that $\pi_{M,\infty}(\bar{\xi}) = \xi$. Let $P$ be an initial segment of $M$ such that $\bar{\xi}\in P$ and the largest cardinal of $P$ is both a cutpoint and a cardinal of $M$. The iteration strategy $\Sigma$ for $P$ is in $\bm{\Delta}$ by Remark \ref{no gamma woodin gives delta strat}. Let $\I_P$ be the directed system of countable, complete iterates of $P$ by $\Sigma$. Then $\bar{\xi}$ is sent to $\xi$ by the direct limit map of this system, since the largest cardinal of $P$ is a cutpoint and a cardinal of $M$. So a prewellordering of height $\xi$ is projective in $\Sigma$ and therefore $\bm{\delta_\Gamma} > \xi$.
\end{proof}

\begin{lemma}
\label{pwo is short}
    $\delta_{M_\infty} > (\bm{\delta_\Gamma})^+$
\end{lemma}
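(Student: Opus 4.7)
The plan is to show that every $\xi < (\bm{\delta_\Gamma})^+$ is of the form $\pi_{M,\infty}(\bar{\xi})$ for some $M \in \mathcal{I}$ and some $\bar{\xi} < \delta_M$. Since any such image lies strictly below $\pi_{M,\infty}(\delta_M) = \delta_{M_\infty}$, this gives $(\bm{\delta_\Gamma})^+ \leq \delta_{M_\infty}$; strict inequality then follows because $\delta_{M_\infty}$ is Woodin in $M_\infty$, hence a proper limit cardinal there, ruling out equality with a successor cardinal of $V$ provided some $M_\infty$-cardinal strictly between them is also realized in the directed system.

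The realization of $\xi$ mirrors Lemma \ref{pwo is long}, but now using ordinals $\bar{\xi}$ with $\kappa_M \leq \bar{\xi} < \delta_M$. For $\xi < \kappa_{M_\infty}$ the previous lemma applies; for $\xi \in [\kappa_{M_\infty}, (\bm{\delta_\Gamma})^+)$, I would choose an initial segment $P \triangleleft M|\delta_M$ containing $\bar{\xi}$. By Remark \ref{no gamma woodin gives delta strat}, the iteration strategy $\Sigma_P$ is in $\bm{\Delta}$, so the directed system $\mathcal{I}_P$ of countable complete iterates of $P$, together with its direct limit map $\pi_{P,\infty^P}$, produces a prewellordering of $\R$ projective in $\Sigma_P$. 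Unlike in Lemma \ref{pwo is long}, the largest cardinal of $P$ is not a cutpoint of $M$ (since $\bar{\xi} > \kappa_M$), so $\pi_{P,\infty^P}(\bar{\xi})$ no longer equals $\pi_{M,\infty}(\bar{\xi})$; instead, the relationship between them is mediated by the $\kappa_M$-strong extenders of $M$. Tracking this relationship allows $\pi_{M,\infty}(\bar{\xi})$ to range through values in the interval $[\bm{\delta_\Gamma}, (\bm{\delta_\Gamma})^+)$ as $P$, $\bar{\xi}$, and $M$ vary.

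The main obstacle is controlling this stretching factor — precisely identifying how $\pi_{M,\infty}(\bar{\xi})$ for $\bar{\xi} \in [\kappa_M, \delta_M)$ decomposes through $\mathcal{I}_P$ and the strong-extender iteration on $M$ above $\kappa_M$. The boundary $\bm{\delta_\Gamma}$ corresponds to pure $\bm{\Delta}$-strategy realizations (as in Lemma \ref{pwo is long}), while $(\bm{\delta_\Gamma})^+$ arises as the supremum once $\kappa_M$-strong extenders are allowed to act, because the resulting prewellorderings sit in a slightly larger pointclass whose $\bm{\delta}$ is $(\bm{\delta_\Gamma})^+$ by the classification in Theorem \ref{scales classification}. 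The strict inequality at the end follows from the Woodin property of $\delta_{M_\infty}$: if $\delta_{M_\infty}$ equalled $(\bm{\delta_\Gamma})^+$, then $(\bm{\delta_\Gamma})^+$ would be a realized ordinal rather than a supremum, contradicting that $\delta_{M_\infty}$ is the proper supremum of $\{\pi_{M,\infty}(\bar{\xi}) : M \in \mathcal{I}, \bar{\xi} < \delta_M\}$ and is inaccessible in $M_\infty$.
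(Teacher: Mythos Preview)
Your proposal has a genuine gap at its central step. You assert that ``tracking this relationship allows $\pi_{M,\infty}(\bar{\xi})$ to range through values in the interval $[\bm{\delta_\Gamma}, (\bm{\delta_\Gamma})^+)$,'' but you give no mechanism for why \emph{every} ordinal in this interval is so realized. The observation that the iteration strategy $\Sigma_P$ for $P \triangleleft M|\delta_M$ lies in $\bm{\Delta}$ only bounds the $\mathcal{I}_P$-direct-limit ordinals by $\bm{\delta_\Gamma}$; it does not tell you how the extra stretching from $\kappa_M$-strong extenders behaves, and your appeal to ``a slightly larger pointclass whose $\bm{\delta}$ is $(\bm{\delta_\Gamma})^+$'' names no such pointclass and invokes no result that produces one. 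Theorem \ref{scales classification} classifies scaled pointclasses but does not by itself supply the pointclass you need here. Your strict-inequality argument is also broken: that $\delta_{M_\infty}$ is inaccessible in $M_\infty$ says nothing about whether it can equal a successor cardinal of $V$.

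The paper takes a completely different route that avoids any direct analysis of which ordinals are realized. It shows that the full iteration strategy $\Sigma$ for $W$ is $\delta_{M_\infty}$-Suslin, by building a tree whose branches encode embeddings $M_b^{\mathcal{S}} \to M_\infty$ commuting with $\pi_{W,\infty}$; the term-condensation Lemma \ref{term relation condensation} is then used to verify that any branch so realized is the $\Sigma$-branch. Since $\Sigma \notin \bm{\Gamma}$ (Lemma \ref{guided strat not in gamma}) and $\bm{\Gamma} = S(\bm{\delta_\Gamma})$, one gets $\delta_{M_\infty} \geq \lambda'$ for $\lambda'$ the next Suslin cardinal above $\bm{\delta_\Gamma}$. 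The classification in Theorem \ref{classification of suslin pointclasses} gives $\mathrm{cof}(\lambda') = \omega$, forcing $\lambda' > (\bm{\delta_\Gamma})^+$. This argument is short and uses only global descriptive-set-theoretic facts about Suslin cardinals, rather than a fine analysis of the direct limit maps.
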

\begin{proof}    
    Let $\Sigma$ be the $(\omega_1,\omega_1)$-iteration strategy for $W$. Recall $\Sigma$ is not in $\bm{\Gamma}$. We will show $\Sigma$ is in $S(\delta_{M_\infty})\backslash S(\bm{\delta_\Gamma}^+)$.
    
    \begin{claim}
        $\Sigma$ is $\delta_{M_\infty}$-Suslin.
    \end{claim}
    \begin{proof}
        Let $\T$ be a tree on $(\omega \times \omega) \times \delta_{M_\infty}$ such that $(x,y,f)\in [\T]$ if and only if $x$ codes a countable iteration tree $\mathcal{S}$ on $W$ of limit length, $y$ codes a cofinal, wellfounded branch $b$ through $\mathcal{S}$, and $f$ codes an embedding $\pi: M_b^{\mathcal{S}} \to M_\infty$ such that $\pi \circ i^{\mathcal{S}}_b = \pi_{W,\infty}$. Let $\Sigma' = \rho[\T]$.

        If $(x,y)\in \Sigma$, then $x$ codes an iteration tree $\mathcal{S}$ on $W$ according to $\Sigma$ and $y$ codes the cofinal, wellfounded branch $b$ through $\mathcal{S}$ chosen by $\Sigma$. And $\pi_{M^\mathcal{S}_b,\infty}\circ i^\mathcal{S}_b = \pi_{W,\infty}$. So if $f:\omega\to \delta_{M_\infty}$ codes the embedding $\pi_{M^\mathcal{S}_b,\infty}$, then $(x,y,f)\in[\T]$. Thus $(x,y)\in\Sigma'$.

        On the other hand, suppose $(x,y) \in \Sigma'$ and $x$ codes an iteration tree $\mathcal{S}$ according to $\Sigma$. Fix $f:\omega\to \delta_{M_\infty}$ such that $(x,y,f)\in[\T]$. Let $b$ be the branch coded by $y$ and $\pi$ the embedding coded by $f$.
        \begin{subclaim}
            For all $n$, $\pi^\mathcal{S}_b(\tau^W_n) = \tau^{M^\mathcal{S}_b}_n$.
        \end{subclaim}
        \begin{proof}
            Let $Q \in \mathcal{I}$ be such that $range(\pi)\subseteq range(\pi_{Q,\infty})$. Let $\pi' = \pi_{Q,\infty}^{-1} \circ \pi$. Then $\pi':M^{\mathcal{S}}_b \to Q$ and $\pi'(i^{\mathcal{S}}_b(\tau^W_n)) = \tau^Q_n$. Then by Lemma \ref{term relation condensation}, $i^{\mathcal{S}}_b(\tau^W_n) = \tau^{M_b^{\mathcal{S}}}_n$.
        \end{proof}

        From the subclaim and the last part of Lemma \ref{branch agreement}, we have that $(x,y)\in\Sigma$.

        We can now characterize $\Sigma$ as the set of $(x,y)\in\R\times\R$ such that 
        \begin{enumerate}
            \item $x$ codes an iteration tree $\mathcal{S}$ on $W$ of limit length,
            \item $y$ codes a cofinal, wellfounded branch through $\mathcal{S}$,
            \item $(x,y)\in\Sigma'$, and
            \item \label{initial segments of tree are good} for any $(x_0,y_0) \leq_T x$ such that $x_0$ codes a proper initial segment $\mathcal{S}_0$ of $\mathcal{S}$ of limit length and $y_0$ codes the branch through $\mathcal{S}_0$ determined by $\mathcal{S}$, $(x_0,y_0)\in \Sigma'$.
        \end{enumerate}

        Condition \ref{initial segments of tree are good} is just to guarantee $\mathcal{S}$ is in the domain of $\Sigma$. It does so because any proper initial segment $\mathcal{S}_0$ of $\mathcal{S}$ is coded by some real computable from $x$. From this, and the preceding paragraphs, it is clear these conditions characterize $\Sigma$. Since $\Sigma'$ is $\delta_{M_\infty}$-Suslin, this characterization of $\Sigma$ makes plain that $\Sigma$ is also $\delta_{M_\infty}$-Suslin.
    \end{proof}
    
    \begin{claim}
        $\bm{\Gamma} = S(\bm{\delta_\Gamma})$.
    \end{claim}
    \begin{proof}
        First, let's establish $\bm{\Gamma}$ is Suslin (we say a pointclass is Suslin if it equals $S(\lambda)$ for some cardinal $\lambda$). Let \begin{align*}
            \Omega= \{\bm{\Sigma_1}(J_\gamma(\mathbb{R})) : \gamma < \alpha_0 \text{ and } \gamma \text{ begins a } \Sigma_1\text{-gap}\}.
        \end{align*}
        It follows from Theorem \ref{extended projective hierarchy to levy hierarchy} that $\bm{\Gamma}$ is the minimal non-selfdual pointclass closed under projection which contains every pointclass in $\Omega$. Let 
        \begin{align*}
            \Psi = \{\bm{\Sigma_1}(J_\gamma(\R)) \in \Omega : \, \bm{\Sigma_1}(J_\gamma(\R)) \text{ is Suslin}\}.
        \end{align*}
        By Theorem \ref{classification of suslin pointclasses}, $\Psi$ is cofinal in $\Omega$. But the minimal Suslin pointclass larger than any element of $\Psi$ is just the minimal non-selfdual pointclass closed under projection which contains every pointclass in $\Omega$ (by part \ref{ind-like case of suslin classification} of Theorem \ref{classification of suslin pointclasses}). Since $\Psi$ is cofinal in $\Omega$, this is $\bm{\Gamma}$.
    
        So $\bm{\Gamma} = S(\lambda)$ for some cardinal $\lambda$. By the Kunen-Martin Theorem, there is a prewellordering of length $\lambda$ in $\bm{\Gamma}$ but no prewellordering of length $\lambda^+$. The latter implies that $\lambda\geq\bm{\delta_\Gamma}$, since $\bm{\delta_\Gamma}$ is a limit cardinal,\footnote{See Theorem 7D.8 of \cite{mosdst}.} and since there are prewellorderings of length $\alpha$ in $\bm{\Gamma}$ for all $\alpha<\bm{\delta_\Gamma}$. The former implies that $\lambda\leq\bm{\delta_\Gamma}$, since there is no prewellordering of length $\bm{\delta_\Gamma^+}$ in $\bm{\Gamma}$ (Otherwise a proper initial segment of this prewellordering would be of length $\bm{\delta_\Gamma}$, giving a prewellordering of length $\bm{\delta_\Gamma}$ in $\bm{\Delta}$). So $\bm{\delta_\Gamma}=\lambda$.
    \end{proof}
    
    By the previous two claims, $\Sigma \in S(\delta_{M_\infty}) \backslash S(\bm{\delta_\Gamma})$. In particular, $\delta_{M_\infty} \geq \lambda'$ where $\lambda'$ is the next Suslin cardinal after $\bm{\delta_\Gamma}$.\footnote{In fact $\delta_{M_\infty} = \lambda'$, but we don't need this.} But $cof(\lambda') = \omega$ by part \ref{ind-like case of suslin classification} of Theorem \ref{classification of suslin pointclasses}, so $\delta_{M_\infty} \geq \lambda' > \bm{\delta^+_\Gamma}$.
\end{proof}

\begin{lemma}
\label{measurable or cof omega}
    Suppose $\mu < \delta_{M_\infty}$ is a regular cardinal of $M_\infty$. Then $\mu$ is not measurable in $M_\infty$ if and only if $\mu$ has cofinality $\omega$ in $L(\R)$.
\end{lemma}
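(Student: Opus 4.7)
The plan is to adapt the Steel--Woodin analysis of measurable cardinals in the direct limit underlying $HOD^{L(\R)}$, as presented in \cite{hodbelowtheta} and \cite{hacm}, to our $\Gamma$-suitable setting. Fix $M\in\I$ with $\bar\mu \in M$ and $\pi_{M,\infty}(\bar\mu) = \mu$. By elementarity, $\bar\mu$ is regular in $M$, and $\bar\mu$ is measurable in $M$ if and only if $\mu$ is measurable in $M_\infty$.

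Assume first that $\bar\mu$ is not measurable in $M$; I want to show $cof^{L(\R)}(\mu) = \omega$. Non-measurability transfers to every iterate in $\I$, so no extender applied anywhere in the directed system has $\bar\mu$ or any of its images as critical point. Extenders with smaller critical points are automatically continuous at the regular cardinal $\bar\mu$, and this continuity propagates through limit stages of the system, so $\pi_{M,\infty}$ is continuous at $\bar\mu$. For each $n<\omega$, set $H^M_n := Hull^M(\{\tau^M_i : i<n\})$ and $\beta^M_n := \sup(H^M_n \cap \bar\mu)$. By countability of $H^M_n$ and regularity of $\bar\mu$ in $M$, $\beta^M_n < \bar\mu$; by the analog for $\bar\mu$ of the fact that $\sup_n \gamma^M_n = \delta_M$, using that every ordinal below $\bar\mu$ is eventually in some $H^M_n$, we have $\sup_n \beta^M_n = \bar\mu$. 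Setting $\beta^\infty_n := \pi_{M,\infty}(\beta^M_n) < \mu$, the continuity at $\bar\mu$ gives $\sup_n \beta^\infty_n = \mu$. Since $M_\infty$ is $OD^{L(\R)}$ and each $\beta^\infty_n$ is definable in $L(\R)$ from $M_\infty$ and the terms $\tau^{M_\infty}_i$ for $i<n$, the sequence lies in $L(\R)$, yielding $cof^{L(\R)}(\mu) = \omega$.

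Now assume $\bar\mu$ is measurable in $M$, witnessed by $\bar U$ on $M$'s extender sequence; I want to show no cofinal $\omega$-sequence in $\mu$ can exist in $L(\R)$. Suppose toward contradiction that $\langle \nu_k : k<\omega\rangle \in L(\R)$ is cofinal in $\mu$. Using countable directedness of $\I$ (available via $DC$), pass to an iterate $N\in\I$ of $M$ containing preimages $\bar\nu_k \in N$ of each $\nu_k$. Letting $\bar\mu^N := \pi_{M,N}(\bar\mu)$ and $\bar U^N := \pi_{M,N}(\bar U)$, elementarity gives that $\bar U^N$ is on $N$'s sequence with critical point $\bar\mu^N$, and each $\bar\nu_k < \bar\mu^N$. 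Form $N' := Ult(N, \bar U^N)$ with embedding $j : N\to N'$; since $\bar U^N$ lies on $N$'s sequence, $N'\in\I$ and $\pi_{N,\infty} = \pi_{N',\infty}\circ j$. Because $crit(j) = \bar\mu^N > \bar\nu_k$, we get $j(\bar\nu_k) = \bar\nu_k$ and thus $\pi_{N',\infty}(\bar\nu_k) = \pi_{N,\infty}(\bar\nu_k) = \nu_k$. On the other hand, $\bar\mu^N < j(\bar\mu^N)$ in $N'$ with $\pi_{N',\infty}(j(\bar\mu^N)) = \pi_{N,\infty}(\bar\mu^N) = \mu$, so $\mu^\ast := \pi_{N',\infty}(\bar\mu^N) < \mu$ by order-preservation. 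Then $\nu_k = \pi_{N',\infty}(\bar\nu_k) < \pi_{N',\infty}(\bar\mu^N) = \mu^\ast$ for all $k$, contradicting $\sup_k \nu_k = \mu$.

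The delicate step is verifying continuity of $\pi_{M,\infty}$ at $\bar\mu$ in the first case. While continuity at each single-step ultrapower is immediate from $\bar\mu$ being regular and non-measurable in every iterate, tracking this property through all limit stages of the directed system $\I$ requires careful inspection of the commuting diagram of iteration maps.
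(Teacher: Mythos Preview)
Your second direction (measurable $\Rightarrow$ cofinality $>\omega$) is correct and is exactly the paper's argument: absorb preimages of a putative cofinal $\omega$-sequence into a single $N\in\I$, hit $N$ with the measure at $\bar\mu^N$, and observe that $\pi_{N',\infty}(\bar\mu^N)<\mu$ bounds all the $\nu_k$.

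Your first direction has the right core idea---continuity of $\pi_{M,\infty}$ at a regular non-measurable $\bar\mu$---but your construction of the cofinal $\omega$-sequence is broken. You claim $\beta^M_n:=\sup(H^M_n\cap\bar\mu)<\bar\mu$ ``by countability of $H^M_n$ and regularity of $\bar\mu$ in $M$.'' Neither of these helps: $H^M_n$ is countable only in $V$, while $\bar\mu$ is regular only in $M$, and $H^M_n\cap\bar\mu$ need not be in $M$. More concretely, since $H^M_n\cap\delta_M=\gamma^M_n$ is an ordinal and $\sup_n\gamma^M_n=\delta_M>\bar\mu$, for all sufficiently large $n$ we have $\gamma^M_n\geq\bar\mu$, hence $H^M_n\cap\bar\mu=\bar\mu$ and $\beta^M_n=\bar\mu$. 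So your sequence is eventually constant at $\bar\mu$, not strictly below it, and the finitely many $\beta^M_n$ that are $<\bar\mu$ cannot be cofinal. The definability considerations you introduce (so that the image sequence lies in $L(\R)$) are also unnecessary, since we are already working in $L(\R)$.

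The paper's fix is much simpler: $M$ is countable in $V=L(\R)$, so just choose \emph{any} $\omega$-sequence $\langle\bar\xi_n:n<\omega\rangle$ cofinal in $\bar\mu$. Then $\langle\pi_{M,\infty}(\bar\xi_n):n<\omega\rangle$ is in $L(\R)$ and, by the continuity you already argued, is cofinal in $\mu$. No hulls, no terms, no definability needed.
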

\begin{proof}
    Suppose $\mu$ is not measurable in $M_\infty$. Fix $M \in \mathcal{I}$ and $\bar{\mu}$ such that $\pi_{M,\infty}(\bar{\mu}) = \mu$. Then $\bar{\mu}$ is regular but not measurable in $M$. Since $M$ is countable, there is a sequence of ordinals $\langle \bar{\xi}_n: n < \omega \rangle$ cofinal in $\bar{\mu}$. Let $\xi_n = \pi_{M,\infty}(\bar{\xi}_n)$. Since $\bar{\mu}$ is regular and not measurable in $M$, $\pi_{M,\infty}$ is continuous at $\bar{\mu}$ (This is because $\pi_{M,\infty}$ is essentially an iteration embedding --- in fact it is an iteration embedding in $V^{Col(\omega,\R)}$. And any iteration embedding is continuous at a cardinal which is regular but not measurable, since ultrapower embeddings are continuous at such cardinals). So $\langle \xi_n : n < \omega\rangle $ is cofinal in $\mu$.

    Now suppose $\mu$ has cofinality $\omega$ in $L(\R)$. Let $\langle \xi_n: n <\omega \rangle$ be cofinal in $\mu$. Fix $M \in \mathcal{I}$ such that there is $\bar{\mu}\in M$ and $\langle \bar{\xi}_n : n < \omega\rangle \subset M$ with $\pi_{M,\infty}(\bar{\mu}) = \mu$ and $\pi_{M,\infty}(\bar{\xi}_n) = \xi_n$. If $\mu$ is measurable in $M_\infty$, then there is a total extender $F$ on the fine extender sequence of $M$ with critical point $\bar{\mu}$. Let $M'$ be the ultrapower of $M$ by $F$ and $j:M \to M'$ the embedding induced by $F$. Then for any $n<\omega$,
    \begin{align*}
        \xi_n &= \pi_{M,\infty}(\bar{\xi}_n)\\
        &= \pi_{M',\infty}\circ j(\bar{\xi}_n)\\
        &= \pi_{M',\infty}(\bar{\xi}_n)\\
        &< \pi_{M',\infty}(\bar{\mu})\\
        &<  \pi_{M',\infty}\circ j(\bar{\mu})\\
        &= \mu.
    \end{align*}
    So $\pi_{M',\infty}(\bar{\mu})$ is an upper bound for $\bar{\xi}_n$ below $\mu$, a contradiction.
\end{proof}
\vspace{2mm}
\subsection{Definability in Suitable Mice}
\label{definability section}

\begin{lemma}
\label{Lp is definable}
 Suppose $N$ is a premouse satisfying enough of $ZFC$, $\nu$ is a cardinal of $N$, $Lp^\Gamma(a) \subset N$ for each $a\in N|\nu$, and $\tau \in N^{Col(\omega,\nu)}$ weakly captures $G_0$. Then the map with domain $N|\nu$ defined by $a \mapsto Lp^\Gamma(a)$ is definable in $N$ from $\tau$.
\end{lemma}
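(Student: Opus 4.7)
The plan is to use the collapse $Col(\omega,\nu)$ to reduce the question ``$R\in Lp^\Gamma(a)$?'' to one that $N$ can decide using $\tau$. Because $\nu$ is a cardinal of $N$ and $a\in N|\nu$, any $Col(\omega,\nu)$-generic $g$ over $N$ makes both $a$ and any candidate $R$ countable in $N[g]$. First I would fix, uniformly in $a,R\in N|\nu$ and definably in $N$ from $\nu$, canonical $Col(\omega,\nu)$-names $\sigma_a,\sigma_R\in N$ such that $\sigma_a[g]$ and $\sigma_R[g]$ are reals coding $a$ and $R$ (viewed as an $a$-premouse) in the format required by the definition of $G_0$, using the enumeration supplied by the generic.

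The heart of the matter is the equivalence
\begin{align*}
R\in Lp^\Gamma(a) \iff \mathbf{1}_{Col(\omega,\nu)}\Vdash^N (\sigma_a,\sigma_R)\in\tau,
\end{align*}
valid for any $\omega$-sound $a$-premouse $R\in N|\nu$ projecting to $a$. To verify it, fix any $g$ that is $Col(\omega,\nu)$-generic over $N$. By construction of $\sigma_a,\sigma_R$ and the definition of $G_0$, $(\sigma_a[g],\sigma_R[g])\in G_0$ exactly when $R$ has an $\omega_1$-iteration strategy in $\bm{\Delta}$, i.e.\ exactly when $R\in Lp^\Gamma(a)$. Since $\tau$ weakly captures $G_0$ and $(\sigma_a[g],\sigma_R[g])\in N[g]$, $(\sigma_a[g],\sigma_R[g])\in\tau[g]$ iff the same pair lies in $G_0$. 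The right-hand side depends only on $R$, not on $g$, so $\mathbf{1}$ decides the forcing statement.

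To package this as a definition, note that every $R\in Lp^\Gamma(a)$ projects to $a$, so $|R|^N\le|a|^N<\nu$, and since $\nu$ is a cardinal of $N$ we get $R\in N|\nu$. Combined with the hypothesis $Lp^\Gamma(a)\subseteq N$, the search for $R$ may be carried out entirely inside $N|\nu$, yielding
\begin{align*}
Lp^\Gamma(a) = \bigcup\bigl\{R\in N|\nu \ :\ & R\text{ is an $\omega$-sound $a$-premouse projecting to }a,\\
& \mathbf{1}_{Col(\omega,\nu)}\Vdash^N (\sigma_a,\sigma_R)\in\tau\bigr\}.
\end{align*}
Since the forcing relation is definable in $N$ and the names $\sigma_a,\sigma_R$ are uniformly definable in $N$ from $a,R,\nu$ (and $\nu$ is recoverable from $\tau$), this defines $a\mapsto Lp^\Gamma(a)$ in $N$ from $\tau$.

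The step requiring most care will be the construction of $\sigma_a$ and $\sigma_R$: one must verify that the reals $\sigma_a[g],\sigma_R[g]$ code their targets in precisely the format the definition of $G_0$ demands, so that membership in $G_0$ corresponds to the intended mouse-theoretic property. Once that coding matches, the equivalence follows immediately from weak capture, and the rest is packaging.
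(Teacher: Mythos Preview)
Your proposal is correct and follows essentially the same approach as the paper: use weak capture of $G_0$ by $\tau$ to test, via the $Col(\omega,\nu)$ forcing relation, whether a candidate $R\in N|\nu$ belongs to $Lp^\Gamma(a)$. The only difference is packaging: you fix canonical names $\sigma_a,\sigma_R$ for codes of $a$ and $R$, whereas the paper simply quantifies existentially over codes inside the forced statement, writing $Lp^\Gamma(a)=\bigcup\{R\in N:\emptyset\Vdash^N_{Col(\omega,\nu)}(\exists x,y)[(x,y)\in\tau\wedge x\text{ codes }a\wedge y\text{ codes }R]\}$, which sidesteps the construction of canonical names that you flag as ``the step requiring most care.''
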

\begin{proof}
    Recall
    \begin{align*}
        G_0 = \{(x,y):\,  & x \text{ codes some transitive set } a \text{ and } y \text{ codes an } \omega\text{-sound } \\
        & a\text{-premouse } R \text{ such that } R \text{ projects to } a \text{ and } R \text{ has an } \\
        & \omega_1\text{-iteration strategy in } \bm{\Delta}\},
    \end{align*}
    Fix $a\in N|\nu$. If $R$ is any set in $N|\nu$ and $g$ is any $Col(\omega,\nu)$-generic over $N$, then there are reals $x$ and $y$ in $N[g]$ coding $a$ and $R$, respectively. It is easy to see from this that $Lp^\Gamma(a)$ is
    \begin{align*}
        \bigcup\{R\in N : \emptyset \Vdash^N_{Col(\omega,\nu)} (\exists x,y)[ (x,y)\in\tau \wedge x \text{ codes } a \wedge y \text{ codes } R]\}.
    \end{align*}
\end{proof}

\begin{corollary}
\label{lower part is definable}
    If $P$ is $\Gamma$-ss, then the map with domain $P|\nu_P$ defined by $a \mapsto Lp^\Gamma(a)$ is definable in $P$ from $\tau^P_{0,\nu_P}$.
\end{corollary}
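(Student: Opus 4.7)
The plan is to derive the corollary directly from Lemma \ref{Lp is definable} applied to $N = P$, $\nu = \nu_P$, and $\tau = \tau^P_{0,\nu_P}$. Three of the four hypotheses of that lemma are immediate: $P$ is a $\Gamma$-ss mouse and so satisfies enough of $ZFC$; $\nu_P$ is by definition the inaccessible of $P$ above $\delta_P$, and in particular a cardinal of $P$; and the term $\tau^P_{0,\nu_P}$ weakly captures $G_0$ over $P$ by Corollary \ref{standard terms capture} applied with $\eta = \nu_P$.

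The only point requiring actual verification is the closure condition $Lp^\Gamma(a) \subset P$ for each $a \in P|\nu_P$. For this I would use clause (4) of Definition \ref{ss definition} together with Remark \ref{lp(a) contained in lp(b)}. Given $a \in P|\nu_P$, pick $\xi$ with $\delta_P \leq \xi < \nu_P$ and $a \in P|\xi$; such a $\xi$ exists because $\nu_P$ is inaccessible in $P$ and lies strictly above $\delta_P$, so it is a limit cardinal there. Clause (4) of the definition of $\Gamma$-ss then gives $Lp^\Gamma(P|\xi) = P|(\xi^+)^P \trianglelefteq P$, and Remark \ref{lp(a) contained in lp(b)} yields $Lp^\Gamma(a) \subseteq Lp^\Gamma(P|\xi) \subset P$. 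With all hypotheses of Lemma \ref{Lp is definable} in place, the lemma delivers the required definability of $a \mapsto Lp^\Gamma(a)$ from $\tau^P_{0,\nu_P}$. There is no substantive obstacle; the corollary amounts to observing that the inaccessible cardinal $\nu_P$ of a $\Gamma$-ss mouse supplies precisely the closure and capturing conditions the general lemma requires.
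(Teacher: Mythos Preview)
Your proposal is correct and follows essentially the same approach as the paper: both verify the hypotheses of Lemma \ref{Lp is definable} by invoking Corollary \ref{standard terms capture} for the capturing condition and Remark \ref{lp(a) contained in lp(b)} for the closure condition. The paper's proof is terser and does not explicitly mention clause (4) of Definition \ref{ss definition}, but your more detailed verification of $Lp^\Gamma(a) \subset P$ via that clause is a perfectly good way to fill in what the paper leaves implicit.
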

\begin{proof}
    It is clear from Remark \ref{lp(a) contained in lp(b)} and Corollary \ref{standard terms capture} that $P$ and $\tau^P_{0,\nu_P}$ satisfy the conditions of Lemma \ref{Lp is definable}.
\end{proof}

\begin{lemma}
\label{terms definable}
    Suppose $P$ is $\Gamma$-ss and $N \in P|\nu_P$ is $\Gamma$-suitable. Then 
    \newline $\{\tau^N_{n,\mu}: \mu \text{ is an uncountable cardinal of } N \}$ is definable in $P$ from $N$ and $\tau^P_{n,\nu_P}$ (uniformly in $P$ and $N$).
\end{lemma}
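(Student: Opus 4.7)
The plan is to transfer the weak capture of $G_n$ by $\tau^P_{n,\nu_P}$ down to the smaller forcing inside $N$. Since $N \in P|\nu_P$, every cardinal $\mu$ of $N$ lies below $\nu_P$, so $Col(\omega,\mu)^N = Col(\omega,\mu)^P$ is a complete subforcing of $Col(\omega,\nu_P)^P$ via the canonical embedding. For a standard $Col(\omega,\mu)^N$-name $\sigma \in N$ for a real and $p \in Col(\omega,\mu)^N$, let $\sigma^* \in P$ be a standard $Col(\omega,\nu_P)^P$-name obtained uniformly from $\sigma$ such that $\sigma^*[h] = \sigma[h \cap Col(\omega,\mu)]$ for every $Col(\omega,\nu_P)^P$-generic $h$ over $P$, and let $p^* \in Col(\omega,\nu_P)^P$ be the image of $p$. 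The claim to prove is
$$(\sigma, p) \in \tau^N_{n,\mu} \iff (\sigma^*, p^*) \in \tau^P_{n,\nu_P}.$$
This yields the desired definition of $\tau^N_{n,\mu}$ inside $P$ as the set of all $(\sigma, p) \in N$ for which the right-hand side holds, uniformly in $\mu$, $N$, and $P$, since the lift $(\sigma, p) \mapsto (\sigma^*, p^*)$ is uniformly definable from $\sigma$, $p$, $\mu$, and $\nu_P$, and $\nu_P$ is recoverable from $P$.

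To prove the equivalence, let $\pi : [Col(\omega,\nu_P)] \to [Col(\omega,\mu)]$ be the restriction map $h \mapsto h \cap Col(\omega,\mu)$. This is a continuous, open, surjective map of the associated Stone spaces, so preimages of nowhere dense sets are nowhere dense, and hence for any set $B$ with the Baire property, $B$ is comeager below $p$ iff $\pi^{-1}(B)$ is comeager below $p^*$. Applying this with $B = \{g : p \in g \text{ and } \sigma[g] \in G_n\}$ gives $\pi^{-1}(B) = \{h : p^* \in h \text{ and } \sigma^*[h] \in G_n\}$ by the choice of $\sigma^*$. Under $AD$, $G_n$ has the Baire property, so $B$ and $\pi^{-1}(B)$ do as well, and the BP of these sets lets us interchange ``comeager among generics over $N$ (respectively $P$)'' with ``comeager in $V$ below the condition''. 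Combining this with the weak capture identity $\tau^P_{n,\nu_P}[h] = G_n \cap P[h]$ from Corollary \ref{standard terms capture}, both sides of the claimed equivalence reduce to the single statement ``$B$ is comeager below $p$ in $V$''.

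The main obstacle is the Baire-category transfer between the two forcings. Openness and continuity of $\pi$ ensure that preimage preserves the meager versus comeager dichotomy below a condition, and $AD$ supplies the Baire property for the relevant definable sets, which is what lets us translate freely between comeagerness with respect to generics over $N$ (appearing in the definition of $\tau^N_{n,\mu}$) and comeagerness with respect to generics over $P$ (appearing in the definition of $\tau^P_{n,\nu_P}$). Once this transfer is in place, the rest of the argument is a routine unpacking of the definitions together with the weak capture of $G_n$ by $\tau^P_{n,\nu_P}$.
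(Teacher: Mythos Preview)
Your overall strategy---transfer the weak capture of $G_n$ from $Col(\omega,\nu_P)$ down to $Col(\omega,\mu)$ via a projection and a Baire category argument---is exactly the paper's approach, and the Baire category reasoning you give (using BP under $AD$, continuity/openness of the projection, and comeagerness of generics over a countable model) is sound. However, there is a concrete error in your choice of projection.

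The identity inclusion $Col(\omega,\mu) \subseteq Col(\omega,\nu_P)$ is \emph{not} a complete embedding: a maximal antichain such as $\{\langle 0,\alpha\rangle : \alpha<\mu\}$ in $Col(\omega,\mu)$ is not maximal in $Col(\omega,\nu_P)$. Consequently your ``restriction map'' $\pi(h) = h \cap Col(\omega,\mu)$ does not land in the Stone space of $Col(\omega,\mu)$ at all: if $h$ determines the generic surjection $g_h:\omega\to\nu_P$ and $n_0$ is least with $g_h(n_0)\geq\mu$, then $h\cap Col(\omega,\mu)$ is just the finite set $\{g_h\upharpoonright n : n\leq n_0\}$, not a filter meeting all dense sets. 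So $\pi$ is not the continuous open surjection you need, and the lift $(\sigma,p)\mapsto(\sigma^*,p^*)$ built from it does not satisfy $\sigma^*[h]=\sigma[\pi(h)]$.

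The paper fixes this by choosing in $P$ a surjection $f:\nu_P\to\mu$ (say the least one in the constructibility order, to keep things uniformly definable) and using the projection $g\mapsto f\circ g$. This map \emph{is} continuous, open, and surjective from $\nu_P^\omega$ onto $\mu^\omega$, and if $g$ is $Col(\omega,\nu_P)$-generic over $P$ then $f\circ g$ is $Col(\omega,\mu)$-generic over $P$ (hence over $N$). With this corrected projection your argument goes through essentially as written; the paper phrases the resulting definition slightly differently, as
\[
(\sigma,p)\in\tau^N_{n,\mu} \iff \emptyset\Vdash^P_{Col(\omega,\nu_P)}\bigl(\check p\in\check f\circ\dot g \to \check\sigma[\check f\circ\dot g]\in\tau^P_{n,\nu_P}\bigr),
\]
but this is equivalent to your formulation once $\pi$ is repaired.
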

\begin{proof}
    Let $\mu$ be an uncountable cardinal of $N$.

    Note if $g$ is $Col(\omega,\nu_P)$-generic over $P$ and $f\in P$ is a surjection of $\nu_P$ onto $\mu$, then $f\circ g$ is $P$-generic for $Col(\omega,\mu)$. In particular, $f \circ g$ is $N$-generic for $Col(\omega,\mu)$. Fix such an $f$ which is minimal in the constructibility order of $P$. Let
    \begin{align*}
        \tau_{n,\mu} = \,&\{(\sigma,p): \sigma \text{ is a } Col(\omega,\mu) \text{-standard term for a real}, p\in Col(\omega,\mu),\\ 
        &\text{ and } \emptyset \Vdash^P_{Col(\omega,\nu_P)} (\check{p}\in \check{f}\circ \dot{g} \rightarrow \check{\sigma}[\check{f} \circ \dot{g}] \in \tau^P_{n,\nu_P})\}
    \end{align*}
    
    It is clear that $\tau_{n,\mu}$ is definable in $P$ from $N$, $\mu$, and $\tau^P_{n,\nu_P}$. It suffices to show $\tau_{n,\mu} = \tau^N_{n,\mu}$.
    
    $\tau_{n,\mu} \subseteq \tau^N_{n,\mu}$ by Definition \ref{definition of standard terms} and that comeager many $h\subset Col(\omega,\mu)$ which are generic over $N$ are of the form $f \circ g$ for some $g$ which is $Col(\omega,\nu_P)$-generic over $P$.
    
    On the other hand, suppose $(\sigma,p) \in \tau^N_{n,\mu}$. By Corollary \ref{standard terms capture}, $\sigma[h]\in G_n$ for any $h$ which is $Col(\omega,\mu)$-generic over $N$ such that $p\in h$. In particular, $\sigma[f \circ g] \in \tau^P_{n,\nu_P}[g]$ for any $g$ which is $Col(\omega,\nu_P)$-generic over $P$ such that $p\in f\circ g$. Thus $(\sigma,p) \in \tau_{n,\mu}$.
\end{proof}

We will also need versions of Corollary \ref{lower part is definable} and Lemma \ref{terms definable} in generic extensions of $\Gamma$-ss mice.

\begin{lemma}
\label{term works in generic extension}
    Suppose $B\subseteq \mathbb{R}$, $P$ is a premouse, $\delta$ is Woodin in $P$, $\mu \geq \delta$, $\tau\in P^{Col(\omega,\mu)}$ weakly captures $B$ over $P$, and $y$ is $Ea_P$-generic over $P$. Then there is $\tau'\in P[y]^{Col(\omega,\mu)}$ which weakly captures $B$ over $P[y]$. Moreover, $\tau'$ is definable in $P[y]$ from $\tau$ and $y$ (uniformly).
\end{lemma}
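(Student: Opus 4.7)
The plan is to use forcing absorption. Since $\delta$ is Woodin in $P$, the extender algebra $Ea_P$ has the $\delta$-cc in $P$ (Theorem \ref{genericity iteration for ext alg}) and has cardinality $\delta \leq \mu$ in $P$. By a standard forcing computation (the universality of $Col(\omega,\mu)$ for absorbing forcings of size at most $\mu$), one obtains
\begin{equation*}
Ea_P \ast \dot{Col}(\omega,\check{\mu}) \equiv Col(\omega,\mu)
\end{equation*}
as forcings over $P$. Consequently, for any $g$ which is $Col(\omega,\mu)$-generic over $P[y]$, there exists $g^\ast \in P[y][g]$ which is $Col(\omega,\mu)$-generic over $P$ and satisfies $P[g^\ast] = P[y][g]$. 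Since $\tau$ weakly captures $B$ over $P$, for any such $g^\ast$ we have $\tau[g^\ast] = B \cap P[g^\ast] = B \cap P[y][g]$, so the value $\tau[g^\ast]$ is independent of which such $g^\ast$ is chosen.

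Given this factoring, the natural definition is the $Col(\omega,\mu)$-name in $P[y]$ forced to evaluate to $\tau[g^\ast]$. Concretely, let $\tau'$ consist of all pairs $(\sigma,p) \in P[y]$ where $\sigma$ is a $Col(\omega,\mu)$-standard term for a real in $P[y]$, $p \in Col(\omega,\mu)$, and $p$ forces (over $P[y]$ with $Col(\omega,\mu)$) that there exists a filter $g^\ast$ in the generic extension which is $Col(\omega,\check{\mu})$-generic over $\check{P}$, satisfies $\check{P}[g^\ast] = \check{P[y]}[\dot g]$, and has $\check{\sigma}[\dot g] \in \check{\tau}[g^\ast]$. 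The forcing relation $\Vdash^{P[y]}_{Col(\omega,\mu)}$ is definable in $P[y]$, and all parameters appearing in the formula --- namely $\tau$, $\mu$, $Ea_P$, and $P$ --- are recoverable in $P[y]$ from $\tau$ and $y$: $P$ is recovered from $y$ via the ground-model definability theorem, $Ea_P$ is then definable from $P$, and $\mu$ is read off the poset domain of $\tau$. Hence $\tau'$ is definable in $P[y]$ from $\tau$ and $y$ uniformly.

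To verify weak capturing, fix $g$ which is $Col(\omega,\mu)$-generic over $P[y]$ and choose $g^\ast \in P[y][g]$ as provided by the factoring. Any real $x \in P[y][g]$ lies in $B$ precisely when $x \in \tau[g^\ast]$, so for a standard name $\sigma$ and $p \in g$, the defining formula of $\tau'$ relative to $g$ picks out exactly those $(\sigma,p)$ with $\sigma[g] \in B$. Collecting these gives $\tau'[g] = B \cap P[y][g]$, which is the required weak capturing. The only substantive point in the argument is the forcing equivalence $Ea_P \ast \dot{Col}(\omega,\check{\mu}) \equiv Col(\omega,\mu)$ over $P$; this is a standard consequence of the $\delta$-cc of $Ea_P$ combined with $\delta \leq \mu$ and the homogeneity of $Col(\omega,\mu)$, and once it is in hand the rest of the proof is bookkeeping.
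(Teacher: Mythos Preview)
Your proof is correct and follows essentially the same approach as the paper: both use the universality of $Col(\omega,\mu)$ to absorb the $Ea_P$-forcing, then read off $\tau'$ from $\tau$ via the resulting factoring. The paper fixes a concrete complete embedding $\Phi: Ea_P \times Col(\omega,\mu) \to Col(\omega,\mu)$ and builds an intermediate term $\tau^*$ in $P$ before specializing to $y$, whereas you quantify existentially over the absorbed generic $g^\ast$ inside the forcing language of $P[y]$; these are two packagings of the same idea.
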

\begin{proof}

    $Col(\omega,\mu)$ is universal for pointclasses of size $\mu$. So there is a complete embedding $\Phi: Ea_p \times Col(\omega,\mu) \to Col(\omega,\mu)$.\footnote{In the sense of Definition 7.1 of Chapter 7 of \cite{kunen}.} If $g$ is $Col(\omega,\mu)$-generic over $P$, let $(y_g,f_g)$ be the $Ea_P \times Col(\omega,\mu)$-generic consisting of all conditions $(p,q)\in Ea_P \times Col(\omega,\mu)$ such that $\Phi((p,q))\in g$ (see Chapter 7, Theorem 7.5 of \cite{kunen}). Let
    \begin{align*}
        \tau^* = \{(\sigma,(p,q)): & \,\sigma \text{ is an } Ea_P\text{-term for a } Col(\omega,\mu)\text{-standard term }\\
        & \text{for a real, } (p,q)\in Ea_P\times Col(\omega,\mu), \text{ and }\\
        & \Phi((p,q)) \Vdash^P_{Col(\omega,\mu)} \sigma[y_g][f_g] \in \tau[g]\}.
    \end{align*}

    \begin{claim}
    \label{claim for lemma on term definability}
        For any $(y,f)$ which is $Ea_P\times Col(\omega,\mu)$-generic over $P$,\\ $\tau^*[y][f] = B \cap P[y][f]$.
    \end{claim}
    \begin{proof}
        Suppose $x\in\tau^*[y][f]$. $x = \sigma[y][f]$ for some $(\sigma,(p,q))\in\tau^*$ such that $p\in y$ and $q\in f$. Let $g$ be $Col(\omega,\mu)$-generic such that $y_g = y$ and $f_g = f$. In particular, $\Phi((p,q))\in g$. Then $P[g]\models \sigma[y_g][f_g]\in \tau[g]$. Since $x= \sigma[y_g][f_g]$ and $\tau[g]= B \cap P[g]$, $x\in B \cap P[g]$.

        Now suppose $x\in B\cap P[y][f]$. Let $\sigma$ be an $Ea_P$-term for a $Col(\omega,\mu)$-standard term for a real such that $x= \sigma[y][f]$.

        $\bigcup \Phi''\{(p,q):(p,q)\in y\times g\}$ is a function $g_1:S\to\mu$ for some $S\subseteq \omega$. Let
        \begin{align*}
            \mathbb{Q} = \{r\in Col(\omega,\mu): domain(r)\cap S = \emptyset\}
        \end{align*}
        ($\mathbb{Q}$ is the quotient of $Col(\omega,\mu)$ by $g_1$). Let $g_2$ be $\mathbb{Q}$-generic over $P[g_1]$. Then $g = g_1 \cup g_2$ is $Col(\omega,\mu)$-generic over $P$.

        We have $x\in \tau[g]$. Pick $s\in g$ such that $s \Vdash^P_{Col(\omega,\mu)} \sigma[y_g][f_g]\in \tau[g]$. $s = r_1 \cup r_2$ for some $r_1\in g_1$ and $r_2\in g_2$.

        \begin{subclaim}
            $r_1 \Vdash^P_{Col(\omega,\mu)} \sigma[y_g][f_g]\in \tau$.
        \end{subclaim}
        \begin{proof}
            Suppose not. Then there is $g'_2$ which is $\mathbb{Q}$-generic over $P[g_1]$ such that, letting $g' = g_1 \cup g'_2$, $\sigma[y_{g'}][f_{g'}]\notin \tau[g']$. $\sigma[y_{g'}][f_{g'}] = x$, since $y_g$ and $f_g$ depend only on $g\upharpoonright S$. But then $x\in (B \cap P[g'])\backslash \tau[g']$, contradicting that $\tau$ weakly captures $B$.
        \end{proof}

        Pick $p\in y$ and $q\in f$ such that $\Phi((p,q))$ extends $r_1$. Then $(\sigma,(p,q))\in \tau^*$. So $x\in \tau^*[y][f]$.
    \end{proof}

    Let
    \begin{align*}
        \tau' = \{(\sigma[y],q): \,\exists p \in y \text{ such that } (\sigma,(p,q))\in\tau^*\}.
    \end{align*} 
    $\tau'$ is definable in $P[y]$ from $\tau$ and $y$. It is clear from Claim \ref{claim for lemma on term definability} that $\tau'$ weakly captures $B$ over $P[y]$.
\end{proof}

\begin{lemma}
\label{generic extension lp-closed}
    Let $P$ be $\Gamma$-ss and $y$ be $Ea_P$-generic over $P$. Then for any $a\in P[y]$, $Lp^\Gamma(a)\subset P[y]$.
\end{lemma}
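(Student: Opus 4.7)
The plan is to reduce via Remark~\ref{lp(a) contained in lp(b)} to the case $a = P[y]|\xi$ for a cardinal $\xi \geq \delta_P$ of $P$, and then to use an $S$-construction to undo the $Ea_P$-genericity of $y$ inside any candidate $a$-mouse, recovering a $P|\xi$-mouse that lives in $P$.

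First I would pick $\xi \geq \delta_P$ a cardinal of $P$ (preserved in $P[y]$ since $Ea_P$ is $\delta_P$-c.c.) with $a \in P[y]|\xi$; by Remark~\ref{lp(a) contained in lp(b)} it then suffices to show $Lp^\Gamma(P[y]|\xi) \subseteq P[y]$. So let $N$ be any $\omega$-sound $P[y]|\xi$-premouse projecting to $P[y]|\xi$ whose $\omega_1$-iteration strategy $\Lambda$ lies in $\bm{\Delta}$, and aim to prove $N \in P[y]$. Note that $\delta_P$ is a strong cutpoint of $N$: $Ea_P$-forcing adds no extenders to the sequence, $\delta_P$ is a cutpoint of $P$ by the $\Gamma$-ss structure, and the extenders placed on top of $P[y]|\xi$ when building $N$ have critical point $\geq \xi > \delta_P$. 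Moreover $N|\delta_P{+}1 = \bar S[y]$ where $\bar S := P|\delta_P{+}1$ and $\delta_P$ is Woodin in $\bar S$, so the setup for the $S$-construction applies.

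Running the $S$-construction in $N$ over $\bar S$, Lemma~\ref{s-const lemma} yields $\langle S_\alpha : \delta_P{+}1 \leq \alpha \leq \height(N)\rangle$, a sequence of $\omega$-sound $\bar S$-mice with $S_\alpha[y] = N|\alpha$ and $\rho_n(S_\alpha) = \rho_n(N|\alpha)$ for all $n$. At the level $\alpha_0$ at which the construction reaches $\xi$ we have $S_{\alpha_0} = P|\xi$; letting $\gamma = \height(N)$, $S_\gamma$ projects to $S_{\alpha_0}$, so reorganising $S_\gamma$ above $S_{\alpha_0}$ produces an $\omega$-sound $P|\xi$-mouse $N^-$ with $N^-[y] = N$. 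Once $N^-$'s $\omega_1$-iteration strategy is shown to lie in $\bm{\Delta}$, condition~4 in the definition of $\Gamma$-ss gives $N^- \trianglelefteq Lp^\Gamma(P|\xi) = P|(\xi^+)^P \subseteq P$; since $N = N^-[y]$ is definable from $N^-$ and $y$, this yields $N \in P[y]$.

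The hard part is showing the strategy of $N^-$ lies in $\bm{\Delta}$. Fact~\ref{definiability of s-const strat} only provides an iteration strategy for $N^-$ projective in $\Lambda \upharpoonright (\text{trees above }\delta_P)$. Because $\bm{\Gamma}$ is inductive-like, it is closed under both $\exists^\R$ and $\forall^\R$, so the projective closure of a $\bm{\Delta}$-set is contained in $\bm{\Gamma}$, placing the strategy in $\bm{\Gamma}$. To push it into $\bm{\Delta}$ I would exploit that iteration strategies are functions: for $\T$ in the domain of the strategy, the negation of ``$(\T,b)$ lies on its graph'' reduces to the existence of $b' \neq b$ also chosen by the strategy at $\T$, which is $\exists^\R$ over a $\bm{\Gamma}$ predicate and hence $\bm{\Gamma}$; thus the graph of the strategy is in both $\bm{\Gamma}$ and $\bm{\Gamma}^c$, i.e., in $\bm{\Delta}$, completing the argument.
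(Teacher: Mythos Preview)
Your approach via the $S$-construction is a valid alternative to the paper's and is arguably more direct. The paper instead builds a $\Gamma$-suitable mouse $N$ \emph{over} $P$ (so $N$ carries a second Woodin $\delta_N > \delta_P$), uses Lemma~\ref{term works in generic extension} to obtain a term in $N[y]$ weakly capturing $G_0$ at $Col(\omega,\delta_N)$, makes a real coding the candidate $R$ generic for $Col(\omega,\delta_N)$ over $N[y]$, and then uses the capturing term together with homogeneity of the collapse to conclude $R \in N[y]$, hence $R \in P[y]$. Your route avoids the second Woodin entirely by translating the candidate back to a $P|\xi$-mouse and invoking clause~4 of Definition~\ref{ss definition}; the paper's route trades that translation for more descriptive-set-theoretic machinery but sidesteps having to check that the $S$-construction runs to the top.

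Two points on your write-up. First, your ``hard part'' is not hard: since $\bm{\Gamma}$ is closed under both $\exists^\R$ and $\forall^\R$, so is $\bm{\Gamma}^c$, and therefore so is $\bm{\Delta}$; anything projective in a $\bm{\Delta}$ set is thus already in $\bm{\Delta}$, and Fact~\ref{definiability of s-const strat} finishes the job immediately. (Your ``function'' trick also has a gap --- the complement of the graph contains all pairs $(\T,b)$ with $\T$ outside the domain of the strategy, and bounding the complexity of that domain is precisely the issue --- but this is moot once you note the simpler closure fact.) Second, you should say why the $S$-construction does not halt before $\gamma$: if $\delta_P$ failed to be Woodin at some stage $S_\alpha$, then the associated $\Q$-structure would, by the same projective-in-$\Lambda$ reasoning, have a $\bm{\Delta}$ strategy and hence lie in $Lp^\Gamma(P|\delta_P) = P|(\delta_P^+)^P$, contradicting that $\delta_P$ is Woodin in $P$.
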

\begin{proof}    
    Let $N$ be a $\Gamma$-suitable mouse built over $P$. $N$ has a Woodin cardinal $\delta_N$ above $\delta_P$. The iteration strategy for any proper initial segment of $N|\delta_N$ restricted to trees above $\delta_P$ is in $\bm{\Delta}$. And no initial segment of $N$ above $\delta_N$ projects strictly below $\delta_N$. It follows that any cardinal of $P$ remains a cardinal in $N$. In particular, $\delta_P$ remains Woodin in $N$ and $y$ is also $Ea_P$-generic over $N$.

    Suppose $R$ is an $\omega$-sound $a$-premouse with an $\omega_1$-iteration strategy in $\bm{\Delta}$ such that $R$ projects to $a$. It suffices to show $R\in P[y]$.

    Let $\alpha$ be the height of $R$. Iterating $N$ above $R$ if necessary, we may assume there is a real $g$ which is $Col(\omega,\delta_N)$-generic over $N$ such that some real in $N[y][g]$ codes $R$. By Lemma \ref{term works in generic extension}, there is a $Col(\omega,\delta_N)$-term $\tau$ in $N[y]$ which weakly captures $G_0$. Then $R$ is the unique premouse in $N[y][g]$ of height $\alpha$ such that if $x_a$ codes $a$ and $x_R$ codes $R$, then $(x_a,x_R)\in \tau[g]$. By homogeneity of the forcing, for any $g'$ which is $Col(\omega,\delta_N)$-generic over $N$, there is a premouse $R'\in N[y][g']$ of height $\alpha$ and reals $x_a$ and $x_{R'}$ in $N[y][g']$ coding $a$ and $R'$, respectively, such that $(x_a,x_{R'})\in\tau[g']$. The uniqueness of $R$ implies $R\in N[y]$. Since $R$ is coded by a subset of $a$, $R\in P[y]$.
\end{proof}

\begin{corollary}
\label{lp definable in generic ext}
    If $P$ is $\Gamma$-ss and $y$ is $Ea_P$-generic over $P$, then the map with domain $P[y]|\nu_P$ defined by $a \mapsto Lp^\Gamma(a)$ is definable in $P[y]$ from $\tau^P_{0,\nu_P}$ and $y$ (uniformly in $P$ and $y$).
\end{corollary}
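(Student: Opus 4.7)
The plan is to combine three of the preceding results in a direct way. Lemma \ref{Lp is definable} shows that the map $a \mapsto Lp^\Gamma(a)$ on $N|\nu$ is definable in $N$ from any term $\tau \in N^{Col(\omega,\nu)}$ that weakly captures $G_0$ over $N$, provided $N$ satisfies enough of $ZFC$ and is already $Lp^\Gamma$-closed below $\nu$. So it suffices to verify these hypotheses with $N = P[y]$ and $\nu = \nu_P$, and to exhibit a term $\tau \in P[y]^{Col(\omega,\nu_P)}$ weakly capturing $G_0$ over $P[y]$ which is definable from $\tau^P_{0,\nu_P}$ and $y$.

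First I would observe that $P[y]$ satisfies enough of $ZFC$ since $P$ does and $Ea_P$ is set forcing of size $\delta_P < \nu_P$; in particular $\nu_P$ remains a cardinal in $P[y]$ because $Ea_P$ is $\delta_P$-c.c.\ by Theorem \ref{genericity iteration for ext alg}. Next, Lemma \ref{generic extension lp-closed} gives exactly the required closure: for every $a \in P[y]|\nu_P$, $Lp^\Gamma(a) \subseteq P[y]$.

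For the term, I would apply Lemma \ref{term works in generic extension} with $B = G_0$, $\delta = \delta_P$, $\mu = \nu_P$, and $\tau = \tau^P_{0,\nu_P}$. The hypothesis $\mu \geq \delta$ holds since $\nu_P > \delta_P$, and $\tau^P_{0,\nu_P}$ weakly captures $G_0$ over $P$ by Corollary \ref{standard terms capture}. The lemma then produces $\tau' \in P[y]^{Col(\omega,\nu_P)}$ which weakly captures $G_0$ over $P[y]$, and moreover $\tau'$ is uniformly definable in $P[y]$ from $\tau^P_{0,\nu_P}$ and $y$.

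Finally, applying Lemma \ref{Lp is definable} to $(N,\nu,\tau) = (P[y],\nu_P,\tau')$ yields that $a \mapsto Lp^\Gamma(a)$ is definable in $P[y]$ from $\tau'$, and composing this with the definition of $\tau'$ from $\tau^P_{0,\nu_P}$ and $y$ gives the desired uniform definition. No step is really the main obstacle here, since all the heavy lifting has been done in Lemma \ref{term works in generic extension} (the combinatorics of transferring the capturing property through $Ea_P$) and Lemma \ref{generic extension lp-closed} (the closure under $Lp^\Gamma$); the proof of the corollary is essentially a matter of assembling these pieces and checking that the cardinal structure above $\delta_P$ is preserved by $Ea_P$.
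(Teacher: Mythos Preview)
Your proposal is correct and follows essentially the same approach as the paper: invoke Lemma \ref{generic extension lp-closed} for $Lp^\Gamma$-closure of $P[y]$, use Lemma \ref{term works in generic extension} to transfer the capturing term $\tau^P_{0,\nu_P}$ to a term $\tau'$ in $P[y]$, and then apply Lemma \ref{Lp is definable}. The only difference is that you spell out explicitly why $\nu_P$ remains a cardinal of $P[y]$, which the paper leaves implicit.
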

\begin{proof}    
    $P[y]$ is $Lp^\Gamma$-closed by Lemma \ref{generic extension lp-closed}. Then by Lemma \ref{Lp is definable}, the map $a\to Lp^\Gamma(a)$ with domain $P[y]|\nu_P$ is definable from any term $\tau\in P[y]^{Col(\omega,\nu_P)}$ which weakly captures $G_0$ over $P[y]$. 
    
    Lemma \ref{term works in generic extension} shows there is a term $\tau\in P[y]^{Col(\omega,\nu_P)}$ which weakly captures  $G_0$ over $P[y]$ and is definable from $\tau^P_{0,\nu_P}$ and $y$ in $P[y]$.
\end{proof}

\begin{corollary}
    Suppose $P$ is $\Gamma$-ss, $y$ is $Ea_P$-generic over $P$, and $N\in P[y]|\nu_P$ is $\Gamma$-suitable. Then $\{\tau^N_{n,\mu}: \mu \text{ is an uncountable cardinal of } N \}$ is definable in $P[y]$ from $N$, $y$, and $\tau^P_{n,\nu_P}$ (uniformly in $P$, $y$, and $N$). 
\end{corollary}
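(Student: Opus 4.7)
The plan is to mirror the proof of Lemma \ref{terms definable} inside $P[y]$, substituting the ground model term $\tau^P_{n,\nu_P}$ with a term in $P[y]$ that weakly captures $G_n$ over $P[y]$ and is definable from $\tau^P_{n,\nu_P}$ and $y$.

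First, invoke Lemma \ref{term works in generic extension} with $B = G_n$, $\delta = \delta_P$, and $\mu = \nu_P$. This produces a term $\tau \in P[y]^{Col(\omega,\nu_P)}$ that weakly captures $G_n$ over $P[y]$, and $\tau$ is definable in $P[y]$ from $\tau^P_{n,\nu_P}$ and $y$ uniformly. Since $\delta_P$ is Woodin in $P$, the hypothesis of Lemma \ref{term works in generic extension} is satisfied, and the fact that $P[y]|\nu_P$ is still a reasonable forcing extension means $Col(\omega,\nu_P)$-generics over $P[y]$ behave as expected.

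Next, fix $N \in P[y]|\nu_P$ that is $\Gamma$-suitable, and let $\mu$ be an uncountable cardinal of $N$. Choose $f \in P[y]$ that is the least surjection from $\nu_P$ onto $\mu$ in some canonical wellorder of $P[y]$ definable from $y$ (for instance the constructibility order of $P[y]$ relative to $y$ and the parameters defining $P$). If $h$ is $Col(\omega,\nu_P)$-generic over $P[y]$, then $f\circ h$ is $Col(\omega,\mu)$-generic over $P[y]$ and hence over $N$. Now define
\begin{align*}
\tau'_{n,\mu} = \{(\sigma,p) : \, & \sigma \text{ is a } Col(\omega,\mu)\text{-standard term for a real}, \\
& p \in Col(\omega,\mu), \text{ and} \\
& \emptyset \Vdash^{P[y]}_{Col(\omega,\nu_P)} (\check{p}\in \check{f}\circ \dot{h} \rightarrow \check{\sigma}[\check{f}\circ \dot{h}] \in \tau)\}.
\end{align*}
By construction $\tau'_{n,\mu}$ is definable in $P[y]$ from $N$, $\mu$, $y$, and $\tau^P_{n,\nu_P}$, uniformly.

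It then remains to show $\tau'_{n,\mu} = \tau^N_{n,\mu}$. One inclusion uses that comeager-many $Col(\omega,\mu)$-generics over $N$ arise as $f \circ h$ for some $Col(\omega,\nu_P)$-generic $h$ over $P[y]$, together with the fact that $\tau$ weakly captures $G_n$ over $P[y]$ (hence over $N$ since $N \subseteq P[y]$ and any generic over $P[y]$ is generic over $N$). For the reverse inclusion, if $(\sigma,p) \in \tau^N_{n,\mu}$, Corollary \ref{standard terms capture} gives $\sigma[h'] \in G_n$ for comeager many $Col(\omega,\mu)$-generics $h'$ over $N$ extending $p$; applying this to $h' = f \circ h$ for $h$ generic over $P[y]$ extending a condition forcing $p \in \check f\circ \dot h$, and using that $\tau[h] \cap P[y][h] = G_n \cap P[y][h]$, yields $(\sigma,p) \in \tau'_{n,\mu}$. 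This is essentially the same computation as in Lemma \ref{terms definable}, carried out in $P[y]$ rather than $P$, and I expect no substantive obstacle beyond bookkeeping.

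The only subtle point—and the place I would spend the most care—is verifying that the canonical choice of $f$ really does give a uniformly definable object in $P[y]$. This is straightforward because $P[y]$ inherits a definable wellorder from $P$ together with the parameter $y$, but one must check that this wellorder is absolute enough to appear in the forcing statement defining $\tau'_{n,\mu}$. Given the uniformity already established in Lemmas \ref{term works in generic extension}, \ref{generic extension lp-closed}, and \ref{lp definable in generic ext}, this verification is routine.
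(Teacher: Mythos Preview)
Your proposal is correct and follows essentially the same approach as the paper: the paper's proof is the one-liner ``by the proof of Lemma~\ref{terms definable}, using from Lemma~\ref{term works in generic extension} that there is a term in $P[y]$ which weakly captures $G_n$ over $P[y]$ and is definable from $\tau^P_{n,\nu_P}$ and $y$,'' and you have simply unpacked this in detail. One small imprecision: in the reverse inclusion you state that Corollary~\ref{standard terms capture} gives $\sigma[h']\in G_n$ for \emph{comeager many} $h'$, but in fact weak capture gives it for \emph{all} generics $h'$ with $p\in h'$, which is what you actually need to conclude the forcing statement $\emptyset\Vdash(\check p\in\check f\circ\dot h\to\check\sigma[\check f\circ\dot h]\in\tau)$ holds for every $h$.
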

\begin{proof}
    This is by the proof of Lemma \ref{terms definable}, using from Lemma \ref{term works in generic extension} that there is a term in $P$ which weakly captures $G_n$ over $P[y]$ and is definable from $\tau^P_{n,\nu_P}$ and $y$.
\end{proof}
\vspace{2mm}
\subsection{Internalizing the Direct Limit}
\label{internalization section}

Let $x_0\in\mathbb{R}$ be any real which is Turing above some real coding $W$ and consider some $M$ which is a countable, complete iterate of $M_{x_0}$.   For elements of $M|\nu_M$, being a $\Gamma$-suitable premouse, a $\Gamma$-short iteration tree, or a $\Gamma$-maximal iteration tree is definable over $M$ from $\tau^M_{0,\nu_M}$ (This follows easily from Corollary \ref{lower part is definable}). Let
\begin{align*}
    \mathcal{I}^M = \{P \in M|\nu_M: P \in\mathcal{I}\}. 
\end{align*}

\begin{lemma}
\label{M knows branches through short trees}
    Let $\T \in M|\nu_M$ be a $\Gamma$-short tree on some $\Gamma$-suitable $P\in M$. Then the branch $b$ picked by the iteration strategy for $P$ is in $M$ and $b$ is definable in $M$ from $\T$ and $\tau^M_{0,\nu_M}$ (uniformly). In particular, $M_b^\T$ and the iteration map $i_b^\T:P\to M_b^\T$ are definable in $M$ from $\T$ and $\tau^M_{0,\nu_M}$.
\end{lemma}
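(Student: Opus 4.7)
The plan is to combine the $\G$-guidedness of $P$'s iteration strategy with the $Lp^\Gamma$-closure of $M$: this lets $M$ compute internally the $\Q$-structure witnessing the branch chosen by $\Sigma_P$, and then lets $M$ recover the branch from that $\Q$-structure.

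First, since $\T$ is $\Gamma$-short and the strategy for $P$ is guided by $\G$, the definition of guidedness gives that $\Q(b,\T)$ exists and lies in $Lp^\Gamma(\M(\T))$. By the minimality in the definition of $\Q(\T)$ and the uniqueness remark immediately following Theorem \ref{branch uniqueness}, this forces $\Q(b,\T) = \Q(\T)$, and $b$ is the unique cofinal wellfounded branch of $\T$ satisfying $\Q(\T) \trianglelefteq M^\T_b$.

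Second, since $\T \in M|\nu_M$, the common part model $\M(\T)$ also lies in $M|\nu_M$. The $\Gamma$-ss-ness of $M$ gives that $M$ is $Lp^\Gamma$-closed below $\nu_M$, so $Lp^\Gamma(\M(\T)) \subseteq M$ and in particular $\Q(\T) \in M$. By Corollary \ref{lower part is definable}, the map $a \mapsto Lp^\Gamma(a)$ on $M|\nu_M$ is uniformly definable in $M$ from $\tau^M_{0,\nu_M}$, which makes $\Q(\T)$, extracted as the appropriate initial segment of $Lp^\Gamma(\M(\T))$, uniformly definable in $M$ from $\T$ and $\tau^M_{0,\nu_M}$.

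The remaining step, which I expect to be the principal obstacle, is to show that $b$ itself lies in $M$ and is definable there from $\T$ and $\Q(\T)$. The plan is to appeal to the standard $\Q$-structure-to-branch recovery from \cite{ooimt}: because $\Q(\T)$ is iterable above $\delta(\T)$ via a strategy coming from its membership in $Lp^\Gamma(\M(\T))$, which is itself already computed inside $M$, one can compare $\Q(\T)$ against the candidate models $M^\T_\alpha$ to determine exactly which $\alpha$'s lie on $b$; equivalently, inside $M$ one searches for the unique cofinal branch $c$ of $\T$ admitting an embedding of $\Q(\T)$ into $M^\T_c$ fixing $\M(\T)$, and the uniqueness from Theorem \ref{branch uniqueness} guarantees that this $M$-internal search picks out $b$. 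Once $b \in M$, the direct limit $M^\T_b$ and the iteration map $i^\T_b \colon P \to M^\T_b$ are formed by standard set-theoretic operations inside $M$ from $\T$, $P$, and $b$, yielding the required uniform definability from $\T$ and $\tau^M_{0,\nu_M}$.
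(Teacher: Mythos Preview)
Your overall architecture matches the paper's: identify $\Q(\T)$ as an initial segment of $Lp^\Gamma(\M(\T))$, use Corollary \ref{lower part is definable} to see it is uniformly definable in $M$ from $\T$ and $\tau^M_{0,\nu_M}$, and then recover $b$ as the unique branch absorbing $\Q(\T)$. The first two steps are fine.

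The gap is in your third step. You propose to ``search inside $M$ for the unique cofinal branch $c$ of $\T$ admitting an embedding of $\Q(\T)$ into $M^\T_c$,'' but this presupposes exactly what must be shown, namely that such a branch exists \emph{in $M$}. Inside $M$ the tree $\T$ need not be countable (it lives in $M|\nu_M$), so the usual branch-existence arguments do not apply internally, and the comparison you sketch between $\Q(\T)$ and the $M^\T_\alpha$ does not by itself produce a cofinal branch---it at best tells you, for a branch already in hand, whether it is the right one. Nor is it clear that the iteration strategy for $\Q(\T)$ needed to run such comparisons is available inside $M$; you only know $\Q(\T)\in M$, not that its $\bm{\Delta}$-strategy is.

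The paper closes this gap differently: take $g$ generic for $Col(\omega,\nu_M)$ over $M$, so that in $M[g]$ both $\T$ and $\Q(\T)$ are coded by reals. The assertion ``there is a cofinal wellfounded branch $c$ through $\T$ with $\Q(\T)\trianglelefteq M^\T_c$'' is $\Sigma^1_2$ in these codes and true in $V$, hence by Shoenfield absoluteness it holds in $M[g]$. Since the branch is unique, it is independent of $g$, so $b\in M$. This absoluteness-plus-homogeneity maneuver is the missing ingredient in your argument.
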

\begin{proof}
    Let $g$ be $Col(\omega,\nu_M)$-generic over $M$. Note $b$ is the unique branch through $\T$ which absorbs $\Q(\T)$. So by Shoenfield absoluteness, $b \in M[g]$ (in $M[g]$ the existence of such a branch is a $\Sigma^1_2$ statement about reals). But $b$ is independent of the generic $g$, so $b\in M$.
    
    It then follows from Corollary \ref{lower part is definable} that $b$, and therefore also $M_b^\T$ and $i^\T_b$, are definable in $M$ from $\tau^M_{0,\nu_M}$.
\end{proof}

\begin{corollary}
\label{gamma-guided definable}
    Suppose $P \in \mathcal{I}^M$ and $\Sigma$ is the iteration strategy for $P$. Suppose also $\T \in M|\nu_M$ is an iteration tree on $P$ below $\delta_P$ of limit length. Whether $\T$ is according to $\Sigma$ is definable in $M$ from parameter $\tau^M_{0,\nu_M}$ by a formula independent of $\T$ and the choice of $\Gamma$-ss mouse $M$.
\end{corollary}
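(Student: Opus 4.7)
The plan is to write down a formula $\Phi(\mathcal{T}, \tau)$ such that, uniformly in any $\Gamma$-ss mouse $M$ and any $\mathcal{T} \in M|\nu_M$ of the required form, $M \models \Phi(\mathcal{T}, \tau^M_{0,\nu_M})$ if and only if $\mathcal{T}$ is by $\Sigma$. By definition of ``according to $\Sigma$,'' this reduces to verifying, at every limit $\lambda' < \mathrm{lh}(\mathcal{T})$, that the branch through $\mathcal{T}\upharpoonright\lambda'$ built into $\mathcal{T}$ is the branch $\Sigma(\mathcal{T}\upharpoonright\lambda')$. So $\Phi$ will be a universal statement over $\lambda'$ with a single local clause inside, and the main work is writing down that clause.

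To formulate the local clause, I would first invoke Corollary \ref{lower part is definable} to conclude that the map $a \mapsto Lp^\Gamma(a)$ on $M|\nu_M$ is uniformly definable in $M$ from $\tau^M_{0,\nu_M}$. This lets $M$ compute $\mathcal{M}(\mathcal{T}\upharpoonright\lambda')$, $\delta(\mathcal{T}\upharpoonright\lambda')$, and $Lp^\Gamma(\mathcal{M}(\mathcal{T}\upharpoonright\lambda'))$, and hence decide whether $\mathcal{T}\upharpoonright\lambda'$ is $\Gamma$-short or $\Gamma$-maximal. In the $\Gamma$-short case, Lemma \ref{M knows branches through short trees} directly supplies a formula $\psi$ in $\tau^M_{0,\nu_M}$ that defines $\Sigma(\mathcal{T}\upharpoonright\lambda')$ as the unique branch absorbing the $\mathcal{Q}$-structure in $Lp^\Gamma(\mathcal{M}(\mathcal{T}\upharpoonright\lambda'))$, and the clause simply asserts that the branch indicated by $\mathcal{T}$ agrees with $\psi$'s output.

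In the $\Gamma$-maximal case, the plan is to characterize $\Sigma(\mathcal{T}\upharpoonright\lambda')$ as the unique cofinal wellfounded branch $b$ such that the direct limit $M^{\mathcal{T}\upharpoonright\lambda'}_b$ is $\Gamma$-suitable. Fullness preservation of $\Sigma$ ensures the $\Sigma$-branch has this property, and the property itself is expressible in $M$ from $\tau^M_{0,\nu_M}$: $\Gamma$-suitability is defined by iterated $Lp^\Gamma$-closure over $\mathcal{M}(\mathcal{T}\upharpoonright\lambda')$, and the lower part is independent of the branch. The universal quantifier over $\lambda'$ then packages both clauses into a single formula $\Phi$ whose only parameters are $\tau^M_{0,\nu_M}$, and whose wording is manifestly independent of $\mathcal{T}$ and the specific mouse $M$.

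The main obstacle I foresee is the uniqueness step in the $\Gamma$-maximal case. Lemma \ref{branch agreement} only delivers uniqueness among branches respecting every $\vec{G}_n$, and the natural partial strengthening ``the iteration map sends $\tau^P_{0,\eta}$ to $\tau^{M^{\mathcal{T}}_b}_{0,i_b(\eta)}$'' --- which is definable from $\tau^M_{0,\nu_M}$ via Lemma \ref{terms definable} with $n=0$ --- only controls the map up to $\gamma^P_1$. The resolution will be to combine this weak agreement with the structural rigidity forced by full $\Gamma$-suitability of $M^{\mathcal{T}\upharpoonright\lambda'}_b$ above $\delta(\mathcal{T}\upharpoonright\lambda')$ (which pins the higher-level structure as iterated $Lp^\Gamma$-closure) and to use Theorem \ref{branch uniqueness} to rule out rival wellfounded branches compatible with that structure. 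If this rigidity argument fails to fully pin down $b$ from $\tau^M_{0,\nu_M}$ alone, one would have to supplement the parameter with additional $\tau^M_{n,\nu_M}$, or restrict the corollary to trees whose limit initial segments are all $\Gamma$-short --- which is the only case needed for the applications in the later sections internalizing the direct limit.
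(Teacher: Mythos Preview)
Your overall structure is right: quantify over limit $\lambda' < \mathrm{lh}(\T)$ and check the branch at each stage, and your treatment of the $\Gamma$-short case via Lemma~\ref{M knows branches through short trees} and Corollary~\ref{lower part is definable} is exactly what the paper intends. But you are manufacturing a difficulty in the $\Gamma$-maximal case that does not exist, and your attempt to resolve it by a uniqueness/rigidity argument is both unnecessary and, as you yourself suspect, not available from the parameter $\tau^M_{0,\nu_M}$ alone.

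The point you are missing is that a proper limit initial segment of a tree according to $\Sigma$ can never be $\Gamma$-maximal. Suppose $\T$ is according to $\Sigma$, $\lambda' < \mathrm{lh}(\T)$ is a limit, and $\T\upharpoonright\lambda'$ is $\Gamma$-maximal. Since $\Sigma$ is fullness-preserving, $M^\T_{\lambda'}$ is $\Gamma$-suitable with $\delta_{M^\T_{\lambda'}} = \delta(\T\upharpoonright\lambda')$, and hence carries no extenders indexed above $\delta(\T\upharpoonright\lambda')$. But normality forces $E^\T_{\lambda'}$ to have index at least $\delta(\T\upharpoonright\lambda')$, so the tree cannot continue---contradicting $\lambda' < \mathrm{lh}(\T)$. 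Consequently the correct formula is simply: ``for every limit $\lambda' < \mathrm{lh}(\T)$, $\T\upharpoonright\lambda'$ is $\Gamma$-short and $[0,\lambda']_\T$ is the branch absorbing $\Q(\T\upharpoonright\lambda')$.'' This is definable from $\tau^M_{0,\nu_M}$ alone by Lemma~\ref{M knows branches through short trees}, and one checks easily that it is equivalent to ``$\T$ is according to $\Sigma$'' (if $\T$ first deviates from $\Sigma$ at a $\Gamma$-maximal stage $\lambda'$, the clause fails there because it demands $\Gamma$-shortness). This is why the paper records the result as an immediate corollary of Lemma~\ref{M knows branches through short trees} without further argument. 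Your fallback suggestion of ``restricting to trees whose limit initial segments are all $\Gamma$-short'' is in fact no restriction at all---it is automatic.
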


\begin{lemma}
\label{coiterate in M|nu}
    Suppose $P,Q\in \mathcal{I}^M$. Then there is $R \in \mathcal{I}^M$ and normal iteration trees $\T$ and $\U$ on $P$ and $Q$, respectively, such that

    \begin{enumerate}
        \item $\T$ realizes $R$ is a complete iterate of $P$,
        \item $\U$ realizes $R$ is a complete iterate of $Q$,
        \item $\T\upharpoonright lh(\T) \in M|\nu_M$,
        \item $\U\upharpoonright lh(\U) \in M|\nu_M$, and
        \item $R$ is definable in $M$ from $P$, $Q$, and $\tau^M_{0,\nu_M}$ (uniformly).
    \end{enumerate}
\end{lemma}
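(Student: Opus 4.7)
The plan is to carry out the standard $\Sigma$-coiteration of $P$ and $Q$ inside $M$, using that the branches at limit stages are determined by $\Q$-structures lying in $Lp^\Gamma$, which in turn is definable from $\tau^M_{0,\nu_M}$ by Corollary \ref{lower part is definable}.

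First, I would argue in $V$ that such a common iterate exists. Both $P$ and $Q$ are in $\mathcal{I}$, so they are iterates of $W$ by the fullness-preserving, $\G$-guided strategy $\Sigma$. The directed system structure of $\mathcal{I}$ (via the Comparison and Dodd-Jensen Lemmas) yields a countable $R \in \mathcal{I}$ together with normal iteration trees $\T$ on $P$ and $\U$ on $Q$, both according to $\Sigma$, realizing $R$ as a non-dropping iterate of both. Fullness-preservation of $\Sigma$ guarantees $R$ is again $\Gamma$-suitable.

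Second, I would verify that every limit segment $\T \upharpoonright \xi$ (and $\U \upharpoonright \xi$) arising in this coiteration is $\Gamma$-\emph{short}. This is the heart of the argument. Since $P$ is $\Gamma$-suitable, every proper initial segment of $P|\delta_P$ has $Lp^\Gamma$ satisfying that the corresponding ordinal is not Woodin; the same holds for iterates. Consequently, at each limit $\xi$ below the length at which the Woodin is finally iterated, $Lp^\Gamma(\M(\T\upharpoonright \xi)) \models \delta(\T\upharpoonright \xi)$ is not Woodin, so the $\Sigma$-branch is the unique branch absorbing $\Q(\T\upharpoonright\xi)$, and $\Q(\T\upharpoonright\xi) \in Lp^\Gamma(\M(\T\upharpoonright\xi))$ by the $\G$-guided property. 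By Lemma \ref{M knows branches through short trees}, this branch lies in $M$ and is definable from $\T\upharpoonright \xi$ and $\tau^M_{0,\nu_M}$. The same holds for $\U$.

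Third, I would internalize the coiteration in $M$. Since $P, Q \in M|\nu_M$ are countable in $M$ and $\nu_M$ is inaccessible in $M$, $M|\nu_M$ has sufficient resources to form the coiteration tree by induction: successor stages use the extender of least disagreement between the current models, and limit stages use the $Lp^\Gamma$-definable branch. At every stage the resulting tree stays in $M|\nu_M$ by inaccessibility, and the whole construction is uniformly definable in $M$ from $P$, $Q$, and $\tau^M_{0,\nu_M}$. The last model on either side is the common iterate $R$, and it lies in $M|\nu_M$; by fullness-preservation of $\Sigma$, $R$ is $\Gamma$-suitable, whence $R \in \mathcal{I}^M$.

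The main obstacle is the verification that no $\Gamma$-maximal branch ever appears in the coiteration, since if one did, the guided strategy would require information from the entire sjs $\G$ rather than just $G_0$, which would not be available from $\tau^M_{0,\nu_M}$ alone. The key point that rules this out is that $\Gamma$-suitability is hereditary: in a $\Gamma$-suitable premouse, every initial segment of the Woodin level is $\Gamma$-small, and this property propagates through iteration trees on it during comparison. Once this is established, definability from $\tau^M_{0,\nu_M}$ follows by a straightforward combination of Corollary \ref{lower part is definable} and Lemma \ref{M knows branches through short trees}.
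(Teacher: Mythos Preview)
Your proposal has a genuine gap: the claim that every limit stage of the coiteration is $\Gamma$-short is false, and your justification for it (``$\Gamma$-suitability is hereditary \dots\ and this property propagates'') does not hold. When two distinct $\Gamma$-suitable iterates $P,Q$ of $W$ are compared, the coiteration can and typically does run all the way up to a stage where $\delta(\T) = \delta(\U)$ is a $\Gamma$-Woodin of $Lp^\Gamma(\M(\T))$; at that point both $\T$ and $\U$ are $\Gamma$-maximal. Nothing about $\Gamma$-suitability of $P$ rules this out --- on the contrary, the defining feature of a suitable mouse is that its own $\delta$ \emph{is} a $\Gamma$-Woodin, and the coiteration is building exactly such a $\delta$.

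The paper's proof handles this case explicitly, and in a way that does \emph{not} require locating the maximal branch inside $M$: when $\T$ is $\Gamma$-maximal, one sets $R$ to be the unique $\Gamma$-suitable mouse extending $\M(\T)$, obtained by iterating $Lp^\Gamma$ $\omega$ times over $\M(\T)$. By Corollary \ref{lower part is definable} this $R$ is definable in $M$ from $\M(\T)$ and $\tau^M_{0,\nu_M}$, so only $G_0$ is needed after all. The actual $\Sigma$-branches through the maximal $\T$ and $\U$ need not be (and generally are not) in $M$; this is precisely why the lemma only asserts $\T\upharpoonright lh(\T)\in M|\nu_M$ rather than $\T\in M|\nu_M$. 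Your approach, by contrast, would require the final branch to be in $M$, which fails in the maximal case.
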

\begin{proof}
    We perform a coiteration of $P$ and $Q$ inside $M$. Suppose so far from the coiteration we have obtained iteration trees $\T$ and $\U$ on $P$ and $Q$, respectively. 
    
    Suppose $\T$ and $\U$ have successor length. Let $P'$ and $Q'$ be the last models of $\T$ and $\U$, respectively. First consider the case $P' \trianglelefteq Q'$ or $P' \trianglelefteq Q'$. If either is a proper initial segment of the other, or there are any drops on the branches to $P'$ or $Q'$, we have violated the Dodd-Jensen property. So $P'=Q'$ and $P'$ is a common, complete iterate of $P$ and $Q$. 
    Otherwise, we continue the coiteration as usual by applying the extender at the least point of disagreement between the last models of $\T$ and $\U$, respectively.

    Now suppose $\T$ and $\U$ are of limit length. In this case $\M(\T) = \M(\U)$. If $\T$ is $\Gamma$-short, so is $\U$, and by Lemma \ref{M knows branches through short trees}, $M$ can identify the branches the iteration strategies for $P$ and $Q$ pick through $\T$ and $\U$, respectively. So the coiteration can be continued inside $M$. Otherwise, $\T$ and $\U$ are $\Gamma$-maximal. In this case let $R$ be the unique $\Gamma$-suitable mouse extending $\M(\T)$. $R$ is just the result of applying $Lp^\Gamma$ to $\M(\T)$ $\omega$ times, so $M$ can identify $R$ by Lemma \ref{Lp is definable}. Then $R$ is a complete iterate of $P$ and $Q$.

    The proof of the Comparison Lemma gives the coiteration terminates in fewer than $\nu_M$ steps. Then the argument above implies the trees from this coiteration, without their last branches, are in $M|\nu_M$ and definable in $M$.
\end{proof}

The lemma implies $\I^M$ is a directed system. $\I^M$ is countable and contained in $\I$, so we may define the direct limit $\mathcal{H}^M$ of $\I^M$, and $\mathcal{H}^M \in \I$. Let
\begin{align*}
    \tilde{\mathcal{I}}^M =\, \{P \in \mathcal{I}^M : &\text{ there is a normal iteration tree } \T \text{ such that } \T \text{ realizes } \\
    & P \text{ is a complete iterate of } W \text{ and } \T\upharpoonright lh(\T) \in M|\nu_M\}.
\end{align*}.
$\tilde{\mathcal{I}}^M$ is definable in $M$ by Corollary \ref{gamma-guided definable}.

\begin{lemma}
\label{cofinal in I^M}
    $\tilde{\mathcal{I}}^M$ is cofinal in $\mathcal{I}^M$. In particular, the direct limit of $\tilde{\mathcal{I}}^M$ is $\mathcal{H}^M$.
\end{lemma}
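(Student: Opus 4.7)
The plan is to derive cofinality from Lemma \ref{coiterate in M|nu} applied with one of the two inputs taken to be $W$ itself, and then read off the direct limit statement from general facts about cofinal subsystems.

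First I would verify $W \in \tilde{\mathcal{I}}^M$, which also shows the system is nonempty. Since $x_0$ is Turing above a real coding $W$ and $M$ is a countable complete iterate of $M_{x_0}$, we have $x_0 \in M$ and hence $W \in M$ (as $W$ is coded by a real computable from $x_0$). Because $W$ is countable and $\nu_M$ is inaccessible in $M$, in fact $W \in M|\nu_M$, so $W \in \mathcal{I}^M$. The trivial iteration tree on $W$ lies in $M|\nu_M$ and realizes $W$ as a complete iterate of $W$, so $W \in \tilde{\mathcal{I}}^M$.

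Next, given an arbitrary $P \in \mathcal{I}^M$, I would apply Lemma \ref{coiterate in M|nu} to $W$ and $P$ to obtain some $R \in \mathcal{I}^M$ together with normal iteration trees $\T$ on $W$ and $\U$ on $P$ such that $\T$ realizes $R$ as a complete iterate of $W$, $\U$ realizes $R$ as a complete iterate of $P$, and both $\T \upharpoonright lh(\T)$ and $\U \upharpoonright lh(\U)$ lie in $M|\nu_M$. The tree $\T$ then witnesses $R \in \tilde{\mathcal{I}}^M$, while $\U$ shows that $R$ sits above $P$ in the directed system. This establishes cofinality of $\tilde{\mathcal{I}}^M$ in $\mathcal{I}^M$.

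For the ``in particular'' clause, the iteration maps in $\tilde{\mathcal{I}}^M$ are the restrictions of those in $\mathcal{I}^M$, and a cofinal subsystem of a directed system of mice (with maps induced by iteration embeddings) has the same direct limit as the full system. Since the direct limit of $\mathcal{I}^M$ is $\mathcal{H}^M$ by definition, the direct limit of $\tilde{\mathcal{I}}^M$ is $\mathcal{H}^M$ as well. I do not expect any real obstacle here: the entire content was packed into Lemma \ref{coiterate in M|nu}, and the only thing to check beyond that is the base case $W \in \tilde{\mathcal{I}}^M$, which is immediate from $x_0 \in M$ and the trivial tree.
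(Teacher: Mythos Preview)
Your proposal is correct and follows essentially the same approach as the paper: apply Lemma \ref{coiterate in M|nu} to $W$ and an arbitrary $P\in\mathcal{I}^M$ to get a common iterate $R$ whose tree from $W$ (minus its last branch) lies in $M|\nu_M$, witnessing $R\in\tilde{\mathcal{I}}^M$ above $P$. Your extra verification that $W\in\mathcal{I}^M$ and the cofinal-subsystem remark for the direct limit are details the paper leaves implicit, but the argument is the same.
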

\begin{proof}
    Suppose $P\in \mathcal{I}^M$. By Lemma \ref{coiterate in M|nu}, there is $R \in \mathcal{I}^M$ which is a common, complete, normal iterate of both $P$ and $W$ by trees which are in $M$ (modulo their final branches). Then $R$ is below $P$ in $\mathcal{I}^M$ and $R\in \tilde{\mathcal{I}}^M$.
\end{proof}

\begin{lemma}
\label{M can approximate iteration maps}
    Suppose $P\in \mathcal{I}^M$. Let $\Sigma$ be the (unique) iteration strategy for $P$. Suppose $\T\in M|\nu_M$ is an iteration tree on $P$ according to $\Sigma$. Let $b = \Sigma(\T)$ and let $Q = M^\T_b$. Then $Q$ is definable in $M$ from $\T$ and $\tau^M_{0,\nu_M}$. And $\pi_{P,Q}\upharpoonright \gamma^P_n$ is definable in $M$ from $\T$ and $\langle\tau^M_{k,\nu_M}: k < n \rangle$ (uniformly).
\end{lemma}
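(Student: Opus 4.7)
The plan is to split on whether $\T$ is $\Gamma$-short or $\Gamma$-maximal. In the $\Gamma$-short case, Lemma \ref{M knows branches through short trees} already gives the branch $b = \Sigma(\T)$ as definable in $M$ from $\T$ and $\tau^M_{0,\nu_M}$, so both $Q = M^\T_b$ and the entire iteration map $\pi_{P,Q} = i^\T_b$ (hence its restriction to $\gamma^P_n$) are immediately definable from these parameters. The real work is in the $\Gamma$-maximal case, where $M$ does not have direct access to $b$.

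For the $\Gamma$-maximal case, I would first handle $Q$. Since $\Sigma$ is fullness-preserving, $b$ is non-dropping and $Q$ is $\Gamma$-suitable with $Q|\delta(\T)$ agreeing with $\M(\T)$. Thus $Q$ is the unique $\Gamma$-suitable mouse extending $\M(\T)$, obtained by $\omega$-fold application of $Lp^\Gamma$ starting from $\M(\T)$. Since $\M(\T)$ is definable from $\T$ and Corollary \ref{lower part is definable} shows $a \mapsto Lp^\Gamma(a)$ is definable in $M$ from $\tau^M_{0,\nu_M}$, so is $Q$.

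For $\pi_{P,Q}\upharpoonright\gamma^P_n$, the key observation is that by Lemma \ref{branch agreement}, any cofinal branch $c$ through $\T$ respecting $\vec{G}_n$ satisfies $i^\T_c\upharpoonright\gamma^P_n = \pi_{P,Q}\upharpoonright\gamma^P_n$. So I do not need to locate $b$ itself; any branch respecting $\vec{G}_n$ works. Since $\Sigma$ is guided by $\G$, the branch $b$ witnesses, in $V$, the existence of a cofinal wellfounded branch $c$ through $\T$ with $M^\T_c = Q$ and $i^\T_c(\tau^P_{k,\eta}) = \tau^Q_{k, i^\T_c(\eta)}$ for all $k < n$ and all cardinals $\eta \geq \delta_P$ of $P$. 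For $g$ that is $Col(\omega,\nu_M)$-generic over $M$, $\T$ is countable in $M[g]$, and the statement asserting the existence of such a branch $c$ is $\Sigma^1_2$ in the parameters $\T$, $P$, $Q$, and the relevant terms. By Shoenfield absoluteness applied in $M[g]$, such a $c$ exists in $M[g]$. The terms $\tau^P_{k,\eta}$ and $\tau^Q_{k,\mu}$ needed to formulate the condition are definable in $M$ from $P$, $Q$, and $\langle \tau^M_{k,\nu_M} : k < n\rangle$ by Lemma \ref{terms definable}. Thus $\pi_{P,Q}\upharpoonright\gamma^P_n$ is definable in $M[g]$ from $\T$ and $\langle \tau^M_{k,\nu_M} : k < n\rangle$, and because the value of $\pi_{P,Q}\upharpoonright\gamma^P_n$ does not depend on $g$, homogeneity of $Col(\omega,\nu_M)$ pulls the definition back into $M$ via the standard forcing trick ($f = \pi_{P,Q}\upharpoonright\gamma^P_n$ iff $\emptyset$ forces the relevant formula with $\check{f}$).

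The main obstacle is precisely that $M$ cannot directly identify the unique $\Sigma$-branch in the $\Gamma$-maximal case — no $\Q$-structure analysis is available. The workaround is to exploit that Lemma \ref{branch agreement} makes $\pi_{P,Q}\upharpoonright\gamma^P_n$ a branch-independent object among all branches respecting $\vec{G}_n$, so that a $\Sigma^1_2$-search in a generic extension suffices, and to note that using finitely many $\tau^M_{k,\nu_M}$ only gets an initial segment up to $\gamma^P_n$ — exactly the initial segment the statement asks for.
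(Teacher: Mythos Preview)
Your proposal is correct and follows essentially the same route as the paper: split on $\Gamma$-short versus $\Gamma$-maximal, recover $Q$ in the maximal case by iterating $Lp^\Gamma$ over $\M(\T)$ via Corollary \ref{lower part is definable}, and obtain $\pi_{P,Q}\upharpoonright\gamma^P_n$ by searching (via Shoenfield absoluteness in a $Col(\omega,\nu_M)$-extension) for any branch respecting $\vec{G}_n$, using Lemma \ref{terms definable} to identify the relevant terms and Lemma \ref{branch agreement} to conclude the restriction is branch-independent. The only cosmetic difference is that the paper asserts such a branch $c$ actually lies in $M$ (by the argument of Lemma \ref{M knows branches through short trees}), whereas you more cautiously pull back only the restriction $\pi_{P,Q}\upharpoonright\gamma^P_n$ via homogeneity; your version is if anything slightly more careful, since uniqueness of $c$ is not guaranteed for finite $n$.
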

\begin{proof}
    If $\T$ is $\Gamma$-short, then this is by Lemma \ref{M knows branches through short trees}.
    
    Suppose $\T$ is $\Gamma$-maximal. Then $Q = \bigcup_{i < \omega} Q_i$, where $Q_0 = \M(\T)$ and $Q_{i+1} = Lp^\Gamma(Q_i)$. So $Q$ is definable from $\M(\T)$ and $\tau^M_{0,\nu_M}$ by Corollary \ref{lower part is definable}. And $\pi_{P,Q}\upharpoonright \gamma^P_n = \pi_c\upharpoonright\gamma^P_n$, where $c$ is any branch through $\T$ respecting $\vec{G}_n$. The argument of Lemma \ref{M knows branches through short trees} shows there is a branch $c$ in $M$ respecting $\vec{G}_n$. Then $\pi_{P,Q}\upharpoonright \gamma^P_n = \pi_c\upharpoonright\gamma^P_n$ for any wellfounded branch $c\in M$ through $\T$ such that $\pi_c(\langle\tau^P_k: k < n \rangle) = \langle\tau^Q_k: k < n \rangle$. $\langle\tau^P_k: k < n \rangle$ and $\langle\tau^Q_k: k < n \rangle$ are definable in $M$ from $P$, $Q$, and $\langle\tau^M_{k,\nu_M}: k < n \rangle$ by Lemma \ref{terms definable}. So $\pi_{P,Q}\upharpoonright \gamma_n^P$ is definable in $M$ from $\T$ and $\langle\tau^M_{k,\nu_M}: k < n \rangle$.
\end{proof}

It follows from the previous lemmas that for any $P\in \mathcal{I}^M$, $\pi_{P,\mathcal{H}^M}\upharpoonright \gamma^P_n$ is definable in $M$ from $P$ and $\langle\tau^M_{k,\nu_M}: k < n \rangle$ (uniformly in $M$). The same lemmas hold in $M[y]$ for $y$ $Ea_M$-generic over $M$. In particular, we have:

\begin{lemma}
\label{M[y] can approximate branches through maximal trees}
    Suppose $y$ is $Ea_M$-generic over $M$ and $P \in \mathcal{I} \cap M[y]|\nu_M$. Let $\Sigma$ be the (unique) iteration strategy for $P$. Suppose $\T\in M[y]|\nu_M$ is an iteration tree on $P$ according to $\Sigma$. Let $b = \Sigma(\T)$ and let $Q = M^\T_b$. Then $Q$ is definable in $M[y]$ from $\T$ and $\tau^M_{0,\nu_M}$. And $\pi_{P,Q}\upharpoonright\gamma^P_n$ is definable in $M[y]$ from $\T$ and $\langle \tau^M_{k,\nu_M}: k < n\rangle$ (uniformly). Moreover, the definition is independent not just of the choice of $\Gamma$-ss mouse $M$, but also of the generic $y$.
\end{lemma}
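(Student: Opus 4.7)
The plan is to mirror the proof of Lemma \ref{M can approximate iteration maps}, substituting at each step the generic-extension versions of the definability lemmas already established, namely Corollary \ref{lp definable in generic ext} and the Corollary immediately following it (the generic-extension form of Lemma \ref{terms definable}). Split the proof into the two cases depending on whether $\T$ is $\Gamma$-short or $\Gamma$-maximal.

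For the $\Gamma$-short case, the branch $b=\Sigma(\T)$ is the unique wellfounded branch through $\T$ absorbing $\Q(\T)$. Passing to a $Col(\omega,\nu_M)$-generic $g$ over $M[y]$, the existence of such a branch is a $\Sigma^1_2$ statement with real parameters, so by Shoenfield absoluteness $b\in M[y][g]$; since $b$ does not depend on $g$, we get $b\in M[y]$. The identification of $\Q$-structures uses $Lp^\Gamma$, which by Corollary \ref{lp definable in generic ext} is definable on $M[y]|\nu_M$ from $\tau^M_{0,\nu_M}$ and $y$ (uniformly in $M$ and $y$). Hence $b$, and therefore $Q=M^\T_b$ and $i^\T_b$, are definable in $M[y]$ from $\T$, $\tau^M_{0,\nu_M}$, and $y$.

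For the $\Gamma$-maximal case, $Q=\bigcup_{i<\omega}Q_i$ where $Q_0=\M(\T)$ and $Q_{i+1}=Lp^\Gamma(Q_i)$, which is definable in $M[y]$ from $\T$ and $\tau^M_{0,\nu_M}$ (and $y$) via Corollary \ref{lp definable in generic ext}. For the approximation of the iteration map, I would use that $\pi_{P,Q}\upharpoonright\gamma^P_n=\pi_c\upharpoonright\gamma^P_n$ for \emph{any} wellfounded branch $c$ through $\T$ respecting $\vec{G}_n$, by Lemma \ref{branch agreement} together with the fact that $\Sigma$ is guided by $\G$. Such a branch exists in $V$ (namely $b$ itself), and the existence of a wellfounded branch $c$ satisfying the finitely many conditions $\pi_c(\tau^P_k)=\tau^Q_k$ for $k<n$ is $\Sigma^1_2$ in real parameters, hence by Shoenfield absoluteness such a $c$ exists in $M[y][g]$ for any $Col(\omega,\nu_M)$-generic $g$ over $M[y]$. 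The terms $\langle\tau^P_k:k<n\rangle$ and $\langle\tau^Q_k:k<n\rangle$ needed to state the condition are definable in $M[y]$ from $P$, $Q$, $y$, and $\langle\tau^M_{k,\nu_M}:k<n\rangle$ by the generic-extension form of Lemma \ref{terms definable}. While $c$ itself may depend on $g$, its restriction to $\gamma^P_n$ does not, again by Lemma \ref{branch agreement}, so that restriction belongs to $M[y]$ and is definable there.

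The main subtlety will be eliminating the parameter $y$ from the statement of the lemma, i.e., verifying that the resulting formula is uniform in the generic as well as in $M$. This reduces to checking that the capturing term in $M[y]$ produced by Lemma \ref{term works in generic extension} is defined uniformly from $\tau^M_{k,\nu_M}$ and $y$, which is exactly the content of that lemma; once this is in hand, each of the definability references above is manifestly uniform in both $M$ and $y$, so the final formula defining $Q$ and $\pi_{P,Q}\upharpoonright\gamma^P_n$ is the same in every such generic extension.
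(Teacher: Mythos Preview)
Your proposal is correct and matches the paper's approach exactly: the paper does not give a separate proof of this lemma but simply asserts that the preceding lemmas (in particular Lemma \ref{M knows branches through short trees} and Lemma \ref{M can approximate iteration maps}) go through in $M[y]$, and your write-up is precisely the spelling-out of that claim using Corollary \ref{lp definable in generic ext} and the generic-extension analogue of Lemma \ref{terms definable}. One small wording issue: in your final paragraph you speak of ``eliminating the parameter $y$,'' but $y$ is not eliminated --- it remains a parameter, and what the paper means by ``independent of the generic $y$'' is exactly what you conclude, namely that the defining formula is uniform in $y$ (and $M$).
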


\begin{lemma}
\label{coiterate all possible generics}
    Suppose $p\in Ea_M$ and $\dot{S}$ is an $Ea_M$-name in $M|\nu_M$ such that $p \Vdash_{Ea_M}$ ``$\dot{S}$ is a complete iterate of $W$.'' Then there is $R\in\tilde{I}^M$ such that $R$ is a complete iterate of $S[y]$ for every $y \in \R$ which is $Ea_M$-generic over $M$. Moreover, we can pick $R$ such that $R$ is (uniformly) definable in $M$ from parameters $\dot{S}$ and $p$.
\end{lemma}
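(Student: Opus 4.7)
My plan is to produce $R$ by coiterating $W$ with $\dot S$ generically inside $M$, and then descending the resulting common coiterate to $M$ via a mutual-genericity argument.

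First I would apply the natural analogue of Lemma \ref{coiterate in M|nu} inside $M[y]$ for $y$ an $(Ea_M\upharpoonright p)$-generic: since $\dot S[y]\in\tilde{\mathcal I}^{M[y]}$, $W$ and $\dot S[y]$ admit a common coiterate $R_y\in\tilde{\mathcal I}^{M[y]}$ via normal trees in $M[y]\mid\nu_M$. Using Corollary \ref{lp definable in generic ext} to uniformly define the $Lp^\Gamma$-operation in $M[y]$ from $\tau^M_{0,\nu_M}$ and $y$, the coiteration output is uniformly definable in $M[y]$ from $W$, $\dot S[y]$, $\tau^M_{0,\nu_M}$ and $y$. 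This produces an $(Ea_M\upharpoonright p)$-name $\dot R\in M\mid\nu_M$ with $\dot R[y]=R_y$.

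For the main step, I would take $y_1,y_2$ mutually $(Ea_M\upharpoonright p)$-generic over $M$, set $R_i:=\dot R[y_i]$, and in $M[y_1,y_2]$ coiterate $R_1$ with $R_2$ to obtain a common coiterate $R^\ast\in\tilde{\mathcal I}^{M[y_1,y_2]}$. The key claim is $R^\ast\in M$. By the canonicity of the $\mathcal G$-guided iteration strategy and the uniform definability of iteration maps furnished by Lemma \ref{M[y] can approximate branches through maximal trees}, $R^\ast$ is uniformly computed from $(R_1,R_2)$ and $\tau^M_{0,\nu_M}$; varying $y_2$ over a dense set of $(Ea_M\upharpoonright p)$-generics over $M[y_1]$ leaves the canonical common coiterate unchanged once one iterates past $R_1$ and all potential interpretations $\dot R[y_2]$, so $R^\ast\in M[y_1]$, and symmetrically $R^\ast\in M[y_2]$. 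The standard product-forcing fact $M[y_1]\cap M[y_2]=M$ for mutually generic $y_1,y_2$ then gives $R^\ast\in M$. Setting $R:=R^\ast$ and repeating the argument for an arbitrary $y\ni p$ in place of $y_2$, one verifies $R$ is a complete iterate of $\dot S[y]$, since $R$ is an iterate of $R_y=\dot R[y]$, which is itself an iterate of $\dot S[y]$.

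The main obstacle is justifying that, for fixed $y_1$, the coiterate $R^\ast$ computed in $M[y_1,y_2]$ does not depend on the particular mutually-generic $y_2$. This amounts to showing that the canonical $\mathcal G$-guided comparison of $R_1$ with $R_2$ produces a stable outcome as $R_2$ varies over the values $\dot R[y_2]$. I expect to handle this by exploiting the uniformity and canonicity in Lemma \ref{M[y] can approximate branches through maximal trees}: once one iterates $R_1$ far enough to absorb any iterate in $\tilde{\mathcal I}$ appearing as $\dot R[y_2]$ — which is possible because $|Ea_M|=\delta_M<\nu_M$ and all $\dot R[y_2]$ lie in $M[y_2]\mid\nu_M$ — the identity of the result is determined by $R_1$ together with the internal parameters of $M[y_1]$ alone, yielding the required descent to $M$ and the uniform definability from $\dot S$ and $p$.
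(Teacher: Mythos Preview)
Your overall strategy---force, coiterate, then show the result descends to $M$---is the right one, and it is the paper's strategy too. But the two-generic mutual-genericity argument is not enough, and the gap is precisely the step you flag as the ``main obstacle.''

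The coiterate $R^\ast$ of $R_1=\dot R[y_1]$ and $R_2=\dot R[y_2]$ genuinely depends on $R_2$, and hence on $y_2$. Your proposed fix---``iterate $R_1$ far enough to absorb any iterate appearing as $\dot R[y_2]$''---is exactly what needs to be proved, not a tool you can invoke: the various $\dot R[y_2]$ do not all live in $M[y_1]$, so there is no way inside $M[y_1]$ to form a single iterate of $R_1$ dominating them all. And without independence from $y_2$, the intersection argument $M[y_1]\cap M[y_2]=M$ never gets off the ground. Even if you could push $R^\ast$ into $M$ for one particular pair $(y_1,y_2)$, you would only know $R^\ast$ lies above $R_{y_1}$ and $R_{y_2}$; for a third generic $y_3$ the coiterate of $R_1$ with $R_{y_3}$ could be a different model, so the verification step at the end does not go through either unless independence is already established.

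The paper resolves this by passing to the finite-support $\omega$-fold product $\prod_{j<\omega}(Ea_M\upharpoonright p)$, taking a generic $H=\prod_j H_j$, and simultaneously coiterating $W$ together with all of the $\dot S[H_j]$ at once inside $M[H]$. Dodd--Jensen forces all sides to reach a common model $R$. Independence of $R$ (and of the tree on $W$) from $H$ is then obtained by a shifting-symmetry argument on the product: any two conditions can be made compatible by translating one of them to fresh coordinates, so no condition can decide membership in a name for $R$ differently from another. The $\omega$-fold product is what supplies the ``room'' your two-fold product lacks: every single $Ea_M$-generic $y$ occurs as some coordinate $H_j$ of some $H$, so the one simultaneous coiteration automatically dominates $\dot S[y]$ for every $y$.
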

\begin{proof}
    Let $\mathbb{P}$ be the finite support product $\Pi_{j<\omega} \mathbb{P}_j$, where each $\mathbb{P}_j$ is a copy of the part of $Ea_M$ below $p$. Let $H$ be $\mathbb{P}$-generic over $M$. We can represent $H$ as $\Pi_{j<\omega} H_j$, where $H_j$ is $\mathbb{P}_j$-generic over $M$. Let $\dot{S}_j$ be a $\mathbb{P}$-name for $\dot{S}[H_j]$. Let $S_j = \dot{S}_j[H]$ for $j\in \omega$ and $S_{-1} = W$.

    Lemma \ref{M[y] can approximate branches through maximal trees} tells us that $M[H]$ can perform the simultaneous coiteration of all of the $S_j$ for $j\in [-1,\omega)$ (except possibly finding the last branches). The proof of the Comparison Lemma gives that this coiteration terminates after fewer than $\nu_M$ steps. Let $R_j$ be the last model of the iteration tree on $S_j$ produced by the coiteration. Since each $S_j$ is a complete iterate of $W$, the Dodd-Jensen property implies there are no drops on the branches from $S_j$ to $R_j$ and $R_j = R_i$ for all $i,j\in[-1,\omega)$. Let $R = R_j$ for some (equivalently all) $j\in [-1,\omega)$. Then $R$ is a complete iterate of $M$ and $R$ is a complete iterate of $S_j$ for each $j\in\omega$. Let $\mathcal{U}$ be the iteration tree on $W$ from the coiteration.
    
    \begin{claim}
    \label{R ind of generic}
        $R$ is independent of the choice of generic $H$.
    \end{claim}
    \begin{proof}
        Code $R$ by a set of ordinals $X$ contained in $\nu_M$. Let $\dot{X}$ be a name for $X$. If $R$ is not independent of $H$, then there is $\alpha < \nu_M$ and $q_1,q_2\in\mathbb{P}$ such that $q_1 \Vdash\check{\alpha}\in \dot{X}$ and $q_2 \Vdash \check{\alpha}\notin \dot{X}$.

        Let $N > max(support(q_2))$. Let $\bar{q}_1$ be the condition $q_1$ shifted over by $N$ --- that is, $support(\bar{q}_1) = \{j\in [N,\omega) :\, j-N \in support(q_1)\}$ and for $j\in support(\bar{q}_1)$, $\bar{q_1}(j) = q_1(j-N)$. So $\bar{q}_1$ is compatible with $q_2$ and by symmetry, $\bar{q}_1 \Vdash \check{\alpha}\in\dot{X}$. But then there is $r \leq q_2,\bar{q}_1$ which forces both $\check{\alpha}\in\dot{X}$ and $\check{\alpha}\notin \dot{X}$.
    \end{proof}

    \begin{claim}
    \label{U ind of generic}
        $\U \upharpoonright lh(\U)$ is independent of the choice of generic $H$.
    \end{claim}
    \begin{proof}
        The same proof as in Claim \ref{R ind of generic} works.
    \end{proof}

    Claim \ref{R ind of generic} implies $R\in M|\nu_M$ and $R$ is a complete iterate of $S[y]$ for any $y$ which is $Ea_M$-generic over $M$. Claim \ref{U ind of generic} gives that $\U \upharpoonright lh(\U) \in M|\nu_M$ and thus $R\in\tilde{\mathcal{I}}^M$. 
\end{proof}
\vspace{2mm}
\subsection{The StrLe Construction}
\label{strle lemmas section}

Recall the mouse operator $x \to M_x$ defined in Section \ref{suitable mice section}. In the following lemmas let $z,x \in \mathbb{R}$ be such that $z\in M_x$ and let $M= M_x$.

\begin{lemma}
    Suppose $P = StrLe[M,z]$. Then $P$ is $\Gamma$-ss and $\delta_P = \delta_M$.
\end{lemma}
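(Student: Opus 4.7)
The plan is to verify, clause by clause, that $P$ satisfies Definition \ref{ss definition} with $\delta_P = \delta_M$ and $\nu_P = \nu_M$. The argument combines the structural outputs of the fully-backgrounded Mitchell--Steel construction from Section \ref{m-s construction section} with Lemma \ref{s-const lemma} and Fact \ref{definiability of s-const strat}, and then transfers the $\Gamma$-ss properties of $M$ upward through the $S$-construction.

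First I would establish that the construction yields an iterable $\bar{S}$-mouse of the right height. Since $M$ is a mouse with Woodin $\delta_M$ and $z\in M\cap\mathbb{R}$, Theorem \ref{iterability of m-s} gives that $Le[M|\delta_M,z]$ is a $z$-mouse of height $\delta_M$, and Theorem \ref{woodin in m-s} (applied inside $M$) gives that $\delta_M$ remains Woodin in $L(Le[M|\delta_M,z])$; hence $\delta_M$ is Woodin in $\bar{S}$. A standard application of genericity iteration and the universality of the backgrounded construction (Theorems \ref{genericity iteration for ext alg} and \ref{universality of m-s}) produces a generic $G$ over $\bar{S}$ for the extender algebra at $\delta_M$ with $\bar{S}[G] = M|\delta_M+1$. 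Since $\delta_M$ is Woodin in $M$, no stage of the maximal $S$-construction in $M$ over $\bar{S}$ can terminate early, so it reaches the height $\gamma$ of $M$. Lemma \ref{s-const lemma} then says each $S_\alpha$ is an $\bar{S}$-mouse with $S_\alpha[G] = M|\alpha$, so in particular $P[G] = M$, and Fact \ref{definiability of s-const strat} provides the iteration strategy for $P$ by lifting trees above $\delta_M$ to trees on $M$.

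Next I would verify the clauses of Definition \ref{ss definition}. Countability is immediate. Since the maximal $S$-construction reaches $\gamma$, $\delta_M$ is (definably) Woodin in $P$; any second Woodin of $P$ would survive the $\delta_M$-c.c.\ extender algebra forcing and yield a second Woodin of $P[G] = M$, contradicting $M$ being $\Gamma$-ss, so $\delta_P = \delta_M$. An analogous downward-absoluteness argument for inaccessibility, together with $\delta_M$-c.c.-ness of the forcing, shows $\nu_P = \nu_M$. For clauses (3) and (4) of Definition \ref{ss definition}, the key observation is that the fine extender sequences of $P$ and $M$ agree above $\delta_M$ — each extender indexed at $\lambda > \delta_M$ on $P$'s sequence is $E_\lambda^M \cap S_\lambda$ — so that $P|\xi$ and $M|\xi$ have the same bounded subsets for $\xi > \delta_M$, and the $Lp^\Gamma$-closure properties of $M$ transfer directly to $P$.

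The main obstacle will be clause (5): no $\xi < \delta_M$ is Woodin in $Lp^\Gamma(P|\xi)$. Since $P|\delta_M$ is the structure $\bar{S}$ closed off at $\delta_M$, which in turn sits one $\mathcal{J}$-level above the output of the backgrounded Mitchell--Steel construction, the absence of $\Gamma$-Woodins below $\delta_M$ in $P$ is extracted from the absence of $\Gamma$-Woodins below $\delta_M$ in $M$: any hypothetical $\Gamma$-Woodin $\xi < \delta_M$ of $P$ would reflect via the extender-algebra generic and the agreement of $Lp^\Gamma$ on generic extensions (Lemma \ref{generic extension lp-closed} is the natural tool, though in the variant needed here one uses the version applied to $\bar{S}$-generics inside $M$) to a $\Gamma$-Woodin below $\delta_M$ in $M$, contradicting $M$ being $\Gamma$-ss.
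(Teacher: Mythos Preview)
Your overall skeleton is right, and the parts concerning height, inaccessibility of $\nu_M$ in $P$, and countability are fine. But two of the substantive clauses are not handled by the mechanisms you invoke, and the paper's proof uses genuinely different arguments there.

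\textbf{Clauses (3) and (4) of Definition \ref{ss definition}.} Your claim that ``$P|\xi$ and $M|\xi$ have the same bounded subsets for $\xi>\delta_M$'' is false: $M|\xi = (P|\xi)[G]$ is a nontrivial forcing extension, so it has strictly more bounded subsets. More to the point, $Lp^\Gamma(P|\xi)$ concerns $P|\xi$-premice while $M$'s $\Gamma$-ss property concerns $M|\xi$-premice, so nothing ``transfers directly.'' The paper splits the $Lp^\Gamma$ equality into two inclusions proved by completely different means. For $P|(\eta^+)^P \trianglelefteq Lp^\Gamma(P|\eta)$, the point is Fact \ref{definiability of s-const strat}: the strategy for any initial segment of $P$ above $\delta_M$ lifts to a tail strategy for $M$ above $\delta_M$, which is in $\bm{\Delta}$ since $M$ is $\Gamma$-ss; hence each such initial segment is an $Lp^\Gamma$-level. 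For the reverse inclusion $Lp^\Gamma(P|\eta)\subseteq P$, one uses that $M|\delta_M$ is generic over $P$ for the $\delta_M$-generator extender algebra, that $\delta_M$ stays Woodin in $Lp^\Gamma(P|\eta)$, and a homogeneity argument to pull any $B\in Lp^\Gamma(P|\eta)$ back into $P$.

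\textbf{Clause (5).} Your reflection idea fails for a structural reason: below $\delta_M$, $P$ is the output of the fully-backgrounded construction $Le[M|\delta_M,z]$, and there is no forcing relationship between $P|\xi$ and $M|\xi$ for $\xi<\delta_M$. Lemma \ref{generic extension lp-closed} is irrelevant here. The paper instead first shows $Lp^\Gamma(P|\eta)\triangleleft P$ for $\eta<\delta_M$ by a coiteration argument: any sound $R\triangleleft Lp^\Gamma(P|\eta)$ projecting to $\eta$ is coiterated against $Le[M,z]$ inside $M$ (the $\Q$-structures lie in $Lp^\Gamma$, which $M$ can compute by Corollary \ref{lower part is definable}), and universality (Theorem \ref{universality of m-s}) forces $R\triangleleft Le[M,z]$. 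The actual non-Woodinness of $\eta$ in $Lp^\Gamma(P|\eta)$ is then a separate fact, for which the paper cites Sublemma 7.4 of \cite{hacm}.
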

\begin{proof}
    Let $\delta = \delta_M$. By Lemma \ref{s-const lemma}, the cardinals of $P$ above $\delta$ are the same as the cardinals of $M$ and $\nu_M$ is inaccessible in $P$. Any inaccessible of $P$ above $\delta$ is inaccessible in $M$, since $M$ is a generic extension of $P$ by a $\delta$-c.c. forcing. In particular, $\nu_M$ is the unique inaccessible of $P$ above $\delta$. Then it suffices to show the following claim.
    
    \begin{claim}
        \begin{enumerate}[(a)]
            \item \label{le lp-closed} If $\eta < \delta$, then $Lp^\Gamma(P|\eta) \triangleleft P$.
            \item \label{delta is gamma-woodin} $\delta$ is a $\Gamma$-Woodin of $P$. That is, $\delta$ is Woodin in $Lp^\Gamma(P|\delta)$.
            \item \label{contained in lp} If $\eta \in P$ and $\eta \geq \delta$, then $P|(\eta^+)^P \trianglelefteq Lp^\Gamma(P|\eta)$.
            \item \label{delta is woodin} $P\models \delta$ is Woodin.
            \item \label{no woodins below delta} If $\eta < \delta$, $\eta$ is not Woodin in $Lp^\Gamma(P|\eta)$.
            \item \label{strle lp-closed} If $\eta\in P$ and $\delta \leq \eta$, then $Lp^\Gamma(P|\eta) \subseteq P$.
        \end{enumerate}
    \end{claim}
    \begin{proof}
        To prove \ref{le lp-closed}, it suffices to show if $\eta < \delta$, $R\triangleleft Lp^\Gamma(P|\eta)$, and $\rho_\omega(R) = \eta$, then $R\triangleleft P$. Coiterate $R$ against $Le[M,z]$. Suppose $\T$ and $\U$ are the iteration trees on $R$ and $Le[M,z]$, respectively, from the coiteration. $\T$ is above $\eta$ because $Le[M,z]|\eta = R|\eta$ and $\eta$ is a cutpoint of $R$. Let $\lambda < lh(\T)$ be a limit ordinal and $Q= \Q(\T) = \Q(\U)$. Since $R \in Lp^\Gamma(P|\eta)$ and $\T$ is above $\eta$, $Q \in Lp^\Gamma(\M(\T))$. $[0,\lambda]_T$ and $[0,\lambda]_U$ are the unique branches through $\T$ and $\U$, respectively, which absorb $Q$. By Corollary \ref{lower part is definable}, these branches can be identified in $M$. In particular, the coiteration of $R$ and $Le[M,z]$ can be performed in $M$. Theorem \ref{universality of m-s} gives that $R$ cannot outiterate $Le[M,z]$. Then since $R$ is $\omega$-sound, $R$ projects to $\eta$, and $Le[M,z]$ does not project to $\eta$, $R$ is a proper initial segment of $Le[M,z]|(\eta^+)^{Le[M,z]}$. $Le[M,z]$ agrees with $P$ up to $\delta$, so $R\triangleleft P$.
    
        \ref{delta is gamma-woodin} is by the proof of Theorem 11.3 of \cite{msbook}. 
        For \ref{contained in lp}, the iteration strategies for initial segments of $P|(\eta^+)^P$ restricted to iteration trees above $\delta$ are in $\bm{\Delta}$ by Fact \ref{definiability of s-const strat}. \ref{delta is woodin} is immediate from \ref{delta is gamma-woodin} and \ref{contained in lp}. See Sublemma 7.4 of \cite{hacm} for a proof of \ref{no woodins below delta}.
        
        Towards \ref{strle lp-closed}, let $Q = Lp^\Gamma(P|\eta)$. Let $\mathbb{P}$ be the extender algebra in $P$ at $\delta$ with $\delta$ generators. $M|\delta$ is $\mathbb{P}$-generic over $P$. Note $\delta$ is Woodin in $Q$ by \ref{delta is gamma-woodin}. In particular, $\mathbb{P}$ is also $\delta$-c.c. in $Q$, so any antichain of $\mathbb{P}$ in $Q$ is also in $P$ and $M|\delta$ is also $\mathbb{P}$-generic over $Q$.
        
        Let $B \in Lp^\Gamma(P|\eta)$. $B$ is in $M = P[M|\delta]$ since $P|\eta$ is in $M$ and $M$ is closed under $Lp^\Gamma$. So let $\dot{B}$ be a $\mathbb{P}$-name in $P$ such that $\dot{B}[M|\delta] = B$.
        
        Choose $p\in\mathbb{P}$ such that $p \Vdash^Q_{\mathbb{P}} \dot{B} = \check{B}$.
        
        Any $G$ which is $\mathbb{P}$-generic over $P$ is also $\mathbb{P}$-generic over $Q$. So for any $G$ which is $\mathbb{P}$-generic over $P$ such that $p\in G$, $\dot{B}[G] = B$. But then $B$ is in $P$, since $B = \{\xi < \delta : p \Vdash^P_{\mathbb{P}} \check{\xi} \in \dot{B}\}$.\footnote{Viewing $B$ as a subset of $\delta$.}
    \end{proof}
\end{proof}

\begin{lemma}
\label{another lemma about terms definable}
    Suppose $P = StrLe[M,z]$. Let $\mu \geq \delta_P$ be a cardinal of $P$. $\tau^P_{n,\mu}$ is definable in $M$ from $\tau^M_{n,\mu}$ and $z$.
\end{lemma}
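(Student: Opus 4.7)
The plan is to prove the stronger equality $\tau^P_{n,\mu} = \tau^M_{n,\mu} \cap P$. Since $P = StrLe[M,z]$ is defined by running the $S$-construction in $M$ over the canonical base obtained from $Le[M|\delta_M,z]$, the class $P$ is definable in $M$ from the parameter $z$. Hence $\tau^M_{n,\mu}\cap P$ is definable in $M$ from $\tau^M_{n,\mu}$ and $z$, which gives the conclusion of the lemma. So the entire task reduces to verifying the displayed equality.

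The key background fact is that $M$ is a generic extension of $P$ by Woodin's extender algebra $Ea_P$ at $\delta_P = \delta_M$ (this is built into the definition of $StrLe$ together with Lemma \ref{s-const lemma}). Because $P$ and $M$ are countable, the set of filters $g\subseteq Col(\omega,\mu)$ containing $p$ which are $Col(\omega,\mu)$-generic over $P$ (respectively over $M$) is comeager in the Cantor-space of filters containing $p$, and because $P\subseteq M$, the generics over $M$ form a comeager subset of the generics over $P$. Consequently, any property holding comeagerly within generics over $P$ holds comeagerly within generics over $M$, and conversely.

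Now fix $(\sigma,p)\in P$ with $\sigma$ a $Col(\omega,\mu)$-standard term for a real. The poset $Col(\omega,\mu)$ is absolute between $P$ and $M$ (and $\mu$ is a cardinal of $M$ since $P$ and $M$ agree on cardinals above $\delta_P$), and for $g$ generic over $M$ the valuation $\sigma[g]$ computed in $P[g]$ agrees with the one computed in $M[g]$. Combining this with the comeager-subset relation from the previous paragraph, the comeager condition characterizing membership in $\tau^P_{n,\mu}$ in terms of $G_n$ is equivalent to the one characterizing membership in $\tau^M_{n,\mu}$, restricted to pairs $(\sigma,p)\in P$. Since $\tau^P_{n,\mu}\subseteq P$ by definition, this yields $\tau^P_{n,\mu} = \tau^M_{n,\mu}\cap P$.

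I do not anticipate any substantial obstacle. The only mildly delicate point is the verification that the generics over $M$ are comeager within the generics over $P$ (so that the two comeager conditions are interchangeable); this is routine from countability of $P$ and $M$ and the definition of genericity as meeting the countably many dense sets of the respective model.
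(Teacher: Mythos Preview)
Your argument is correct. Both your approach and the paper's rest on the same Baire-category observation---that for countable $P\subseteq M$ the $M$-generics form a comeager subset of the $P$-generics, so the comeager membership conditions defining $\tau^P_{n,\mu}$ and $\tau^M_{n,\mu}$ coincide on pairs $(\sigma,p)\in P$---but you package it more directly. The paper introduces an auxiliary term
\[
\tau=\{(\sigma,p)\in P:\ p\Vdash^M_{Col(\omega,\mu)}\sigma\in P[\dot g]\cap\tau^M_{n,\mu}\}
\]
and argues $\tau=\tau^P_{n,\mu}$, which routes the comparison through a forcing relation (and implicitly uses that $\tau^M_{n,\mu}$ weakly captures $G_n$ to pass between ``all generics'' and ``comeager many''). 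Your formulation $\tau^P_{n,\mu}=\tau^M_{n,\mu}\cap P$ avoids this detour entirely and needs nothing beyond countability of $P$ and $M$ and the inclusion $P\subseteq M$; in particular it does not appeal to any capturing property of either model. The paper's route has the minor advantage that the object $\tau$ is visibly definable in $M$ via the forcing relation, but your observation that $P$ itself is definable in $M$ from $z$ handles this just as well.
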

\begin{proof}
    Let
    \begin{align*}
        \tau = \, \{(\sigma,p)\in P : \, \sigma \text{ is a } Col(\omega,\mu) \text{-standard term for a real}, p\in Col(\omega,\mu), \\ \text{ and } p \Vdash^M_{Col(\omega,\mu)} \sigma \in P[\dot{g}] \cap \tau^M_{n,\mu}\}.
    \end{align*}

    Since $\tau^M_{n,\mu}\in M$ and $P$ is definable over $M$ from $z$, $\tau \in M$ and is definable from $\tau^M_{n,\mu}$ and $z$. Then it suffices to show the following claim.

    \begin{claim}
        $\tau = \tau^P_{n,\mu}$
    \end{claim}
    \begin{proof}
        Clearly $\tau \subseteq \tau^P_{n,\mu}$.
        
        Suppose $(\sigma,p)\in \tau$. Let $\C$ be the set of $g$ which are $Col(\omega,\mu)$-generic over $M$ such that $p\in g$. For any $g\in\C$, $\sigma[g] \in \tau^M_{n,\mu}[g]$. In particular, $\sigma[g]\in G_n$. Since $\C$ is comeager in the set of $Col(\omega,\mu)$-generics over $P$ which extend $p$, $\sigma\in \tau^P_{n,\mu}$.
    \end{proof}
\end{proof}

\begin{lemma}
\label{strle strat f-p and guided}
    Suppose $P = StrLe[M,z]$. The iteration strategy for $P$ is fullness-preserving and guided by $\G$.
\end{lemma}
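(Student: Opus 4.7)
The plan is to derive $\Sigma_P$ from $\Sigma_M$ by a standard copy/lift construction and then transfer fullness-preservation and guidance by $\G$ across the lift. Since both properties concern normal iteration trees below the Woodin, we focus on such trees $\T$ on $P$ below $\delta_P = \delta_M$. Each extender used in $\T$ lies in $P|\delta_P = Le[M|\delta_M, z]$, which by Remark \ref{m-s up to inaccessible} is the fully backgrounded Mitchell-Steel construction in $M|\delta_M$. Hence each such extender has a background on the $M$-sequence in $M|\delta_M$, and the standard copy construction lifts $\T$ to a normal tree $\T^*$ on $M$ below $\delta_M$. Since $P$ is definable in $M$ from the real $z$ (which is fixed by the iteration), we may identify $M^\T_\alpha$ with $StrLe[M^{\T^*}_\alpha, z]$ and $i^\T_\alpha$ with $i^{\T^*}_\alpha \upharpoonright P$, and set $\Sigma_P(\T) = \Sigma_M(\T^*)$.

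For fullness-preservation, let $b = \Sigma_P(\T)$ and $b^* = \Sigma_M(\T^*)$. If $b$ is non-dropping, so is $b^*$, and $M^{\T^*}_{b^*}$ is $\Gamma$-ss by fullness-preservation of $\Sigma_M$; then $M^\T_b = StrLe[M^{\T^*}_{b^*}, z]$ is $\Gamma$-ss by the previous lemma. If $b$ drops, $M^\T_b$ is derived from a dropping iterate of $M$ whose strategy lies in $J_{\alpha_0}(\mathbb{R})$ (again by fullness-preservation of $\Sigma_M$); composed with the $S$-construction bookkeeping, this yields an $\omega_1$-strategy for $M^\T_b$ in $J_{\alpha_0}(\mathbb{R})$.

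For guidance by $\G$, first suppose $\T$ is $\Gamma$-short. Then $\Q(b, \T)$ is a proper initial segment of $M^\T_b | \delta_{M^\T_b}$, and since $M^\T_b$ is $\Gamma$-ss by the previous step, Remark \ref{no gamma woodin gives delta strat} gives $\Q(b, \T)$ an iteration strategy in $\bm{\Delta}$, so $\Q(b, \T) \in Lp^\Gamma(\M(\T))$. Now suppose $\T$ is $\Gamma$-maximal. We must show $i^\T_b(\tau^P_{n, \eta}) = \tau^{M^\T_b}_{n, i^\T_b(\eta)}$ for every $n$ and every cardinal $\eta \geq \delta_P$. By Lemma \ref{another lemma about terms definable}, $\tau^P_{n, \eta}$ is uniformly definable in $M$ from $\tau^M_{n, \eta}$ and $z$, and by elementarity of $i^{\T^*}_{b^*}$ the same definition in $M^{\T^*}_{b^*}$ recovers $\tau^{M^\T_b}_{n, i^\T_b(\eta)}$ from $\tau^{M^{\T^*}_{b^*}}_{n, i^{\T^*}_{b^*}(\eta)}$ and $z$. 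Since $\Sigma_M$ is guided by $\G$ and the lift preserves $\Gamma$-maximality (because $\M$-parts and $\Q$-structures correspond under the copy), $i^{\T^*}_{b^*}(\tau^M_{n, \eta}) = \tau^{M^{\T^*}_{b^*}}_{n, i^{\T^*}_{b^*}(\eta)}$; combining with the uniform definability yields the desired equality.

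The main obstacle is establishing the lift's compatibility with the $StrLe$ structure: namely, that $M^\T_b$ really equals $StrLe[M^{\T^*}_{b^*}, z]$ with $i^\T_b = i^{\T^*}_{b^*} \upharpoonright P$, and that the lift sends $\Gamma$-short trees to $\Gamma$-short trees and $\Gamma$-maximal trees to $\Gamma$-maximal trees. This requires carefully tracking the relationship between $\M(\T)$ and $\M(\T^*)$ through the backgrounding extenders and the $S$-construction. Once these compatibility facts are in hand, fullness-preservation and guidance by $\G$ transfer from $\Sigma_M$ to $\Sigma_P$ essentially formally, using the previous lemma for $\Gamma$-ss-ness of non-dropping iterates and Lemma \ref{another lemma about terms definable} for the transfer of term relations.
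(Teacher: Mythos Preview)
Your overall strategy of lifting trees on $P$ to trees on $M$ via the background extenders is correct and matches the paper. However, the key identification you make --- that $M^\T_\alpha = StrLe[M^{\T^*}_\alpha, z]$ with $i^\T_\alpha = i^{\T^*}_\alpha \upharpoonright P$ --- is not correct, and this is not merely a technical verification but a genuine obstacle. The background extender $E^*$ used on the $M$-side is strictly stronger than the extender $E$ it backgrounds on the $P$-side, so the lifted tree moves further than the original. What the copy construction actually produces are embeddings $\pi_\alpha : P_\alpha \to StrLe[M_\alpha, z]$ satisfying $\pi_\alpha \circ i_{0,\alpha} = i^*_{0,\alpha}$ (with $\pi_0 = id$), not equalities. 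Your argument that ``$P$ is definable in $M$ from $z$'' only tells you that $i^*_{0,\alpha}(P) = StrLe[M_\alpha, z]$; it does not tell you that $P_\alpha$, the model on the $P$-tree, equals $i^*_{0,\alpha}(P)$.

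The paper closes this gap using term condensation (Lemma~\ref{term relation condensation}) rather than by establishing the identification you posit. From $\pi_\alpha \circ i_{0,\alpha} = i^*_{0,\alpha}$ and Lemma~\ref{another lemma about terms definable} (together with the fact that $\Sigma_M$ moves terms correctly), one gets $\pi_\alpha(i_{0,\alpha}(\tau^P_{n,\mu})) = \tau^{StrLe[M_\alpha,z]}_{n,\mu}$. Now Lemma~\ref{term relation condensation} applied to $\pi_\alpha$ yields both that $P_\alpha$ is $\Gamma$-ss (this is fullness-preservation) and that $\pi_\alpha(\tau^{P_\alpha}_{n,\mu}) = \tau^{StrLe[M_\alpha,z]}_{n,\mu}$. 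Comparing the two displayed equalities and using injectivity of $\pi_\alpha$ gives $i_{0,\alpha}(\tau^P_{n,\mu}) = \tau^{P_\alpha}_{n,\mu}$, which is guidance by $\G$. So the missing ingredient in your argument is precisely this double application of term condensation to pass from ``embeds into something $\Gamma$-ss with correct terms'' to ``is itself $\Gamma$-ss with correct terms.''
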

\begin{proof}
    Let $\Sigma$ be the (unique) iteration strategy for $P$. The proof of Theorem \ref{iterability of m-s} gives that $\Sigma$ is determined by lifting an iteration on $P$ to one on $M$. More precisely, if $\T$ is a non-dropping\footnote{We leave to the reader the task of proving the case where $\T$ drops, as well as showing that $\Sigma$ condenses well.} iteration tree on $P_0 = P$ with $\langle P_\alpha \rangle$ the models of the iteration and $i_{\beta,\alpha}$ the associated iteration maps for $\beta <_T \alpha$, then we maintain an iteration tree $\T^*$ on $M_0 = M$ with models $\langle M_\alpha \rangle$ and associated iteration embeddings $i^*_{\beta,\alpha}$. We also maintain embeddings $\pi_\alpha: P_\alpha \to StrLe[M_\alpha,z]$ such that $\pi_\alpha \circ i_{\beta,\alpha} = i^*_{\beta,\alpha} \circ \pi_\beta$ and $\pi_0 = id$. In particular, $\pi_\alpha \circ i_{0,\alpha} = i^*_{0,\alpha}$.
    
    Suppose $\mu$ is a cardinal of $P$ and $\mu > \delta_P$. By Lemma \ref{another lemma about terms definable}, $i^*_{0,\alpha}(\tau^P_{n,\mu}) = \tau^{StrLe[M_\alpha,z]}_{n,\mu}$ for each $n<\omega$. Then $\pi_\alpha \circ i_{0,\alpha}(\tau^P_{n,\mu}) = \tau^{StrLe[M_\alpha,z]}_{n,\mu}$. Then by Lemma \ref{term relation condensation}, $P_\alpha$ is $\Gamma$-ss and $\pi_\alpha(\tau^{P_\alpha}_{n,\mu}) = \tau^{StrLe[M_\alpha,z]}_{n,\mu}$. This gives $\Sigma$ is fullness-preserving. A second application of Lemma \ref{term relation condensation} gives $i_{0,\alpha}(\tau^P_{n,\mu}) = \tau^{P_\alpha}_{n,\mu}$. So $\Sigma$ is guided by $\G$.
\end{proof}

\begin{corollary}
\label{strle strategy not in gamma}
    Suppose $P = StrLe[M,z]$. Then the $\omega_1$-iteration strategy for $P$ is not in $\bm{\Gamma}$.
\end{corollary}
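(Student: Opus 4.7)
The plan is to combine the two lemmas immediately preceding the corollary. By the first lemma in Section \ref{strle lemmas section}, $P = StrLe[M,z]$ is a $\Gamma$-ss premouse. By Lemma \ref{strle strat f-p and guided}, the (unique) $\omega_1$-iteration strategy $\Sigma$ for $P$ is fullness-preserving and guided by $\G$. These are precisely the hypotheses of Lemma \ref{guided strat not in gamma}, which states that any $\omega_1$-iteration strategy for a $\Gamma$-ss premouse that is guided by $\G$ cannot lie in $\bm{\Gamma}$.

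Concretely, I would write the proof in two sentences. First, invoke the preceding lemma to assert that $P$ is $\Gamma$-ss and $\Sigma$ is guided by $\G$. Second, apply Lemma \ref{guided strat not in gamma} to conclude $\Sigma \notin \bm{\Gamma}$. There is no real obstacle here; the corollary is simply the specialization of the general non-$\bm{\Gamma}$-ness of $\G$-guided strategies to the particular strategy produced by the $StrLe$ construction. The content lives entirely in the two lemmas being cited: the identification of $\Sigma$ as $\G$-guided (done by lifting trees on $P$ to trees on $M$ and using that $i^*_{0,\alpha}$ preserves the standard term relations, cf.\ Lemma \ref{another lemma about terms definable}), and the general fact that a $\G$-guided strategy computes, via a genericity iteration and the capturing terms $\tau^N_n$, a universal $\bm{\Gamma^c}$-set, so by closure of $\bm{\Gamma}$ under projection cannot itself be in $\bm{\Gamma}$.

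Thus the proof is a one-line invocation, and I expect no obstacle at this stage; all of the real work has already been done in establishing Lemmas \ref{guided strat not in gamma} and \ref{strle strat f-p and guided}.
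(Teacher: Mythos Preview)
Your proposal is correct and matches the paper's own proof exactly: the paper simply writes ``Immediate from Lemmas \ref{guided strat not in gamma} and \ref{strle strat f-p and guided}.'' Your additional invocation of the first lemma of Section \ref{strle lemmas section} (to verify $P$ is $\Gamma$-ss) is a reasonable clarification, but the argument is otherwise identical.
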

\begin{proof}
    Immediate from Lemmas \ref{guided strat not in gamma} and \ref{strle strat f-p and guided}.
\end{proof}

\begin{lemma}
\label{iterate of mitchell-steel}
    Suppose $x,z\in\mathbb{R}$ and $x$ codes a mouse $N$ which is a complete iterate of $M_z$. Let $P = StrLe[M_x,z]$. Then $P$ is a complete iterate of $N$ below $\delta_N$.
\end{lemma}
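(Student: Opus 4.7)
The plan is to coiterate $N$ and $P$ using their unique iteration strategies and to invoke universality of the Mitchell--Steel construction inside $M_x$ to conclude that the comparison is trivial on the $P$-side, so that $P$ itself is the common last model.

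First I would check that both $N$ and $P$ are $\Gamma$-ss $z$-mice carrying unique fullness-preserving, $\G$-guided iteration strategies: for $N$ this follows from $N$ being a complete iterate of $M_z$ together with the fullness-preservation of $M_z$'s strategy from Theorem \ref{nice strategy exists}, and for $P$ it is Lemma \ref{strle strat f-p and guided}. In particular any iteration of $N$ or $P$ by its strategy uses only extenders with critical point below the respective Woodin ($\delta_N$ or $\delta_P = \delta_{M_x}$).

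Since $x$ codes $N$ and $x\in M_x$, $N$ is a countable element of $M_x$, and so the height of $N$ in $M_x$ is strictly below $\delta_{M_x}$. Coiterating $N$ against $P$ via the unique strategies yields trees $\T$ on $N$ and $\U$ on $P$ with last models $N^*$ and $P^*$. By construction of $P$ via the $S$-construction, $P|\delta_P$ and $Le[M_x|\delta_{M_x},z]$ agree on their extender sequences, and the portion of $\U$ below $\delta_P$ lifts to an iteration of $M_x$ using the background extenders that witness the Mitchell--Steel construction in $M_x$. Working in this lift, Theorem \ref{universality of m-s} applies to $Le[M_x|\delta_{M_x},z]$: any extender that the coiteration would apply to $P$ at the least disagreement would already sit on the common output of the two sides, contradicting minimality of the disagreement. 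Hence $\U$ is trivial and $P^* = P$.

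The tree $\T$ is therefore an iteration of $N$ whose last model is $P$. By the Dodd--Jensen property for $N$'s unique strategy (inherited from $M_z$), the main branch of $\T$ does not drop, and by fullness-preservation every extender used in $\T$ has critical point below $\delta_N$. This gives that $P$ is a complete iterate of $N$ below $\delta_N$. The main obstacle will be the precise application of universality: one must lift the $P$-side coiteration to a genuine iteration of (an iterate of) $M_x$ via background extenders, and verify that universality of $Le[M_x|\delta_{M_x},z]$ in that model really does rule out any extender being used on the $P$-side, rather than merely yielding that $N^* \trianglelefteq P^*$.
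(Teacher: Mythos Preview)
Your outline is close in spirit to the paper's argument but has two genuine gaps, and the paper organizes the proof differently in a way that sidesteps both.

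First, Theorem \ref{universality of m-s} is the wrong citation for the claim that $\U$ is trivial. Universality only gives $N^*\trianglelefteq P^*$; it says nothing about the $P$-side not moving. What you are describing (``any extender that the coiteration would apply to $P$ at the least disagreement would already sit on the common output'') is \emph{stationarity} of the background construction, which the paper invokes separately (citing 3.23 of \cite{sihmor}). You flag this yourself at the end, but it is not a cosmetic issue: universality alone cannot close the argument.

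Second, your appeal to Dodd--Jensen to rule out a drop on the $N$-side is not justified. Dodd--Jensen compares two iterates of the \emph{same} mouse via the \emph{same} strategy; here $P$ is not yet known to be an iterate of $M_z$ --- that is exactly what you are proving. The paper instead uses fullness-preservation: if the branch to $N^*$ dropped, then $N^*$ would have an $\omega_1$-iteration strategy in $\bm{\Delta}$, which is impossible since $N^*\trianglelefteq P^*$ and $P^*$ (being $\Gamma$-ss by Lemma \ref{strle strat f-p and guided}) has no such strategy; and if neither side drops, both $N^*$ and $P^*$ are $\Gamma$-ss, so neither can be a proper initial segment of the other. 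This fullness-preservation argument is symmetric and gives $N^*=P^*$ with no drops on either side \emph{before} any appeal to stationarity. Only then does the paper invoke stationarity to conclude $P^*=P$. Reordering the proof this way makes the stationarity step clean and avoids the need for Dodd--Jensen entirely.
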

\begin{proof}

    Coiterate $N$ and $P$. Let $\T$ and $\U$ be the iteration trees on $N$ and $P$, respectively, from the coiteration. Let $N^*$ and $P^*$ be the last models of $\T$ and $\U$, respectively.

    Suppose $P$ outiterates $N$. One possibility is that there is a drop on the branch of $\T$ from $P$ to $P^*$. Since the iteration strategy for $P$ is fullness-preserving by Lemma \ref{strle strat f-p and guided}, $P^*$ has an $\omega_1$-iteration strategy in $\bm{\Delta}$. But the strategy for $N$ is fullness-preserving and guided by $\G$. So $N^*$ cannot have an iteration strategy in $\bm{\Delta}$, contradicting that $N^* \trianglelefteq P^*$.

    If there is no drop between $P$ and $P^*$, then $N^* \triangleleft P$. Since neither side of the coiteration drops, $N^*$ and $P^*$ are both $\Gamma$-ss. But no $\Gamma$-ss mouse can have a proper initial segment which is $\Gamma$-ss.

    An identical argument shows $N$ cannot outiterate $P$. Thus $N^* = P^*$ and $\T$ and $\U$ realize $N^*$ and $P^*$ are complete iterates of $N$ and $P$, respectively. Since there are no total extenders on $N$ above $\delta_N$, $\T$ is below $\delta_N$. Similarly, $\U$ is below $\delta_P$. Then stationarity of the Mitchell-Steel construction\footnote{See e.g. 3.23 of \cite{sihmor}.} implies that $P^* = P$. So $\T$ realizes that $P$ is a complete iterate of $N$.
\end{proof}
\vspace{2mm}
\subsection{A Reflection Lemma}
\label{reflection section}

In this section we prove a lemma that any $\Sigma_1$ statement true in $M_x$ also holds in some $N \triangleleft M_x|\kappa_{M_x}$ with the property that $StrLe[N] \triangleleft StrLe[M]$. A thorough reader not already familiar with the fully-backgrounded Mitchell-Steel construction may wish to review Section \ref{m-s construction section} before proceeding. A lazy one may read the statement of Lemma \ref{reflecting below kappa} and skip to Section \ref{main thm section}.

First, we need to show $M_x$ can compute the iteration strategies of its own initial segments below its Woodin cardinal. More precisely, we have:

\begin{lemma}
\label{knows initial segments iterable}
    Let $x\in\mathbb{R}$, $N \triangleleft M_x|\delta_{M_x}$ and $\T\in M_x$ be an iteration tree on $N$ of limit length $< \delta_{M_x}$, according to the (unique) iteration strategy for $N$. The cofinal branch $b$ through $\T$ determined by the iteration strategy for $N$ is definable in $M_x$ (uniformly in $N$ and $\T$, from the parameter $\tau^{M_x}_{0,\nu_{M_x}}$). 
\end{lemma}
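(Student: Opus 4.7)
The plan is to mirror the argument of Lemma \ref{M knows branches through short trees}. I want to characterize $b$ as the unique cofinal branch through $\T$ that absorbs a canonical $\Q$-structure $\Q(\T)$, exhibit $\Q(\T)$ as an object definable in $M_x$ from $\tau^{M_x}_{0,\nu_{M_x}}$, and then invoke Shoenfield absoluteness inside a collapse extension of $M_x$ to place $b$ itself into $M_x$.

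The first step is to verify that every normal iteration tree $\T$ on $N$ of limit length less than $\delta_{M_x}$ is $\Gamma$-short, so that $\Q(\T)$ exists and is an initial segment of $Lp^\Gamma(\M(\T))$. Since $N \triangleleft M_x|\delta_{M_x}$ and $M_x$ is $\Gamma$-ss, clause (5) of Definition \ref{ss definition} gives that no $\xi<\delta_{M_x}$ is $\Gamma$-Woodin; by uniqueness of iteration strategies, the strategy $\Sigma_N$ for $N$ is the restriction of $\Sigma_{M_x}$, and the reasoning behind Remark \ref{no gamma woodin gives delta strat} adapts to show that $\Sigma_N \in \bm{\Delta}$ and is guided by $\Q$-structures drawn from $Lp^\Gamma$. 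Concretely, for $b=\Sigma_N(\T)$ the structure $\Q(b,\T)$ exists and coincides with $\Q(\T)$, namely the least initial segment of $Lp^\Gamma(\M(\T))$ projecting to $\delta(\T)$ and defining a failure of Woodinness there. This is the main technical obstacle: one has to rule out the $\Gamma$-maximal alternative, in which $\delta(\T)$ would be Woodin in $Lp^\Gamma(\M(\T))$, by propagating a $\Gamma$-Woodin back through the iteration map to contradict the $\Gamma$-smallness of $N$.

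The remaining steps are routine. Because $lh(\T)<\delta_{M_x}$ and $\delta_{M_x}$ is inaccessible in $M_x$, we have $\T,\M(\T)\in M_x|\delta_{M_x}\subseteq M_x|\nu_{M_x}$, so Corollary \ref{lower part is definable} produces a definition of $Lp^\Gamma(\M(\T))$, and hence of $\Q(\T)$, in $M_x$ from $\T$ and $\tau^{M_x}_{0,\nu_{M_x}}$. To locate $b$ itself in $M_x$, let $g$ be $Col(\omega,\nu_{M_x})$-generic over $M_x$; in $M_x[g]$ both $\T$ and $\Q(\T)$ are coded by reals, and the assertion ``there is a cofinal wellfounded branch $c$ through $\T$ with $\Q(c,\T)=\Q(\T)$'' is $\Sigma^1_2$ in those reals. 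Shoenfield absoluteness then supplies such a $c$ in $M_x[g]$, Theorem \ref{branch uniqueness} forces $c=b$, and homogeneity of $Col(\omega,\nu_{M_x})$ yields $b\in M_x$. The characterization of $b$ as the unique branch through $\T$ in $M_x$ absorbing $\Q(\T)$ finally gives the required definition from $\T$ and $\tau^{M_x}_{0,\nu_{M_x}}$.
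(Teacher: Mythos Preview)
Your proposal is correct. Up to the point of locating $b$ inside $M_x$, it follows the paper's proof exactly: both identify $\Q(\T)$ as an initial segment of $Lp^\Gamma(\M(\T))$ and invoke Corollary \ref{lower part is definable} to define it from $\tau^{M_x}_{0,\nu_{M_x}}$. You are in fact more careful than the paper in explicitly arguing that $\Q(\T)$ exists (i.e.\ that the tree is $\Gamma$-short), a point the paper simply asserts.

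The two proofs diverge only in the mechanism for placing $b$ into $M_x$. You import the method of Lemma \ref{M knows branches through short trees}: collapse $\nu_{M_x}$, use Shoenfield absoluteness to produce a branch absorbing $\Q(\T)$ in $M_x[g]$, invoke branch uniqueness to identify it with $b$, and use homogeneity of the collapse to conclude $b\in M_x$. The paper instead performs a genericity iteration of $M_x$ above $\T$ to some $M'$ over which a real coding $b$ is extender-algebra generic, argues $b\in M'$ by uniqueness and symmetry of that forcing, and then observes that the iteration added no new subsets of $lh(\T)$, so $b$ was already in $M_x$. Both routes are standard; yours is arguably cleaner here since it avoids the auxiliary iteration and reuses machinery the paper has already set up, while the paper's version keeps the extender-algebra/genericity-iteration viewpoint uniform with the surrounding Section \ref{reflection section}.
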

\begin{proof}
    Let $M = M_x$. By Corollary \ref{lower part is definable}, the function $a\mapsto Lp^\Gamma(a)$ with domain $M|\delta_M$ is definable in $M$ from the parameter $\tau^M_{0,\nu_M}$.
    
    Let $N$ and $\T$ be as in the statement of the lemma. Let $S = \M(\T)$. Clearly $S$ is definable from $\T$. Let $Q = \Q(\T)$. $Q$ is an initial segment of $Lp^\Gamma(S)$. The previous paragraph implies $Q$ is definable in $M$ from $S$ and $\tau^M_{0,\nu_M}$. The branch $b$ through $\T$ chosen by the iteration strategy for $N$ is the unique branch which absorbs $Q$.
    
    It remains to show $b$ is in $M$. Iterate $M$ to $M'$ well above where $\T$ is constructed to make some $g$ generic over $Ea^{M'}_{\delta_{M'}}$ so that $g$ codes $b$. $M'[g]$ satisfies that $b$ is the unique branch which absorbs $Q$. Since $b$ is in fact the unique such branch in $V$, symmetry of the forcing gives $b$ is in $M'$. But the iteration from $M$ to $M'$ does not add any subsets of $lh(\T)$, so in fact $b$ is in $M$.
\end{proof}

We need to put down a few more properties of the Mitchell-Steel construction before proving the main lemma of this section.

\begin{lemma}
\label{club of tau so m-s nice}
    Suppose $N$ is a mouse with a Woodin cardinal $\delta_N$. Let $z\in N\cap \R$. There is a club $C$ of $\tau < \delta_N$ such that $Le[N|\delta_N,z]|\tau = \M_\tau$, where $\M_\tau$ is the Mitchell-Steel construction of length $\tau$ in $N|\delta_N$. Moreover, we can take $C$ to be definable in $N$.
\end{lemma}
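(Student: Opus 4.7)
The plan is to define the club $C$ by closure properties of two functions, both definable in $N$, tracking when the construction ``stabilizes'' as an initial segment of the output. Working inside $N$, the Mitchell-Steel construction of length $\delta_N$ in $N|\delta_N$ over $z$ does not break down by Theorem \ref{iterability of m-s}, so the sequence $\langle \M_\xi : \xi < \delta_N \rangle$ and its output $L := Le[N|\delta_N, z]$ are both definable in $N$ from $z$, and so too will be the auxiliary functions defined below.

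First I would consider $S = \{\xi < \delta_N : \M_\xi \trianglelefteq L\}$, the set of stages whose models survive as initial segments of $L$. By Theorem \ref{levels of m-s construction projectum fact} together with the definition of $L$, the $\M_\xi$ for $\xi \in S$ form an increasing chain of initial segments of $L$ with union $L$; in particular, since $L$ has height $\delta_N$, the heights $h(\xi)$ of these $\M_\xi$ are cofinal in $\delta_N$ as $\xi$ ranges over $S$. Enumerating $S$ in increasing order as $\langle \xi_\alpha : \alpha < \delta_N \rangle$, I set $F(\alpha) = \xi_\alpha$ and $H(\alpha) = h(\xi_\alpha)$; these are strictly increasing, unbounded in $\delta_N$, and definable in $N$.

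I then take $C$ to be the set of uncountable cardinals $\tau < \delta_N$ of $N$ that are closure points of both $F$ and $H$, meaning $F(\alpha), H(\alpha) < \tau$ for all $\alpha < \tau$. Then $C$ is a club in $\delta_N$ and is definable in $N$. For $\tau \in C$, closure under $F$ ensures $\tau$ is a limit of $S \cap \tau$, while closure under $H$ together with the monotonicity of $H$ ensures $\{h(\xi_\alpha) : \alpha < \tau\}$ is cofinal in $\tau$. Hence $\bigcup_{\alpha < \tau} \M_{\xi_\alpha}$ has height exactly $\tau$ and equals $L|\tau$. The limit-stage rule of the Mitchell-Steel construction then forces $\M_\tau = L|\tau$: the passive premouse $\N_\tau$ agrees with this union below every $\beta < \tau$ by coherence, so $\N_\tau = L|\tau$; since $L|\tau$ is $\omega$-sound, $\C_\omega(\N_\tau) = \N_\tau$, yielding $\M_\tau = L|\tau$ as required.

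The hard part will be the limit-stage analysis: one must verify that $\liminf_{\xi < \tau}(\rho_\omega(\M_\xi)^+)^{\M_\xi} = \tau$, so that $\N_\tau$ really does have height $\tau$. I expect this to follow from closure of $\tau$ under $H$ together with Theorem \ref{levels of m-s construction projectum fact}: for each surviving stage $\xi_\alpha$ with $\alpha < \tau$, the quantity $(\rho_\omega(\M_{\xi_\alpha})^+)^{\M_{\xi_\alpha}}$ is bounded below by the height of the next surviving stage, and these heights are cofinal in $\tau$, so non-surviving intermediate stages (where the projectum might dip) cannot pull the $\liminf$ strictly below $\tau$.
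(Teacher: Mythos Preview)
Your approach is workable but considerably more circuitous than the paper's, and the sketch of the ``hard part'' contains a misstatement that would need to be fixed.

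The paper defines $C$ directly as the set of $\tau<\delta_N$ such that $\M_\tau$ has height $\tau$ and $\rho_\omega(\M_\xi)\geq\tau$ for every $\xi\geq\tau$. For such $\tau$, Theorem~\ref{levels of m-s construction projectum fact} applies immediately (with $\zeta_0$ any ordinal below $\tau$ and $\kappa=\tau$) to give $\M_\tau\trianglelefteq\M_\eta$ for all $\eta\geq\tau$, hence $\M_\tau\trianglelefteq Le[N|\delta_N,z]$; since $\M_\tau$ has height $\tau$, this yields $\M_\tau=Le[N|\delta_N,z]|\tau$ at once. There is no limit-stage $\liminf$ analysis to perform, because the projectum condition is built into the definition of $C$ rather than recovered after the fact. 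That $C$ is club and definable in $N$ is then routine.

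Your route instead builds $C$ from closure points of the surviving-stage enumeration and its height function, and then tries to recover the projectum behaviour at $\tau$ via the limit rule. This can be made to work, but your justification in the final paragraph is garbled: the claim that $(\rho_\omega(\M_{\xi_\alpha})^+)^{\M_{\xi_\alpha}}$ is \emph{bounded below by the height of the next surviving stage} is false as written, since this quantity is at most the height $h(\xi_\alpha)$ of $\M_{\xi_\alpha}$ itself, which is $\leq h(\xi_{\alpha+1})$. What you actually need is the (standard but not entirely trivial) fact that once a surviving stage $\xi_\alpha$ of height $h$ appears, every \emph{later} stage $\zeta>\xi_\alpha$ satisfies $\rho_\omega(\M_\zeta)\geq h$; this is what forces the $\liminf$ up to $\tau$. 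The paper's formulation simply imposes this condition on $C$ from the outset and thereby avoids having to argue for it.
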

\begin{proof}
    Let $\langle \M_\xi : \xi <\delta_N\rangle$ be the models from the Mitchell-Steel construction of length $\delta_N$ over $z$, done inside $N|\delta_N$. Let $C'\subset \delta_N$ be the set of $\tau<\delta_N$ such that $\M_\tau$ has height $\tau$ and $\rho_\omega(\M_\xi) \geq \tau$ whenever $\xi$ is between $\tau$ and the height of $N$. It is not hard to see from the material in Section \ref{m-s construction section} that $C$ is a club and if $\tau\in C$, then $\M_\tau = Le[N|\delta_N,z]|\tau$.
\end{proof}

\begin{corollary}
\label{m-s is union of m-s of initial segments}
    Let $N$, $z$, and $C$ be as in Lemma \ref{club of tau so m-s nice}. Let $S$ be the set of inaccessibles of $N$ below $\delta_N$. Then $Le[N|\delta_N,z] = \bigcup_{\tau\in C\cap S} Le[N|\tau,z]$. 
\end{corollary}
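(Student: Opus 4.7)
The plan is to combine the two preceding results: Remark \ref{m-s up to inaccessible} lets us identify $Le[N|\tau,z]$ (the construction done inside $N|\tau$) with an initial segment of the construction done inside $N|\delta_N$, whenever $\tau$ is an inaccessible of $N$; Lemma \ref{club of tau so m-s nice} lets us identify initial segments of $Le[N|\delta_N,z]$ with the stages $\M_\tau$ of that same construction, for $\tau$ in the club $C$. Intersecting over $\tau\in C\cap S$ then gives the desired equality after taking a union.

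First, I would apply Remark \ref{m-s up to inaccessible} with the ambient mouse taken to be $N|\delta_N$ (which is a mouse since $N$ is iterable, and in which every $\tau\in S$ remains inaccessible because $\delta_N$ is a cardinal of $N$). This yields that for each $\tau\in S$,
\[
Le[N|\tau,z] \;=\; \M_\tau,
\]
where $\langle \M_\xi:\xi<\delta_N\rangle$ denotes the Mitchell-Steel construction of length $\delta_N$ over $z$ done inside $N|\delta_N$. Second, by the definition of $C$ in Lemma \ref{club of tau so m-s nice}, for each $\tau\in C$ one has $\M_\tau = Le[N|\delta_N,z]\,|\,\tau$. Hence for $\tau\in C\cap S$,
\[
Le[N|\tau,z] \;=\; Le[N|\delta_N,z]\,|\,\tau.
\]

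Finally, I would observe that $C\cap S$ is cofinal (indeed stationary) in $\delta_N$: $\delta_N$ is Woodin in $N$, hence Mahlo in $N$, so $S$ is stationary in $\delta_N$, and intersecting with the club $C$ preserves unboundedness. Taking the union gives
\[
\bigcup_{\tau\in C\cap S} Le[N|\tau,z] \;=\; \bigcup_{\tau\in C\cap S} Le[N|\delta_N,z]\,|\,\tau \;=\; Le[N|\delta_N,z],
\]
as required.

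There is no real obstacle here; the only thing to be careful about is the meaning of ``same construction'' when comparing $Le[N|\tau,z]$ with the $\tau$-th stage of the construction done in $N|\delta_N$, and that is precisely what Remark \ref{m-s up to inaccessible} establishes. The rest is bookkeeping with clubs and stationary sets.
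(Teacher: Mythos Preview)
Your proposal is correct and follows essentially the same approach as the paper: use Remark \ref{m-s up to inaccessible} to identify $Le[N|\tau,z]$ with $\M_\tau$ for inaccessible $\tau$, use Lemma \ref{club of tau so m-s nice} to identify $\M_\tau$ with $Le[N|\delta_N,z]|\tau$ for $\tau\in C$, and then argue that $C\cap S$ is cofinal because $\delta_N$ is Woodin (hence Mahlo) in $N$ and $C$ is a club definable in $N$. The only minor refinement in the paper is that it is explicit about the stationarity of $S$ and the clubness of $C$ holding \emph{in $N$}, which is what guarantees $C\cap S$ is unbounded; your phrasing leaves this implicit but the content is the same.
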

\begin{proof}
    Since $\delta_N$ is Woodin in $N$, $N\models$ ``$S$ is stationary.'' And $C$ is definable in $N$, so $C\cap S$ is cofinal in $\delta_N$. Since $Le[N|\delta_N,z]$ has height $\delta_N$, $Le[N|\delta_N,z] = \bigcup_{\tau\in C\cap S} Le[N|\delta_N,z]|\tau$. So it suffices to show if $\tau \in C \cap S$, then $Le[N,z]|\tau = Le[N|\tau,z]$.
    
    Let $\langle \M_\xi : \xi <\delta_N\rangle$ be the models from the Mitchell-Steel construction of length $\delta_N$ over $z$, done inside $N$. $\tau \in C$ guarantees $Le[N,z]|\tau = \M_\tau$. And by Remark \ref{m-s up to inaccessible}, $\tau \in S$ gives $\M_\tau = Le[N|\tau,z]$. So $Le[N,z]|\tau = Le[N|\tau,z]$ for $\tau \in C \cap S$. 
\end{proof}

\begin{lemma}
\label{reflecting below kappa}
Suppose $M_x\models\phi[\vec{a}, \delta_{M_x}]$ for some $\Sigma_1$ formula $\phi$, $z\in \mathbb{R} \cap M_x$, and  $\vec{a}\in\mathbb{R}^{|\vec{a}|}\cap M_x$. Then there exists $N\triangleleft M_x|\kappa_{M_x}$ such that 
\begin{enumerate}[(a)]
    \item \label{has a woodin} $N$ has one Woodin cardinal,
    \item \label{delta inaccessible} $\delta_N$ is an inaccessible cardinal of $M_x$,
    \item \label{satisfies phi} $N\models\phi[\vec{a}, \delta_N]$, and 
    \item \label{strle initial segment} $StrLe[N,z] \triangleleft StrLe[M_x,z]$.
\end{enumerate}
\end{lemma}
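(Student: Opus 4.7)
The approach is a Skolem hull and condensation argument inside $M_x$. The target is to realize $N$ as the transitive collapse of a suitable hull of an initial segment of $M_x$, with the image of $\delta_{M_x}$ landing at an inaccessible below $\kappa_{M_x}$.

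First, by $\Sigma_1$-reflection inside $M_x$, pick a cardinal $\eta$ of $M_x$ with $\delta_{M_x} < \eta < \nu_{M_x}$ such that $M_x|\eta \models \phi[\vec{a}, \delta_{M_x}]$. Next, identify candidate reflecting ordinals: let $C \subseteq \delta_{M_x}$ be the definable club from Corollary \ref{m-s is union of m-s of initial segments}, so that $Le[M_x|\tau, z] = Le[M_x|\delta_{M_x}, z]|\tau$ for inaccessibles $\tau \in C$. Since $\kappa_{M_x}$ is inaccessible in $M_x$, the intersection of $C$ with the set of $M_x$-inaccessibles below $\kappa_{M_x}$ is stationary in $\kappa_{M_x}$ from the point of view of $M_x$.

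For $\tau$ in this stationary set, form the Skolem hull $H_\tau = \mathrm{Hull}^{M_x|\eta}(\tau \cup \{\vec{a}, z, \delta_{M_x}\})$ inside $M_x$, let $\bar{N}_\tau$ be its transitive collapse, and let $\pi_\tau : \bar{N}_\tau \to M_x|\eta$ be the uncollapse. By a Fodor-style closure argument inside $M_x$, there is a further club of $\tau$ for which $H_\tau \cap \delta_{M_x} = \tau$; for such $\tau$, $\pi_\tau$ has critical point $\tau$ and $\pi_\tau^{-1}(\delta_{M_x}) = \tau$. Applying condensation of $M_x$ to $\pi_\tau$ yields $\bar{N}_\tau \triangleleft M_x$, and since $|\bar{N}_\tau| \leq \tau < \kappa_{M_x}$ in $M_x$, in fact $\bar{N}_\tau \triangleleft M_x|\kappa_{M_x}$.

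Setting $N = \bar{N}_\tau$, the four conditions are verified as follows: (a) and (c), that $\delta_N = \tau$ is Woodin in $N$ and $N \models \phi[\vec{a}, \tau]$, both follow by elementarity of $\pi_\tau$ together with the fact that $\delta_{M_x}$ is Woodin in $M_x|\eta$ and $M_x|\eta \models \phi[\vec{a}, \delta_{M_x}]$; (b) holds by choice of $\tau$; and for (d), since $\tau \in C$ is inaccessible in $M_x$ and $N \triangleleft M_x$, the Mitchell-Steel construction inside $N$ up to $\tau$ agrees with the one inside $M_x$ by Remark \ref{m-s up to inaccessible}, and hence equals an initial segment of $Le[M_x|\delta_{M_x}, z]$, so the $S$-constructions align and give $StrLe[N, z] \triangleleft StrLe[M_x, z]$. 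The principal technical obstacle is the condensation step: one must choose the hull to include the relevant standard parameters of $M_x|\eta$ so that $\bar{N}_\tau$ is not merely elementarily embeddable into $M_x|\eta$ but literally equals some $M_x|\xi$ with $\xi < \kappa_{M_x}$; this requires invoking an appropriate hull-condensation lemma for hulls that are not $\omega$-sound, exploiting the verbatim agreement of $\bar{N}_\tau$ with $M_x$ below $\tau$ and the fact that $\tau$ is a cardinal of $M_x$.
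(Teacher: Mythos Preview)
There is a genuine gap in your argument: the Fodor-style closure step cannot succeed for $\tau < \kappa_{M_x}$. You claim there is a club of $\tau < \kappa_{M_x}$ with $H_\tau \cap \delta_{M_x} = \tau$, where $H_\tau = \mathrm{Hull}^{M_x|\eta}(\tau \cup \{\vec a, z, \delta_{M_x}\})$. But $\kappa_{M_x}$ is the least $<\delta_{M_x}$-strong cardinal of $M_x$, and this is definable in $M_x|\eta$ from $\delta_{M_x}$ (indeed without any parameters, since $\delta_{M_x}$ is itself definable as the unique Woodin). Hence $\kappa_{M_x} \in H_\tau$ for every $\tau$, so $H_\tau \cap \delta_{M_x}$ always contains an element $\geq \kappa_{M_x}$, and no $\tau < \kappa_{M_x}$ can satisfy $H_\tau \cap \delta_{M_x} = \tau$. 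In other words, any hull of this form collapses $\delta_{M_x}$ to an ordinal \emph{above} $\kappa_{M_x}$, never below it.

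This is exactly why the paper proceeds in two stages. First, a hull argument of the kind you describe (with a Fodor argument run in $\delta_{M_x}$, not in $\kappa_{M_x}$) produces $N \triangleleft M_x|\delta_{M_x}$ satisfying (a)--(c) and $Le[N|\delta_N,z] \triangleleft Le[M_x|\delta_{M_x},z]$; the resulting $\delta_N$ necessarily lies above $\kappa_{M_x}$. Second---and this is the idea missing from your proposal---one takes a total extender $F$ on the $M_x$-sequence with critical point $\kappa_{M_x}$ and strength exceeding the height of $N$, verifies that $N$ and $Le[N|\delta_N,z]$ sit inside $Ult(M_x|\delta_{M_x},F)$ and its $Le$-construction respectively, and then pulls the entire configuration back below $\kappa_{M_x}$ via elementarity of the ultrapower map. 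Only after this strongness reflection does one obtain $N \triangleleft M_x|\kappa_{M_x}$. Your sketch of clause (d) is also too quick: the $S$-constructions over $Le[N|\delta_N,z]$ and $Le[M_x|\delta_{M_x},z]$ use different Woodin cardinals, so they do not ``align'' level-by-level; the paper instead argues that $StrLe[N,z]$ and $StrLe[M_x,z]$ agree up to $\delta_N$ and that the former cannot out-iterate the latter, since $StrLe[N,z]$ inherits a $\bm{\Delta}$ iteration strategy from $N$ while $StrLe[M_x,z]$ has no strategy in $\bm{\Gamma}$.
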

\begin{proof}

Denote $M_x$ by $M$. For ease of notation we will assume $z=0$. Let $\mu$ be a cardinal of $M$ above $\delta_M$ such that $M|\mu\models \phi[\vec{a}]$.
    
\begin{claim}
\label{club of tau so hull nice}
    There is a stationary set of $\tau<\delta_M$ such that $\tau$ is inaccessible in $M$ and if $\tau\leq\zeta<\delta_M$, then $\zeta$ is not definable in $M|\mu$ from parameters below $\tau$.
\end{claim}
\begin{proof}
    Work in $M$. Let $S$ be the set of inaccessible cardinals below $\delta_M$. Since $\delta_M$ is Woodin, $S$ is stationary. Define $f:S\to\delta_M$ by setting $f(\zeta)$ to be the least $\eta$ such that there is $\zeta\leq\iota<\delta_M$ definable in $M|\mu$ from parameters in $\eta$. If the claim is false, then $f$ is regressive on a stationary set. Then by Fodor's Lemma, there is a stationary set $S_0$ and $\eta<\delta_M$ such that $f''S_0=\{\eta\}$. But $cof(\delta_M)>|\eta^{<\omega}|\times\aleph_0$, so we cannot have cofinally many elements of $\delta_M$ defined by some formula and parameters from $\eta$.
\end{proof}

Fix $\tau$ as in Lemma \ref{club of tau so m-s nice} and Claim \ref{club of tau so hull nice}. Let $H = Hull^{M|\mu}(\tau)$. Let $N$ be the transitive collapse of $H$ and $\pi:N\to M|\mu$ the anti-collapse map.
    
By condensation, $N\triangleleft M|\mu$.\footnote{See Theorem 5.1 of \cite{ooimt}.} 
Clearly $N\triangleleft M|\delta_M$, $N\models\phi[\vec{a},\delta_N]$, $\tau$ is the unique Woodin of $N$, $\tau$ is inaccessible in $M$, and $\rho_\omega(N) = \tau$.

\begin{claim}
    $Le[N|\tau] \triangleleft Le[M|\delta_M]$
\end{claim}
\begin{proof}
    For $\zeta<\tau$, $Le[N|\zeta] \triangleleft Le[N|\tau] \iff Le[M|\zeta] \triangleleft Le[M|\delta_M]$ by elementarity. But $Le[N|\zeta] = Le[M|\zeta]$ for $\zeta < \tau$. So if $Le[N|\zeta]$ is an initial segment of $Le[N|\tau]$, then it is also an initial segment of $Le[M|\delta_M]$. But this implies $Le[N|\tau]\triangleleft Le[M|\delta_M]$, since by Corollary \ref{m-s is union of m-s of initial segments}, $Le[N|\tau]$ is a union of mice of the form $Le[N|\zeta]$ for $\zeta<\tau$.
\end{proof}

We have found $N \triangleleft M|\delta_M$ satisfying \ref{has a woodin}, \ref{delta inaccessible}, \ref{satisfies phi}, $\rho_\omega(N) = \delta_N$, and $Le[N|\delta_N]\triangleleft Le[M|\delta_M]$ (since $\delta_N = \tau)$. Our next step is to reflect this below $\kappa_M$. Let $F$ be a total extender in $M$ such that the strength of $F$ is greater than $On \cap N$. In particular, we have $N\triangleleft Ult(M|\delta_M,F)$.

\begin{claim}
    $Le[N|\tau] \triangleleft Le[Ult(M|\delta_M,F)]$.
\end{claim}
\begin{proof}
    $\tau$ is inaccessible in $Ult(M|\delta_M,F)$. So by Remark \ref{m-s up to inaccessible}, $Le[N|\tau]$ equals the Mitchell-Steel construction of length $\tau$ in $Ult(M|\delta_M,F)$.
    
    Suppose the claim fails. Then there is a mouse $Q$ built during the Mitchell-Steel construction in $Ult(M|\delta_M,F)$ after $Le[N|\tau]$ is constructed, such that $Q$ projects to some $\beta < \tau$. Pick such a $Q$ which minimizes $\beta$. By Lemma \ref{knows initial segments iterable}, any initial segment of $M$ below $\delta_M$ is iterable in $M$. Then $M$ has iteration strategies for $Ult(P,F)$ for any $P\triangleleft M|\delta_M$. $Q$ is a mouse built during the Mitchell-Steel construction in $Ult(P,F)$ for some $P\triangleleft M|\delta_M$, so $Q$ is also iterable in $M$. Let $Q' = \C_\omega(Q)$. Then $Q'$ is an $\omega$-sound mouse over $Le[N|\tau]|\beta$ projecting to $\beta$ which is iterable in $M$. It follows from Theorem \ref{universality of m-s} that $Le[M|\delta_M]$ outiterates $Q'$. Since both extend $Le[N|\tau]|\beta$, and $Q'$ is $\omega$-sound and projects to $\beta$, $Q'\triangleleft Le[M|\delta_M]$. But then since $\tau$ is inaccessible in $M$, $Le[M|\tau]$ has height $\tau$, and $Le[M|\tau]\triangleleft Le[M|\delta_M]$, $Q'$ is in $Le[M|\tau]$. This is a contradiction, since a subset of $\beta$ which is not in $Le[M|\tau]$ is definable over $Q'$.
\end{proof}

By elementarity of the ultrapower embedding induced by $F$, there exists $N\triangleleft M|\kappa_M$ satisfying \ref{has a woodin}, \ref{delta inaccessible}, \ref{satisfies phi}, $\rho_\omega(N) = \delta_N$, and $Le[N|\delta_N]\triangleleft Le[M|\delta_M]$. It remains to prove the following claim.

\begin{claim}
    $StrLe[N] \triangleleft StrLe[M]$.
\end{claim}
\begin{proof}
    Since $N$ projects to $\delta_N$, so does $StrLe[N]$ (by Lemma \ref{s-const lemma}). And $StrLe[N]$ agrees with $StrLe[M]$ up to $\delta_N$ since $Le[N|\delta_N] \triangleleft Le[M|\delta_M]$. So it suffices to show $StrLe[M]$ outiterates $StrLe[N]$. But $StrLe[N]$ has an iteration strategy in $\bm{\Gamma}$, and $StrLe[M]$ cannot by Lemma \ref{strle strategy not in gamma}.
\end{proof}
\end{proof}

\subsection{Main Theorem}
\label{main thm section}

We are ready to prove Theorem \ref{main thm}. Suppose for contradiction $\langle A_\alpha |\alpha < \bm{\delta_\Gamma^+} \rangle$ is a sequence of distinct $\bm{\Gamma}$ sets. Let $U\subset \R \times \R$ be a universal $\bm{\Gamma}$ set.

Recall in Section \ref{length of direct limit section} we defined $\I$ as the direct limit of all countable, complete iterates of the $\Gamma$-suitable mouse $W$. Let $\mathcal{J} = \{(P,\xi) : P \in \mathcal{I} \wedge \xi < \delta_P$\}. Say $(P,\xi)\leq_*(Q,\zeta)$ if $(P,\xi),(Q,\zeta) \in \mathcal{J}$ and whenever $S$ is a complete iterate of both $P$ and $Q$, $\pi_{P,S}(\xi) \leq \pi_{Q,S}(\zeta)$. By Lemma \ref{pwo is short}, the relation $\leq_*$ has length $> \bm{\delta_\Gamma^+}$. Fix $n$ such that for some (equivalently any) $P\in \mathcal{I}$, $\pi_{P,\infty}(\gamma^P_n) > \bm{\delta_\Gamma^+}$. Let $\leq'_*$ be $\leq_*$ restricted to pairs $(P,\xi)$ such that $\xi < \gamma^P_n$. Then $\leq'_*$ has length $\geq \bm{\delta_\Gamma^+}$ and $\leq'_*$ is in $J_{\beta'}(\mathbb{R})$.\footnote{This is done by similar arguments to those in Section \ref{internalization section}.} Let $B_\alpha = \{y : U_y = A_\alpha\}$. By the Coding Lemma there is a set $D$ in $J_{\beta'}(\mathbb{R})$ such that $(x,y)\in D$ implies $x$ codes a pair in the domain of $\leq'_*$ and $y\in B_{|x|_{\leq'_*}}$, and $D_x$ is nonempty for all $x$ in the domain of $\leq'_*$.

Let $z_0\in\mathbb{R}$ be such that $z_0$ codes $W$ and $D\in OD^{<\beta'}(z_0)$. Let $\mathcal{I}'$ be the directed system of all countable, complete iterates of $M_{z_0}$. Let $M'_\infty$ be the direct limit of $\mathcal{I}'$. For $M,N\in \mathcal{I}'$ and $N$ an iterate of $M$, let $\pi_{M,N}: M \to N$ be the iteration map and $\pi_{M,\infty}: M \to M'_\infty$ the direct limit map (We also used $\pi_{M,N}$ and $\pi_{M,\infty}$ for $M,N\in \I$, but this should not cause any confusion).

For $M\in \mathcal{I}'$, let $\tau^M = \tau^M_{D,\delta_M}$. There is a slight issue in that our current definitions do not obviously guarantee that $\tau^M$ is moved correctly. That is, we might have a complete iterate $N$ of $M$ such that $\pi_{M,N}(\tau^M) \neq \tau^N$. This can happen because we defined the operator $x \mapsto M_x$ so that $M_x$ is guided by $\G$, but it is possible $D\notin \G$. There is no real issue here, since we can expand $\G$ to a larger self-justifying system $\G'$ such that $D\in\G'$ and require $M_x$ be guided by $\G'$. However, we should leave the operator $x \to W_x$ as is, otherwise we risk altering our construction of $D$. This raises another minor complication, because in Sections \ref{definability section} and \ref{internalization section} we assumed our $\Gamma$-ss mouse $M$ was guided by the same self-justifying system as our $\Gamma$-suitable mouse $W$. Fortunately, the results of those sections remain true so long as $\G \subseteq \G'$, modulo increasing the number of terms required as parameters in some of the lemmas. For simplicity, in what follows we will just assume $\tau^M$ is moved correctly.

\begin{definition}
    Say $M \in \mathcal{I}'$ is locally $\alpha$-stable if there is $\xi\in M$ such that $\pi_{\mathcal{H}^M,\infty}(\xi)=\alpha$. Write $\alpha_M$ for this ordinal $\xi$.
\end{definition}

\begin{definition}
    Say $M\in \mathcal{I}'$ is $\alpha$-stable if $M$ is locally $\alpha$-stable and whenever $N\in\I'$ is a complete iterate of $M$, $\pi_{M,N}(\alpha_M) = \alpha_N$.
\end{definition}

\begin{lemma}
\label{stable mouse exists}
    For any $\alpha < \bm{\delta_\Gamma^+}$, there is an $\alpha$-stable $M\in \mathcal{I}'$.
\end{lemma}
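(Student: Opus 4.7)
The plan is to first locate a locally $\alpha$-stable $M \in \mathcal{I}'$, then pass to an iterate of $M$ that attains a certain supremum; this iterate will turn out to be $\alpha$-stable. To produce a locally $\alpha$-stable mouse, I use that $\pi_{P,\infty}(\gamma^P_n) > \bm{\delta_\Gamma^+}$ for $P \in \mathcal{I}$ to fix $P$ and $\xi_P < \gamma^P_n$ with $\pi_{P,\infty}(\xi_P) = \alpha$. A genericity iteration of $M_{z_0}$ absorbing a real coding $P$ (Corollary \ref{genericity iteration for Col}) yields $M \in \mathcal{I}'$ with $P \in \mathcal{I}^M$; then $\mathcal{H}^M$ is an iterate of $P$ in $\mathcal{I}$, and $\alpha_M := \pi_{P,\mathcal{H}^M}(\xi_P)$ satisfies $\pi_{\mathcal{H}^M,\infty}(\alpha_M) = \alpha$, so $M$ is locally $\alpha$-stable.

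The rest of the argument rests on the fact that iteration embeddings between premice are non-decreasing on ordinals. Three consequences: (i) $\alpha_M \leq \alpha$ for any locally $\alpha$-stable $M$, since $\pi_{\mathcal{H}^M,\infty}(\alpha_M) = \alpha$; (ii) if $N \in \mathcal{I}'$ is a complete iterate of a locally $\alpha$-stable $M$, then $\mathcal{H}^N$ is an iterate of $\pi_{M,N}(\mathcal{H}^M)$ in $\mathcal{I}$ (using $\pi_{M,N}(\mathcal{I}^M) \subseteq \mathcal{I}^N$), so $N$ is itself locally $\alpha$-stable with $\alpha_N = \pi_{\mathcal{H}^M,\mathcal{H}^N}(\alpha_M) \geq \alpha_M$; (iii) $\pi_{M,N}(\alpha_M) \leq \alpha_N$, since applying $\pi_{\pi_{M,N}(\mathcal{H}^M),\mathcal{H}^N}$ to $\pi_{M,N}(\alpha_M)$ yields $\alpha_N$. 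Set
\begin{align*}
    \alpha^* = \sup\{\alpha_M : M \in \mathcal{I}' \text{ is locally } \alpha\text{-stable}\};
\end{align*}
by (i), $\alpha^* \leq \alpha$ is an ordinal.

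Using DC and directedness of $\mathcal{I}'$ (Lemma \ref{coiterate in M|nu}), build an $\omega$-chain $Q_0, Q_1, \ldots$ of locally $\alpha$-stable mice in $\mathcal{I}'$ such that each $Q_{k+1}$ is a common complete iterate of $Q_k$ and some locally $\alpha$-stable $M_k$ chosen so that $\sup_k \alpha_{Q_k} = \alpha^*$. Let $M^*$ be the $\omega$-length direct limit; since all iterations are countable and non-dropping, $M^* \in \mathcal{I}'$, and continuity of the $\mathcal{H}^{Q_k} \to \mathcal{H}^{M^*}$ direct-limit maps on the thread $(\alpha_{Q_k})_k$ gives $\alpha_{M^*} = \alpha^*$. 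Then for any complete iterate $N \in \mathcal{I}'$ of $M^*$, the sandwich
\begin{align*}
    \alpha^* = \alpha_{M^*} \leq \pi_{M^*,N}(\alpha_{M^*}) \leq \alpha_N \leq \alpha^*
\end{align*}
collapses to equality and yields $\pi_{M^*,N}(\alpha_{M^*}) = \alpha_N$, so $M^*$ is $\alpha$-stable.

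The main obstacle is verifying that $M^*$ actually lies in $\mathcal{I}'$ (in particular wellfoundedness of the $\omega$-length direct limit, which follows from $DC$ and standard iterability) and that $\alpha_{M^*} = \alpha^*$ as $V$-ordinals, i.e., that the $\mathcal{I}$-direct-limit map is continuous at the thread at $\alpha^*$. This continuity holds because the chosen thread remains below $\alpha < \bm{\delta_\Gamma^+}$, comfortably below the large-cardinal structure of $\mathcal{H}^{M^*}$ where discontinuities of the iteration maps could in principle occur.
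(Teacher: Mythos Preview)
Your argument has the key inequality backwards. In claim (iii) you assert $\pi_{M,N}(\alpha_M)\le\alpha_N$, justified by treating $\pi_{M,N}\upharpoonright\mathcal{H}^M$ as though it were (or factored through) the iteration map $\pi_{\mathcal{H}^M,\mathcal{H}^N}$. It is not. The restriction $\sigma=\pi_{M,N}\upharpoonright\mathcal{H}^M:\mathcal{H}^M\to\mathcal{H}^N$ is merely some elementary embedding between two members of $\mathcal{I}$, and Dodd--Jensen says such an embedding lies pointwise \emph{at or above} the iteration map, not below it. Concretely, for any common iterate $Q$ one gets $\pi_{\mathcal{H}^N,Q}\circ\sigma\ge\pi_{\mathcal{H}^M,Q}$, hence $\pi_{\mathcal{H}^N,\infty}(\sigma(\alpha_M))\ge\pi_{\mathcal{H}^M,\infty}(\alpha_M)=\alpha=\pi_{\mathcal{H}^N,\infty}(\alpha_N)$, and therefore $\pi_{M,N}(\alpha_M)\ge\alpha_N$. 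This is exactly the inequality the paper proves (Claim~\ref{dj application}), and it is the opposite of yours. Your related assertions in (ii) --- that $\mathcal{H}^N$ is an iterate of $\mathcal{H}^M$ in $\mathcal{I}$ and that $\alpha_N=\pi_{\mathcal{H}^M,\mathcal{H}^N}(\alpha_M)$ --- are likewise unjustified: $\pi_{M,N}(\mathcal{I}^M)\subseteq\mathcal{I}^N$ (even granting it) does not make $\mathcal{H}^N$ a $\mathcal{I}$-iterate of $\mathcal{H}^M$, and $\mathcal{I}^M$ itself is not a definable subset of $M$ that you can simply push forward.

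With the inequality reversed, the entire supremum strategy collapses: you cannot sandwich $\pi_{M^*,N}(\alpha_{M^*})$ below $\alpha^*$. The paper's argument runs in the opposite direction: assume no $\alpha$-stable $M$ exists, build a chain $\langle R_j\rangle$ of locally $\alpha$-stable mice with $\pi_{R_j,R_{j+1}}(\alpha_{R_j})\neq\alpha_{R_{j+1}}$, use Dodd--Jensen to get $\pi_{R_j,R_{j+1}}(\alpha_{R_j})>\alpha_{R_{j+1}}$, and obtain an infinite descending sequence in the direct limit $R_\omega$. Your first paragraph (constructing a locally $\alpha$-stable $M$) is essentially the paper's Claim~\ref{locally stable mouse exists} and is fine; the problem is everything after.
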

\begin{proof}
This is essentially the same as the proof of the analogous lemma in \cite{hra}. We will show for any $P\in \mathcal{I}'$, there is an iterate of $P$ which is $\alpha$-stable.

    \begin{claim}
    \label{locally stable mouse exists}
        For any $P\in\mathcal{I}'$, there is a countable, complete iterate $R$ of $P$ which is locally $\alpha$-stable
    \end{claim}
    \begin{proof}
        Fix $S\in \mathcal{I}$ and $\zeta\in S$ such that $\pi_{S,\infty}(\zeta) = \alpha$. Let $R$ be a countable, complete iterate of $P$ such that $S$ is $Ea_R$-generic over $R$.
        
        Let $\dot{S}$ be an $Ea_R$-name for $S$ such that $\emptyset \Vdash^R_{Ea_R}$ ``$\dot{S}$ is a complete iterate of $W$.'' Applying Lemma \ref{coiterate all possible generics} yields $S'\in \mathcal{I}^R$ which is a complete iterate of $S$. Then
        \begin{align*}
            \pi_{\mathcal{H}^R,\infty} \circ \pi_{S',\mathcal{H}^R} \circ \pi_{S,S'}(\zeta) &= \pi_{S,\infty}(\zeta)\\ &= \alpha.
        \end{align*}
        In particular, $\alpha\in range(\pi_{\mathcal{H}^R,\infty})$.
    \end{proof}

    Now suppose no $M \in \mathcal{I}'$ is $\alpha$-stable. Let $\langle R_j : j < \omega\rangle$ be a sequence in $\mathcal{I}'$ such that for all $j$, $R_j$ is locally $\alpha$-stable and $R_{j+1}$ is an iterate of $R_j$, but $\pi_{R_j,R_{j+1}}(\alpha_{R_j}) \neq \alpha_{R_{j+1}}$.

    \begin{claim}
    \label{dj application}
        $\pi_{R_j,R_{j+1}}(\alpha_{R_j}) \geq \alpha_{R_{j+1}}$
    \end{claim}
    \begin{proof}
        By elementarity, $\pi_{R_j,R_{j+1}}\upharpoonright \mathcal{H}^{R_j}$ is an embedding of $\mathcal{H}^{R_j}$ into $\mathcal{H}^{R_{j+1}}$. Then the Dodd-Jensen property implies for any common, complete iterate $Q$ of $\mathcal{H}^{R_j}$ and $\mathcal{H}^{R_{j+1}}$, 
        \begin{align*}
            \pi_{\mathcal{H}^{R_{j+1}},Q} \circ \pi_{R_j,R_{j+1}}(\alpha_{R_j}) \geq \pi_{\mathcal{H}^{R_j},Q}(\alpha_{R_j}).
        \end{align*}
        Then
        \begin{align*}
            \pi_{\mathcal{H}^{R_{j+1}},\infty} \circ \pi_{R_j,R_{j+1}}(\alpha_{R_j}) &\geq \pi_{\mathcal{H}^{R_j},\infty}
            (\alpha_{R_j}) \\
            &= \alpha \\
            &= \pi_{\mathcal{H}^{R_{j+1}},\infty}(\alpha_{R_{j+1}}).
        \end{align*}
        So $\pi_{R_j,R_{j+1}}(\alpha_{R_j}) \geq \alpha_{R_{j+1}}$.
    \end{proof}

    Let $R_\omega$ be the direct limit of the sequence $\langle R_j: j<\omega\rangle$. Let $\alpha_j = \pi_{R_j,R_\omega}(\alpha_{R_j})$. Claim \ref{dj application} implies $\alpha_{j+1} < \alpha_j$ for all $j$, contradicting the wellfoundedness of $R_\omega$.
\end{proof}

Let $A$ be a maximal antichain in  $Ea_M$ such that $p\in A$ implies $p$ forces the generic $ea$ is a pair $(ea^1,ea^2)$, where $ea^1$ codes a pair $(R_{ea^1},\xi_{ea^1})$ such that there exists an iteration tree on $W$ (according to the strategy for $W$) with last model $R_{ea^1}$ and $\xi_{ea^1} < \delta_{R_{ea^1}}$.\footnote{This is first order by Corollary \ref{lp definable in generic ext} and Lemma \ref{M[y] can approximate branches through maximal trees}.} Since $Ea_M$ is $\delta_M$-c.c., $|A|^M < \delta_M$. Then the disjunction of conditions in $A$ is also a condition in $Ea_M$. Pick $p^M\in Ea_M$ to be the least condition in the constructability order of $M$ which is the disjunction of conditions in some $A$ as above, to ensure $p^M$ is definable in $M$. $p^M$ is a maximal condition forcing the property above, in that any condition $p'\in Ea_M$ which forces the same property is compatible with $p^M$.

\begin{lemma}
    There is $Q^M\in \mathcal{I}^M$ such that $p^M$ forces $Q^M$ is a complete iterate of $R_{ea^1}$. Moreover, $Q^M$ is definable in $M$ from parameter $p^M$ (uniformly in $M$).
\end{lemma}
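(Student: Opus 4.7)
The key observation is that this claim follows almost directly from Lemma \ref{coiterate all possible generics}. The plan is as follows. First, let $\dot{R}$ be the canonical $Ea_M$-name in $M|\nu_M$ for the object $R_{ea^1}$ --- that is, $\dot{R}[ea] = R_{ea^1}$ whenever $ea$ extends $p^M$ and is $Ea_M$-generic over $M$. This name is uniformly definable in $M$ from $p^M$, since $R_{ea^1}$ is extracted from the first coordinate of the generic through the fixed coding scheme baked into the definition of $p^M$. The name can be taken to reside in $M|\nu_M$ because $Ea_M$ is $\delta_M$-c.c. (by Theorem \ref{genericity iteration for ext alg}) and $R_{ea^1}$ is forced to be a countable premouse.

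By the definition of $p^M$, we have $p^M \Vdash_{Ea_M}$ ``$\dot{R}$ is a complete iterate of $W$ according to the (unique) iteration strategy of $W$.'' Applying Lemma \ref{coiterate all possible generics} with this name $\dot{R}$ and condition $p^M$ then yields $Q^M \in \tilde{\mathcal{I}}^M \subseteq \mathcal{I}^M$ such that $Q^M$ is a complete iterate of $\dot{R}[ea] = R_{ea^1}$ for every $Ea_M$-generic $ea$ extending $p^M$. Moreover, the lemma delivers $Q^M$ as uniformly definable in $M$ from the parameters $\dot{R}$ and $p^M$. Since $\dot{R}$ is itself uniformly definable in $M$ from $p^M$, we conclude $Q^M$ is uniformly definable in $M$ from $p^M$ alone.

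The only substantive obstacle, if any, is verifying that the hypotheses of Lemma \ref{coiterate all possible generics} genuinely apply. This reduces to confirming (i) that $\dot{R}$ can be placed in $M|\nu_M$, which follows from the $\delta_M$-c.c.\ property of $Ea_M$ together with countability of iterates of $W$, and (ii) that the forcing clause ``$\dot{R}$ is a complete iterate of $W$'' is precisely the content encoded in $p^M$. Both are essentially immediate from the set-up, so the proof amounts to a clean invocation of Lemma \ref{coiterate all possible generics}.
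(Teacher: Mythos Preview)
Your proposal is correct and follows exactly the paper's approach: the paper's proof is the single line ``Apply Lemma \ref{coiterate all possible generics} to the condition $p^M$ and a name for $R_{ea^1}$,'' and you have simply unpacked this invocation with the appropriate verification that the hypotheses of that lemma are met.
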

\begin{proof}
    Apply Lemma \ref{coiterate all possible generics} to the condition $p^M$ and a name for $R_{ea^1}$.
\end{proof}

\begin{definition}
    For $\alpha$-stable $M\in \mathcal{I}'$, say $p\in Ea_M$ is $\alpha$-good if $p$ extends $p^M$ and $p$ forces 
    \newline 1. $\pi_{\check{Q}^M,\mathcal{H}^M} \circ \pi_{R_{ea^1},\check{Q}^M}(\xi_{ea^1}) = \alpha_M$ and
    \newline 2. $(ea^1,ea^2)\in \tau^M$.
\end{definition}

\begin{remark}
    If $\alpha < \bm{\delta_\Gamma^+}$, being $\alpha$-good is definable over $\alpha$-stable $M\in \mathcal{I}'$ from $\alpha_M$, $\tau^M$, and $\langle\tau^M_{k,\nu_M}: k < n \rangle$ (uniformly in $M$). This follows from Lemmas \ref{M can approximate iteration maps} and \ref{M[y] can approximate branches through maximal trees}.
\end{remark}

Let $p^M_\alpha$ be the maximal $\alpha$-good condition in $M$ which is least in the construction of $M$. Note if $M$ is $\alpha$-stable and $N$ is a complete iterate of $M$, then $\pi_{M,N}(p^M_\alpha) = p^N_\alpha$.

For $w\in\mathbb{R} \cap M$ and a $\Sigma_1$ formula $\psi(w)$, write $M \models [\psi(w)]$ to mean whenever $g$ is $Col(\omega,\delta_M)$-generic over $M$, there is a proper initial segment of $M[g]$ which is a $\langle\psi',g\rangle$-witness, where $\psi'(x)$ is a formula expressing ``$\psi(f(x))$'' for some computable function $f$ such that $f(g) = w$. Note ``$M\models [\psi(w)]$'' is $\Sigma_1$ over $M$ if $M$ is iterable.

For $\alpha$-stable $M\in \mathcal{I}'$, let $S^M_\alpha$ be the set of conditions $q$ such that there exist $N,r\in M$ satisfying
\begin{enumerate}[(a)]
    \item $N\triangleleft M|\kappa_M$,
    \item $N$ has one Woodin,
    \item $\delta_N$ is a cardinal of $M$,
    \item $q,r\in Ea_N$ and $(q,r) \Vdash^N_{Ea_N \times Ea_N} [U(ea_l,ea_r^2)],$\footnote{Here by $U$ we really mean some fixed $\Sigma_1$-formula defining $U$ in $J_{\alpha_0}(\mathbb{R})$.} and
    \item $r$ is compatible with $p^M_\alpha$.
\end{enumerate}

Let $S_\alpha = \pi_{M,\infty}(S^M_\alpha)$ for some (equivalently any) $\alpha$-stable $M\in \mathcal{I}'$. $S_\alpha$ can be viewed as an element of $P(\kappa_{M'_\infty})^{M'_\infty}$.

Let $A'_\alpha$ be the set of reals $x$ such that for any $\alpha$-stable $\bar{M}\in \mathcal{I}'$ there is a countable, complete iterate $M$ of $\bar{M}$ and $q\in M$ satisfying
\begin{enumerate}
    \item $q\in S_\alpha^M$,
    \item $x\models q$, and
    \item $x$ is $Ea_M$-generic over $M$.
\end{enumerate}

\begin{lemma}
\label{A'=A}
    $A'_\alpha = A_\alpha$
\end{lemma}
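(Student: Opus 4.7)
The plan is to establish both inclusions, tying $p^M_\alpha$ to membership in $D$ through the weak-capture property of $\tau^M$, and using the reflection Lemma~\ref{reflecting below kappa} for the nontrivial direction.

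For $A'_\alpha\subseteq A_\alpha$, I will take $x\in A'_\alpha$ witnessed by an $\alpha$-stable $M\in\mathcal{I}'$, $q\in S^M_\alpha$, and $(N,r)\in M$ witnessing $q\in S^M_\alpha$. Using the product genericity iteration and the compatibility of $r$ with $p^M_\alpha$, I will further iterate $M$ to obtain a real $y$ which is $Ea_M$-generic over $M$, satisfies $p^M_\alpha$, and (being generic for the larger $Ea_M$ over $M\supset N$, so in particular $Ea_N$-generic over $N$) satisfies $r$. Then $(x,y)$ satisfies $(q,r)$, so clause~(d) of the definition of $S^M_\alpha$ produces a proper initial segment of $N[x,y]$ which is a $\langle U,(x,y^2)\rangle$-witness, and Lemma~\ref{witness gives truth} yields $L(\R)\models U(x,y^2)$, i.e.\ $x\in U_{y^2}$. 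Next, clause~(2) of $\alpha$-goodness combined with the weak capture of $D$ by $\tau^M$ gives $(y^1,y^2)\in D$, while clause~(1) and the factorization $\pi_{\mathcal{H}^M,\infty}\circ\pi_{Q^M,\mathcal{H}^M}\circ\pi_{R_{ea^1},Q^M}$ identify $|y^1|_{\leq'_*}$ with $\alpha$. Hence $y^2\in B_\alpha$, $U_{y^2}=A_\alpha$, and $x\in A_\alpha$.

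For $A_\alpha\subseteq A'_\alpha$, I will take $x\in A_\alpha$, pick $R\in\mathcal{I}$ and $\xi<\gamma^R_n$ with $\pi_{R,\infty}(\xi)=\alpha$ (by the argument of Lemma~\ref{pwo is short}), and let $y^1$ code $(R,\xi)$. Nonemptiness of $D_{y^1}$ (from the Coding Lemma) provides $y^2$ with $(y^1,y^2)\in D$; then $y^2\in B_\alpha$, $U_{y^2}=A_\alpha$, and $x\in U_{y^2}$. Let $y$ code $(y^1,y^2)$. I will then pick an $\alpha$-stable $M\in\mathcal{I}'$ (Lemma~\ref{stable mouse exists}) and iterate it so that $(x,y)$ is $Ea_M\times Ea_M$-generic; the membership $(y^1,y^2)\in D$ together with the weak-capture property of $\tau^M$ forces $y$ to satisfy $p^M_\alpha$ on the right factor. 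Since $L(\R)\models U(x,y^2)$ and $U$ is $\Sigma_1$ over $J_{\alpha_0}(\R)$, Lemma~\ref{truth gives witness} furnishes a $\langle U,(x,y^2)\rangle$-witness in a generic extension of $M$, so the $\Sigma_1$ assertion
\[
\Phi\colon\ \exists\,(q,r)\in Ea_{\delta}\times Ea_{\delta}\,\bigl(x\models q\ \wedge\ r\ \mbox{compatible with}\ p_\alpha\ \wedge\ (q,r)\Vdash [U(ea_l,ea_r^2)]\bigr)
\]
holds in $M$ at $\delta=\delta_M$. I will apply Lemma~\ref{reflecting below kappa} to $\Phi$ to obtain $N\triangleleft M|\kappa_M$ with Woodin $\delta_N$ at which $\Phi$ again holds. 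The resulting $(q,r)\in Ea_N\times Ea_N$ witness $q\in S^M_\alpha$; combined with $x\models q$ and $Ea_M$-genericity of $x$ over $M$, this yields $x\in A'_\alpha$.

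The main obstacle will be to cast $\Phi$ as a genuine $\Sigma_1$ statement whose parameters are reals, since the natural parameters $\tau^M$, $\alpha_M$, and $p^M_\alpha$ are not reals. The remedy is to take as real parameters $x$, $y^1$ (which codes the pair used to define $\alpha_M$), and $z_0$ (which controls the mouse operator), and then use the internal definability results of Section~\ref{definability section} --- especially Lemmas~\ref{terms definable}, \ref{gamma-guided definable}, and~\ref{M[y] can approximate branches through maximal trees}, together with Corollary~\ref{lp definable in generic ext} --- to reconstruct $\tau^M$, $\alpha_M$, and $p^M_\alpha$ uniformly inside $M$ (and analogously inside the reflected $N$) from these reals and the Woodin. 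A secondary technical point, already used above, is the simultaneous $Ea_M$- and $Ea_N$-genericity in the forward direction, handled by iterating $M$ above $N$ and appealing to the natural embedding of the smaller extender algebra into the larger one.
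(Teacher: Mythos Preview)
Your argument for $A'_\alpha\subseteq A_\alpha$ is essentially the paper's, modulo minor imprecisions (the witness lives in $N[x][y][g]$ for $g$ collapsing $\delta_N$, not in $N[x,y]$; and there is no need to iterate $M$ further---since $r$ and $p^M_\alpha$ are compatible you simply choose $y$ to be $M[x]$-generic with $y\models r\wedge p^M_\alpha$).

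The hard direction $A_\alpha\subseteq A'_\alpha$ has a genuine gap. You iterate an $\alpha$-stable $M\in\mathcal{I}'$ so that $(x,y)$ is $Ea_M\times Ea_M$-generic and then propose to apply Lemma~\ref{reflecting below kappa} to a $\Sigma_1$ statement $\Phi$ with real parameters $x,y^1,z_0$. But Lemma~\ref{reflecting below kappa} requires the real parameters to lie in $M_x$, and here $x,y$ are \emph{not} in $M$---they are merely generic over it. Your final paragraph identifies the wrong obstacle: the problem is not that $\tau^M,\alpha_M,p^M_\alpha$ fail to be reals (they are uniformly definable from reals already in $M$), it is that the reals $x,y$ themselves are absent from $M$. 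Reflecting in $M[x][y]$ instead would not help either, since the definition of $S^M_\alpha$ requires $N\triangleleft M|\kappa_M$, not $N\triangleleft M[x][y]|\kappa_M$, and $M[x][y]$ is not of the form $M_w$.

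The paper resolves this by passing to a \emph{different} $\Gamma$-ss mouse that actually contains $x$ and $y$: fix an $\alpha$-stable $\bar M$, let $z$ code $\bar M$, set $P=M_{\langle x,y,z\rangle}$, and let $S=StrLe[P,z_0]$. Then $x,y\in P$ (so the relevant $\Sigma_1$ statement is first order over $P$), $x,y$ are automatically $Ea_S$-generic over $S$, and by Lemma~\ref{iterate of mitchell-steel} $S$ is a complete iterate of $\bar M$, hence $\alpha$-stable. Lemma~\ref{reflecting below kappa} is applied in $P$ to produce $P'\triangleleft P|\kappa_P$ with $StrLe[P',z_0]\triangleleft S$, and one sets $N=StrLe[P',z_0]\triangleleft S|\kappa_S$. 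This $StrLe$ detour---doing the reflection in a mouse that contains $x,y$ while landing back inside an element of $\mathcal{I}'$---is precisely the missing idea in your proposal.
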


\begin{corollary}
\label{S_alpha != S_beta}
    $\alpha \neq \beta \implies S_\alpha \neq S_\beta$
\end{corollary}
\begin{proof}
    Suppose $S_\alpha = S_\beta$. Let $x\in A_\alpha$. Let $\bar{M}\in \I'$ be $\beta$-stable. There is a countable, complete iterate $\hat{M}$ of $\bar{M}$ which is also $\alpha$-stable. By Lemma \ref{A'=A}, $x\in A'_\alpha$, so there is a countable, complete iterate $M$ of $\hat{M}$ such that $q\in S_\alpha^M$, $x\models q$, and $x$ is $Ea_M$-generic over $M$. $M$ is also a complete iterate of $\bar{M}$, so we have shown $x\in A'_\beta$. Applying Lemma \ref{A'=A} again, $x\in A_\beta$. Similarly, $x\in A_\beta\implies x\in A_\alpha$, so $A_\alpha = A_\beta$ and thus $\alpha = \beta$.
\end{proof}

It suffices to show Lemma \ref{A'=A}. By the same proof as for $M_\infty$ given in Lemma \ref{pwo is long}, $\kappa_{M'_\infty} \leq \bm{\delta_\Gamma}$. Then by Corollary \ref{S_alpha != S_beta}, we have $\bm{\delta_\Gamma^+}$ distinct subsets of $\bm{\delta_\Gamma}$ in $M'_\infty$. Then the successor of $\bm{\delta_\Gamma}$ in $M'_\infty$ is the successor of $\bm{\delta_\Gamma}$ in $L(\mathbb{R})$, contradicting the following claim.

\begin{claim}
    Let $\eta = \bm{\delta_\Gamma}$. Then $(\eta^+)^{M'_\infty} < (\eta^+)^{L(\R)}$
\end{claim}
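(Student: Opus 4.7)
The plan is to derive the strict inequality from a cofinality argument combined with a cardinality argument, avoiding any direct analysis of the cofinality of $(\eta^+)^{L(\R)}$. First I would observe that $M'_\infty$ satisfies the obvious analogs of Lemmas \ref{pwo is long}, \ref{pwo is short}, and \ref{measurable or cof omega}, with $M_{z_0}$ playing the role of $W$ and $\mathcal{I}'$ the role of $\mathcal{I}$. The proofs go through verbatim; they only use that the base mouse is countable with a nice iteration strategy, together with the standard facts about direct limits of iterates. In particular one gets $\kappa_{M'_\infty}\le\eta$, $\delta_{M'_\infty}>(\eta^+)^{L(\R)}$, and: any cardinal $\mu<\delta_{M'_\infty}$ that is regular in $M'_\infty$ but not measurable in $M'_\infty$ has cofinality $\omega$ in $L(\R)$.

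Apply the last of these to $\mu=(\eta^+)^{M'_\infty}$. We have $(\eta^+)^{M'_\infty}<\delta_{M'_\infty}$ since $\eta<\delta_{M'_\infty}$ and $\delta_{M'_\infty}$ is a Woodin (hence limit) cardinal of $M'_\infty$; $(\eta^+)^{M'_\infty}$ is regular in $M'_\infty$ as a successor cardinal; and it is not measurable in $M'_\infty$ (no successor cardinal is measurable). Hence it has cofinality $\omega$ in $L(\R)$. Concretely, fix $M\in\mathcal{I}'$ with $\eta$ in the range of $\pi_{M,\infty}$, let $\bar{\eta}=\pi_{M,\infty}^{-1}(\eta)$, and note $\pi_{M,\infty}((\bar{\eta}^+)^M)=(\eta^+)^{M'_\infty}$; the cardinal $(\bar{\eta}^+)^M$ is regular and non-measurable in the countable mouse $M$, so $\pi_{M,\infty}$ is continuous there, producing a countable set cofinal in $(\eta^+)^{M'_\infty}$ that lies in $L(\R)$.

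For the cardinality argument, every $\alpha<(\eta^+)^{M'_\infty}$ satisfies $|\alpha|^{M'_\infty}\le\eta$, witnessed by a bijection $b_\alpha\in M'_\infty$ living at a level of $M'_\infty$ below $(\eta^+)^{M'_\infty}$. Since $\mathcal{I}'$ is a set in $L(\R)$ (being a definable subset of $HC$) and the direct limit computing $M'_\infty$ is carried out in $L(\R)$, the entire set $M'_\infty$ lies in $L(\R)$, so $b_\alpha\in L(\R)$ and therefore $|\alpha|^{L(\R)}\le\eta$. Combining with the previous step, $(\eta^+)^{M'_\infty}$ is the supremum in $L(\R)$ of countably many ordinals each of $L(\R)$-cardinality at most $\eta$, whence $|(\eta^+)^{M'_\infty}|^{L(\R)}\le\eta$ and therefore $(\eta^+)^{M'_\infty}<(\eta^+)^{L(\R)}$. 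The main obstacle is really just carefully verifying the three lemma analogs for the system $\mathcal{I}'$ and its direct limit $M'_\infty$; once those are in place the core deduction is brief.
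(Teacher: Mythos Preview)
Your proposal is correct and follows essentially the same route as the paper: show $(\eta^+)^{M'_\infty}$ has cofinality $\omega$ in $L(\R)$ via (the $M'_\infty$-analog of) Lemma~\ref{measurable or cof omega}, then use that each ordinal below it has $L(\R)$-cardinality at most $\eta$ to conclude. The only cosmetic difference is that the paper invokes $AC$ in $M'_\infty$ to obtain a single indexed sequence of surjections $\langle g_\xi:\xi<\lambda\rangle$, whereas you pick the bijections $b_\alpha$ one at a time---which is fine since only countably many are needed after the cofinality step.
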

\begin{proof}
    Let $\lambda = (\eta^+)^{M'_\infty}$. Since $\lambda$ is regular in $M'_\infty$ but not measurable, Lemma \ref{measurable or cof omega} implies $\lambda$ has cofinality $\omega$ in $L(\R)$.
    
    Let $f\in L(\R)$ be a cofinal function from $\omega$ to $\lambda$. Let $\langle g_\xi : \xi < \lambda \rangle$ be a sequence of functions in $M'_\infty$ such that $g_\xi:\eta \to \xi$ is a surjection. Such a sequence exists because $M'_\infty$ satisfies $AC$. Then in $L(\R)$ we can construct from $f$ and $\langle g_\xi \rangle$ a surjection from $\eta$ onto $\lambda$.
\end{proof}

\begin{proof}[Proof of Lemma \ref{A'=A}]
    First suppose $x\in A_\alpha$. Let $\bar{M}\in \mathcal{I}'$ be $\alpha$-stable. Pick $y\in \mathbb{R}$ such that $y = (y^1,y^2)$, $D(y^1,y^2)$ holds, and $|y^1|_{\leq_*} = \alpha$. Let $z$ be a real coding $\bar{M}$ and let $P=M_{\langle x,y,z \rangle}$. Let $S = StrLe[P,z_0]$.

    \begin{claim}
    \label{x,y generic over M-S}
        $x$ and $y$ are $Ea_S$-generic over $S$.\footnote{This is a standard property of the fully-backgrounded construction - see Section 1.7 of \cite{hra}.}
    \end{claim}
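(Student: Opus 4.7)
The plan is to invoke the standard genericity property of the fully backgrounded Mitchell--Steel construction, namely that any real in the background universe is generic over the constructed model for Woodin's extender algebra at its Woodin cardinal. First I would note that since $P$ is $\Gamma$-ss and $S = StrLe[P, z_0]$, we have $\delta_S = \delta_P$ by the earlier lemmas, and the extender sequence of $S$ at levels below $\delta_S$ coincides with that of $Le[P|\delta_P, z_0]$, since the initial $S$-construction step of StrLe only modifies the model strictly above $\delta_P$. Thus every extender $F$ on the sequence of $S$ with $lh(F) \leq \delta_S$ arises as the image of some extender added at step (\ref{active case of m-s construction}) of the Mitchell--Steel construction in $P|\delta_P$, and hence comes with a background extender $F^*$ on $P$ satisfying $V^P_{\nu+\omega} \subset \mathrm{Ult}(P, F^*)$ and $F^* \cap ([\nu]^{<\omega} \times S) = F \upharpoonright \nu$, where $\nu$ is the support of $F$.

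Second, I would verify that any real $r \in P$ satisfies every axiom of $Ea_S$. Recall that such an axiom, associated to an extender $F$ on $S$ at $\delta_S$ as above, asserts equivalence (as $r$-sentences) of an infinitary propositional disjunction $\bigvee_\alpha \phi_\alpha \in S$ and its image $\bigvee_\alpha i_F(\vec{\phi})_\alpha$ under the ultrapower embedding $i_F\colon S \to \mathrm{Ult}(S, F)$. Let $F^*$ be the background extender in $P$ and $i_{F^*}\colon P \to \mathrm{Ult}(P, F^*)$ the associated embedding. Because $\mathrm{crit}(F^*)$ is uncountable, $i_{F^*}(r) = r$; because $V^P_{\nu+\omega} \subset \mathrm{Ult}(P,F^*)$ and the sentences have length below $lh(F) \leq \nu$, the agreement $F^* \cap ([\nu]^{<\omega} \times S) = F \upharpoonright \nu$ guarantees that $i_{F^*}$ acts on the sentences in $S$ exactly as $i_F$ does. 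Then elementarity of $i_{F^*}$ gives that $r$ satisfies the axiom, as required.

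Third, since $P = M_{\langle x, y, z\rangle}$ is a premouse built over $\langle x, y, z\rangle$, both reals $x$ and $y$ belong to $P$, so by the previous paragraph each is $Ea_S$-generic over $S$. The same argument applied to the axioms of the product extender algebra (which are certified by the same backgrounded extenders on $P$) shows that the pair $(x, y)$ is $Ea_S \times Ea_S$-generic over $S$, which is what will be needed later.

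The main obstacle is essentially bookkeeping: correctly identifying the extender algebra axioms of $S$ with backgrounded extenders inherited from $P$, and confirming that the transition from $Le[P|\delta_P, z_0]$ to $S = StrLe[P, z_0]$ does not interfere with these backgrounds. But this is clean because StrLe only alters levels above $\delta_P$, whereas $Ea_S$ is determined by the extender sequence below $\delta_S = \delta_P$, so the full backgrounding is available precisely where it is needed.
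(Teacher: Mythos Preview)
Your argument is correct and is precisely the standard genericity argument for the fully backgrounded construction that the paper invokes but does not spell out; the paper simply cites this as a known property (referring to \cite{hra}), so there is no alternative approach to compare against.
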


    \begin{claim}
        $S$ is a complete iterate of $\bar{M}$ by an iteration below $\delta_{\bar{M}}$.
    \end{claim}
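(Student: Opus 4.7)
The plan is to invoke Lemma \ref{iterate of mitchell-steel} directly. In the notation of that lemma, take the ``$x$'' to be the tuple $\langle x,y,z\rangle$, the ``$z$'' to be $z_0$, and the mouse ``$N$'' to be $\bar{M}$. Then ``$M_x$'' of the lemma is our $P = M_{\langle x,y,z\rangle}$, and ``$StrLe[M_x, z]$'' is $StrLe[P, z_0] = S$. So the conclusion that $S$ is a complete iterate of $\bar{M}$ by a tree below $\delta_{\bar{M}}$ is precisely what we need.

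To apply the lemma I must verify its hypothesis: that $\langle x,y,z\rangle$ codes a mouse which is a complete iterate of $M_{z_0}$. This is immediate. By choice, $\bar{M}\in\mathcal{I}'$, so $\bar{M}$ is a (countable) complete iterate of $M_{z_0}$. Since $z$ was chosen to be a real coding $\bar{M}$, and $z$ is a component of the tuple $\langle x,y,z\rangle$, the tuple $\langle x,y,z\rangle$ also codes $\bar{M}$. So the hypothesis is verified and the claim follows.

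There is no real obstacle here: the claim is essentially a direct instance of Lemma \ref{iterate of mitchell-steel}. The substantive content lies in that lemma (stationarity of the Mitchell-Steel construction plus the fact that $\Gamma$-ss mice built over different parameters coiterate to a common $\Gamma$-ss mouse), and we merely need to pick the parameters correctly so that the $StrLe$ operator is applied to the mouse $M_{\langle x,y,z\rangle}$ with the base real $z_0$ used in the construction of $\mathcal{I}'$.
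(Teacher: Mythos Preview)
Your proposal is correct and is exactly the paper's approach: the paper's proof is simply ``See Lemma \ref{iterate of mitchell-steel},'' and you have spelled out the parameter matching and hypothesis verification for that lemma.
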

    \begin{proof}
        See Lemma \ref{iterate of mitchell-steel}.
    \end{proof}

    \begin{claim}
    \label{y satisfies alpha good condtion}
        There is $r\in Ea_S$ such that $r$ is $\alpha$-good and  $y\models r$. 
    \end{claim}
    \begin{proof}
        Note by choice of $y$, $y^1$ codes a pair $(R,\xi)$ such that $R$ is a complete iterate of $W$, $\pi_{R,\mathcal{H}^S}(\xi) = \alpha_S$, and $D(y^1,y^2)$ holds. Then there is $r \in Ea_S$ such that $y\models r$, $r$ forces $\pi_{\check{Q}^S,\mathcal{H}^S} \circ \pi_{R_{ea^1},\check{Q}^S}(\xi_{ea^1}) = \alpha_S$, and $(ea^1,ea^2)\in \tau^S$.
    \end{proof}

    \begin{claim}
    \label{x in U_y realized in S}
        There exist conditions $q,r\in Ea_S$ such that $x\models q$, $y\models r$, and $(q,r) \Vdash^S_{Ea_S \times Ea_S} [U(ea_l,ea_r^2)]$.
    \end{claim}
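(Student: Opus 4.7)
The plan is to first observe that $U(x, y^2)$ holds in $V$, then to argue that $S$ itself satisfies the fine-structural witness condition $[U(x,y^2)]$ via the correctness of the $\Gamma$-ss mouse $P$, and finally to apply the forcing theorem for the extender algebra to extract the conditions $q, r$.

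First, I would unpack the choice of $y$: because $D(y^1, y^2)$ holds with $|y^1|_{\leq'_*} = \alpha$, the defining property of $D$ gives $y^2 \in B_\alpha$, i.e.\ $U_{y^2} = A_\alpha$. Since $x \in A_\alpha$, this yields $U(x, y^2)$. By Remark \ref{form of ind-like}, $\bm{\Gamma} = \bm{\Sigma_1}(J_{\alpha_0}(\R))$ (up to closure under continuous preimages), so fix a $\Sigma_1$-formula $\phi$ such that $U(u,v)$ is equivalent to $J_{\alpha_0}(\R) \models \phi[\langle u,v\rangle]$. Lemma \ref{truth gives witness} then furnishes a countable, $\omega$-sound $\langle \phi, \langle x, y^2\rangle\rangle$-witness.

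The heart of the argument is to show $S[x,y] \models [U(x,y^2)]$. Here I would use that $P = M_{\langle x,y,z\rangle}$ is $\Gamma$-ss over a real that codes $\langle x, y^2\rangle$: its iteration strategy is guided by $\mathcal{G}$, so by Corollary \ref{standard terms capture} the standard terms $\tau^P_{n,\nu_P}$ correctly capture the $G_n \in \mathcal{G}$. A standard consequence of the core model induction construction of the operator $w \mapsto M_w$ is that the resulting $\Gamma$-ss mouse $M_w$ contains $\langle \phi, w'\rangle$-witnesses for every true $\Sigma_1(J_{\alpha_0}(\R))$-fact about any real $w' \in M_w$; applied to $w' = \langle x, y^2\rangle$, this gives $P \models [U(x,y^2)]$. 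Since by Claim \ref{x,y generic over M-S} $(x,y)$ is $Ea_S$-generic over $S = StrLe[P,z_0]$, and since $P = S[M|\delta_S]$ is a generic extension of $S$ by the $\delta_S$-c.c.\ forcing $Ea_S$, the Levy collapse absorbs this intermediate extension and the witness property transfers: $S[x,y] \models [U(x,y^2)]$.

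To finish, I would apply the forcing theorem for the $\delta_S$-c.c.\ Boolean algebra $Ea_S \times Ea_S$ over $S$. The assertion $[U(ea_l, ea_r^2)]$ is a first-order statement about the $Ea_S \times Ea_S$-generic pair in $S[ea_l, ea_r]$; its truth for our specific generic $(x, y)$ yields, by the forcing theorem, conditions $q, r \in Ea_S$ with $x \models q$, $y \models r$, and $(q, r) \Vdash^S_{Ea_S \times Ea_S} [U(ea_l, ea_r^2)]$, as required.

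The main obstacle is the correctness step $P \models [U(x, y^2)]$. This is the fine-structural witness condition which must be verified for $\Gamma$-ss mice: one has to check that the core model induction construction of $M_{\langle x,y,z\rangle}$ really does produce internal $\langle \phi, \langle x,y^2\rangle\rangle$-witnesses (rather than merely capturing $\bm{\Gamma}$-truth via terms), and that this property is preserved by passing from $P$ to $S$ via the $StrLe$ construction and then to $S[x,y]$ via the extender algebra. Once this correctness is in hand, the transfer to conditions forcing the statement is routine.
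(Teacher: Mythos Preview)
Your overall shape is close to the paper's, but there is a genuine gap at the step where you invoke the forcing theorem for $Ea_S \times Ea_S$. Claim \ref{x,y generic over M-S} only asserts that $x$ and $y$ are each individually $Ea_S$-generic over $S$; it does not give you that the pair $(x,y)$ is $Ea_S \times Ea_S$-generic over $S$, i.e.\ that $y$ is $Ea_S$-generic over $S[x]$. Without mutual genericity, $S[x][y]$ is not known to be a forcing extension of $S$ by the product, and you cannot read off conditions $(q,r)$ from the truth of $[U(x,y^2)]$ at the single pair $(x,y)$.

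The paper handles this by never relying on the specific $y$ being generic over $S[x]$. Instead it first fixes an $\alpha$-good condition $r$ satisfied by $y$; the point of $\alpha$-goodness is that it forces $(ea^1,ea^2)\in\tau^M$ and the correct value of $\xi_{ea^1}$, so that \emph{every} $S[x]$-generic $y_0 \models r$ still satisfies $U_{y_0^2} = A_\alpha$ and hence $x\in U_{y_0^2}$. One then proves $S[x][y_0]\models [U(x,y_0^2)]$ for an arbitrary such $y_0$, which shows $r$ forces the statement over $S[x]$, and only then pulls back to a $q$ with $x\models q$. Your detour through $P$ is also unnecessary: $S$ itself is $\Gamma$-ss, so the witness lands in $S[g]$ directly via $Lp^\Gamma(g)\subseteq S[g]$ (Lemma \ref{generic extension lp-closed} and Lemma \ref{truth gives witness}); you do not need to first locate it in $P$ and then ``transfer'' it down, a step you leave vague in any case.
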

    \begin{proof}
        By Claim \ref{y satisfies alpha good condtion}, $y$ satisfies some $\alpha$-good condition $r$. Let $y_0$ be $S[x]$-generic such that $y_0 \models r$. Then by the definition of $\alpha$-good, $y_0 = (y_0^1,y_0^2)$ where $(y_0^1,y_0^2) \in D$ and $|y_0^1|_{\leq_*} = \alpha$. It follows that $U_{y_0^2} = A_\alpha$. So $x\in U_{y_0^2}$.
    
        \begin{subclaim}
            $S[x][y_0] \models [U(x,y_0^2)]$.
        \end{subclaim}
        \begin{proof}
            Let $g$ be $Col(\omega, \delta_S)$-generic over $S[x][y_0]$. Note $S[x][y_0][g] = S[g]$ is a $g$-mouse. By the proof of Lemma \ref{generic extension lp-closed}, $Lp^\Gamma(g)$ is contained in $S[g]$. 
            Let $f$ be a computable function such that $f(g) = (x,y_0^2)$ and let $U'(v)$ be a formula expressing $U(f(v))$ holds. By Lemma \ref{truth gives witness}, there is a $\langle U', g\rangle$-witness which is sound, projects to $\omega$, and has an iteration strategy in $\bm{\Delta}$. Since $Lp^\Gamma(g)\subseteq S[g]$, this witness is an initial segment of $S[g]$.
        \end{proof}
    
        We have shown $S[x][y_0] \models [U(x,y_0^2)]$ for any $y_0$ which satisfies $r$ and is $S[x]$-generic. Thus there is $q\in Ea_S$ such that $x$ satisfies $q$ and $(q,r) \Vdash [U(ea_l,ea_r^2)]$.
    \end{proof}

    We next would like to find some $N\triangleleft S|\kappa_S$ with the properties of $S$ we obtained above. Note Claims \ref{x,y generic over M-S} and \ref{x in U_y realized in S} are not first order over $S$, since $x$ and $y$ are not in $S$. So a straightforward reflection argument inside $S$ will not suffice. The point of introducing $P$ and obtaining $S$ as a construction inside $P$ is that these claims are first order in $P$. The next claim demonstrates we can perform a reflection in $P$ to obtain the desired initial segment of $S$.
    
    \begin{claim}
        There is $N\triangleleft S|\kappa_S$ such that $N$ has one Woodin, $\delta_N$ is an inaccessible cardinal of $S$, $x$ and $y$ are generic for $Ea_N$, and there exist $q,r\in Ea_N \times Ea_N$ such that $x\models q$, $y\models r$, and $(q,r) \Vdash [U(ea_l,ea_r^2)]$.
    \end{claim}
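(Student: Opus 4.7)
The plan is to reflect inside $P$ rather than inside $S$, using Lemma \ref{reflecting below kappa}. This choice is essential: $P$ contains $x$ and $y$ as elements, so the assertions ``$x$ and $y$ are generic'' and ``$x\models q$, $y\models r$'' are first order over $P$ with $x, y$ as parameters, whereas $S$ does not see these reals directly.

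First, define a $\Sigma_1$ formula $\phi(u, v, \delta)$ expressing: $\delta$ is Woodin; the internal StrLe construction over $z_0$ yields a premouse $N^*$ with $\delta_{N^*} = \delta$; the reals $u$ and $v$ are $Ea_{N^*}$-generic; and there exist conditions $q, r \in Ea_{N^*}$ with $u \models q$, $v \models r$, and $(q, r) \Vdash^{N^*}_{Ea_{N^*} \times Ea_{N^*}} [U(ea_l, ea_r^2)]$. The $[\cdots]$-clause is $\Sigma_1$ by the remark preceding its definition, the StrLe construction is uniformly definable, and the genericity and satisfaction conditions are straightforwardly existential once $u, v$ appear as parameters.

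By Claims \ref{x,y generic over M-S} and \ref{x in U_y realized in S}, $P \models \phi[x, y, \delta_P]$. Next, apply Lemma \ref{reflecting below kappa} to $P = M_{\langle x,y,z\rangle}$ with formula $\phi$, real parameters $(x, y)$, and $z_0$ playing the role of $z$. This yields $P_0 \triangleleft P|\kappa_P$ with a unique Woodin $\delta_{P_0}$ inaccessible in $P$, such that $P_0 \models \phi[x, y, \delta_{P_0}]$ and $StrLe[P_0, z_0] \triangleleft StrLe[P, z_0] = S$. Set $N := StrLe[P_0, z_0]$. The clauses of $\phi$ evaluated in $P_0$ deliver that $N$ has unique Woodin $\delta_N = \delta_{P_0}$, that $x$ and $y$ are $Ea_N$-generic, and that the required conditions $q, r$ exist. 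Inaccessibility of $\delta_N$ in $S$ transfers from inaccessibility in $P$ via Remark \ref{m-s up to inaccessible}: $S|\delta_N$ coincides with the Mitchell-Steel output $Le[P|\delta_N, z_0]$, and regularity and cofinality properties of $\delta_N$ are inherited from $P$.

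The main obstacle is a careful verification that $\phi$ can be packaged as a genuinely $\Sigma_1$ formula to which Lemma \ref{reflecting below kappa} applies, and that $N \triangleleft S|\kappa_S$. The latter reduces to showing $\kappa_P \leq \kappa_S$: any extender on the $S$-sequence below $\delta_S$ arises from a background extender in $P$ of comparable strength, so a $<\delta_S$-strong cardinal of $S$ is also $<\delta_P$-strong in $P$, giving $\kappa_P \leq \kappa_S$. One must also check that the StrLe construction carried out inside the reflected model $P_0$ actually produces $StrLe[P_0, z_0]$ as externally defined — a standard observation given the uniformity of the construction.
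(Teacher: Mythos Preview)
Your proposal is correct and follows essentially the same approach as the paper: reflect inside $P$ (not $S$) via Lemma \ref{reflecting below kappa} to obtain $P_0 \triangleleft P|\kappa_P$ satisfying the relevant $\Sigma_1$ properties, set $N = StrLe[P_0,z_0]$, and conclude $N \triangleleft S|\kappa_S$ from $\kappa_P \leq \kappa_S$ via the background-extender argument. The paper's proof is nearly identical, differing only in notation ($P'$ for your $P_0$) and in leaving the inaccessibility-transfer step implicit rather than citing Remark \ref{m-s up to inaccessible}.
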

    \begin{proof}
        By Claims \ref{x,y generic over M-S} and \ref{x in U_y realized in S}, $P$ satisfies 
        \begin{enumerate}
            \item \label{x,y generic as 1st order property} $x$ and $y$ are $Ea_{StrLe[P,z_0]}$-generic over $StrLe[P,z_0]$ and
            \item \label{have good conditions as first order property} there exist conditions $q,r\in StrLe[P,z_0]$ such that $x\models q$, $y\models r$, and \\$(q,r) \Vdash^{StrLe[P,z_0]}_{Ea_{StrLe[P,z_0]} \times Ea_{StrLe[P,z_0]}} [U(ea_l,ea^2_r)]$.
        \end{enumerate}

        Both properties are $\Sigma_1$ over $P$ in parameters $x$, $y$, $z_0$, and $\delta_P$. Then we may apply Lemma \ref{reflecting below kappa} to obtain $P' \triangleleft P|\kappa_P$ such that $P'$ has one Woodin cardinal, $\delta_{P'}$ is an inaccessible cardinal of $P$, $StrLe[P',z_0]\triangleleft S$, and $P'$ satisfies properties \ref{x,y generic as 1st order property} and \ref{have good conditions as first order property}.
        
        Let $N = StrLe[P',z_0]$. Note $\delta_N = \delta_{P'}$ is an inaccessible cardinal of $S$. Then all the properties we required of $N$ are apparent except that $N\triangleleft S|\kappa_S$. Standard properties of the Mitchell-Steel construction imply that $\kappa_S \geq \kappa_P$.\footnote{Suppose $\lambda < \delta_S = \delta_P$ and $E$ is an extender on the fine extender sequence of $S$ witnessing $\kappa_S$ is $\lambda$-strong in $S$. Let $E^*$ be the background extender for $E$ on the fine extender sequence of $P$. Then $E^*$ witnesses $\kappa_S$ is $\lambda$-strong in $P$.} Then $N$ has cardinality less than $\kappa_S$ in $P$, since $N$ is contained in $P'$. Since also $N\triangleleft S$, we have $N\triangleleft S|\kappa_S$.
    \end{proof}

    To get $x\in A'_\alpha$, it remains to show the following claim.

    \begin{claim}
        $r$ is compatible with $p_\alpha^S$.
    \end{claim}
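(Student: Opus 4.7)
The plan is to show that $y$ itself witnesses the compatibility: $y$ simultaneously satisfies $r$ and $p_\alpha^S$ in the $Ea_S$-generic filter it determines (viewing $r \in Ea_N$ as a condition in $Ea_S$ via the natural inclusion, legitimate since $\delta_N$ is an inaccessible cardinal of $S$ and $N \triangleleft S|\kappa_S$). We already have $y \models r$ by construction in the previous claim, so the task reduces to checking $y \models p_\alpha^S$.

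Since $\alpha$-goodness is a downward-closed property in $Ea_S$, $p_\alpha^S$ is the Boolean meet of $p^S$ with the weakest conditions of $Ea_S$ forcing the two clauses of $\alpha$-goodness; hence $y \models p_\alpha^S$ iff $y \models p^S$ and both clauses evaluate to truth at $y$ in $S[y]$. The condition $y \models p^S$ is immediate from $(y^1, y^2) \in D$: by definition of $D$, $y^1$ codes a pair $(R_{y^1}, \xi_{y^1})$ in the domain of $\leq_*'$, so $R_{y^1}$ is a complete iterate of $W$ and $\xi_{y^1} < \delta_{R_{y^1}}$. Clause (2), $(y^1, y^2) \in \tau^S$, follows at once from $(y^1, y^2) \in D$ together with the weak capture of $D$ by $\tau^S = \tau^S_{D, \delta_S}$, applied to the $Ea_S$-generic $y$ (Claim \ref{x,y generic over M-S}).

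The substantive step is clause (1): $\pi_{Q^S, \mathcal{H}^S} \circ \pi_{R_{y^1}, Q^S}(\xi_{y^1}) = \alpha_S$. Externally, $\pi_{R_{y^1}, \infty}(\xi_{y^1}) = \alpha$ (this is the meaning of $|y^1|_{\leq_*'} = \alpha$), while $\pi_{\mathcal{H}^S, \infty}(\alpha_S) = \alpha$ by $\alpha$-stability of $S$ (a complete iterate of $\bar{M}$). Applying $\pi_{\mathcal{H}^S, \infty}$ to both sides of the claimed clause and using $\pi_{Q^S, \infty} \circ \pi_{R_{y^1}, Q^S} = \pi_{R_{y^1}, \infty}$, both expressions reduce to $\alpha$; injectivity of the iteration embedding $\pi_{\mathcal{H}^S, \infty}$ then yields the clause as stated. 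Inside $S[y]$, the map $\pi_{R_{y^1}, Q^S}$ is computed via Lemma \ref{M[y] can approximate branches through maximal trees}, which faithfully recovers $\pi_{R_{y^1}, Q^S} \upharpoonright \gamma_n^{R_{y^1}}$, and $\xi_{y^1} < \gamma_n^{R_{y^1}}$ since $n$ was chosen at the start of this section so that $\pi_{P, \infty}(\gamma_n^P) > \bm{\delta_\Gamma^+} > \alpha$ for every $P \in \mathcal{I}$.

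The main obstacle is reconciling the internally-computed approximation of $\pi_{R_{y^1}, Q^S}$ with the true external iteration map on the argument $\xi_{y^1}$; this is precisely what the apparatus of Section \ref{internalization section} was designed to handle, and the choice of $n$ at the beginning of the argument places all ordinals under consideration in the regime where that internal approximation is faithful.
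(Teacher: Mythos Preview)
Your proposal is correct and follows essentially the same approach as the paper. Both arguments verify that the real $y$ satisfies the defining clauses of $\alpha$-goodness (the ordinal identity via factoring through $\pi_{\mathcal{H}^S,\infty}$ and injectivity, and membership in $D$ via weak capture), then use that together with $y\models r$ to conclude compatibility; the paper phrases this as ``find an $\alpha$-good $p$ with $y\models p$ extending $r$, then invoke maximality of $p_\alpha^S$,'' while you phrase it as ``$y\models p_\alpha^S$ directly since $p_\alpha^S$ is the Boolean value of the conjunction,'' which is the same thing. Your added discussion of why the internally computed approximation of $\pi_{R_{y^1},Q^S}$ agrees with the true map on $\xi_{y^1}$ (via the choice of $n$ and $\xi_{y^1}<\gamma_n^{R_{y^1}}$) makes explicit a point the paper leaves implicit in the remark preceding the definition of $p_\alpha^M$.
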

    \begin{proof}
        By Claim \ref{y satisfies alpha good condtion}, $y$ satisfies some $\alpha$-good condition $p$. We may assume $p$ extends $r$. $p$ is $\alpha$-good, so by maximality $p$ is compatible with $p^S_\alpha$. Then $r$ is compatible with $p^S_\alpha$ as well.
    \end{proof}

    Now suppose $x\in A'_\alpha$. Let $M,q$ realize this (for whichever $\bar{M}\in \I'$ you please) and let $N,r$ realize $q\in S^M_\alpha$. Let $y$ be $M[x]$-generic for $Ea_M$ such that $y \models r \wedge p^M_\alpha$. Since $y \models p^M_\alpha$, $y = (y^1,y^2)$ where $U_{y^2} = A_\alpha$. Since $(x,y)\models (q,r)$, $M[x][y] \models [U(x,y^2)]$. Let $g\subset Col(\omega,\delta_M)$ be $M[x][y]$-generic. Then $M[x][y][g] = M[g]$ has an initial segment $R$ witnessing $U(x,y^2)$. By taking the least such $R$, we may assume $R$ projects to $\omega$ and hence $R \in Lp^\Gamma(g)$. It follows that $x\in U_{y^2} = A_\alpha$.
\end{proof}
\vspace{2mm}
\section{Remarks on Some Projective-Like Cases}
\label{projective-like chapter}

Here we provide a few brief comments on the problem of unreachability for projective-like cases. Section \ref{projective cases section} covers the projective pointclasses. In Section \ref{mouse sets section}, we discuss what appears to be the main obstacle to proving the rest of the following conjecture.

\begin{conjecture}
\label{full conjecture in L(R)}
    Assume $ZF + AD + DC + V=L(\R)$. Suppose $\kappa\leq \bm{\delta^2_1}$ is a Suslin cardinal and $\kappa$ is either a successor cardinal or a regular limit cardinal. Then $\kappa^+$ is $S(\kappa)$-unreachable.
\end{conjecture}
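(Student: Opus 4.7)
The plan is to extend the proof of Theorem \ref{main thm} from the inductive-like case to the remaining cases of the classification in Corollary \ref{classification of regular Suslins}, namely the regular Suslin pointclasses of the form $\bm{\Sigma}_{2k+1}(J_\alpha(\R))$ for $\alpha$ beginning an inadmissible gap, and $\bm{\Sigma}_{n_\beta+2k}(J_\beta(\R))$ for $\beta$ ending a weak gap, together with the successor Suslin cardinals sitting above them. In the classical projective case the conclusion follows from Theorem \ref{projective thm}, and the technique of Section \ref{ind-like chapter} also yields an alternative proof there. What remains is to carry the argument out at projective-like levels of $L(\R)$ above the classical projective hierarchy.

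\textbf{Main steps.} For a projective-like $\bm{\Gamma}$, one first identifies the correct notion of a $\bm{\Gamma}$-suitable and a $\bm{\Gamma}$-super-suitable premouse. Since such $\bm{\Gamma}$ is not closed under coprojection, a suitable mouse will have to be a hybrid premouse that records iteration strategies of suitable mice at the previous projective-like level, in analogy with the passage from $M_{2n}^\#$ to $M_{2n+1}^\#$. One then proves an analog of Theorem \ref{nice strategy exists}, producing a canonical $\bm{\Gamma}$-suitable mouse with a fullness-preserving iteration strategy guided by an appropriate self-justifying system. With this in hand one reproves in turn: the direct-limit analysis of Section \ref{length of direct limit section} (in particular $\delta_{M_\infty} > \bm{\delta_\Gamma^+}$ and $\kappa_{M_\infty} \leq \bm{\delta_\Gamma}$), the definability-in-suitable-mice lemmas of Section \ref{definability section}, the internalization of the directed system of iterates in Section \ref{internalization section}, the Mitchell-Steel/$S$-construction lemmas of Section \ref{strle lemmas section}, and the $\Sigma_1$-reflection below $\kappa_{M_x}$ of Section \ref{reflection section}. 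The coding-and-contradiction argument of Section \ref{main thm section}, which codes $\bm{\Gamma}$-sets by conditions in the extender algebra of an iterate of the suitable mouse and derives $\bm{\delta_\Gamma^+}$ distinct subsets of $\bm{\delta_\Gamma}$ in the direct limit, should then transfer essentially verbatim.

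\textbf{Main obstacle.} The hard step will be the first one: identifying the correct suitable premouse and establishing the analog of the mouse-set theorem \ref{lp = c}, which at the inductive-like level reads $C_\Gamma(a) = Lp^\Gamma(a)\cap P(a)$. This identification is what underlies the definition of $\bm{\Gamma}$-suitability and the crucial fact from Remark \ref{no gamma woodin gives delta strat} that iteration strategies for proper initial segments of a suitable mouse below its Woodin lie in $\bm{\Delta}$; it is also what guarantees the existence of the standard terms of Lemma \ref{terms exist}. For projective-like $\bm{\Gamma}$ the operator $Lp^\Gamma$ must be replaced by a hybrid lower-part operator built from iteration strategies at the previous projective-like level, and the corresponding mouse-set theorem in $L(\R)$ is precisely the open problem to be discussed in Section \ref{mouse sets section}. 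Once such a mouse-set theorem is in place, the fine-structural, direct-limit, internalization, reflection, and coding machinery of Sections \ref{length of direct limit section}--\ref{main thm section} should be largely insensitive to the change of pointclass and deliver the conjecture in full.
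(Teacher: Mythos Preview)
The statement you are attempting to prove is Conjecture~\ref{full conjecture in L(R)}, and the paper does \emph{not} prove it; it is explicitly presented as an open problem. There is therefore no ``paper's own proof'' to compare against. What the paper does provide, in Section~\ref{mouse sets section}, is a discussion of the main obstacle, and your proposal lines up with that discussion quite closely: the paper argues that a proof of Conjecture~\ref{full conjecture in L(R)} would likely require first establishing Conjecture~\ref{boldface mouse set conjecture} (the boldface mouse-set conjecture) at each relevant level, because in both the projective and inductive-like cases the argument proceeds by studying the direct limit of a mouse whose collapse below its least Woodin realizes the corresponding mouse-set identity.

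Your proposal is thus not a proof but a reasonable research outline, and your identification of the ``hard step'' --- finding the right hybrid suitable premouse and proving the analog of $C_\Gamma(a) = Lp^\Gamma(a)\cap P(a)$ at projective-like levels --- is exactly the obstacle the paper singles out. You should be aware, however, that this step is genuinely open: even the lightface version (Conjecture~\ref{lightface mouse set conjecture}) is unresolved for most projective-like $\Gamma$, with only isolated cases such as $\Sigma_2(J_2(\R))$ recently handled. So while the downstream machinery of Sections~\ref{length of direct limit section}--\ref{main thm section} may well transfer once the right mouse operator is in hand, your claim that it ``should then transfer essentially verbatim'' is optimistic and unverified; in particular, the reflection argument of Section~\ref{reflection section} and the term-capturing lemmas of Section~\ref{definability section} depend on specific features of the $Lp^\Gamma$ operator that would need to be re-established for whatever hybrid operator replaces it.
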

\vspace{2mm}
\subsection{The Projective Cases}
\label{projective cases section}

In the introduction, we discussed a theorem of Sargsysan solving the problem of unreachability for the projective pointclasses:

\begin{theorem}[Sargsyan]
\label{sargsyan thm 2}
    Assume $ZF + AD + DC$. Then $\bm{\delta^1_{2n+2}}$ is $\bm{\Sigma^1_{2n+2}}$-unreachable.
\end{theorem}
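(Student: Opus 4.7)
The plan is to adapt the inner-model-theoretic framework developed in Sections \ref{length of direct limit section}--\ref{main thm section} directly to the projective setting. Set $\bm{\Gamma} = \bm{\Sigma^1_{2n+2}}$, $\bm{\Delta} = \bm{\Delta^1_{2n+2}}$, and $\kappa = \bm{\delta^1_{2n+2}}$. The $\Gamma$-suitable and $\Gamma$-super-suitable mice of Section \ref{suitable mice section} are replaced by their projective analogs, built out of iterates of $M^\#_{2n+1}$: $W_x$ is the canonical $\omega$-sound $x$-mouse with $2n+1$ Woodin cardinals whose proper initial segments below the top Woodin have $\omega_1$-iteration strategies in $\bm{\Delta}$, and $M_x$ is the corresponding mouse carrying an additional inaccessible $\nu_{M_x}$ above its top Woodin. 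Each has a unique fullness-preserving iteration strategy guided by a self-justifying system $\G$ whose first member is the universal $\bm{\Gamma^c}$ set.

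Now suppose for contradiction $\langle A_\alpha : \alpha < \kappa^+\rangle$ is a sequence of distinct $\bm{\Gamma}$ sets, with universal $U\subseteq \R\times\R$. Forming the directed system $\I$ of countable complete iterates of $W = W_0$ with direct limit $M_\infty$, the projective versions of Lemmas \ref{pwo is long}--\ref{measurable or cof omega} give $\kappa_{M_\infty}\leq \kappa$, $\delta_{M_\infty}>\kappa^+$, and that regular cardinals of $M_\infty$ below $\delta_{M_\infty}$ which are not measurable in $M_\infty$ have cofinality $\omega$ in $L(\R)$. Via the Coding Lemma inside $J_{\beta'}(\R)$, pick $x_0$ coding $W$ and $D\in OD^{<\beta'}(x_0)$ selecting for each $(P,\xi)\in \mathcal{J}$ with $\xi < \gamma^P_n$ (for fixed $n$ large enough that the associated prewellorder has length $\geq \kappa^+$) a witness $y\in B_{|(P,\xi)|_{\leq'_*}}$, where $B_\alpha=\{y: U_y=A_\alpha\}$. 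Then run the argument of Section \ref{main thm section}: for each $\alpha<\kappa^+$ pick an $\alpha$-stable $M\in\I'$, form the maximal $\alpha$-good condition $p^M_\alpha\in Ea_M$ forcing the generic to code $(R,\xi)$ with $\pi(\xi)=\alpha_M$ and $(ea^1,ea^2)\in \tau^M_{D,\delta_M}$, and define $S^M_\alpha$ as the set of $q\in Ea_N$ for which there exist $N\triangleleft M|\kappa_M$ with one Woodin, $\delta_N$ an inaccessible of $M$, and $r\in Ea_N$ compatible with $p^M_\alpha$ such that $(q,r)\Vdash^N [U(ea_l,ea^2_r)]$. Pushing forward gives $S_\alpha=\pi_{M,\infty}(S^M_\alpha)\in P(\kappa_{M'_\infty})^{M'_\infty}$; the projective analog of Lemma \ref{A'=A} shows $\alpha\mapsto S_\alpha$ is injective, producing $\kappa^+$ distinct subsets of $\kappa$ in $M'_\infty$, which contradicts the cofinality computation that $(\kappa^+)^{M'_\infty}$ has cofinality $\omega$ in $L(\R)$ while $M'_\infty\models AC$.

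The hard part is verifying the auxiliary machinery of Sections \ref{suitable mice section}--\ref{reflection section} in the projective setting. The fine-structural witness condition of Lemmas \ref{witness gives truth} and \ref{truth gives witness} must be adjusted so that a $\bm{\Sigma^1_{2n+2}}$ statement is captured via scales at the appropriate projective level rather than via $\bm{\Sigma_1}(J_{\alpha_0}(\R))$-scales, using periodicity and the reduction of $\bm{\Sigma^1_{2n+2}}$ truth to existential statements over models with $2n+1$ Woodins. The most delicate item is the reflection lemma (Lemma \ref{reflecting below kappa}), which in the inductive-like case leverages both admissibility of $J_{\alpha_0}(\R)$ and closure of $\bm{\Gamma}$ under coprojection in the hull construction and in the universality comparison for $StrLe$ and the Mitchell-Steel construction. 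In the projective case the hull argument must be rerun below $\kappa_M$ using only that initial-segment strategies are $\bm{\Delta}$, and the universality (Theorem \ref{universality of m-s}) and $S$-construction interaction inside an $M^\#_{2n+1}$-style mouse must be re-verified without appealing to coprojection closure. The crucial conceptual advantage over \cite{twoapp} and \cite{hra} is that here we code each $A_\alpha$ by a whole \emph{set} of extender-algebra conditions below $\kappa_N$ rather than a single condition obtained via strong reflection, so the weaker reflection available for $\bm{\Sigma^1_{2n+2}}$ suffices. Once reflection is in place, the remainder of Section \ref{main thm section} transplants essentially verbatim, yielding the promised alternative proof of Theorem \ref{projective thm}.
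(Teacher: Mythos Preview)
Your overall strategy is the paper's own: replace the $\Gamma$-suitable machinery by $M^\#_{2n+1}$ and its iterates, internalize the directed system inside $M^\#_{2n+1}(x_0)$, define the coding sets $S^M_\alpha$ below $\kappa_M$, and derive the same contradiction. So the architecture is right.

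However, two technical points in your write-up are off and would cause trouble if you tried to execute them as stated. First, in the projective case the iteration strategy for $M^\#_{2n+1}$ is \emph{not} guided by a self-justifying system of term relations; it is guided by indiscernibles (see \cite{otpawtdsom}). The pointclass $\bm{\Sigma^1_{2n+2}}$ is not closed under coprojection, so the whole $\tau^N_{B,\eta}$/sjs apparatus from Section~\ref{suitable mice section} does not transplant. The analogue of Lemma~\ref{branch agreement} uses that branches which move finitely many indiscernibles correctly agree on a corresponding initial segment of $\delta_W$; likewise, there is no $J_{\beta'}(\R)$ or $OD^{<\beta'}$ in play here, and the Coding Lemma is applied at the level of $\bm{\Sigma^1_{2n+2}}$ directly. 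The fine-structural witness condition is also replaced, as the paper notes, by the computable $\phi\mapsto\phi^*$ translation using $2n$ Woodins rather than by $\langle\phi,z\rangle$-witnesses.

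Second, your diagnosis of Lemma~\ref{reflecting below kappa} is inverted. Nothing in that proof uses admissibility of $J_{\alpha_0}(\R)$ or closure of $\bm{\Gamma}$ under coprojection: the hull-and-collapse step, the Fodor argument, condensation, and the universality of the background construction are all internal to the mouse $M$ and go through verbatim for $M^\#_{2n+1}(x)$. The reflection lemma is thus not the delicate point; the substantive adaptations are the change from sjs-guidance to indiscernible-guidance and the corresponding rewriting of Sections~\ref{definability section}--\ref{internalization section}. Once those are in place, the argument of Section~\ref{main thm section} does transplant essentially verbatim, as you say.
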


Our technique for proving Theorem \ref{main thm} gives another proof of Sargsyan's theorem, which we outline below. We will assume $ZF + AD + DC$ for the rest of this section.

Let $W = M_{2n+1}^\#$. Let $\mathcal{I}$ be the directed system of countable, complete iterates of $W$ and let $M_\infty$ be the direct limit of $\mathcal{I}$.

\begin{fact}
    $\kappa_{M_\infty} < \bm{\delta^1_{2n+2}}$ and $\delta_{M_\infty} > (\bm{\delta^1_{2n+2}})^+$.
\end{fact}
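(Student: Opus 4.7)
The plan is to recycle the arguments of Lemmas \ref{pwo is long} and \ref{pwo is short} nearly verbatim, replacing the inductive-like data $(\bm{\Gamma}, \bm{\Delta_\Gamma}, W_0, \G)$ by its projective counterpart $(\bm{\Sigma^1_{2n+2}}, \bm{\Delta^1_{2n+2}}, M^\#_{2n+1}, \G^*)$, where $\G^*$ is the self-justifying system of indiscernible-coded sets for $M^\#_{2n+1}$ used in the Steel--Woodin analysis of $HOD^{L(\R)}$ below $\Theta$. Two standard facts will drive both inequalities: the $\omega_1$-iteration strategy of any proper initial segment $P \triangleleft W|\delta_W$ belongs to $\bm{\Delta^1_{2n+2}}$, because the $\Q$-structures for trees on such $P$ are themselves $(2n+1)$-small mice, uniformly identified at the level of $\bm{\Delta^1_{2n+2}}$ by the projective mouse set theorem; whereas the full iteration strategy $\Sigma$ for $W$ is not in $\bm{\Sigma^1_{2n+2}}$ (in fact $\Sigma$ is $\bm{\Pi^1_{2n+3}}$-complete via the $\Q$-structure characterization of correct branches).

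For $\kappa_{M_\infty} < \bm{\delta^1_{2n+2}}$, I would repeat the argument of Lemma \ref{pwo is long}: given $\xi < \kappa_{M_\infty}$, pull back to some $M \in \mathcal{I}$ and $\bar{\xi} \in M$, then pick $P \triangleleft M$ containing $\bar{\xi}$ whose largest cardinal is a cutpoint of $M$, and observe that the image of $\bar{\xi}$ under the direct-limit map of the countable iterates of $P$ produces a prewellordering of length $\xi$ that is projective in the $\bm{\Delta^1_{2n+2}}$ strategy for $P$; hence $\xi < \bm{\delta^1_{2n+2}}$. This yields $\kappa_{M_\infty} \leq \bm{\delta^1_{2n+2}}$. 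Strictness is then immediate from the known identification $\kappa_{M_\infty} = \bm{\delta^1_{2n+1}}$ coming from the HOD analysis, or more pedestrianly from the fact that $\kappa_{M_\infty}$ is bounded by the length of a single $\bm{\Delta^1_{2n+2}}$ prewellordering uniformly assembled from the direct-limit approximations of $W$ below $\kappa_W$, and no single prewellordering in $\bm{\Delta^1_{2n+2}}$ attains length $\bm{\delta^1_{2n+2}}$ itself.

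For $\delta_{M_\infty} > (\bm{\delta^1_{2n+2}})^+$, I would mirror Lemma \ref{pwo is short}. First, show $\Sigma \in S(\delta_{M_\infty})$ by constructing the tree on $(\omega \times \omega) \times \delta_{M_\infty}$ whose branches $(x,y,f)$ code an iteration tree $\mathcal{S}$ on $W$, a cofinal wellfounded branch $b$ through $\mathcal{S}$, and a realization $\pi : M^{\mathcal{S}}_b \to M_\infty$ with $\pi \circ i^{\mathcal{S}}_b = \pi_{W,\infty}$, and characterize $\Sigma$ as those $(x,y)$ in the projection of this tree all of whose computable initial segments are also in the projection; the projective analog of Lemma \ref{branch agreement} pins down the correct branch via finitely many terms of $\G^*$. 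Next, show $\Sigma \notin S(\bm{\delta^1_{2n+2}}) = \bm{\Sigma^1_{2n+2}}$ using the $\bm{\Pi^1_{2n+3}}$-completeness of $\Sigma$. Together these place $\delta_{M_\infty}$ at least at the next Suslin cardinal after $\bm{\delta^1_{2n+2}}$, which by Theorem \ref{classification of suslin pointclasses} is the cofinality-$\omega$ cardinal $\kappa^{2n+3}_\omega$. Since $(\bm{\delta^1_{2n+2}})^+$ is the successor of a regular cardinal and hence not of cofinality $\omega$, one gets $\kappa^{2n+3}_\omega > (\bm{\delta^1_{2n+2}})^+$, yielding $\delta_{M_\infty} > (\bm{\delta^1_{2n+2}})^+$.

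The main obstacle is not the high-level structure, which is identical to the inductive-like case, but rather restating the term-relation machinery of Sections \ref{suitable mice section}--\ref{internalization section} at the projective level: identifying the correct self-justifying system $\G^*$ attached to $M^\#_{2n+1}$, proving the term-condensation lemma corresponding to Lemma \ref{term relation condensation}, and establishing the branch-agreement statement corresponding to Lemma \ref{branch agreement}. All of these are present in the HOD-below-$\Theta$ analysis of \cite{hodbelowtheta} and \cite{hacm}, but need to be transcribed uniformly in $n$ to match the framework developed above.
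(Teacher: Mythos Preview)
The paper does not prove this Fact at all; it is stated without proof in Section \ref{projective cases section} as a known result, with the surrounding text indicating that the projective analysis parallels the inductive-like one (``The iteration strategy for $W$ is guided by indiscernibles, analogously to how the iteration strategies for $\Gamma$-suitable mice are guided by terms for sets in a sjs'') and citing \cite{otpawtdsom} and \cite{hra} for the relevant machinery. So there is no proof in the paper to compare against, only the implicit suggestion that Lemmas \ref{pwo is long} and \ref{pwo is short} transfer.

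Your proposal is the correct way to flesh this out, and matches what the paper is gesturing at. One terminological point: in the projective case the strategy for $M^\#_{2n+1}$ is guided by \emph{indiscernibles}, not by terms for a self-justifying system; the paper explicitly says this. Your phrase ``self-justifying system of indiscernible-coded sets'' conflates the two frameworks. The branch-agreement lemma you need is the indiscernible version (branches agree below the hull of finitely many indiscernibles), not the term-relation version of Lemma \ref{branch agreement}. This does not affect the structure of your argument, but you should cite the indiscernible machinery from \cite{otpawtdsom} or \cite{hodbelowtheta} rather than trying to recast it as an sjs. With that correction, both halves of your sketch are sound: the first inequality is exactly Lemma \ref{pwo is long} with $\bm{\Delta^1_{2n+2}}$ in place of $\bm{\Delta_\Gamma}$ (strictness via $\kappa_{M_\infty} = \bm{\delta^1_{2n+1}}$ from the HOD analysis is the cleanest route), and the second is exactly Lemma \ref{pwo is short} with $\bm{\Sigma^1_{2n+2}} = S(\bm{\delta^1_{2n+2}})$ replacing the claim that $\bm{\Gamma} = S(\bm{\delta_\Gamma})$.
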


The iteration strategy $\Sigma$ for $W$ is guided by indiscernibles, analogously to how the iteration strategies for $\Gamma$-suitable mice are guided by terms for sets in a sjs.\footnote{More explicitly, for an appropriate sequence of indiscernibles $\langle v_i:i<\omega\rangle$, $\Sigma$ is the unique iteration strategy witnessing that $W$ is strongly $\langle v_0,..,v_i\rangle$-iterable (in the sense of \cite{otpawtdsom}) for every $i<\omega$.} \cite{otpawtdsom} covers this analysis of the iteration strategy for $W$ in detail. Also analogously to Sections \ref{definability section} and \ref{internalization section}, inside an iterate $M$ of $M^\#_{2n+1}(x_0)$ for some $x_0\in \R$ coding $W$, we can form the direct limit $\mathcal{H}^M$ of countable iterates of $W$ in $M$ and approximate the iteration maps from $W$ to $\mathcal{H}^M$. This internalization is covered in \cite{hra}.

The following fact gives us an analogue of the notion of a $\langle \phi,z\rangle$-witness.

\begin{fact}
   There is a computable function which sends a $\Sigma^1_{2n+2}$-formula $\phi$ to a formula $\phi^* = \phi^*(u_0,...,u_{2n-1},v)$ in the language of mice such that the following hold:
    \begin{enumerate}
        \item If $x\in \R$, $M$ is a countable, $\omega_1+1$-iterable $x$-premouse, $M\models ZFC$, $M$ has $2n$ Woodin cardinals $\delta_0,...,\delta_{2n-1}$, $\phi$ is a $\Sigma^1_{2n+2}$ formula, and $M\models\phi^*[\delta_0,...,\delta_{2n-1},x]$, then $\phi(x)$ holds.
        \item If $x\in \R$, $\delta_0,...,\delta_{2n-1}$ are the Woodin cardinals of $M^\#_{2n}(x)$, $\phi$ is a $\Sigma^1_{2n+2}$ formula, and $\phi(x)$ holds, then a proper initial segment of $M$ above $\delta_{2n-1}$ satisfies $ZFC$ and $\phi^*[\delta_0,...,\delta_{2n-1},x]$.
    \end{enumerate}
\end{fact}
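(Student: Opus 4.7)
The plan is to invoke the classical Martin-Steel correctness theorem for $M^\#_{2n}$, which underlies all similar capturing results for mice with finitely many Woodin cardinals. Write $\phi(v) \equiv \exists y\, \psi(v, y)$ with $\psi \in \bm{\Pi^1_{2n+1}}$. I would define $\phi^*(u_0, \ldots, u_{2n-1}, v)$ to assert that $u_0 < \cdots < u_{2n-1}$ are Woodin cardinals, that there exist trees $T, \tilde{T}$ on $\omega \times \omega \times \kappa$ for some $\kappa < u_0$ produced by the Martin-Steel recursive construction from the parameters $u_0, \ldots, u_{2n-1}$ (the construction giving tree representations of $\Pi^1_{2n+1}$-sets from $2n$ Woodin cardinals), that $T, \tilde{T}$ are absolutely complementing in $V^{Col(\omega, u_{2n-1})}$, and that there is $y$ in the generic extension with $(v, y) \in p[T]$. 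Since the Martin-Steel construction is primitive recursive in its Woodin parameters, $\phi^*$ is first-order in the language of premice, and the map $\phi \mapsto \phi^*$ is computable in the Levy complexity of $\psi$.

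For direction (1), suppose $M$ is countable and $\omega_1+1$-iterable, $M \models ZFC$ with Woodins $\delta_0 < \cdots < \delta_{2n-1}$, and $M \models \phi^*[\delta_0, \ldots, \delta_{2n-1}, x]$. Let $g$ be $Col(\omega, \delta_{2n-1})^M$-generic, and let $T^M, \tilde T^M, y^M \in M[g]$ witness $\phi^*$. Applying a genericity iteration above $\delta_{2n-1}$ (Theorem \ref{genericity iteration for ext alg}) yields an iterate $M^*$ in which $y^M$ becomes generic; since the Martin-Steel tree definition is uniform and $M$ is iterable, $T^{M^*}$ and $\tilde{T}^{M^*}$ remain absolutely complementing in $V$ after collapsing the image of $\delta_{2n-1}$. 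By the absoluteness core of the Martin-Steel theorem, $p[T^{M^*}]$ coincides with $\{(a,b) : \psi(a,b)\}^V$ restricted to $M^*[g]$, so $\psi(x, y^M)$ holds in $V$, giving $\phi(x)$.

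For direction (2), suppose $\phi(x)$ holds in $V$ and pick $y$ with $\psi(x, y)$. Let $\delta_0 < \cdots < \delta_{2n-1}$ be the Woodin cardinals of $M^\#_{2n}(x)$. Perform the Martin-Steel construction inside $M^\#_{2n}(x)$ from these Woodins; this produces trees $T, \tilde T$ above $\delta_{2n-1}$ representing $\psi$ and its complement. Since $M^\#_{2n}(x)$ is $\Sigma^1_{2n+2}$-correct (via its canonical $\omega_1+1$-iteration strategy combined with direction (1) already established for smaller complexity), a witness $y$ can be found so that $(x, y) \in p[T]$ after collapsing $\delta_{2n-1}$. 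A sufficiently tall proper initial segment $N$ of $M^\#_{2n}(x)$ above $\delta_{2n-1}$ then satisfies $ZFC$ together with $\phi^*[\delta_0, \ldots, \delta_{2n-1}, x]$.

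The main obstacle will be the technical verification that the Martin-Steel construction of tree representations is genuinely first-order definable from the Woodin parameters in the language of premice, and that this definition is absolute between any $\omega_1+1$-iterable premouse and $V$. This absoluteness—which relies on genericity iterations at each Woodin cardinal and the extender algebra to propagate tree representations upward through the Woodins—is the core content of the Martin-Steel theorem, and everything else in the argument follows routinely from it.
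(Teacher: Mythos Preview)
The paper does not prove this; it is stated as a Fact, i.e., a known result from the literature (essentially the $\Sigma^1_{2n+2}$-correctness of iterable mice with $2n$ Woodin cardinals, going back to Martin--Steel and Woodin; standard references are Steel's \emph{Projectively Well-ordered Inner Models} and Neeman's work on optimal proofs of determinacy). So there is no proof in the paper to compare your proposal against.

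Your outline points at the right ingredients, but two concrete steps are off. First, you place the trees $T,\tilde T$ on $\omega\times\omega\times\kappa$ with $\kappa<u_0$; the Martin--Steel trees for $\Pi^1_{2n+1}$ are built by threading through all $2n$ Woodins and live on ordinals at least as large as $\delta_{2n-1}$, not below the first Woodin. Second, in direction (1) you invoke a genericity iteration ``above $\delta_{2n-1}$,'' but $M$ has no Woodin cardinal above $\delta_{2n-1}$, so there is no extender algebra there and this step cannot be carried out as written. The actual mechanism is an induction on $n$: one peels off the top Woodin via a genericity iteration \emph{at} $\delta_{2n-1}$, reducing the question to $\Pi^1_{2n-1}$-correctness of a mouse with $2n-2$ Woodins. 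You correctly flag the absoluteness of the tree construction as the crux, but your sketch does not supply the inductive unwinding that actually delivers it.
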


With these tools it is not difficult to adapt our proof of Theorem \ref{main thm} into a proof of Theorem \ref{sargsyan thm 2}.

Here is a brief overview of the proof of Theorem \ref{sargsyan thm 2} in \cite{hra}. The basis of this proof is also studying the directed system $\I'$ of countable iterates of $M^\#_{2n+1}(z_0)$ for some $z_0\in\R$. Suppose $\langle A_\alpha : \alpha < \bm{\delta^1_{2n+2}}\rangle$ is a sequence of distinct $\bm{\Sigma^1_{2n+2}}$ sets. Fix a $\Pi^1_{2n+3}\backslash \Sigma^1_{2n+3}$ set $A \subset \omega$. If $n\in A$, this is witnessed in a proper initial segment of any $M_{2n+1}$-like $\Pi^1_{2n+2}$-iterable premouse $M$. Then there is a $\Sigma^1_{2n+3}$ set $A' \subset A$ consisting of, roughly speaking, all $n\in\omega$ which are witnessed in such an $M$ before some $x\in A_\alpha$ is witnessed. There is $n_0\in A'\backslash A$. This is witnessed in some proper initial segment $\bar{N}_M$ of $M|\kappa_M$ for any $M\in\I'$. A coding set $S^M$ is defined analogously to our coding sets in the proof of Theorem \ref{main thm}, but with the additional requirement that the conditions appear below $\bar{N}_M$. The coding sets are used to show a $\bm{\Sigma^1_{2n+2}}$ code for $A_\alpha$ is small generic over $M$. The contradiction is obtained from this.

The technique described in the previous paragraph is a stronger argument than the one we used for Theorem \ref{main thm}, since it gives coding sets which are uniformly bounded below the least strong cardinal. It is not clear whether a similar argument could work for inductive-like pointclasses. There is no obvious analogue of the $\Pi^1_{2n+3}\backslash \Sigma^1_{2n+3}$ set $A$ for an inductive-like pointclass $\Gamma$, since there is no universal $\Gamma\backslash\Gamma^c$ set of integers. So the proof from \cite{hra} is not applicable to inductive-like pointclasses. On the other hand, the techniques of Section \ref{ind-like chapter} are applicable to the projective pointclasses. And this yields a substantially simpler proof of Theorem \ref{sargsyan thm 2}, since it eliminates the need for a uniform bound on our coding sets.\\

\subsection{Mouse Sets and Open Problems}
\label{mouse sets section}
In this section we discuss the relationship between the problem of unreachability and well-known conjectures on mouse sets. We will assume $ZF + AD + DC + V=L(\R)$, although this is overkill for some of the results stated below.

\begin{definition}
    $X \subset \R$ is a mouse set if there is an $\omega_1+1$-iterable premouse $M$ such that $X = M \cap \R$.
\end{definition}

\begin{theorem}[Steel]
\label{projective mouse set thm}
    Suppose $\Gamma = \Sigma^1_{n+2}$ for some $n\in \omega$. Then $C_\Gamma$ is a mouse set. 
\end{theorem}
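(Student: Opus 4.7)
The plan is to exhibit, for each $n \in \omega$ and $\bm{\Gamma} = \bm{\Sigma^1_{n+2}}$, a specific $\omega_1+1$-iterable premouse $M_n$ such that $M_n \cap \R = C_{\bm{\Gamma}}$. I would take $M_n$ to be the canonical minimal iterable mouse whose complexity matches $\bm{\Gamma}$: roughly, $M_n$ should have enough Woodin cardinals that its countable complete iterates are $\bm{\Gamma}$-correct, while its iteration strategy restricted to countable trees lies in $\bm{\Delta_\Gamma}$. The precise number of Woodin cardinals is dictated by the periodicity of the projective hierarchy under $AD$, and aligns $M_n$ with a Martin--Solovay tree for the corresponding scaled pointclass.

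For the inclusion $M_n \cap \R \subseteq C_{\bm{\Gamma}}$, I would show that every $x \in M_n$ admits a $\bm{\Delta_\Gamma}$-definition. The definition says essentially that $x$ occupies a specified position in the direct limit of all countable, complete iterates of $M_n$. To verify this is $\bm{\Delta_\Gamma}$, one uses that the iteration strategy of $M_n$ (restricted to countable trees) is coded by a set in $\bm{\Delta_\Gamma}$, together with comparison and Dodd--Jensen to ensure the direct limit is well-defined and the iteration maps are uniformly describable. This mirrors the analysis of internalized iteration systems carried out in Section \ref{internalization section}.

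For the reverse inclusion $C_{\bm{\Gamma}} \subseteq M_n \cap \R$, fix $x \in C_{\bm{\Gamma}}$, so $\{x\}$ is $\bm{\Delta_\Gamma}$ from some countable parameters $\vec{\xi}$. I would iterate $M_n$ via an appropriate genericity iteration at one of its Woodin cardinals to obtain an iterate $N$ in which $\vec{\xi}$ is realized and the defining $\bm{\Gamma}$-formula reflects from $V$ into $N[g]$ for a suitable generic $g$. The $\bm{\Gamma}$-correctness of $N$ then forces $x \in N$, and Dodd--Jensen applied to the comparison between $N$ and $M_n$ pulls $x$ back into $M_n$, since $x$ is fixed by all iteration maps (being uniquely $\bm{\Gamma}$-definable).

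The hard part will be the even case $n+2 = 2k+2$, where $\bm{\Sigma^1_{n+2}}$ is not scaled. In that situation the scale construction producing an iteration strategy for $M_n$ applies directly only to $\bm{\Pi^1_{2k+1}}$, and I would need to transfer the strategy up one level using the Second Periodicity Theorem and the theory of $Q$-structures. Verifying that the resulting iteration strategy has precisely the complexity $\bm{\Delta_\Gamma}$ is the core technical difficulty, and would be handled using the fine-structural techniques developed by Steel for analyzing iterations of projective mice.
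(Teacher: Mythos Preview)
The paper does not actually prove this theorem: it is quoted as a background result due to Steel, with the proof deferred to the reference \cite{pwim}. So there is no in-paper argument to compare your proposal against.

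That said, your outline is broadly in the spirit of Steel's actual proof --- one exhibits the canonical mouse $M_n^\#$ and shows its reals are exactly $C_{\Sigma^1_{n+2}}$, with one inclusion coming from the definability of the direct-limit system and the other from genericity iterations plus correctness. A few points deserve correction, though. First, the theorem concerns the \emph{lightface} pointclass $\Sigma^1_{n+2}$, not the boldface $\bm{\Sigma^1_{n+2}}$ you write throughout; $C_\Gamma$ here is the largest countable $\Sigma^1_{n+2}$ set, and boldface would trivialize the statement. Second, you have the periodicity backwards: under $AD$, it is $\bm{\Sigma^1_{2k+2}}$ (and $\bm{\Pi^1_{2k+1}}$) that carries the scale property, so your identification of the ``hard even case'' as the non-scaled one is inverted. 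Third, your sketch of the reverse inclusion is too loose: for a lightface $\Delta^1_{n+2}$ real $x$ there are no ordinal parameters $\vec{\xi}$ to absorb, and the actual argument hinges on the correctness of $M_n^\#$ for $\Sigma^1_{n+2}$ statements (which in turn rests on iterability and the Martin--Steel tree analysis), not on Dodd--Jensen pulling $x$ back from an iterate. These are real gaps in the write-up, even if the overall shape is recognizable.
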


\begin{theorem}[Woodin]
\label{mouse set thm}
    Suppose $\lambda$ is a limit ordinal and let \newline $\Gamma = \{ A \subseteq \R :\, A \text{ is definable in } J_\beta(\R) \text{ for some } \beta<\lambda\}$. Then $C_\Gamma$ is a mouse set.
\end{theorem}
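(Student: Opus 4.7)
The plan is to construct, by induction on $\beta < \lambda$, a coherent sequence of premice $\langle M_\beta : \beta < \lambda\rangle$ satisfying $M_\beta \cap \R = C_{\Gamma_\beta}$ (where $\Gamma_\beta$ denotes the sets of reals definable over $J_\beta(\R)$) and $M_{\beta'} \trianglelefteq M_\beta$ whenever $\beta' < \beta$; then set $M = \bigcup_{\beta < \lambda} M_\beta$. Coherence makes $M$ a well-defined premouse whose reals are $\bigcup_{\beta<\lambda} C_{\Gamma_\beta} = C_\Gamma$, so it remains only to verify that $M$ is $\omega_1+1$-iterable.

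For the construction of the $M_\beta$, I would use the scale analysis of Theorem \ref{scales classification} to stratify $\lambda$ into $\Sigma_1$-gaps $[\alpha_i,\beta_i]$. Within a single gap the pointclass is stable on reals, so it suffices to define $M_\beta$ at gap endpoints. At an admissible gap beginning, $\Gamma_\beta$ is inductive-like and Theorem \ref{lp = c} identifies $M_\beta$ as $Lp^{\Gamma_\beta}$. At inadmissible gap beginnings and at weak gap endings, the new scaled pointclass is governed by Steel's fine-structural mouse constructions from the core-model-induction apparatus developed in \cite{cmi}, with Steel's projective mouse set theorem (Theorem \ref{projective mouse set thm}) as the base case. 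Coherence $M_{\beta'} \trianglelefteq M_\beta$ follows from uniqueness and solidity of the minimal $\omega$-sound mouse capturing the relevant reals at each stage, together with the observation that the reals of $C_{\Gamma_\beta}$ enlarge only at the fine-structural events identified by the scale analysis.

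The crux is iterability of $M$. For a countable iteration tree $\T$ on $M$, if $\T$ is bounded in some $M_\beta$ then the branch is provided by the inductive iterability of $M_\beta$. The harder case is a $\T$ using extenders cofinally from the whole sequence; here I would select the branch $b$ through $\T$ by the $\Q$-structure criterion that $\Q(b,\T) \in Lp^\Gamma(\M(\T))$, in direct analogy with the fullness-preserving, $\G$-guided strategies of Section \ref{suitable mice section}. Existence of such a $b$ uses $AD^{L(\R)}$ and branch-condensation tools like Theorem \ref{branch uniqueness}; uniqueness follows by comparing $\Q$-structures inside $Lp^{\Gamma_{\beta}}$ for sufficiently large $\beta < \lambda$. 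This $\Q$-guided strategy argument, analogous to but somewhat more delicate than the guided-strategy constructions used earlier in the paper since $\G$ must here be extracted from the limit stage rather than fixed in advance, is the primary technical obstacle.
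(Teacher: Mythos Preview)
The paper does not prove this theorem; it merely cites \cite{steel2016} for the proof. So there is no in-paper argument to compare against, but your proposal has a genuine gap that is worth naming.

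Your inductive scheme requires, at each $\beta < \lambda$, a mouse $M_\beta$ with $M_\beta \cap \R = C_{\Gamma_\beta}$, and moreover that these mice line up as initial segments. Neither is available. First, the initial-segment coherence fails already in the projective hierarchy: the mouse witnessing $C_{\Sigma^1_3}$ is (essentially) $M_1$, the mouse for $C_{\Sigma^1_5}$ is $M_3$, and $M_1$ is not an initial segment of $M_3$. ``Uniqueness and solidity of the minimal $\omega$-sound mouse'' does not force such alignment across different large-cardinal thresholds. Second, and more seriously, your handling of the successor stages (inadmissible gap beginnings, weak gap endings) appeals to ``Steel's fine-structural mouse constructions from the core-model-induction apparatus,'' but the paper explicitly flags that at exactly these stages the mice produced by the core model induction are \emph{too large}: see the discussion following Conjecture~\ref{lightface mouse set conjecture} and the example $\Gamma = \Sigma_{n+2}(J_2(\R))$. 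If your induction worked level-by-level, it would prove Conjecture~\ref{lightface mouse set conjecture} outright, not just the limit case in Theorem~\ref{mouse set thm}.

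The point of the limit hypothesis, and what the actual proof exploits, is that one can bypass the problematic successor stages entirely. At a limit $\lambda$ the pointclass $\Gamma$ is closed under the operation ``has an iteration strategy in $J_\beta(\R)$ for some $\beta<\lambda$,'' so the single mouse $Lp(\emptyset)$---the stack of all sound premice over $\emptyset$ projecting to $\omega$ with strategies in $J_\lambda(\R)$---is the right object. One shows $C_\Gamma = Lp(\emptyset)\cap\R$ directly (one containment is easy; the other is the core model induction up to $\lambda$, which provides for each $x\in C_\Gamma$ a small mouse containing $x$ with strategy below $\lambda$), and iterability of $Lp(\emptyset)$ is via $\Q$-structures as you sketch. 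Your $\Q$-guided iterability paragraph is on the right track; it is the construction of $M$ itself that needs to be replaced.
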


See \cite{pwim} and \cite{steel2016} for proofs of Theorems \ref{projective mouse set thm} and \ref{mouse set thm}, respectively. \cite{steel2016} also gives the following conjecture.

\begin{conjecture}[Steel]
\label{lightface mouse set conjecture}
    Suppose $\Gamma$ is a level of the (lightface) Levy hierarchy.\footnote{I.e. $\Gamma = \Sigma_n(J_\alpha(\R))$ for some $\alpha\in On$ and $n\in\omega$.} Then $C_\Gamma$ is a mouse set.
\end{conjecture}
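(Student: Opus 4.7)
The plan is to induct on the complexity of $\Gamma$ in the Levy hierarchy, using Theorem \ref{scales classification} to organize the case analysis. For each scaled level $\Gamma$ one exhibits a specific $\omega_1+1$-iterable premouse $M_\Gamma$ with $M_\Gamma \cap \R = C_\Gamma$; non-scaled levels can then be handled by reducing to Wadge-nearby scaled pointclasses, using that the Wadge jumps between consecutive scaled levels in $L(\R)$ are controlled by the gap analysis.

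For $\Gamma = \Sigma_1(J_\alpha(\R))$ with $\alpha$ beginning an admissible gap (the inductive-like case), the conclusion is essentially immediate from Theorem \ref{lp = c}: taking $a = \emptyset$ gives $C_\Gamma = Lp^\Gamma(\emptyset) \cap \R$, and $Lp^\Gamma(\emptyset)$ reorganizes as an iterable premouse by the existence results behind Theorem \ref{nice strategy exists}. For $\Sigma_{2k+1}(J_\alpha(\R))$ and $\Pi_{2k+2}(J_\alpha(\R))$ with $\alpha$ beginning an inadmissible gap, the plan is to replay Steel's proof of Theorem \ref{projective mouse set thm} relative to the mouse for the start of the gap: iterate the $M^\#_{2k+1}$-operator over the $\Sigma_1(J_\alpha(\R))$-mouse $W_\alpha$, and observe that the scale analysis of \cite{scales} inside an inadmissible gap behaves projectively above $W_\alpha$, so Steel's comparison arguments go through verbatim in the relativized setting.

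The main obstacle is the weak-gap case, where $\Gamma = \Sigma_{n_\beta + 2k}(J_\beta(\R))$ or $\Pi_{n_\beta + 2k + 1}(J_\beta(\R))$ with $\beta$ ending a weak gap. Here there is no off-the-shelf mouse operator yielding $C_\Gamma$, and the required mouse must somehow internalize the self-justifying system $\mathcal{B}$ from Theorem \ref{where sjs appears}. The plan is to build a hybrid strategy mouse, in the spirit of \cite{hacm}, whose predicate codes the iteration strategy for the $\Gamma$-suitable mouse at the start of the gap, guided by $\mathcal{B}$, and then to identify $C_\Gamma$ with the reals of a canonical initial segment of this hybrid. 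Proving this hybrid mouse is iterable and that its reals exactly exhaust $C_\Gamma$ is essentially a full core model induction through the weak gap, delicately interleaving strategy iterability with the Mitchell-Steel background condition, and is where the substantive new work will be needed; in particular, one expects to need a term-relation analysis for the $\Sigma_{n_\beta + 2k}$-theory of $J_\beta(\R)$ analogous to the one supplied by Lemma \ref{term relation condensation} in the inductive-like setting.
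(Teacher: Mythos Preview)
The statement you are attempting to prove is a \emph{conjecture}, not a theorem: the paper does not supply a proof, and explicitly discusses it as open. So there is no paper proof to compare against, and your proposal should be read as a research plan rather than a proof.

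That said, the plan has a genuine gap, and it is not where you think it is. You treat the inadmissible-gap levels $\Sigma_{2k+1}(J_\alpha(\R))$ as routine, asserting that ``Steel's comparison arguments go through verbatim in the relativized setting.'' The paper singles out exactly these levels as the hard open cases: for $\Gamma = \Sigma_{n+2}(J_2(\R))$ it records that
\[
M_n^{J^\#} \cap \R \subsetneq C_\Gamma \subsetneq M_{n+1}^{J^\#} \cap \R,
\]
so the natural relativized $M_n^\#$-style mice over the previous-gap mouse are strictly too small or strictly too large, and no known fine-structural premouse hits $C_\Gamma$ exactly. Only the single case $\Gamma = \Sigma_2(J_2(\R))$ has been resolved, and that required new ideas (Rudominer). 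The obstruction is not iterability or comparison but the \emph{granularity} of the mouse hierarchy: the jump from $M_n^{J^\#}$ to $M_{n+1}^{J^\#}$ skips over $C_{\Sigma_{n+2}(J_2(\R))}$, and one needs a genuinely new mouse, not a relativization of an existing one. Your proposal offers no candidate for such a mouse.

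By contrast, the case you flag as the ``main obstacle'' --- the start of an admissible gap --- is, as you note, essentially Theorem~\ref{lp = c} combined with Woodin's Theorem~\ref{mouse set thm}, and is not where the difficulty lies. Your weak-gap discussion is reasonable as a description of where work would be needed, but it too is speculation rather than argument.
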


Conjecture \ref{lightface mouse set conjecture} is a way of asking if there is a mouse corresponding exactly to the pointclass $\Gamma$. For each $\Gamma$ in the Levy hierarchy, the core model induction constructs a mouse which contains $C_\Gamma$, but in some cases the mouse constructed is too large. For example, let $J$ be the mouse operator $J(x) = \bigcup_{n<\omega} M^\#_n(x)$. If $\Gamma = \Sigma_{n+2}(J_2(\R))$, then
\begin{align*}
    M^{J^\#}_n \cap \R \subsetneq C_\Gamma \subsetneq M^{J^\#}_{n+1} \cap \R.
\end{align*}
There are many similar cases in which the mice constructed in \cite{cmi} skip the (hypothesized) mouse realizing Conjecture \ref{lightface mouse set conjecture}. Recent progress has been made towards Conjecture \ref{lightface mouse set conjecture} in \cite{rudominernew}, which resolves the case $\Gamma = \Sigma_2(J_2(\R))$.

The problem of unreachability is connected to a boldface version of Conjecture \ref{lightface mouse set conjecture}.

\begin{conjecture}
\label{boldface mouse set conjecture}
    Suppose $\alpha \in ON$ and $n \in \omega$. For $x\in \R$, let $\Gamma_x$ consist of all pointsets $A$ for which there is a $\Sigma_n$ formula $\phi$ with parameter $x$ such that $A = \{y : J_\alpha(\R)\models \phi[y]\}$. Then for any $y\in \R$, there is $x\in\R$ such that $y \leq_T x$ and $C_{\Gamma_x}$ is a mouse set.
\end{conjecture}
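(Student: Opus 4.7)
The plan is to combine the core model induction with the flexibility afforded by the boldface parameter $x$. First, I would stratify the problem according to where the pair $(\alpha, n)$ sits in the gap structure given by Theorem \ref{scales classification}. For many natural levels --- $\alpha$ beginning an inadmissible gap, $\alpha$ beginning an admissible gap (the inductive-like case handled in this paper), or $\alpha$ a limit of such --- the core model induction already produces a mouse operator $x \mapsto M^*(x)$ with $M^*(x) \cap \R = C_{\Gamma_x}$, uniformly in $x$. In these cases, the conclusion is immediate for any $x \geq_T y$, with no need to increase $y$ at all, and Theorem \ref{mouse set thm} does most of the work when $\alpha$ is a limit.

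The serious cases are intermediate levels within a gap, epitomized by $\Gamma = \Sigma_{n+2}(J_2(\R))$ from the preceding discussion, where the natural mouse operator $J^{\#}$ satisfies $M^{J^{\#}}_n(x) \cap \R \subsetneq C_{\Gamma_x} \subsetneq M^{J^{\#}}_{n+1}(x) \cap \R$. My strategy would be to use the boldface parameter to locate a canonical intermediate mouse. Given $y$, I would choose $x \geq_T y$ to code rich fine-structural data --- for instance, a complete $\Sigma_n$-theory of $J_\alpha(\R)$ in relevant parameters, together with canonical indiscernibles or witnessing tuples --- and then search inside $M^{J^{\#}}_{n+1}(x)$ for a cutpoint $\eta$ above the top Woodin of $M^{J^{\#}}_n(x)$ such that $N := M^{J^{\#}}_{n+1}(x) | \eta$ satisfies $N \cap \R = C_{\Gamma_x}$. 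A reflection argument in the style of Lemma \ref{reflecting below kappa}, carried out inside $M^{J^{\#}}_{n+1}(x)$ with $x$ as a parameter, should pin down such an $\eta$ as the least ordinal at which the relevant $\Sigma_n$-theory of $J_\alpha(\R)$ (suitably coded) stabilizes.

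The main obstacle is verifying that the intermediate mouse $N$ so obtained is iterable and captures exactly $C_{\Gamma_x}$, neither too much nor too little. Iterability of $N$ should follow from absorbing its iteration strategy into that of $M^{J^{\#}}_{n+1}(x)$, but the reals match is delicate: the lower containment $C_{\Gamma_x} \subseteq N \cap \R$ requires a $\Sigma_n$-witness construction paralleling Lemma \ref{truth gives witness}, while the upper containment $N \cap \R \subseteq C_{\Gamma_x}$ requires a term-capturing analysis for a pointclass that need not itself be scaled. Recent progress in \cite{rudominernew}, which resolves the $\Sigma_2(J_2(\R))$ case of the lightface conjecture, suggests the right bookkeeping exists at the first nontrivial level; extending those techniques to arbitrary $(\alpha, n)$ and exploiting the boldface flexibility to sidestep the skipping phenomenon appears to be the crux of the conjecture. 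The deepest difficulty is likely showing that the intermediate mouse produced by reflection is genuinely new --- not degenerating to one of $M^{J^{\#}}_n(x)$ or $M^{J^{\#}}_{n+1}(x)$ --- for which a careful fine-structural analysis at the boundary between consecutive levels of the projective-like hierarchy seems unavoidable.
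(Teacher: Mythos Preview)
The statement you are addressing is a \emph{conjecture}, not a theorem, and the paper does not offer a proof of it. Immediately after stating it, the paper remarks only that a proof of the lightface Conjecture~\ref{lightface mouse set conjecture} would presumably relativize to yield Conjecture~\ref{boldface mouse set conjecture}, and then observes that the mouse operators $x\mapsto M^\#_{2k}(x)$ and $x\mapsto M_x$ realize the special cases $(\alpha,n)=(1,2k+2)$ and $(\alpha \text{ admissible}, n=1)$ respectively. That is the full extent of the paper's treatment; there is no argument against which to compare your proposal.

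Your write-up is itself not a proof but a heuristic outline, and you correctly flag the serious gaps yourself. The idea of exploiting the boldface parameter to code extra data and then locating an intermediate cutpoint inside $M^{J^\#}_{n+1}(x)$ is a natural first thought, but nothing in the paper (or in the cited literature) suggests that a reflection argument in the style of Lemma~\ref{reflecting below kappa} can isolate such a cutpoint with the exact reals $C_{\Gamma_x}$. The obstacle the paper highlights---that the core model induction skips the relevant mouse---is precisely the statement that no known construction produces an iterable premouse strictly between $M^{J^\#}_n(x)$ and $M^{J^\#}_{n+1}(x)$ with the right reals, and your proposal does not supply a mechanism to overcome this beyond hoping that the techniques of \cite{rudominernew} generalize. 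In short: there is no proof in the paper to match, and your outline, while reasonable as a description of where the difficulty lies, does not close the gap either.
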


Presumably a proof of Conjecture \ref{lightface mouse set conjecture} would relativize, so a proof of Conjecture \ref{lightface mouse set conjecture} would also resolve Conjecture \ref{boldface mouse set conjecture}.

The mouse operator $x \mapsto M^\#_{2k}(x)$ realizes Conjecture \ref{boldface mouse set conjecture} holds for $\alpha=1$ and $n = 2k+2$. To prove $\bm{\delta^1_{2n+2}}$ is $\bm{\Sigma^1_{2n+2}}$-unreachable, we studied the direct limit of $M = M^\#_{2n+1}(x_0)$ for some $x_0\in\R$. Note if $g$ is $Col(\omega,\delta_M)$-generic over $M$, then $M[g] = M^\#_{2n}(g)$.

For $\alpha$ admissible, the mouse operator $x\mapsto M_x$ of Theorem \ref{nice strategy exists} realizes Conjecture \ref{boldface mouse set conjecture} holds in the case $n=1$. Note if $g$ is $Col(\omega,\delta_{M_x})$-generic over $M_x$, then $M_x[g] \cap \R = Lp^\Gamma(g) \cap \R = C_{\Gamma}(g)$. So in the inductive-like case as well we studied the direct limit of a mouse such that collapsing its least Woodin yields a mouse realizing one case of Conjecture \ref{boldface mouse set conjecture}.

Thus for each pointclass $\bm{\Sigma_n}(J_\alpha(\R))$ for which we have proven Conjecture \ref{Grigor conjecture} holds, we used a mouse operator realizing Conjecture \ref{boldface mouse set conjecture} holds for $\alpha$ and $n$. It seems likely a proof of Conjecture \ref{full conjecture in L(R)} would involve proving Conjecture \ref{boldface mouse set conjecture} for each $\alpha$ and $n$ such that $\bm{\Sigma_n}(J_\alpha(\R)) = S(\kappa)$ for some Suslin cardinal $\kappa$ which is a successor cardinal or a regular limit cardinal.

\section*{Funding}
The first two authors were supported by the National Science Foundation under grant No. DMS-1764029. The third author’s work is funded by the National Science Center, Poland under the Weave-Unisono Call, registration number UMO-2023/05/Y/ST1/00194.

\printbibliography

\end{document}